\author{
  \textbf{Matthias Schötz}\thanks{\texttt{matthias.schoetz@mathematik.uni-wuerzburg.de}},
  \addtocounter{footnote}{2}
  \textbf{Stefan Waldmann}\thanks{\texttt{stefan.waldmann@mathematik.uni-wuerzburg.de}}
  \\[0.5cm]
  Institut für Mathematik \\
  Lehrstuhl für Mathematik X \\
  Universität Würzburg \\
  Campus Hubland Nord \\
  Emil-Fischer-Straße 31 \\
  97074 Würzburg \\
  Germany
}
\renewcommand{\mathbb}[1]{\mathbbm{#1}}
\numberwithin{equation}{section}
\renewcommand{\arraystretch}{1.2}
\let\originalleft\left
\let\originalright\right
\renewcommand{\left}{\mathopen{}\mathclose\bgroup\originalleft}
\renewcommand{\right}{\aftergroup\egroup\originalright}
\renewcommand{\cleardoublepage}{\clearpage\ifodd\c@page\else\vspace*{\fill}\thispagestyle{empty}\newpage\fi}
\newcommand{\lemmachairxname}{Lemma}
\newcommand{\propositionchairxname}{Proposition}
\newcommand{\theoremchairxname}{Theorem}
\newcommand{\corollarychairxname}{Corollary}
\newcommand{\definitionchairxname}{Definition}
\newcommand{\claimchairxname}{Claim}
\newcommand{\examplechairxname}{Example}
\newcommand{\remarkchairxname}{Remark}
\newcommand{\questionchairxname}{Question}
\newcommand{\conjecturechairxname}{Conjecture}
\newcommand{\exercisechairxname}{Exercise}
\newcommand{\maintheoremchairxname}{Main Theorem}
\newcommand{\notationchairxname}{Notation}
\newcommand{\proofchairxname}{Proof}
\newcommand{\subproofchairxname}{Proof}
\newcommand{\hintchairxname}{Hint}
\SetString{\lemmachairxname}{Lemma}
\SetString{\propositionchairxname}{Proposition}
\SetString{\theoremchairxname}{Theorem}
\SetString{\corollarychairxname}{Corollary}
\SetString{\definitionchairxname}{Definition}
\SetString{\claimchairxname}{Claim}
\SetString{\examplechairxname}{Example}
\SetString{\remarkchairxname}{Remark}
\SetString{\questionchairxname}{Question}
\SetString{\conjecturechairxname}{Conjecture}
\SetString{\exercisechairxname}{Exercise}
\SetString{\maintheoremchairxname}{Main Theorem}
\SetString{\notationchairxname}{Notation}
\SetString{\proofchairxname}{Proof}
\SetString{\subproofchairxname}{Proof}
\SetString{\hintchairxname}{Hint}
\SetString{\lemmachairxname}{Lemma}
\SetString{\propositionchairxname}{Proposition}
\SetString{\theoremchairxname}{Satz}
\SetString{\corollarychairxname}{Korollar}
\SetString{\definitionchairxname}{Definition}
\SetString{\claimchairxname}{Behauptung}
\SetString{\examplechairxname}{Beispiel}
\SetString{\remarkchairxname}{Bemerkung}
\SetString{\questionchairxname}{Frage}
\SetString{\conjecturechairxname}{Vermutung}
\SetString{\exercisechairxname}{Übung}
\SetString{\maintheoremchairxname}{Theorem}
\SetString{\notationchairxname}{Notation}
\SetString{\proofchairxname}{Beweis}
\SetString{\subproofchairxname}{Beweis}
\SetString{\hintchairxname}{Hinweis}
\newtheorem{lemma}{\lemmachairxname}[section]
\newtheorem{proposition}[lemma]{\propositionchairxname}
\newtheorem{theorem}[lemma]{\theoremchairxname}
\newtheorem{corollary}[lemma]{\corollarychairxname}
\newtheorem{definition}[lemma]{\definitionchairxname}
\newtheorem{remark}[lemma]{\remarkchairxname}
\def\theorem@checkbold{}
\theoremstyle{nonumberplain}
\newtheorem{proof}{\proofchairxname}
\newenvironment{lemmalist}{\begin{compactenum}[\itshape i.)]}{\end{compactenum}}
\newenvironment{propositionlist}{\begin{compactenum}[\itshape i.)]}{\end{compactenum}}
\newenvironment{definitionlist}{\begin{compactenum}[\itshape i.)]}{\end{compactenum}}
\newcommand{\I}              {\mathrm{i}}
\newcommand{\E}              {\mathrm{e}}
\newcommand{\D}              {\mathop{}\!\mathrm{d}}
\newcommand{\cc}[1]          {\overline{{#1}}}
\newcommand{\at}[1]          {\big|_{#1}}
\newcommand{\At}[1]          {\Big|_{#1}}
\newcommand{\argument}       {\,\cdot\,}
\newcommand{\id}             {\mathsf{id}}
\newcommand{\pr}             {\mathrm{pr}}
\newcommand{\algebra}[1]      {\mathscr{#1}}
\DeclarePairedDelimiter{\abs}{\lvert}{\rvert}
\DeclarePairedDelimiter{\norm}{\lVert}{\rVert}
\newcommand{\@supnormstar}[1]{\norm*{#1}_\infty}
\newcommand{\@supnormnostar}[2][]{\norm[#1]{#2}_\infty}
\newcommand{\supnorm}{\@ifstar\@supnormstar\@supnormnostar}
\newcommand{\Stetig}         {\mathscr{C}}
\DeclareMathOperator*{\esssup}  {\mathrm{ess\,\sup}}
\newcommand{\@esssupnormstar}[1]{\norm*{#1}_{\esssup}}
\newcommand{\@esssupnormnostar}[2][]{\norm[#1]{#2}_{\esssup}}
\newcommand{\esssupnorm}{\@ifstar\@esssupnormstar\@esssupnormnostar}
\DeclareMathSymbol\dAlembert     {\mathord}{AMSa}{"03}
\DeclareMathOperator{\spann} {\mathrm{span}}
\DeclareFontFamily{U}{FdSymbolF}{}
\DeclareFontShape{U}{FdSymbolF}{m}{n}{
    <-7.1> FdSymbolF-Book
    <7.1-> FdSymbolF-Book
}{}
\DeclareSymbolFont{delimiters}{U}{FdSymbolF}{m}{n}
\DeclareMathDelimiter{\llangle}{\mathopen}{delimiters}{"92}{delimiters}{"92}
\DeclareMathDelimiter{\rrangle}{\mathclose}{delimiters}{"98}{delimiters}{"98}
\DeclarePairedDelimiter{\coolIP}{\llangle}{\rrangle}
\DeclarePairedDelimiter{\ordinaryIP}{\langle}{\rangle}
\newcommand{\skal}[3][]{\ordinaryIP[#1]{#2 \,#1|\, #3}}
\newcommand{\ptwskal}[3][]{\coolIP[#1]{#2 \,#1|\, #3}}
\newcommand{\seminorm}[3][]{\norm[#1]{#3}_{#2}}
\newcommand{\Ten}{\mathcal{T}}
\newcommand{\alg}{\textup{alg}}
\newcommand{\CC}{\mathbb{C}} % I know, I know, \dots This improves readability!!
\newcommand{\RR}{\mathbb{R}}
\newcommand{\NN}{\mathbb{N}}
\renewcommand{\P}{\mathrm{P}}
\newcommand{\cpl}{{\textup{cpl}}}
\newcommand{\SymTen}{\mathcal{S}}
\newcommand{\SymFkt}{\mathscr{S}}
\newcommand{\SymGrp}{\mathfrak{S}}
\newcommand{\stpro}[1][]{\,{\star_{\!#1}}\,}
\newcommand{\per}{\textup{per}}
\newcommand{\cs}{{\textup{\tiny{CS}}}}
\newcommand{\multotimes}{\mu_\otimes}
\newcommand{\multvee}{\mu_\vee}
\newcommand{\multstpro}[1][]{\mu_{\stpro[#1]}}
\newcommand{\Bil}{\mathfrak{Bil}}
\newcommand{\analytic}{\Stetig^{\omega_{HS}}}
\newcommand{\smooth}{\Stetig^{\infty}}
\newcommand{\Kern}{\textup{ker}}
\newcommand{\hilb}{\mathscr{H}}
\title{Convergent Star Products for Projective Limits of Hilbert Spaces}
\date{April 2017}
\begin{document}

%
% title page
%

\maketitle

%
% abstract
%

\begin{abstract}
    Given a locally convex vector space with a topology induced by
    Hilbert seminorms and a continuous bilinear form on it we
    construct a topology on its symmetric algebra such that the usual
    star product of exponential type becomes continuous. Many
    properties of the resulting locally convex algebra are
    explained. We compare this approach to various other discussions
    of convergent star products in finite and infinite dimensions. We
    pay special attention to the case of a Hilbert space and to
    nuclear spaces.
\end{abstract}

\newpage

%
% table of contents
%

\tableofcontents

%
% Introduction
%

\section{Introduction}
\label{sec:Introduction}

The canonical commutation relations
\begin{equation*}
    QP - PQ = \I \hbar
\end{equation*}
are the paradigm of quantum physics. They indicate the transition from
formerly commutative algebras of observables in classical mechanics to
now non-commutative algebras, those generated by the fundamental
variables of position $Q$ and momentum $P$. While this basic form of
the commutation relations is entirely algebraic, the need of physics is
to have some more analytic framework. Traditionally, one views $Q$ and
$P$ as (necessarily unbounded) self-adjoint operators on a Hilbert
space. Then the commutation relation becomes immediately much more
touchy as one has to take care of domains. Ultimately, the reasonable
way to handle these difficulties is to use the Schrödinger
representation which leads to a strongly continuous representation of
the Heisenberg group. This way, the commutation relations encode an
integration problem, namely from the infinitesimal picture of a Lie
algebra representation by unbounded operators to the global picture of
a group representation by unitary operators.

While this is all well-understood, things become more interesting in
infinite dimensions: here one still has canonical commutation
relations now based on a symplectic (or better: Poisson) structure on
a vector space $V$. Physically, infinite dimensions correspond to a
field theory with infinitely many degrees of freedom instead of a
mechanical system. Then, algebraically, the commutation relations can
be realized as a star product for the symmetric algebra over this
vector space, see the seminal paper \cite{bayen.et.al:1978a} where the
basic notions of deformation quantization have been introduced as well
as e.g.\cite{waldmann:2007a, gutt:2000a, weinstein:1995a} for
introductions. However, beyond the algebraic questions one is again
interested in an analytic context: it turns out that now things are
much more involved. First, there is no longer an essentially unique
way to represent the canonical commutation relations by operators, a
classical result which can be stated in many ways. One way to approach
this non-uniqueness is now to focus first on the algebraic part and
discuss the whole representation theory of this quantum algebra. To
make this possible one has to go beyond the symmetric algebra and
incorporate suitable completions instead.

Based on a $C^*$-algebraic formulation there are (at least) two
approaches available. The classical one is to take formal exponentials
of the unbounded quantities and implement a $C^*$-norm for the algebra
they generate, see \cite{manuceau:1968a}. More recently, an
alternative was proposed by taking formal resolvents and the
$C^*$-algebra they generate\cite{buchholz.grundling:2008a}. These two
approaches can be formulated in arbitrary dimensions and are used
extensively in quantum field theory.

Only for finite dimensions there is a third $C^*$-algebraic way based
on (strict) deformation quantization in the framework of Rieffel
\cite{rieffel:1993a}, see also
\cite{dubois-violette.kriegl.maeda.michor:2001a, forger.paulino:2016a}
for some more recent development: here one constructs a rather large
$C^*$-algebra by deforming the bounded continuous functions on the
underlying symplectic vector space. The deformation is based on
certain oscillatory integrals which is the reason that this approach,
though extremely appealing and powerful, will be restricted to finite
dimensions. Nevertheless, in such finite-dimensional situations one
has even ways to go beyond the flat situation and include also much
more non-trivial geometries of the underlying geometric system, see
e.g. \cite{bieliavsky.gayral:2015a}.  Unfortunately, none of those
techniques carry over to infinite dimensions.

While the $C^*$-algebraic approaches are very successful in many
aspects, some questions seem to be hard to answer within this
framework: from a deformation quantization point of view it is not
completely obvious in which sense these algebras provide deformations
of their classical counterparts, see, however,
\cite{binz.honegger.rieckers:2004b}. Closely related is the question
of how one can get back the analogs of the classically unbounded
quantities like polynomials on the symplectic vector space: in the
quantum case they can not be elements of any $C^*$-algebra and thus
they have to be recovered in certain well-behaved representations as
unbounded operators on the representation space. This raises the
question whether they can acquire some intrinsic meaning, independent
of a chosen representation. In particular, all the $C^*$-algebraic
constructions completely ignore possible additional structures on the
underlying vector space $V$, like e.g. a given topology. This seems
both from the purely mathematical but also from the physical point of
view rather unpleasant.

In \cite{waldmann:2014a} a first step was taken to overcome some of
these difficulties: instead of considering a $C^*$-algebraic
construction, the polynomials, modeled as the symmetric algebra, were
kept and quantized by means of a star product directly. Now the
additional feature is that a given locally convex topology on the
underlying vector space $V$ induces a specific locally convex topology
on the symmetric algebra $\SymTen^\bullet(V)$ in such a way that the
star product becomes \emph{continuous}. Necessarily, there will be no
non-trivial sub-multiplicative seminorms, making the whole locally
convex algebra quite non-trivial. It was then shown that in the
completion the star product is a convergent series in the deformation
parameter $\hbar$. This construction has good functorial properties
and works for every locally convex space $V$ with continuous constant
Poisson structure. The basic feature was that on a fixed symmetric
power $\SymTen^k(V)$ the \emph{projective} locally convex topology was
chosen. In finite dimensions this construction reproduces earlier
versions \cite{omori.maeda.miyazaki.yoshioka:2000a,
  omori.maeda.miyazaki.yoshioka:2007a} of convergence results for the
particular case of the Weyl-Moyal star product.

In the present paper we want to adapt the construction of
\cite{waldmann:2014a} to the more particular case of a projective
limit of (pre-) Hilbert spaces, i.e. a locally convex space where the
topology is determined by Hilbert seminorms coming from (not
necessarily non-degenerate) positive inner products. The major
difference is now that for each fixed symmetric power $\SymTen^k(V)$
we have another choice of the topology, namely the one by extending
the inner products first and taking the corresponding Hilbert
seminorms afterwards. In general, this is coarser than the projective
one and thus yields a larger and hence more interesting completion. We
then use a star product coming from an arbitrary continuous bilinear
form on $V$, thereby allowing for various other orderings beside the
usual Weyl symmetrization. We are able to determine many features of
this new algebra hosting the canonical commutation relations in
arbitrary dimensions, including the convergence of the star product
and an explicit description of the completion as certain analytic
functions on the topological dual.

The paper is organized as follows: in
Section~\ref{sec:ConstructionAlgebra} we outline the construction of
the star product and the relevant topology. Since the star product is
the usual one of exponential type on a vector space we can be brief
here. The topological properties are discussed in some detail, in
particular as they differ at certain points significantly from the
previous work \cite{waldmann:2014a}. After the necessary but technical
estimates this results in the construction of the locally convex
algebra in Theorem~\ref{theorem:stprocont}.

Section~\ref{sec:PropertiesStarProduct} contains various properties of
the star-product algebra. First we show that a continuous antilinear
involution on $V$ extends to a continuous $^*$-involution on the
algebra. Then we are able to characterize the topology by some very
simple conditions in Theorem~\ref{theo:char}, a feature which is
absent in the case of \cite{waldmann:2014a}. The discussion of
equivalences between different star products becomes now more involved
as not all continuous symmetric bilinear forms give rise to
equivalences as that was the case in \cite{waldmann:2014a}. Now in
Theorem~\ref{theo:equivalence} we have to add a Hilbert-Schmidt
condition similar to the one of Dito in \cite{dito:2005a}. In
Theorem~\ref{theo:gelfand} we are able to characterize the completed
star-product algebra as certain analytic functions on the topological
dual. This will later be used to make contact to the more particular
situation considered in \cite{dito:2005a}. In
Theorem~\ref{theorem:posex} we show the existence of many positive
linear functionals provided the Poisson tensor allows for a compatible
positive bilinear form of Hilbert-Schmidt type.  Since the algebra is
(necessarily) not locally multiplicatively convex, we have no general
entire calculus. However, we can show that for elements of degree one,
i.e. vectors in $V$, the star-exponential series converges absolutely.
This is no longer true for quadratic elements, i.e. elements in
$\SymTen^2(V)$. However, we are able to show that in all GNS
representations with respect to continuous positive linear functionals
all elements up to quadratic ones yield essentially self-adjoint
operators in Theorem~\ref{theo:essselfadj}. Here our topology is used
in an essential way. The statement can be seen as a
representation-independent version of Nelson's theorem, as it holds
for arbitrary such GNS representations.

Finally, Section~\ref{sec:SpecialCasesExamples} contains a discussion
of two particular cases of interest: First, we consider the case that
$V$ is not just a projective limit of Hilbert spaces but a Hilbert
space directly. In this case, Dito discussed formal star products of
exponential type and their formal equivalence in \cite{dito:2005a}. We
can show that his algebra of functions contains our algebra, where the
star product converges nicely, as a subalgebra. In this sense, we
extend Dito's results form the formal power series context to a
convergent one. In fact, we show a rather strong continuity with
respect to the deformation parameter in
Theorem~\ref{theorem:ContinuityOnLambda}.

The second case is a nuclear space $V$. It is well-known that any
(complete) nuclear space can be seen as a projective limit of Hilbert
spaces, see e.g. \cite[Cor.~21.2.2]{jarchow:1981a}. Not very
surprisingly, we prove that in this case our construction coincides
with the previous one of \cite{waldmann:2014a} as for nuclear spaces
the two competing notions of topological tensor products we use
coincide. This way we can transfer the abstract characterization of
the topology to the case of nuclear spaces in \cite{waldmann:2014a}, a
result which was missing in that approach.  The important benefit from
the projective Hilbert space point of view is now that we can show the
existence of sufficiently many continuous positive linear functionals:
an element in the completed $^*$-algebra is zero iff all continuous
positive functionals on it vanish. It follows that the resulting
$^*$-algebra has a faithful $^*$-representation on a pre-Hilbert
space, i.e. it is $^*$-semisimple in the sense of
\cite{schmuedgen:1990a}.

\noindent
\textbf{Notation:} For a set $X$ and $k \in \NN_0$ we define $X^k$ as
the set of all functions from $\{1, \ldots, k\}$ (or the empty set if
$k=0$) to $X$ and usually put the parameter in the index, i.e.
$\{1, \ldots, k\} \ni i \mapsto f_i \in X$ for $f \in X^k$. Let $V$ be
a vector space and $k \in \NN_0$, then we write $\Ten^k_\alg(V)$ for
the space of degree $k$-tensors over $V$ and
$\Ten^\bullet_\alg(V) := \bigoplus_{k\in \NN_0} \Ten^k_\alg(V)$ for the
vector space underlying the tensor algebra. For $x \in V^k$ we define
the projections on the tensors of degree $k$ by
$\langle \argument\rangle_k\colon \Ten^\bullet_\alg(V) \rightarrow
\Ten^k_\alg(V)$.
Let $\SymGrp_k \subseteq \{1, \ldots, k\}^k$ be the symmetric group of
degree $k$ (in the case $k = 0$ this is
$\SymGrp_0 = \{\id_\emptyset\}$), then $\SymGrp_k$ acts linearly on
$\Ten^k_\alg(V)$ from the right via
$(x_1 \otimes \cdots \otimes x_k)^\sigma := x_{\sigma(1)} \otimes
\cdots \otimes x_{\sigma(k)}$.
This allows us to define the symmetrisation operators
$\SymFkt^k\colon \Ten^k_\alg(V) \rightarrow \Ten^k_\alg(V)$ by
$X \mapsto \SymFkt^k(X) := \frac{1}{k!}\sum_{\sigma\in\SymGrp_k}
X^\sigma$
and
$\SymFkt^\bullet\colon \Ten^\bullet_\alg(V) \rightarrow
\Ten^\bullet_\alg(V)$
by
$X \mapsto \SymFkt^\bullet(X) := \sum_{k\in \NN_0}
\SymFkt^k\big(\langle X\rangle_k\big)$.
These are projectors on subspaces of $\Ten^k_\alg(V)$ and
$\Ten^\bullet_\alg(V)$ which we will denote by $\SymTen^k_\alg(V)$ and
$\SymTen^\bullet_\alg(V)$.

We will always denote an algebra as a pair $(V, \circ)$ of a vector
space $V$ and a multiplication $\circ$, because we will discuss
different products on the same vector space.

%
% Acknowledgements
%

\noindent
\textbf{Acknowledgements:} We would like to thank Philipp Schmitt for
useful comments on an early version of the manuscript.

%
% Construction of the algebra
%

\section{Construction of the Algebra}
\label{sec:ConstructionAlgebra}

As we want to construct a similar algebra like in
\cite{waldmann:2014a}, but by using Hilbert tensor products instead of
projective tensor products, we have to restrict our attention to
locally convex spaces whose topology is given by Hilbert seminorms.

Let $V$ be a locally convex space, then a positive Hermitian form on
$V$ is a sesquilinear Hermitian and positive semi-definite form
$\skal{\argument}{\argument}_\alpha\colon V\times V \rightarrow \CC$
(antilinear in the first, linear in the second argument). By
$\mathcal{I}_V$ we denote the set of all continuous positive Hermitian
forms on $V$ and we will distinguish different positive Hermitian
forms by a lowercase greek subscript. Out of $p,q\ge 0$ and
$\skal{\argument}{\argument}_\alpha,\skal{\argument}{\argument}_\beta
\in \mathcal{I}_V$ we get a new continuous positive Hermitian form
$\skal{\argument}{\argument}_{p\alpha + q\beta} := p
\skal{\argument}{\argument}_\alpha + q
\skal{\argument}{\argument}_\beta$.

Every $\skal{\argument}{\argument}_\alpha \in \mathcal{I}_V$ yields a
continuous Hilbert seminorm on $V$, defined as $\seminorm{\alpha}{v}
:= \sqrt{\skal{v}{v}_\alpha}$ for all $v\in V$.  The set of all
continuous Hilbert seminorms on $V$ will be denoted by
$\mathcal{P}_V$. Note that $\seminorm{p\alpha+q\beta}{\argument} =
\big(q \seminorm{\alpha}{\argument}^2 + p
\seminorm{\beta}{\argument}^2 \big)^{1/2}$ and that
$\mathcal{P}_V$ with the usual partial ordering of seminorms
(i.e. by pointwise comparison) is an upwards directed poset and that
there is a one-to-one corres\-pondence between $\mathcal{I}_V$ and
$\mathcal{P}_V$ due to the polarisation identity.

In the following we will always assume that $V$ is a Hausdorff locally
convex space whose topology is given by its continuous Hilbert
seminorms (``hilbertisable'' in the language of \cite{jarchow:1981a}),
i.e. we assume that $\mathcal{P}_V$ is cofinal in the upwards directed
set of all continuous seminorms on $V$.  Important examples of such
spaces are (pre-) Hilbert spaces and nuclear spaces (see
\cite[Corollary 21.2.2]{jarchow:1981a}) and, in general, all
projective limits of pre-Hilbert spaces in the category of locally
convex spaces.

%
% Extension of Hilbert Seminorms to the Tensor Algebra
%

\subsection{Extension of Hilbert Seminorms to the Tensor Algebra}

Analogous to \cite{waldmann:2014a}, we extend all Hilbert seminorms
from $V$ to $\Ten^\bullet_\alg(V)$ with the difference that we first
extend the $\skal{\argument}{\argument}_\alpha$ and reconstruct the
seminorms out of their extensions:
\begin{definition}
    \label{defi:extension}%
    For every continuous positive Hermitian form
    $\skal{\argument}{\argument}_\alpha \in \mathcal{I}_V$ we define the
    sesquilinear extension
    $\skal{\argument}{\argument}^{\bullet}_\alpha\colon
    \Ten^\bullet_\alg(V)\times \Ten^\bullet_\alg(V) \rightarrow \CC$
    \begin{equation}
        \label{eq:skalBulletDef}
        (X, Y)
        \mapsto
        \skal{X}{Y}^{\bullet}_\alpha
        :=
        \sum_{k=0}^\infty
        \skal[\big]
        {\langle X \rangle_k}{\langle Y \rangle_k}^{\bullet}_\alpha,
    \end{equation}
    where
    \begin{equation}
        \label{eq:semiinnerproductextension1}
        \skal[\big]
        {x_1 \otimes \cdots \otimes x_k}
        {y_1 \otimes \cdots \otimes y_k}^{\bullet}_\alpha
        :=
        k! \prod_{m=1}^k
        \skal{x_m}{y_m}_\alpha
    \end{equation}
    for all $k \in \NN_0$ and all $x, y \in V^k$.
\end{definition}

It is well-known that this is a positive Hermitian form on all
homogeneous tensor spaces and then it is clear that
$\skal{\argument}{\argument}^\bullet_\alpha$ is a positive Hermitian
form on $\Ten^\bullet_\alg(V)$. We write
$\seminorm{\alpha}{\argument}^\bullet$ for the resulting seminorm on
$\Ten^\bullet_\alg(V)$ and $\Ten^\bullet(V)$ for the locally convex
space of $\Ten^\bullet_\alg(V)$ with the topology defined by the
extensions of all
$\seminorm{\alpha}{\argument}\in\mathcal{P}_V$. Analogously, we write
$\Ten^k(V)$, $\SymTen^k(V)$ and $\SymTen^\bullet(V)$ for the subspaces
$\Ten^k_\alg(V)$, $\SymTen^k_\alg(V)$ and $\SymTen^\bullet_\alg(V)$
with the subspace topology.  Note that
$\seminorm{\alpha}{\argument}^\bullet \le
\seminorm{\beta}{\argument}^\bullet$ holds if and only if
$\seminorm{\alpha}{\argument} \le \seminorm{\beta}{\argument}$.  Note
that, in general, for a fixed tensor degree the resulting topology on
$\Ten^k(V)$ is \emph{not} the projective topology used in
\cite{waldmann:2014a}.

%%%%%%%%%%%%%%%%%%%%%%%%%%%%%%%%%%%%%%%%%%%%%%%%%%%%%%%%%%%%
%%% explanation                             %%%%%
%%%%%%%%%%%%%%%%%%%%%%%%%%%%%%%%%%%%%%%%%%%%%%%%%%%%%%%%%%%%
% The ``if''-part is no so trivial: show that
% $\skal{\argument}{\argument}_{\alpha_1} \otimes
% \skal{\argument}{\argument}_{\alpha_2} \le
% \skal{\argument}{\argument}_{\beta_1} \otimes
% \skal{\argument}{\argument}_{\beta_2}$
% if
% $0\le \skal{\argument}{\argument}_{\alpha_1}\le
% \skal{\argument}{\argument}_{\beta_1}$
% and
% $0\le \skal{\argument}{\argument}_{\alpha_2}\le
% \skal{\argument}{\argument}_{\beta_2}$.
% Essentially this leads to: Show that $\tr(A_1A_2) \le \tr(B_1B_2)$
% if $0\le A_1 \le B_1$ and $0 \le A_2 \le B_2$ for quadratic
% matrices. This can be done like this:
% $\tr(B_1B_2-A_1A_2) = \tr(B_1(B_2-A_2)) + \tr((B_1-A_1)A_2) =
% \tr(\sqrt{B_2-A_2} B_1 \sqrt{B_2-A_2}) + \tr(\sqrt{B_1-A_1} A_2
% \sqrt{B_1-A_1})\ge 0$.

The factor $k!$ in \eqref{eq:semiinnerproductextension1} for the
extensions of positive Hermitian forms corresponds to the factor
$(n!)^R$ for $R=1/2$ in \cite[Eq.~(3.7)]{waldmann:2014a} for the
extensions of seminorms (where $R=1/2$ yields the coarsest topology
for which the continuity of the star-product could be shown in
\cite{waldmann:2014a}). We are only interested in this special case
because of the characterization in Section~\ref{subsec:char}.

The following is an easy consequence of the definition of the topology
on $\Ten^\bullet(V)$:
\begin{proposition}
    $\Ten^\bullet(V)$ is Hausdorff and is metrizable if and only
    if $V$ is metrizable.
\end{proposition}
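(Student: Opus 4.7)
The claim has two independent pieces: Hausdorffness of $\Ten^\bullet(V)$ (which should hold unconditionally under the standing assumption that $V$ itself is Hausdorff) and the metrizability equivalence. Both will be derived from the cofinality of $\mathcal{P}_V$ in the continuous seminorms of $V$, together with the already-recorded fact that $\seminorm{\alpha}{\argument}^\bullet \leq \seminorm{\beta}{\argument}^\bullet$ if and only if $\seminorm{\alpha}{\argument} \leq \seminorm{\beta}{\argument}$.

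For Hausdorffness, I would fix $X \in \Ten^\bullet_\alg(V) \setminus \{0\}$ and choose $k$ with $X_k := \langle X \rangle_k \neq 0$. Writing $X_k$ as a finite sum of elementary tensors shows that it already lies in $W^{\otimes k}$ for some finite-dimensional subspace $W \subseteq V$. The goal is then to produce $\alpha \in \mathcal{I}_V$ whose seminorm restricts to a \emph{genuine} norm on $W$: since $V$ is Hausdorff, the subspace topology on $W$ is Hausdorff and therefore the Euclidean one, so some continuous seminorm on $V$ separates the nonzero points of $W$, and dominating it by a Hilbert seminorm via cofinality of $\mathcal{P}_V$ yields such an $\alpha$. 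Then the restriction of $\skal{\argument}{\argument}_\alpha$ to $W$ is an inner product, its tensor-product extension from Definition~\ref{defi:extension} is positive definite on $W^{\otimes k}$, giving $\seminorm{\alpha}{X_k}^\bullet > 0$, and the orthogonality of homogeneous degrees in \eqref{eq:skalBulletDef} yields $\seminorm{\alpha}{X}^\bullet \geq \seminorm{\alpha}{X_k}^\bullet > 0$.

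For the metrizability equivalence, the ``only if'' direction is immediate: at $k=1$ the factor $k!$ in \eqref{eq:semiinnerproductextension1} equals $1$, so the restriction of $\seminorm{\alpha}{\argument}^\bullet$ to $V = \Ten^1(V) \subseteq \Ten^\bullet(V)$ is just $\seminorm{\alpha}{\argument}$, and subspaces of metrizable spaces are metrizable. Conversely, if $V$ is metrizable, I would pick a countable fundamental family $(\alpha_n)_{n\in\NN}$ in $\mathcal{I}_V$ (possible by combining metrizability with cofinality of $\mathcal{P}_V$). For any $\gamma \in \mathcal{I}_V$ fundamentality gives $n \in \NN$ and $C > 0$ with $\gamma \leq C^2 \alpha_n$ as Hermitian forms; choosing $m \in \NN$ with $m \geq C^2$ then gives $\gamma \leq m \alpha_n$, hence $\seminorm{\gamma}{\argument} \leq \seminorm{m\alpha_n}{\argument}$, and the recalled fact upgrades this to $\seminorm{\gamma}{\argument}^\bullet \leq \seminorm{m\alpha_n}{\argument}^\bullet$. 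Thus the countable family $\{\seminorm{m\alpha_n}{\argument}^\bullet\}_{m,n\in\NN}$ is a fundamental system of seminorms on $\Ten^\bullet(V)$ which, combined with Hausdorffness, yields metrizability.

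The one step that really deserves a sanity check is the construction of a continuous Hilbert seminorm that is a genuine norm on a prescribed finite-dimensional subspace; the finite-dimensional Hausdorff-plus-cofinality argument sketched above settles it cleanly, and absorbing a multiplicative constant on $V$ into an integer rescaling of the Hermitian form (so that the differing scaling $c^k$ of $\seminorm{c\alpha}{\argument}^\bullet$ on each tensor degree causes no trouble) is the only other point worth spelling out. Everything else is direct bookkeeping from the definitions.
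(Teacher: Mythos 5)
Your proof is correct. The paper itself offers no argument here --- the proposition is introduced as ``an easy consequence of the definition'' and left unproved --- so there is nothing to compare against; your write-up simply supplies the missing details, and does so accurately. The two points you single out are indeed the only ones requiring care: producing a continuous Hilbert seminorm that restricts to a genuine norm on the finite-dimensional subspace carrying a nonzero $\langle X\rangle_k$ (Hausdorffness of $V$ plus cofinality of $\mathcal{P}_V$ gives this, exactly as you say, and then positive definiteness of the tensor-product inner product on that subspace together with the orthogonality of the homogeneous components in \eqref{eq:skalBulletDef} finishes Hausdorffness), and the fact that $\seminorm{\gamma}{\argument}\le C\seminorm{\alpha_n}{\argument}$ only yields the degree-dependent bound $C^k\seminorm{\alpha_n}{\argument}^\bullet$ on $\Ten^k(V)$, so the countable fundamental family must consist of the integer rescalings $m\alpha_n$ rather than the $\alpha_n$ alone --- which is precisely what the paper's remark that $\seminorm{\alpha}{\argument}^\bullet\le\seminorm{\beta}{\argument}^\bullet$ iff $\seminorm{\alpha}{\argument}\le\seminorm{\beta}{\argument}$ is set up to handle. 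With those in place the rest is, as you note, bookkeeping.
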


For working with these extensions of not necessarily positive definite
positive Hermitian forms, the following technical lemma will be
helpful:
\begin{lemma}
    \label{lemma:helpfull}%
    Let $\skal{\argument}{\argument}_\alpha \in \mathcal{I}_V$,
    $k\in \NN$ and $X\in \Ten^k(V)$ be given. Then $X$ can be
    expressed as $X = X_0 + \tilde{X}$ with tensors
    $X_0, \tilde{X} \in \Ten^k(V)$ that have the following properties:
    \begin{lemmalist}
    \item One has $\seminorm{\alpha}{X_0}^\bullet = 0$ and there
        exists a finite (possibly empty) set $A$ and tuples $x_a \in
        V^k$ for all $a \in A$ that fulfill $\prod_{n=1}^k
        \seminorm{\alpha}{x_{a,n}} = 0$ and $X_0 = \sum_{a\in A}
        x_{a,1} \otimes \cdots \otimes x_{a,k}$.
    \item There exist a $d\in \NN_0$ and a
        $\skal{\argument}{\argument}_\alpha$-orthonormal tuple
        $e \in V^d$ as well as complex coefficients $X^{a'}$, such
        that
        \begin{equation}
            \label{eq:ZerlegungX}
            \tilde{X}
            =
            \sum_{a'\in\{1, \ldots, d\}^k} X^{a'}
            e_{a'_1} \otimes \cdots \otimes e_{a'_k}
            \quad
            \textrm{and}
            \quad
            \seminorm{\alpha}{X}^{\bullet,2}
            =
            \seminorm{\alpha}{\tilde{X}}^{\bullet,2}
            =
            k! \sum_{a'\in\{1, \ldots, d\}^k}
            \abs[\big]{X^{a'}}^2.
        \end{equation}
    \end{lemmalist}
\end{lemma}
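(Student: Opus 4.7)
The plan is to reduce to the finite-dimensional subspace $W \subseteq V$ spanned by the factors in a chosen simple-tensor representation of $X$, and then to decompose $W$ into the null space of $\skal{\argument}{\argument}_\alpha|_W$ and an $\alpha$-orthonormal complement.

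First I would pick any representation $X = \sum_{i=1}^N v_{i,1} \otimes \cdots \otimes v_{i,k}$ and set $W := \spann\{v_{i,j}\}$. Since $\skal{\argument}{\argument}_\alpha|_W$ is positive semidefinite and $W$ is finite-dimensional, the subspace $W'' := \{w \in W : \seminorm{\alpha}{w} = 0\}$ admits an algebraic complement $W'$ on which $\skal{\argument}{\argument}_\alpha$ is positive definite; Gram-Schmidt then yields an $\alpha$-orthonormal basis $e_1, \ldots, e_d$ of $W'$. Splitting each factor as $v_{i,j} = w_{i,j} + n_{i,j}$ with $w_{i,j} \in W'$, $n_{i,j} \in W''$, and multinomially expanding
\begin{equation*}
    v_{i,1} \otimes \cdots \otimes v_{i,k}
    =
    \sum_{S \subseteq \{1, \ldots, k\}}
    z_{i,1}^S \otimes \cdots \otimes z_{i,k}^S,
\end{equation*}
where $z_{i,j}^S := w_{i,j}$ for $j \in S$ and $z_{i,j}^S := n_{i,j}$ otherwise, I would collect the $S = \{1, \ldots, k\}$ terms into $\tilde{X}$ and all remaining terms into $X_0$. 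Each summand of $X_0$ then contains at least one factor in $W''$, yielding the representation in \textit{i}.). Writing $w_{i,j} = \sum_{l=1}^d c_{i,j,l} e_l$ and collecting coefficients gives $\tilde{X} = \sum_{a' \in \{1, \ldots, d\}^k} X^{a'} e_{a'_1} \otimes \cdots \otimes e_{a'_k}$ with $X^{a'} := \sum_i \prod_j c_{i,j,a'_j}$.

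The remaining claims follow from~\eqref{eq:semiinnerproductextension1}. Cauchy-Schwarz applied factorwise shows that for any simple tensor $x_1 \otimes \cdots \otimes x_k$ with some $\seminorm{\alpha}{x_j} = 0$ the inner product $\skal{x_1 \otimes \cdots \otimes x_k}{y_1 \otimes \cdots \otimes y_k}^\bullet_\alpha$ vanishes for every $y \in V^k$; hence $\seminorm{\alpha}{X_0}^\bullet = 0$ and $\skal{X_0}{\tilde{X}}^\bullet_\alpha = 0$, so $\seminorm{\alpha}{X}^{\bullet,2} = \seminorm{\alpha}{\tilde{X}}^{\bullet,2}$. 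Orthonormality of $e_1, \ldots, e_d$ combined with~\eqref{eq:semiinnerproductextension1} yields $\skal{e_{a'_1} \otimes \cdots \otimes e_{a'_k}}{e_{b'_1} \otimes \cdots \otimes e_{b'_k}}^\bullet_\alpha = k!\,\delta_{a',b'}$, giving $\seminorm{\alpha}{\tilde{X}}^{\bullet,2} = k! \sum_{a'} \abs{X^{a'}}^2$. The only nontrivial point is the combinatorial bookkeeping of the expansion together with the observation that one must choose the complement $W'$ inside the finite-dimensional $W$ rather than attempting any global splitting of $V$; once this is set up, everything else is a direct computation.
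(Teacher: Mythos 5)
Your proposal is correct and follows essentially the same route as the paper's proof: restrict to the finite-dimensional span of the factors, split it into the null space of $\seminorm{\alpha}{\argument}$ and a complement carrying an orthonormal basis, expand each factor accordingly, and sort the resulting simple tensors by whether a null factor occurs. The multinomial expansion you spell out is exactly what the paper compresses into its ``without loss of generality'' step, and your factorwise Cauchy--Schwarz argument for the orthogonality of $X_0$ and $\tilde{X}$ matches the paper's computation.
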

\begin{proof}
    We can express $X$ as a finite sum of simple tensors,
    $X = \sum_{b\in B} x_{b,1} \otimes \cdots \otimes x_{b,k}$
    with a finite set $B$ and vectors $x_{b,i} \in V$. Let
    \[
    V_X
    :=
    \spann
    \big\{
    x_{b,i}
    \; \big| \;
    b\in B, i\in\{1, \ldots, k\}
    \big\}
    \textrm{ and }
    V_{X_0}
    :=
    \big\{v\in V_X
    \; \big| \;
    \seminorm{\alpha}{v} = 0
    \big\}.
    \]
    Construct a complementary linear subspace $V_{\tilde{X}}$ of
    $V_{X_0}$ in $V_X$, then we can also assume without loss of
    generality that $x_{b,i} \in V_{X_0} \cup V_{\tilde{X}}$ for all
    $b\in B$ and $i\in\{1, \ldots, k\}$. Note that $V_X,V_{X_0}$ and
    $V_{\tilde{X}}$ are all finite-dimensional.  Now define
    $A := \big\{a\in B \; \big| \; \exists_{n \in \{1, \ldots,
      k\}}\colon x_{a,n} \in V_{X_0}\big\}$
    and $X_0 := \sum_{a\in A} x_{a,1} \otimes \cdots \otimes x_{a,k}$,
    then $\prod_{n=1}^k \seminorm{\alpha}{x_{a,n}} = 0$ by
    construction and so $\seminorm{\alpha}{X_0}^{\bullet} = 0$ and
    $\seminorm{\alpha}{X - X_0}^\bullet =
    \seminorm{\alpha}{X}^\bullet$.
    Restricted to $V_{\tilde{X}}$, the positive Hermitian form
    $\skal{\argument}{\argument}_\alpha$ is even positive definite,
    i.e. an inner product. Let $d := \dim(V_{\tilde{X}})$ and
    $e \in V^d$ be an $\skal{\argument}{\argument}_\alpha$-orthonormal
    base of $V_{\tilde{X}}$. Define $\tilde{X} := X-X_0$, then
    $\tilde{X} = \sum_{a'\in\{1, \ldots, d\}^k} X^{a'} e_{a'_1}
    \otimes \cdots \otimes e_{a'_k}$
    with complex coefficients $X^{a'}$ and
    \begin{align*}
        \seminorm{\alpha}{X}^{\bullet,2}
        =
        \seminorm{\alpha}{\tilde{X}}^{\bullet, 2}
        =
        \sum_{a'\in\{1, \ldots, d\}^k} \abs[\big]{X^{a'}}^2
        \seminorm{\alpha}
        {e_{a'_1} \otimes \cdots \otimes e_{a'_k}}^{\bullet,2}
        =
        \sum_{a'\in\{1, \ldots, d\}^k} \abs[\big]{X^{a'}}^2 k!.
    \end{align*}
\end{proof}

On the locally convex space $\Ten^\bullet(V)$, the tensor product is
indeed continuous and $\big(\Ten^\bullet(V),\otimes\big)$ is a locally
convex algebra. In order to see this, we are going to prove the
continuity of the following function:
\begin{definition}
    We define the map
    $\multotimes\colon \Ten^\bullet(V) \otimes_\pi \Ten^\bullet(V)
    \rightarrow \Ten^\bullet(V)$ by
    \begin{equation}
        X \otimes_\pi Y
        \; \mapsto \;
        \multotimes(X\otimes_\pi Y)
        :=
        X \otimes Y.
    \end{equation}
\end{definition}
Algebraically, $\multotimes$ is of course just the product of the
tensor algebra. The emphasize lies here on the topologies involved:
$\otimes_\pi$ denotes the projective tensor product. We recall that
the topology on $\Ten^\bullet(V) \otimes_\pi \Ten^\bullet(V)$ is
described by the seminorms
$\seminorm{\alpha \otimes_\pi \beta}{\argument}^\bullet\colon
\Ten^\bullet(V) \otimes_\pi \Ten^\bullet(V) \rightarrow [0,\infty[$
\begin{equation}
    \label{eq:ProjectiveSeminorm}
    Z
    \; \mapsto \;
    \seminorm{\alpha \otimes_\pi \beta}{Z}^\bullet
    :=
    \inf \sum_{i\in I}
    \seminorm{\alpha}{X_i}^\bullet \seminorm{\beta}{Y_i}^\bullet,
\end{equation}
where the infimum runs over all possibilities to express $Z$ as a sum
$Z = \sum_{i\in I} X_i \otimes_\pi Y_i$ indexed by a finite set $I$
and
$\seminorm{\alpha}{\argument}^\bullet,
\seminorm{\beta}{\argument}^\bullet$
run over all extensions of continuous Hilbert seminorms on $V$. The
only property of the projective tensor product relevant for our
purposes is the following lemma, which is a direct result of the
definition of the seminorms
$\seminorm{\alpha \otimes_\pi \beta}{\argument}^\bullet$:
\begin{lemma}
    \label{lemma:otimespi}%
    Let $W$ be a locally convex space, $p$ a continuous seminorm on
    $W$ and
    $\seminorm{\alpha}{\argument},\seminorm{\beta}{\argument} \in
    \mathcal{P}_V$.
    Let
    $\Phi\colon \Ten^\bullet(V) \otimes_\pi \Ten^\bullet(V)\rightarrow
    W$ be a linear map. Then the two statements
    \begin{lemmalist}
    \item
        $p \big(\Phi(X\otimes_\pi Y)\big) \le
        \seminorm{\alpha}{X}^\bullet \seminorm{\beta}{Y}^\bullet$
        for all $X, Y \in \Ten^\bullet(V)$
    \item
        $p \big(\Phi(Z)\big) \le \seminorm{\alpha \otimes_\pi
          \beta}{Z}^\bullet$
        for all $Z \in \Ten^\bullet(V)\otimes_\pi \Ten^\bullet(V)$
    \end{lemmalist}
    are equivalent. Continuity of the bilinear map
    $\Ten^\bullet(V)\times \Ten^\bullet(V) \ni (X, Y) \mapsto
    \Phi(X\otimes_\pi Y) \in W$
    is therefore equivalent to continuity of $\Phi$.
\end{lemma}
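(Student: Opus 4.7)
The plan is to prove the two implications directly from the definition of the projective seminorm given in equation (2.4), and then deduce the continuity equivalence from the fact that the seminorms $\seminorm{\alpha}{\argument}^\bullet$ (respectively $\seminorm{\alpha \otimes_\pi \beta}{\argument}^\bullet$) form a fundamental system of continuous seminorms on $\Ten^\bullet(V)$ (respectively on $\Ten^\bullet(V) \otimes_\pi \Ten^\bullet(V)$).

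First I would establish the implication \textit{ii.)}$\Rightarrow$\textit{i.)}, which is the easy direction: given arbitrary $X, Y \in \Ten^\bullet(V)$, apply \textit{ii.)} to $Z := X \otimes_\pi Y$. The trivial one-term decomposition $Z = X \otimes_\pi Y$ appears in the infimum defining $\seminorm{\alpha \otimes_\pi \beta}{Z}^\bullet$, so $\seminorm{\alpha \otimes_\pi \beta}{X \otimes_\pi Y}^\bullet \le \seminorm{\alpha}{X}^\bullet \seminorm{\beta}{Y}^\bullet$, and combining this with \textit{ii.)} yields \textit{i.)}.

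For the converse \textit{i.)}$\Rightarrow$\textit{ii.)}, fix $Z \in \Ten^\bullet(V) \otimes_\pi \Ten^\bullet(V)$ and any representation $Z = \sum_{i \in I} X_i \otimes_\pi Y_i$ with $I$ finite. Linearity of $\Phi$ and the triangle inequality for $p$ give
\begin{equation*}
    p\big(\Phi(Z)\big)
    \; \le \;
    \sum_{i\in I} p\big(\Phi(X_i \otimes_\pi Y_i)\big)
    \; \le \;
    \sum_{i\in I} \seminorm{\alpha}{X_i}^\bullet \seminorm{\beta}{Y_i}^\bullet,
\end{equation*}
where the second estimate uses \textit{i.)}. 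Since the left-hand side does not depend on the chosen decomposition, passing to the infimum on the right produces $p\big(\Phi(Z)\big) \le \seminorm{\alpha \otimes_\pi \beta}{Z}^\bullet$, which is exactly \textit{ii.)}.

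Finally, for the continuity equivalence I would argue that $\Phi$ is continuous iff for every continuous seminorm $p$ on $W$ there exist $\seminorm{\alpha}{\argument},\seminorm{\beta}{\argument} \in \mathcal{P}_V$ and a constant $C \ge 0$ with $p\big(\Phi(Z)\big) \le C \seminorm{\alpha \otimes_\pi \beta}{Z}^\bullet$ for all $Z$, since these seminorms form a fundamental system for the projective tensor product topology. Similarly, continuity of the bilinear map $(X, Y) \mapsto \Phi(X \otimes_\pi Y)$ is characterized by estimates of the form $p\big(\Phi(X\otimes_\pi Y)\big) \le C \seminorm{\alpha}{X}^\bullet \seminorm{\beta}{Y}^\bullet$. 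Absorbing $C$ into a rescaling (e.g.\ replacing $\alpha$ by $C^2 \alpha$, using the formula $\seminorm{p\alpha + q\beta}{\argument}$ noted earlier) reduces these two conditions to the equivalence \textit{i.)}$\Leftrightarrow$\textit{ii.)} already proved. There is no real obstacle here beyond correctly invoking the definition of the projective tensor product; this is essentially its universal property, specialized to our particular set of generating seminorms.
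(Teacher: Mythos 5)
Your two implications are correct and are exactly the content the paper has in mind: the paper states this lemma without proof, remarking only that it is a direct consequence of the definition of the projective seminorms, and your one-term decomposition for \emph{ii.)}$\Rightarrow$\emph{i.)} together with the triangle-inequality-plus-infimum argument for the converse is the standard way to make that precise. One detail in your final paragraph is wrong, though harmless: replacing $\seminorm{\alpha}{\argument}$ by $\seminorm{C^2\alpha}{\argument}$ does \emph{not} multiply the extended seminorm uniformly by $C$. By Definition~\ref{defi:extension} the extension scales the degree-$k$ component by $(C^2)^k$ at the level of the Hermitian form, so $\seminorm{C^2\alpha}{X}^\bullet = C^k\seminorm{\alpha}{X}^\bullet$ for homogeneous $X$ of degree $k$; in particular the degree-zero component is not rescaled at all, and the desired bound $C\seminorm{\alpha}{X}^\bullet \le \seminorm{C^2\alpha}{X}^\bullet$ fails for $C>1$ whenever $X$ has a nonzero scalar part. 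The conclusion is unaffected, because there is no need to absorb the constant into the seminorm index: the equivalence \emph{i.)}$\Leftrightarrow$\emph{ii.)} that you proved carries any fixed constant $C$ through verbatim (apply it with $p$ replaced by $p/C$), and continuity only requires domination by \emph{some} constant multiple of a seminorm from the fundamental system.
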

%%%%%%%%%%%%%%%%%%%%%%%%%%%%%%%%%%%%%%%%%%%%%%%%%%%%%%%%%%%%
%%%            explanation                             %%%%%
%%%%%%%%%%%%%%%%%%%%%%%%%%%%%%%%%%%%%%%%%%%%%%%%%%%%%%%%%%%%
% \begin{proof}
%     If (i) holds, let
%     $Z\in \Ten^\bullet(V)\otimes_\pi \Ten^\bullet(V)$ be given. If
%     $Z$ can be expressed as a finite sum
%     $Z = \sum_{i\in I} X_i \otimes_\pi Y_i$ with
%     $X_i,Y_i \in \Ten^\bullet(V)$, then
%     \begin{align*}
%         p\big(\Phi(Z)\big) \,\,\,\le\,\,\, \sum_{i\in I} p\big(\Phi(X_i\otimes_\pi Y_i)\big) \,\,\,\le\,\,\, \sum_{i\in I} \seminorm{\alpha}{X_i}^\bullet\,\,\seminorm{\beta}{Y_i}^\bullet\,.
%     \end{align*}
%     As this holds for all such representations of $Z$ as a finite
%     sum of factorizing tensors it follows that
%     $p\big(\Phi(Z)\big) \le \seminorm{\alpha\otimes_\pi
%     \beta}{Z}^\bullet$.
%     Conversely, if (ii) holds, let $X,Y \in \Ten^\bullet(V)$ be
%     given. Then
%     \begin{align*}
%         p\big(\Phi(X\otimes_\pi Y)\big)\,\,\,\le\,\,\,\seminorm{\alpha \otimes_\pi \beta}{X\otimes_\pi Y}^\bullet\,\,\,\le\,\,\, \seminorm{\alpha}{X}^\bullet\,\,\seminorm{\beta}{Y}^\bullet\,.
%     \end{align*}
% \end{proof}

\begin{proposition}
    \label{prop:contotimes}%
    The linear map $\multotimes$ is continuous and the estimate
    \begin{equation}
        \label{eq:ContinuityMuotimes}
        \seminorm{\gamma}{\multotimes(Z)}^\bullet
        \le
        \seminorm{2\gamma\otimes_\pi 2\gamma}{Z}^\bullet
    \end{equation}
    holds for all $Z\in \Ten^\bullet(V)\otimes_\pi \Ten^\bullet(V)$
    and all $\seminorm{\gamma}{\argument}\in \mathcal{P}_V$. Moreover,
    all $X\in \Ten^k(V)$ and $Y\in \Ten^\ell(V)$ with
    $k,\ell \in \NN_0$ fulfill for all
    $\seminorm{\gamma}{\argument}\in \mathcal{P}_V$ the estimate
    \begin{equation}
        \label{eq:PreciseContTensorProduct}
        \seminorm{\gamma}{\mu_\otimes(X\otimes_\pi Y)}^\bullet
        \le
        \binom{k+\ell}{k}^{1/2}
        \seminorm{\gamma}{X}^\bullet
        \seminorm{\gamma}{Y}^\bullet.
    \end{equation}
\end{proposition}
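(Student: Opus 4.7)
The plan is to first prove the sharp homogeneous estimate \eqref{eq:PreciseContTensorProduct} and then derive the projective estimate \eqref{eq:ContinuityMuotimes} from it via Lemma~\ref{lemma:otimespi}.

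For the homogeneous estimate, I would apply Lemma~\ref{lemma:helpfull} to $X \in \Ten^k(V)$ and $Y \in \Ten^\ell(V)$, obtaining decompositions $X = X_0 + \tilde X$ and $Y = Y_0 + \tilde Y$. A technical preliminary is to show that each of the three cross terms $X_0 \otimes Y_0$, $X_0 \otimes \tilde Y$, and $\tilde X \otimes Y_0$ has vanishing $\seminorm{\gamma}{\argument}^\bullet$: each can be written as a finite sum of simple tensors having at least one factor with $\seminorm{\gamma}{\argument} = 0$, and Cauchy--Schwarz applied to the individual factors in \eqref{eq:semiinnerproductextension1} forces the full inner product to vanish. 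To expand $\tilde X$ and $\tilde Y$ coherently, I would pass to a common $\skal{\argument}{\argument}_\gamma$-orthonormal family $(g_1,\dots,g_D)$ of $V_{\tilde X} + V_{\tilde Y}$ modulo its isotropic part, so that $\tilde X = \sum_{a' \in \{1,\dots,D\}^k} X^{a'} g_{a'_1} \otimes \cdots \otimes g_{a'_k}$ and similarly for $\tilde Y$. Then
\[
    \seminorm{\gamma}{\tilde X \otimes \tilde Y}^{\bullet,2}
    = (k+\ell)! \sum_{a', b'} \abs{X^{a'}}^2 \abs{Y^{b'}}^2
    = \binom{k+\ell}{k}\, \seminorm{\gamma}{\tilde X}^{\bullet,2} \seminorm{\gamma}{\tilde Y}^{\bullet,2},
\]
and combining with the vanishing cross terms yields \eqref{eq:PreciseContTensorProduct}, in fact with equality.

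For \eqref{eq:ContinuityMuotimes}, Lemma~\ref{lemma:otimespi} reduces the claim to the inequality $\seminorm{\gamma}{X\otimes Y}^\bullet \le \seminorm{2\gamma}{X}^\bullet \seminorm{2\gamma}{Y}^\bullet$ for arbitrary $X, Y \in \Ten^\bullet(V)$. Decomposing into homogeneous parts $X = \sum_k X_k$, $Y = \sum_\ell Y_\ell$ and using orthogonality of distinct degrees together with the triangle inequality gives
\[
    \seminorm{\gamma}{X \otimes Y}^{\bullet,2}
    \le \sum_n \Bigl(\sum_{k+\ell = n} \binom{n}{k}^{1/2} \seminorm{\gamma}{X_k}^\bullet \seminorm{\gamma}{Y_\ell}^\bullet\Bigr)^{\!2}.
\]
Writing $\binom{n}{k}^{1/2} \seminorm{\gamma}{X_k}^\bullet \seminorm{\gamma}{Y_\ell}^\bullet = 2^{-n/2} \binom{n}{k}^{1/2} \seminorm{2\gamma}{X_k}^\bullet \seminorm{2\gamma}{Y_\ell}^\bullet$ (using $\seminorm{2\gamma}{X_k}^\bullet = 2^{k/2}\seminorm{\gamma}{X_k}^\bullet$) and applying Cauchy--Schwarz together with $\sum_{k+\ell=n}\binom{n}{k} = 2^n$ bounds each summand by $\sum_{k+\ell=n}\seminorm{2\gamma}{X_k}^{\bullet,2}\seminorm{2\gamma}{Y_\ell}^{\bullet,2}$; summing over $n$ factors this into $\seminorm{2\gamma}{X}^{\bullet,2}\seminorm{2\gamma}{Y}^{\bullet,2}$.

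The main obstacle is the homogeneous case: the extended forms need not be positive definite, so one cannot naively work with an orthonormal basis of $V$ itself. The correct way around this, as outlined, is to isolate the $X_0$- and $Y_0$-contributions via Lemma~\ref{lemma:helpfull} and show via Cauchy--Schwarz that they decouple from $\seminorm{\gamma}{\argument}^\bullet$; once $\tilde X$ and $\tilde Y$ are expressed in a \emph{common} orthonormal family, the combinatorial factor $(k+\ell)!/(k!\,\ell!) = \binom{k+\ell}{k}$ arises directly from the definition \eqref{eq:semiinnerproductextension1}.
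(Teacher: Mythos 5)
Your proposal is correct, and its overall architecture coincides with the paper's: establish the homogeneous identity with the factor $\binom{k+\ell}{k}^{1/2}$, then obtain \eqref{eq:ContinuityMuotimes} by splitting into homogeneous components, inserting the weights $2^{-n/2}$, applying Cauchy--Schwarz with $\sum_{k+\ell=n}\binom{n}{k}2^{-n}=1$, and invoking Lemma~\ref{lemma:otimespi}; that second half of your argument is essentially verbatim the paper's computation. The one genuine difference is how you prove the homogeneous estimate: you route it through Lemma~\ref{lemma:helpfull}, vanishing cross terms, and a common orthonormal family of $V_{\tilde X}+V_{\tilde Y}$ modulo its isotropic part. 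This works (and you correctly flag the isotropic-part subtlety, which forces a second round of the cross-term argument after re-expanding $\tilde X$ and $\tilde Y$ in the new family), but it is an unnecessary detour. The paper instead observes that
\begin{equation*}
    \skal{X\otimes Y}{X'\otimes Y'}^\bullet_\gamma
    =
    \binom{k+\ell}{k}\,
    \skal{X}{X'}^\bullet_\gamma\, \skal{Y}{Y'}^\bullet_\gamma
\end{equation*}
holds for simple tensors directly from \eqref{eq:semiinnerproductextension1} via $(k+\ell)! = \binom{k+\ell}{k}\,k!\,\ell!$, and both sides are sesquilinear in the appropriate arguments, so the identity extends to all of $\Ten^k(V)\times\Ten^\ell(V)$ without choosing any basis and without positive definiteness ever entering; setting $X'=X$, $Y'=Y$ gives \eqref{eq:PreciseContTensorProduct} with equality, just as you conclude. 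So you end up in the same place, but the multilinearity argument buys you a one-line proof where your decomposition requires several pages of bookkeeping; Lemma~\ref{lemma:helpfull} is really needed only for the estimates (such as Lemma~\ref{lemma:plambdaklij}) where a contraction against $\Lambda$ or $\rho$ destroys the purely multilinear structure.
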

\begin{proof}
    Let $X\in \Ten^k(V)$ and $Y\in \Ten^\ell(V)$ with
    $k,\ell \in \NN_0$ be given. Then
    \begin{align*}
        \seminorm{\gamma}{X\otimes Y}^\bullet
        =
        \sqrt{\skal{X\otimes Y}{X\otimes Y}_\gamma^\bullet}
        =
        \binom{k\!+\!\ell}{k}^{\frac{1}{2}}
        \seminorm{\gamma}{X}^\bullet
        \seminorm{\gamma}{Y}^\bullet
    \end{align*}
    holds. It now follows for all $X, Y \in \Ten^\bullet(V)$ that
    \begin{align*}
        \seminorm{\gamma}{X \otimes Y}^{\bullet, 2}
        &=
        \sum_{m=0}^\infty
        \seminorm{\gamma}{\langle X \otimes Y\rangle_m}^{\bullet, 2}\\
        &\le
        \sum_{m=0}^\infty
        \left(
            \sum_{n=0}^m
            \seminorm{\gamma}
            {
              \langle X\rangle_{m-n} \otimes \langle Y\rangle_{n}
            }^\bullet
        \right)^2\\
        &=
        \sum_{m=0}^\infty
        \left(
            \sum_{n=0}^m
            \binom{m}{n}^{\frac{1}{2}}
            \seminorm{\gamma}{\langle X\rangle_{m-n}}^\bullet
            \seminorm{\gamma}{\langle Y\rangle_{n}}^\bullet
        \right)^2\\
        &=
        \sum_{m=0}^\infty
        \left(
            \sum_{n=0}^m
            \left(
                \binom{m}{n}
                \frac{1}{2^m}
            \right)^{\frac{1}{2}}
            \seminorm{2\gamma}{\langle X\rangle_{m-n}}^\bullet
            \seminorm{2\gamma}{\langle Y\rangle_{n}}^\bullet
        \right)^2\\
        &\stackrel{\cs}{\le}
        \sum_{m=0}^\infty
        \left(
            \sum_{n=0}^m
            \binom{m}{n}
            \frac{1}{2^{m}}
        \right)
        \left(
            \sum_{n=0}^m
            \seminorm{2\gamma}{\langle X\rangle_{m-n}}^{\bullet, 2}
            \seminorm{2\gamma}{\langle Y\rangle_{n}}^{\bullet,2}
        \right)\\
        &=
        \seminorm{2\gamma}{X}^{\bullet,2}
        \seminorm{2\gamma}{Y}^{\bullet,2},
    \end{align*}
    by the Cauchy-Schwarz (CS) inequality.
\end{proof}

%
% Symmetrisation
%

\subsection{Symmetrisation}

The star product will be defined on the symmetric tensor algebra with
undeformed product $X\vee Y := \SymFkt^\bullet (X\otimes Y)$ for
$X,Y\in\SymTen^\bullet(V)$, which is indeed continuous:
\begin{proposition}
    \label{prop:symcont}%
    The symmetrisation operator is continuous and fulfills
    $\seminorm{\gamma}{\SymFkt^\bullet X}^\bullet \le
    \seminorm{\gamma}{X}^\bullet$
    for all $X\in \Ten^\bullet(V)$ and
    $\seminorm{\gamma}{\argument} \in \mathcal{P}_V$.
\end{proposition}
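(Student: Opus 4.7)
The plan is to show that each $\SymFkt^k$ is a contraction for the seminorm $\seminorm{\gamma}{\argument}^\bullet$ on $\Ten^k(V)$, and then to assemble these into the global estimate by using the orthogonality of the decomposition $\Ten^\bullet(V) = \bigoplus_{k\in\NN_0} \Ten^k(V)$ with respect to $\skal{\argument}{\argument}^\bullet_\gamma$. Continuity will then follow at once from the defining property of the topology.

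The first step is to observe that for each $\sigma \in \SymGrp_k$ the right action $X \mapsto X^\sigma$ is an isometry of $\big(\Ten^k(V),\seminorm{\gamma}{\argument}^\bullet\big)$. It suffices to check this on pairs of simple tensors using \eqref{eq:semiinnerproductextension1}: one has
\begin{equation*}
    \skal[\big]
    {(x_1\otimes\cdots\otimes x_k)^\sigma}
    {(y_1\otimes\cdots\otimes y_k)^\sigma}^\bullet_\gamma
    =
    k!\prod_{m=1}^k \skal{x_{\sigma(m)}}{y_{\sigma(m)}}_\gamma
    =
    k!\prod_{m=1}^k \skal{x_m}{y_m}_\gamma,
\end{equation*}
which extends sesquilinearly to arbitrary tensors and yields $\seminorm{\gamma}{X^\sigma}^\bullet = \seminorm{\gamma}{X}^\bullet$ for all $X\in \Ten^k(V)$.

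Applying the triangle inequality to the average then gives
\begin{equation*}
    \seminorm{\gamma}{\SymFkt^k X}^\bullet
    \le
    \frac{1}{k!}\sum_{\sigma\in\SymGrp_k}\seminorm{\gamma}{X^\sigma}^\bullet
    =
    \seminorm{\gamma}{X}^\bullet
\end{equation*}
for every $X\in \Ten^k(V)$ and every $\seminorm{\gamma}{\argument}\in\mathcal{P}_V$.

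For the second step I note that, directly from \eqref{eq:skalBulletDef} and \eqref{eq:semiinnerproductextension1}, $\skal{\langle X\rangle_k}{\langle Y\rangle_\ell}^\bullet_\gamma = 0$ for $k\ne \ell$, so the decomposition into homogeneous components is orthogonal. Using that $\SymFkt^\bullet$ acts degree-wise, this yields
\begin{equation*}
    \seminorm{\gamma}{\SymFkt^\bullet X}^{\bullet,2}
    =
    \sum_{k=0}^\infty
    \seminorm{\gamma}{\SymFkt^k\langle X\rangle_k}^{\bullet,2}
    \le
    \sum_{k=0}^\infty
    \seminorm{\gamma}{\langle X\rangle_k}^{\bullet,2}
    =
    \seminorm{\gamma}{X}^{\bullet,2}
\end{equation*}
for every $X\in \Ten^\bullet(V)$, which is the claimed estimate. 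Continuity of $\SymFkt^\bullet$ then follows immediately, since the topology of $\Ten^\bullet(V)$ is generated precisely by the seminorms $\seminorm{\gamma}{\argument}^\bullet$ with $\seminorm{\gamma}{\argument}\in\mathcal{P}_V$. There is no real obstacle here; the only subtlety is to keep the two ingredients apart, namely the unitarity of the $\SymGrp_k$-action on each homogeneous component and the orthogonality between different homogeneous components, both of which are built into \eqref{eq:semiinnerproductextension1} through the factor $k!$ and the Kronecker-like behaviour in the tensor degree.
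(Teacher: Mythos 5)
Your proof is correct and follows essentially the same route as the paper: verify that each $\SymGrp_k$-action is an isometry on simple tensors and extend by (anti-)linearity, deduce $\seminorm{\gamma}{\SymFkt^k X}^\bullet \le \seminorm{\gamma}{X}^\bullet$ from the triangle inequality applied to the average, and then sum the squares over the orthogonal homogeneous components. The only difference is that you spell out the intermediate permutation computation and the orthogonality of distinct degrees explicitly, which the paper leaves implicit.
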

\begin{proof}
    From Definition~\ref{defi:extension} it is clear that
    $\skal{X^\sigma}{Y^\sigma}^{\bullet}_\gamma =
    \skal{X}{Y}^{\bullet}_\gamma$
    for all $k\in \NN_0$, $X,Y\in \Ten^k(V)$ and
    $\sigma \in \SymGrp_k$, because this holds for all simple tensors
    and because both sides are (anti-)linear in $X$ and $Y$. Therefore
    $\seminorm{\gamma}{X^\sigma}^\bullet =
    \seminorm{\gamma}{X}^\bullet$
    and
    $\seminorm{\gamma}{\SymFkt^k X}^\bullet \le
    \seminorm{\gamma}{X}^\bullet$
    and we get the desired estimate
    \begin{align*}
        \seminorm{\gamma}{\SymFkt^\bullet X}^{\bullet, 2}
        =
        \sum_{k=0}^\infty
        \seminorm{\gamma}{\SymFkt^k \langle X\rangle_k}^{\bullet,2}
        \le
        \sum_{k=0}^\infty
        \seminorm{\gamma}{\langle X\rangle_k}^{\bullet,2}
        =
        \seminorm{\gamma}{X}^{\bullet, 2}
    \end{align*}
    on $\Ten^\bullet(V)$.
\end{proof}

Analogously to $\multotimes$ we define the linear map $\multvee:=
\SymFkt^\bullet \circ \multotimes\colon \Ten^\bullet(V)\otimes_\pi
\Ten^\bullet(V)\rightarrow \Ten^\bullet(V)$.  Then the restriction of
$\multvee$ to $\SymTen^\bullet(V)$ describes the symmetric tensor
product $\vee$ and Propositions~\ref{prop:contotimes} and
\ref{prop:symcont} yield:
\begin{corollary}
    \label{cor:contvee}%
    The linear map $\multvee$ is continuous and the estimate
    $\seminorm{\gamma}{\multvee(Z)}^\bullet\le
    \seminorm{2\gamma\otimes_\pi 2\gamma}{Z}^\bullet$ holds for all
    $Z\in \Ten^\bullet(V)\otimes_\pi \Ten^\bullet(V)$ and all
    $\seminorm{\gamma}{\argument}\in \mathcal{P}_V$.
\end{corollary}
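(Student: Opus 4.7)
The plan is to simply compose the two estimates already established. By definition $\multvee = \SymFkt^\bullet \circ \multotimes$, so for any $Z \in \Ten^\bullet(V) \otimes_\pi \Ten^\bullet(V)$ and any $\seminorm{\gamma}{\argument} \in \mathcal{P}_V$, I would first apply Proposition~\ref{prop:symcont} to the tensor $\multotimes(Z) \in \Ten^\bullet(V)$ to get
\[
\seminorm{\gamma}{\multvee(Z)}^\bullet = \seminorm{\gamma}{\SymFkt^\bullet \multotimes(Z)}^\bullet \le \seminorm{\gamma}{\multotimes(Z)}^\bullet,
\]
and then chain this with the estimate \eqref{eq:ContinuityMuotimes} from Proposition~\ref{prop:contotimes} to obtain the claimed bound. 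Continuity of $\multvee$ as a linear map from $\Ten^\bullet(V) \otimes_\pi \Ten^\bullet(V)$ to $\Ten^\bullet(V)$ is then immediate, because the seminorms $\seminorm{2\gamma \otimes_\pi 2\gamma}{\argument}^\bullet$ are, by construction, continuous seminorms on the projective tensor product and exhaust its topology as $\seminorm{\gamma}{\argument}$ ranges over $\mathcal{P}_V$.

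There is essentially no obstacle here, since both of the ingredient statements have already been proved and the argument is a single line of inequality chaining. The only minor point to watch is that the composition $\SymFkt^\bullet \circ \multotimes$ is well-defined as a map into $\Ten^\bullet(V)$ (not merely into $\SymTen^\bullet(V)$), which is clear from $\SymFkt^\bullet$ being a projector on $\Ten^\bullet(V)$, and that the restriction of $\multvee$ to $\SymTen^\bullet(V) \otimes_\pi \SymTen^\bullet(V)$ indeed implements the symmetric tensor product $\vee$, which is by the definition $X \vee Y = \SymFkt^\bullet(X \otimes Y)$.
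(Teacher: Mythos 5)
Your argument is exactly the one the paper intends: the corollary is stated as an immediate consequence of Propositions~\ref{prop:contotimes} and~\ref{prop:symcont}, obtained by chaining $\seminorm{\gamma}{\SymFkt^\bullet\multotimes(Z)}^\bullet \le \seminorm{\gamma}{\multotimes(Z)}^\bullet \le \seminorm{2\gamma\otimes_\pi 2\gamma}{Z}^\bullet$. Your proposal is correct and matches the paper's (unwritten) proof.
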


%
% The star product
%

\subsection{The Star Product}

The following star product is based on a bilinear form and generalizes
the usual exponential-type star product like the Weyl-Moyal or Wick
star product, see e.g. \cite[Chap.~5]{waldmann:2007a}, to arbitrary
dimensions:
\begin{definition}
    \label{defi:stpro}%
    For every continuous bilinear form $\Lambda$ on $V$ we define the product
    $\multstpro[\Lambda]\colon
    \Ten^\bullet(V) \otimes_\pi \Ten^\bullet(V) \rightarrow \Ten^\bullet(V)$
    by
    \begin{equation}
        \label{eq:defstpro}
        X \otimes_\pi Y
        \; \mapsto \;
        \multstpro[\Lambda](X\otimes_\pi Y)
        :=
        \sum_{t=0}^\infty \frac{1}{t!}
        \multvee\Big(\big(\P_{\Lambda}\big)^t(X\otimes_\pi Y)\Big),
    \end{equation}
    where the linear map
    $\P_{\Lambda}\colon \Ten^\bullet(V) \otimes_\pi \Ten^\bullet(V)
    \rightarrow \Ten^{\bullet-1}(V) \otimes_\pi \Ten^{\bullet-1}(V)$
    is given on factorizing tensors of degree $k, \ell \in \NN$ by
    \begin{equation}
        \P_{\Lambda} \big(
        (x_1\otimes \cdots \otimes x_k)
        \otimes_\pi
        (y_1 \otimes \cdots \otimes y_\ell)
        \big)
        :=
        k\ell \Lambda(x_k, y_1)
        (x_1\otimes \cdots \otimes x_{k-1})
        \otimes_\pi
        (y_2 \otimes \cdots \otimes y_{\ell})
    \end{equation}
    for all $x \in V^k$ and $y \in V^\ell$. Moreover, we define the
    product $\stpro[\Lambda]$ on $\SymTen^\bullet(V)$ as the bilinear
    map described by the restriction of $\multstpro[\Lambda]$ to
    $\SymTen^\bullet(V)$.
\end{definition}

Note that these definitions of $\P_{\Lambda}$ and $\stpro[\Lambda]$
coincide (algebraically) on $\SymTen^\bullet(V)$ with the ones in
\cite[Eq.~(2.13) and (2.19)]{waldmann:2014a}, evaluated at a fixed
value for $\nu$ in the truely (not graded) symmetric case $V = V_0$.
Note that with our convention the deformation parameter $\hbar$ is
already part of $\Lambda$.

We are now going to prove the continuity of
$\stpro[\Lambda]$. Therefore we note that continuity of $\Lambda$
means that there exist
$\seminorm{\alpha}{\argument}, \seminorm{\beta}{\argument}\in
\mathcal{P}_V$
such that
$\abs{\Lambda(v,w)} \le \seminorm{\alpha}{v} \seminorm{\beta}{w}$
holds for all $v, w \in V$. So the set
\begin{equation}
    \label{eq:PVLambdaDef}
    \mathcal{P}_{V,\Lambda}
    :=
    \left\{
        \seminorm{\gamma}{\argument} \in \mathcal{P}_V
        \; \Big| \;
        \abs{\Lambda(v,w)} \le \seminorm{\gamma}{v} \seminorm{\gamma}{w}
        \textrm{ for all }
        v, w \in V
    \right\}
\end{equation}
contains at least all continuous Hilbert seminorms on $V$ that
dominate $\seminorm{\alpha + \beta}{\argument}$. Thus this set is
cofinal in $\mathcal{P}_V$.

%\noindent We start with the continuity of one contraction:
\begin{lemma}
    \label{lemma:plambdaklij}%
    Let $\Lambda$ be a continuous bilinear form on $V$, let
    $\seminorm{\alpha}{\argument}, \seminorm{\beta}{\argument} \in
    \mathcal{P}_{V,\Lambda}$
    as well as $k, \ell \in \NN_0$ and $X\in \Ten^k(V)$,
    $Y\in \Ten^\ell(V)$ be given. Then
    \begin{equation}
        \label{eq:PLambdaEstiamte}
        \seminorm{\alpha \otimes_\pi \beta}
        {\P_{\Lambda} (X\otimes_\pi Y)}^\bullet
        \le
        \sqrt{k\ell}
        \seminorm{\alpha}{X}^\bullet
        \seminorm{\beta}{Y}^\bullet.
    \end{equation}
\end{lemma}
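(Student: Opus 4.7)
The plan is to apply Lemma~\ref{lemma:helpfull} to $X$ and $Y$ separately (with respect to $\seminorm{\alpha}{\argument}$ and $\seminorm{\beta}{\argument}$), expand $\P_\Lambda(X \otimes_\pi Y)$ by bilinearity into four pieces, discard the ``null'' ones, and finish with a Cauchy--Schwarz-type estimate on an orthonormal expansion.

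First I would write $X = X_0 + \tilde X$ via Lemma~\ref{lemma:helpfull} applied with $\alpha$ and $Y = Y_0 + \tilde Y$ via Lemma~\ref{lemma:helpfull} applied with $\beta$, obtaining $\tilde X = \sum_{a' \in \{1,\ldots,d\}^k} X^{a'} e_{a'_1} \otimes \cdots \otimes e_{a'_k}$ with an $\alpha$-orthonormal tuple $e$, and $\tilde Y = \sum_{b' \in \{1,\ldots,d'\}^\ell} Y^{b'} f_{b'_1} \otimes \cdots \otimes f_{b'_\ell}$ with a $\beta$-orthonormal tuple $f$. Expanding $\P_\Lambda(X \otimes_\pi Y)$ by bilinearity, I would then show that the three cross terms involving $X_0$ or $Y_0$ contribute zero to the projective seminorm: on any simple summand $x_1 \otimes \cdots \otimes x_k$ of $X_0$, with $\prod_{n=1}^k \seminorm{\alpha}{x_n} = 0$, one expands $Y$ as a sum of simple tensors, applies the triangle inequality for the projective seminorm, and then uses $|\Lambda(x_k, y_{b,1})| \le \seminorm{\alpha}{x_k} \seminorm{\alpha}{y_{b,1}}$ (which follows from $\alpha \in \mathcal{P}_{V,\Lambda}$) to see that each resulting bound contains the factor $\prod_{n=1}^k \seminorm{\alpha}{x_n} = 0$. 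The $Y_0$-terms are treated symmetrically with $\beta \in \mathcal{P}_{V,\Lambda}$.

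For the remaining main term, I would regroup the expansion by the contracted indices $i = a'_k$ and $j = b'_1$ and define
\begin{equation*}
    \tilde X_i := \sum_{a'' \in \{1,\ldots,d\}^{k-1}} X^{(a'',i)} \, e_{a''_1} \otimes \cdots \otimes e_{a''_{k-1}} \in \Ten^{k-1}(V),
\end{equation*}
and analogously $\tilde Y_j \in \Ten^{\ell-1}(V)$, so that $\P_\Lambda(\tilde X \otimes_\pi \tilde Y) = k\ell \sum_{i,j} \Lambda(e_i, f_j) \, \tilde X_i \otimes_\pi \tilde Y_j$. Orthonormality of $e$ (resp.\ $f$) gives $\seminorm{\alpha}{\tilde X_i}^{\bullet,2} = (k-1)! \, r_i^2$ with $r_i^2 := \sum_{a''} |X^{(a'',i)}|^2$ (analogously $s_j$), together with $\seminorm{\alpha}{X}^{\bullet,2} = k! \sum_i r_i^2$ and $\seminorm{\beta}{Y}^{\bullet,2} = \ell! \sum_j s_j^2$. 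Subadditivity of the projective seminorm and the factorial identity $\sqrt{k!\ell!} = \sqrt{k\ell}\sqrt{(k-1)!(\ell-1)!}$ then reduce the claim to the bilinear estimate
\begin{equation*}
    \sum_{i,j} |\Lambda(e_i, f_j)| \, r_i s_j \le \Bigl(\textstyle\sum_i r_i^2\Bigr)^{1/2} \Bigl(\textstyle\sum_j s_j^2\Bigr)^{1/2}.
\end{equation*}

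The main obstacle is this last Cauchy--Schwarz-type inequality: it amounts to showing that the finite matrix $\bigl(\Lambda(e_i, f_j)\bigr)_{i,j}$ acts as a contraction between the Euclidean spaces spanned by the index sets. The natural route is to interpret $\Lambda$ as inducing a bounded operator of norm at most one between the Hilbert space completions of $V$ relative to $\seminorm{\alpha}{\argument}$ and $\seminorm{\beta}{\argument}$, and then deduce the estimate by a singular value decomposition of that matrix, using the unitarity of the resulting change-of-basis transformations to dispose of the absolute values after a polarization/phase argument.
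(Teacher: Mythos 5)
Your overall strategy --- decompose $X$ and $Y$ via Lemma~\ref{lemma:helpfull}, show the $X_0$/$Y_0$ cross terms contribute nothing, and control the remaining finite matrix $\Omega_{ij}=\Lambda(e_i,f_j)$ --- is exactly the paper's, and your factorial bookkeeping down to the displayed bilinear estimate is correct. The gap is in the last step. The inequality you reduce the lemma to, $\sum_{i,j}\abs{\Lambda(e_i,f_j)}\,r_is_j\le\norm{r}\,\norm{s}$, does \emph{not} ``amount to showing that the matrix acts as a contraction'': it asserts that the \emph{entrywise absolute value} of $\Omega$ is a contraction, which is strictly stronger and false in general. For a counterexample take $V=\CC^2$ with the standard inner product for both $\alpha$ and $\beta$ and $\Lambda(v,w):=v^{\Trans}Hw$ with $H=\tfrac{1}{\sqrt{2}}\left(\begin{smallmatrix}1&1\\1&-1\end{smallmatrix}\right)$; then $\seminorm{\alpha}{\argument}\in\mathcal{P}_{V,\Lambda}$, but with the standard basis and $r=s=(1,1)$ one finds $\sum_{i,j}\abs{\Omega_{ij}}r_is_j=2\sqrt{2}>2=\norm{r}\,\norm{s}$. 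No polarization or phase argument can repair this once the absolute values are in place, since the phases of $\Omega_{ij}$ need not factor as $e^{\I\theta_i}e^{\I\phi_j}$ (they do not for $H$ above).

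The singular value decomposition is the right tool, but it must be applied \emph{before} the triangle inequality introduces entrywise absolute values, not afterwards; this is precisely the order in the paper's proof. The SVD unitaries preserve orthonormality of the tuples $e$ and $f$, and the relevant quantities ($\sum_i r_i^2=\seminorm{\alpha}{X}^{\bullet,2}/k!$ and its analogue for $Y$) are basis-independent, so one may assume from the outset that $\Omega$ is diagonal with $\abs{\Omega_{gg}}=\abs{\Lambda(e_g,f_g)}\le\seminorm{\alpha}{e_g}\seminorm{\beta}{f_g}\le 1$. The double sum over $(i,j)$ then collapses to a single sum over the diagonal index, the absolute values become harmless, and ordinary Cauchy--Schwarz in that one index finishes the proof. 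With this reordering your argument coincides with the paper's.
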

\begin{proof}
    If $k = 0$ or $\ell = 0$ this is clearly true, so assume
    $k, \ell \in \NN$. We use Lemma~\ref{lemma:helpfull} to construct
    $X_0 = \sum_{a\in A} x_{a,1} \otimes \cdots \otimes x_{a,k}$ and
    $\tilde{X} = \sum_{a' \in \{1, \ldots, c\}^k} X^{a'} e_{a'_1}
    \otimes \cdots \otimes e_{a'_k}$
    with respect to $\skal{\argument}{\argument}_\alpha$ as well as
    $Y_0 = \sum_{b\in B} y_{b,1} \otimes \cdots \otimes y_{b,\ell}$ and
    $\tilde{Y} = \sum_{b'\in\{1, \ldots, d\}^\ell} Y^{b'} f_{b'_1}
    \otimes \cdots \otimes f_{b'_\ell}$
    with respect to $\skal{\argument}{\argument}_\beta$. Then
    \[
    \seminorm[\big]{\alpha \otimes_\pi \beta}
    {\P_{\Lambda}
      \big((X_0 + \tilde{X}) \otimes_\pi (Y_0 + \tilde{Y})\big)
    }^\bullet
    \le
    \seminorm[\big]{\alpha \otimes_\pi \beta}
    {\P_{\Lambda}(\tilde{X}\otimes_\pi \tilde{Y})}^\bullet,
    \]
    because
    \begin{align*}
        \seminorm[\big]{\alpha \otimes_\pi \beta}
        {\P_{\Lambda}
          \big(
          (\xi_1 \otimes \cdots \otimes \xi_k)
          \otimes_\pi
          (\eta_1 \otimes \cdots \otimes \eta_\ell)
          \big)
        }^\bullet
        &=
        k\ell \abs{\Lambda(\xi_k, \eta_1)}
        \seminorm{\alpha}
        {\xi_1 \otimes \cdots \otimes \xi_{k-1}}^\bullet
        \seminorm{\beta}
        {\eta_2 \otimes \cdots \otimes \eta_\ell}^\bullet\\
        &=
        0
    \end{align*}
    for all $\xi\in V^k$, $\eta\in V^\ell$ for which there is at least
    one $m \in \{1, \ldots, k\}$ with
    $\seminorm{\alpha}{\xi_{m}} = 0$ or one
    $n\in\{1, \ldots, \ell\}$ with $\seminorm{\beta}{\eta_{n}} = 0$.
    On the subspaces
    $V_{\tilde{X}} = \spann\big\{e_1, \ldots, e_c\big\}$ and
    $V_{\tilde{Y}}=\spann\{f_1, \ldots, f_d\}$ of $V$, the bilinear
    form $\Lambda$ is described by a matrix
    $\Omega \in \CC^{c\times d}$ with entries
    $\Omega_{gh} = \Lambda(e_g,f_h)$. By using a singular value
    decomposition we can even assume without loss of generality that
    all off-diagonal entries of $\Omega$ vanish. We also note that
    $\abs{\Omega_{gg}} = \abs{\Lambda(e_g, f_g)} \le
    \seminorm{\alpha}{e_g} \seminorm{\beta}{f_g} \le 1$.
    This gives the desired estimate
    \begin{align*}
        &\seminorm{\alpha \otimes_\pi \beta}
        {\P_{\Lambda}(X \otimes_\pi Y)}^\bullet \\
        &\quad\le
        \seminorm[\big]{\alpha \otimes_\pi \beta}
        {\P_{\Lambda}(\tilde{X}\otimes_\pi \tilde{Y})}^\bullet \\
        &\quad=
        \seminorm[\bigg]{\alpha \otimes_\pi \beta}
        {
          \sum\nolimits_{a'\in \{1, \ldots, c\}^k}
          \sum\nolimits_{b'\in \{1, \ldots, d\}^\ell}
          X^{a'} Y^{b'} \P_{\Lambda}
          \big(
          (e_{a'_1} \otimes \cdots \otimes e_{a'_k})
          \otimes_\pi
          (f_{b'_1} \otimes \cdots \otimes f_{b'_\ell})
          \big)
        }^\bullet \\
        &\quad=
        k\ell
        \seminorm[\bigg]{\alpha \otimes_\pi \beta}
        {
          \sum_{r=1}^{\min\{c, d\}}
          \sum\nolimits_{
            \substack{
              \tilde{a}' \in \{1, \ldots, c\}^{k-1}
              \\
              \tilde{b}' \in \{1, \ldots, d\}^{\ell-1}
            }
          }
          X^{(\tilde{a}', r)}
          Y^{(r, \tilde{b}')}
          \Omega_{rr}
          (e_{\tilde{a}'_1}
          \otimes \cdots \otimes
          e_{\tilde{a}'_{k-1}})
          \otimes_\pi
          (f_{\tilde{b}'_1}
          \otimes \cdots \otimes
          f_{\tilde{b}'_{\ell-1}})
        }^\bullet \\
        &\quad\le
        k\ell
        \sum_{r=1}^{\min\{c, d\}}
        \seminorm[\bigg]{\alpha}
        {
          \sum_{\tilde{a}' \in  \{1, \ldots, c\}^{k-1}}
          X^{(\tilde{a}',r)} e_{\tilde{a}'_1}
          \otimes \cdots \otimes
          e_{\tilde{a}'_{k-1}}
        }^\bullet
        \seminorm[\bigg]{\beta}
        {
          \sum_{\tilde{b}' \in  \{1, \ldots, d\}^{\ell-1}}
          Y^{(r, \tilde{b}')} f_{\tilde{b}'_1}
          \otimes \cdots \otimes
          f_{\tilde{b}'_{\ell-1}}
        }^\bullet \\
        &\quad\stackrel{\cs}{\le}
        \sqrt{k\ell}
        \seminorm{\alpha}{X}^{\bullet}
        \seminorm{\beta}{Y}^{\bullet},
    \end{align*}
    where we have used in the last line after applying the
    Cauchy-Schwarz inequality that
    \begin{align*}
        \sum_{r=1}^{\min\{c, d\}}
        \seminorm[\bigg]{\alpha}
        {
          \sum_{\tilde{a}' \in \{1, \ldots, c\}^{k-1}}
          X^{(\tilde{a}', r)} e_{\tilde{a}'_1}
          \otimes \cdots \otimes
          e_{\tilde{a}'_{k-1}}
        }^{\bullet, 2}
        &=
        \sum_{r=1}^{\min\{c, d\}}
        \sum_{\tilde{a}' \in \{1, \ldots, c\}^{k-1}}
        \abs[\big]{X^{(\tilde{a}', r)}}^2 (k-1)! \\
        &\le
        \frac{1}{k} \seminorm{\alpha}{X}^{\bullet, 2}
    \end{align*}
    and analogously for $Y$.
\end{proof}
\begin{proposition}
    \label{prop:contplambda}%
    Let $\Lambda$ be a continuous bilinear form on $V$, then the
    function $\P_{\Lambda}$ is continuous and fulfills the estimate
    \begin{equation}
        \label{eq:EstimatePLambdaHocht}
        \seminorm[\big]{\alpha\otimes_\pi\beta}
        {\big(\P_{\Lambda}\big)^t(Z)}^\bullet
        \le
        \frac{c}{c-1}
        \frac{t!}{c^t}
        \seminorm{2c\alpha \otimes_\pi 2c\beta}{Z}^\bullet
    \end{equation}
    for all $c > 1$, all $t \in \NN_0$, all seminorms
    $\seminorm{\alpha}{\argument}, \seminorm{\beta}{\argument} \in
    \mathcal{P}_{V, \Lambda}$,
    and all $Z\in \Ten^\bullet(V) \otimes_\pi \Ten^\bullet(V)$.
\end{proposition}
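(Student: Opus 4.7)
The plan is to first establish the iterated estimate on elementary tensors $X \otimes_\pi Y$ with $X, Y \in \Ten^\bullet(V)$, then extend to arbitrary $Z \in \Ten^\bullet(V)\otimes_\pi\Ten^\bullet(V)$ through Lemma~\ref{lemma:otimespi}. Continuity of $\P_\Lambda$ will follow as the $t = 1$ specialisation.

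First, I would show by induction on $t$ that for every $k, \ell \ge t$ and every $Z' \in \Ten^k(V)\otimes_\pi\Ten^\ell(V)$ one has
\[
\seminorm{\alpha\otimes_\pi\beta}{(\P_\Lambda)^t Z'}^\bullet \le t!\binom{k}{t}^{1/2}\binom{\ell}{t}^{1/2}\seminorm{\alpha\otimes_\pi\beta}{Z'}^\bullet.
\]
The case $t = 0$ is trivial; for the inductive step, $(\P_\Lambda)^{t-1} Z'$ lies in $\Ten^{k-t+1}(V)\otimes_\pi\Ten^{\ell-t+1}(V)$, and Lemma~\ref{lemma:plambdaklij} provides the factor $\sqrt{(k-t+1)(\ell-t+1)}$ on elementary inputs, which Lemma~\ref{lemma:otimespi} promotes to the projective tensor seminorm. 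The telescoping product produces the claimed $t!\binom{k}{t}^{1/2}\binom{\ell}{t}^{1/2}$.

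Next, for arbitrary $X, Y \in \Ten^\bullet(V)$, decompose $(\P_\Lambda)^t(X\otimes_\pi Y) = \sum_{k,\ell\ge t}(\P_\Lambda)^t(\langle X\rangle_k\otimes_\pi\langle Y\rangle_\ell)$, apply the triangle inequality, and observe that the resulting bound factors as a product of two sums over $k$ and $\ell$. To each factor I would apply Cauchy-Schwarz with weights $(2c)^{-k/2}$, exploiting that $\seminorm{2c\alpha}{X}^{\bullet,2} = \sum_k (2c)^k \seminorm{\alpha}{\langle X\rangle_k}^{\bullet,2}$. The closed form $\sum_{k\ge t}\binom{k}{t}(2c)^{-k} = \frac{2c}{(2c-1)^{t+1}}$, coming from $\sum_{k\ge t}\binom{k}{t}x^k = x^t(1-x)^{-t-1}$, gives
\[
\seminorm{\alpha\otimes_\pi\beta}{(\P_\Lambda)^t(X\otimes_\pi Y)}^\bullet \le \frac{2c\cdot t!}{(2c-1)^{t+1}}\seminorm{2c\alpha}{X}^\bullet\seminorm{2c\beta}{Y}^\bullet,
\]
which is dominated by $\frac{c}{c-1}\frac{t!}{c^t}\seminorm{2c\alpha}{X}^\bullet\seminorm{2c\beta}{Y}^\bullet$ for $c > 1$ via the elementary inequality $2c(c-1)c^t \le c(2c-1)^{t+1}$.

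Finally, Lemma~\ref{lemma:otimespi} applied to the linear map $(\P_\Lambda)^t$ transfers this bound from elementary tensors to arbitrary $Z \in \Ten^\bullet(V)\otimes_\pi\Ten^\bullet(V)$, giving the estimate \eqref{eq:EstimatePLambdaHocht}. Continuity of $\P_\Lambda$ itself is then the $t = 1$ case, where the bound becomes $\seminorm{\alpha\otimes_\pi\beta}{\P_\Lambda Z}^\bullet \le (c-1)^{-1}\seminorm{2c\alpha\otimes_\pi 2c\beta}{Z}^\bullet$, i.e.\ domination by a continuous seminorm on $\Ten^\bullet(V)\otimes_\pi\Ten^\bullet(V)$. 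I expect the main technical hurdle to be the bookkeeping in the inductive step: since $(\P_\Lambda)^{t-1} Z'$ is typically not an elementary tensor, Lemma~\ref{lemma:otimespi} must be invoked at every stage to lift the elementary bound of Lemma~\ref{lemma:plambdaklij} to the projective seminorm before the next application of $\P_\Lambda$. The subsequent weighted Cauchy-Schwarz calculation is analogous to the one appearing in the proof of Proposition~\ref{prop:contotimes}.
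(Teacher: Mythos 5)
Your proposal is correct and follows essentially the same route as the paper's proof: decomposition into homogeneous components, iteration of the one-step bound of Lemma~\ref{lemma:plambdaklij} (lifted at each stage by Lemma~\ref{lemma:otimespi}) to produce the factor $t!\binom{k}{t}^{1/2}\binom{\ell}{t}^{1/2}$, and a weighted Cauchy--Schwarz argument, finished off by Lemma~\ref{lemma:otimespi} to pass to general $Z$. The only deviation is cosmetic: the paper absorbs the binomials via $\binom{k}{t}\le 2^{k}$ into $\seminorm{2\alpha}{\argument}^\bullet$ and then sums a geometric series, whereas you evaluate $\sum_{k\ge t}\binom{k}{t}(2c)^{-k}$ exactly, obtaining the marginally sharper constant $2c\,t!/(2c-1)^{t+1}$, which you correctly dominate by the stated bound.
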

\begin{proof}
    Let $X, Y \in \Ten^\bullet(V)$ be given, then the previous
    Lemma~\ref{lemma:plambdaklij}
    together with Lemma~\ref{lemma:otimespi} yields
    \begin{align*}
        \seminorm[\big]{\alpha \otimes_\pi\beta}
        {\big(\P_{\Lambda}\big)^t(X \otimes_\pi Y)}^\bullet
        &\le
        \sum_{k, \ell = 0}^\infty
        \seminorm[\big]{\alpha \otimes_\pi \beta}
        {
          \big(\P_{\Lambda}\big)^t
          \big(\langle X \rangle_{k+t}
          \otimes_\pi
          \langle Y \rangle_{\ell+t}\big)
        }^\bullet \\
        &\le
        t! \sum_{k, \ell = 0}^\infty
        \binom{k+t}{t}^{\frac{1}{2}}
        \binom{\ell+t}{t}^{\frac{1}{2}}
        \seminorm{\alpha}{\langle X \rangle_{k+t}}^\bullet
        \seminorm{\beta}{\langle Y \rangle_{\ell+t}}^\bullet \\
        &\le
        t! \sum_{k, \ell = 0}^\infty
        \seminorm{2\alpha}{\langle X \rangle_{k+t}}^\bullet
        \seminorm{2\beta}{\langle Y \rangle_{\ell+t}}^\bullet \\
        &=
        \frac{t!}{c^t}
        \sum_{k, \ell = 0}^\infty
        \frac{1}{\sqrt{c}^{k+\ell}}
        \seminorm{2c\alpha}{\langle X \rangle_{k+t}}^\bullet
        \seminorm{2c\beta}{\langle Y \rangle_{\ell+t}}^\bullet \\
        &\stackrel{\cs}{\le}
        \frac{t!}{c^t}
        \bigg(
        \sum_{k, \ell=0}^\infty
        \frac{1}{c^{k+\ell}}
        \bigg)^{\frac{1}{2}}
        \bigg(
        \sum_{k, \ell=0}^\infty
        \seminorm{2c\alpha}{\langle X \rangle_{k+t}}^{\bullet, 2}
        \seminorm{2c\beta}{\langle Y \rangle_{\ell+t}}^{\bullet, 2}
        \bigg)^{\frac{1}{2}} \\
        &\le
        \frac{c}{c-1} \frac{t!}{c^t}
        \seminorm{2c\alpha}{X}^\bullet
        \seminorm{2c\beta}{Y}^\bullet.
    \end{align*}
\end{proof}
\begin{lemma}
    \label{lemma:stprocont}%
    Let $\Lambda$ be a continuous bilinear form on $V$, then
    $\multstpro[\Lambda]$ is continuous and, given $R>1/2$, the estimate
    \begin{align}
        \seminorm{\gamma}{\multstpro[z\Lambda](Z)}^\bullet
        \le
        \sum_{t=0}^\infty
        \frac{1}{t!}
        \seminorm[\Big]{\gamma}
        {\multvee\Big(\big(\P_{z\Lambda}\big)^t(Z)\Big)}^\bullet
        \le
        \frac{4R}{2R-1} \seminorm{8R\gamma\otimes_\pi 8R\gamma}{Z}^\bullet
    \end{align}
    holds for all
    $\seminorm{\gamma}{\argument} \in \mathcal{P}_{V, \Lambda}$,
    all $Z \in \Ten^\bullet(V) \otimes_\pi \Ten^\bullet(V)$ and all $z\in \CC$ with $|z|\le R$.
\end{lemma}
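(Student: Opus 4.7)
First I would observe that since $\Ten^\bullet(V) = \bigoplus_{k\in\NN_0} \Ten^k(V)$ is the algebraic direct sum, any $Z \in \Ten^\bullet(V)\otimes_\pi\Ten^\bullet(V)$ is supported in only finitely many bidegrees; because $\P_{z\Lambda}$ strictly lowers the bidegree by $(1,1)$, the series defining $\multstpro[z\Lambda](Z)$ in Definition~\ref{defi:stpro} is in fact a finite sum. The first inequality is then just the ordinary triangle inequality for the seminorm $\seminorm{\gamma}{\argument}^\bullet$.

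The second inequality is the heart of the argument, and my strategy is to bound each individual summand $\frac{1}{t!}\seminorm{\gamma}{\multvee((\P_{z\Lambda})^t(Z))}^\bullet$ and then sum a geometric series. Corollary~\ref{cor:contvee} gives $\seminorm{\gamma}{\multvee(W)}^\bullet \le \seminorm{2\gamma\otimes_\pi 2\gamma}{W}^\bullet$. Next, the identity $\P_{z\Lambda} = z\,\P_{\Lambda}$ (immediate from the defining formula) lets me pull a factor $|z|^t \le R^t$ out of $(\P_{z\Lambda})^t$, reducing matters to estimating $\seminorm{2\gamma\otimes_\pi 2\gamma}{(\P_{\Lambda})^t(Z)}^\bullet$. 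Proposition~\ref{prop:contplambda}, applied to $\Lambda$ with $\alpha = \beta = 2\gamma \in \mathcal{P}_{V,\Lambda}$ and the crucial choice $c := 2R$ (admissible because $R > 1/2$ forces $c > 1$), yields
\[
\seminorm{2\gamma\otimes_\pi 2\gamma}{(\P_{\Lambda})^t(Z)}^\bullet
\le \frac{2R}{2R-1}\,\frac{t!}{(2R)^t}\,\seminorm{8R\gamma\otimes_\pi 8R\gamma}{Z}^\bullet.
\]
Combining these three ingredients, the $t$-th summand is bounded by $\frac{2R}{2R-1}\,\frac{R^t}{(2R)^t}\,\seminorm{8R\gamma\otimes_\pi 8R\gamma}{Z}^\bullet = \frac{2R}{2R-1}\,2^{-t}\,\seminorm{8R\gamma\otimes_\pi 8R\gamma}{Z}^\bullet$, and the geometric series $\sum_{t=0}^\infty 2^{-t} = 2$ produces exactly the advertised prefactor $\frac{4R}{2R-1}$.

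Continuity of $\multstpro[\Lambda]$ then falls out by specializing to $z = 1$ with any $R \ge 1$: the right-hand side of the estimate is a continuous seminorm on $\Ten^\bullet(V)\otimes_\pi\Ten^\bullet(V)$, and by the cofinality of $\mathcal{P}_{V,\Lambda}$ in $\mathcal{P}_V$ noted just below \eqref{eq:PVLambdaDef}, the family $\{\seminorm{\gamma}{\argument}^\bullet\}_{\gamma \in \mathcal{P}_{V,\Lambda}}$ is cofinal among continuous seminorms on $\Ten^\bullet(V)$. I do not expect a deep obstacle: Corollary~\ref{cor:contvee} and Proposition~\ref{prop:contplambda} do all the heavy lifting. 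The only point that requires care is the bookkeeping of constants — the specific choice $c = 2R$ is dictated precisely by the demand that $R^t/c^t = 2^{-t}$, and it is exactly this coincidence that makes the geometric series reproduce the stated constant $\frac{4R}{2R-1}$.
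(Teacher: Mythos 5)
Your proposal is correct and follows essentially the same route as the paper's proof: triangle inequality for the first bound, then Corollary~\ref{cor:contvee}, the homogeneity $\P_{z\Lambda}=z\P_\Lambda$, and Proposition~\ref{prop:contplambda} with $c=2R$ applied to $\alpha=\beta=2\gamma$, summed as a geometric series to produce $\tfrac{4R}{2R-1}$. The extra remarks (finiteness of the sum on algebraic tensors, cofinality of $\mathcal{P}_{V,\Lambda}$ for the continuity conclusion) are consistent with what the paper leaves implicit.
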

\begin{proof}
    The first estimate is just the triangle-inequality. By combining
    Corollary~\ref{cor:contvee} and Proposition~\ref{prop:contplambda}
    with $c=2R$ we get the second estimate
    \begin{align*}
        \sum_{t=0}^\infty
        \frac{1}{t!}
        \seminorm[\Big]{\gamma}
        {\multvee\Big(\big(\P_{z\Lambda}\big)^t(Z)\Big)}^\bullet
        &\le
        \sum_{t=0}^\infty
        \frac{|z|^t}{t!}
        \seminorm[\Big]{2\gamma \otimes_\pi 2\gamma}
        {\big(\P_{\Lambda}\big)^t(Z)}^\bullet \\
        &\le
        \frac{2R}{2R-1}\sum_{t=0}^\infty
        \frac{1}{2^t}
        \seminorm{8R\gamma \otimes_\pi 8R\gamma}{Z} \\
        &=
        \frac{4R}{2R-1} \seminorm{8R\gamma \otimes_\pi 8R\gamma}{Z}.
    \end{align*}
\end{proof}
This estimate immediately leads to:
\begin{theorem}
    \label{theorem:stprocont}%
    Let $\Lambda$ be a continuous bilinear form on $V$, then the
    product $\stpro[\Lambda]$ is continuous and
    $\big(\SymTen^\bullet(V), \stpro[\Lambda]\big)$ is a locally
    convex algebra. Moreover, for fixed tensors $X,Y$ from the completion
    $\SymTen^\bullet(V)^\cpl$, the product $X \stpro[z\Lambda] Y$ converges
    absolutely and locally uniformly in $z\in \CC$ and thus depends
    holomorphically on $z$.
\end{theorem}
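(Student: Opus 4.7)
The plan is to derive all three assertions from the continuity estimate of Lemma~\ref{lemma:stprocont}, combined with Lemma~\ref{lemma:otimespi} and a routine extension-by-continuity argument.

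First, I would establish the continuity of $\stpro[\Lambda]$. Specializing Lemma~\ref{lemma:stprocont} to $z=1$ and any fixed $R>1/2$ yields a bound for $\multstpro[\Lambda]$ on $\Ten^\bullet(V)\otimes_\pi\Ten^\bullet(V)$ by a continuous seminorm of $\otimes_\pi$-type. Since $\mathcal{P}_{V,\Lambda}$ is cofinal in $\mathcal{P}_V$, these estimates cover the whole topology, and Lemma~\ref{lemma:otimespi} translates them into joint continuity of the bilinear map $\stpro[\Lambda]\colon \SymTen^\bullet(V)\times\SymTen^\bullet(V)\to\Ten^\bullet(V)$. The image actually lies in $\SymTen^\bullet(V)$: each summand $\frac{1}{t!}\multvee\bigl((\P_\Lambda)^t(\argument)\bigr)$ factors through $\SymFkt^\bullet$, and $\SymTen^\bullet(V)$, being the range of the continuous projector $\SymFkt^\bullet$, is closed. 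Associativity of $\stpro[\Lambda]$ on $\SymTen^\bullet_\alg(V)$ is the classical identity for exponential-type star products (cf.\ \cite[Chap.~5]{waldmann:2007a}) and is purely algebraic, so $(\SymTen^\bullet(V),\stpro[\Lambda])$ is a locally convex algebra.

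Next, I would address the holomorphy in $z$. By continuity, $\stpro[\Lambda]$ and each coefficient map $B_t(X,Y):=\frac{1}{t!}\multvee\bigl((\P_\Lambda)^t(X\otimes_\pi Y)\bigr)$ extend uniquely to continuous bilinear maps on the completion $\SymTen^\bullet(V)^\cpl$. Using that $\P_{z\Lambda}=z\,\P_\Lambda$, for $X,Y\in\SymTen^\bullet(V)^\cpl$ and any $R>1/2$, applying Lemma~\ref{lemma:stprocont} with $|z|\le R$ and passing the seminorm bound to the completion by continuity gives
\begin{equation*}
    \sum_{t=0}^\infty |z|^t\,\seminorm{\gamma}{B_t(X,Y)}^\bullet
    \le
    \frac{4R}{2R-1}\,\seminorm{8R\gamma}{X}^\bullet\,\seminorm{8R\gamma}{Y}^\bullet,
\end{equation*}
so that $X\stpro[z\Lambda]Y=\sum_{t=0}^\infty z^t B_t(X,Y)$ converges absolutely and uniformly for $|z|\le R$. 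Since each partial sum is a polynomial in $z$ with values in the complete locally convex space $\SymTen^\bullet(V)^\cpl$, locally uniform convergence on $\CC$ forces the limit to be entire (reducible, if needed, to the scalar case by composing with continuous linear functionals).

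The only delicate step I foresee is justifying that the seminorm bound of Lemma~\ref{lemma:stprocont}, originally stated for elements of the algebraic projective tensor product, really does extend to pairs $(X,Y)$ taken from the completion. Since every map involved ($B_t$, $\multstpro[\Lambda]$, and the relevant seminorms) is continuous on the dense subspace, this extension is automatic; the remaining work is then purely bookkeeping.
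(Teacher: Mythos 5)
Your proposal is correct and follows exactly the route the paper intends: Theorem~\ref{theorem:stprocont} is stated there as an immediate consequence of Lemma~\ref{lemma:stprocont}, and you have simply spelled out the bookkeeping (cofinality of $\mathcal{P}_{V,\Lambda}$, Lemma~\ref{lemma:otimespi}, closure of the image under $\SymFkt^\bullet$, extension of the termwise estimates to the completion, and the standard locally-uniform-convergence argument for holomorphy) that the authors leave implicit. No gaps.
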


Note that the above estimate also shows that $\big(\SymTen^\bullet(V),
\stpro[z\Lambda]\big)$ describes a holomorphic deformation (as defined
in \cite{pflaum.schottenloher:1998a}) of the locally convex algebra
$\big(\SymTen^\bullet(V), \vee\big)$. However, in the following we
will examine the star product for fixed values of both $\Lambda$ and
$z$ and therefore can absorb the deformation parameter $z$ in the
bilinear form $\Lambda$.

%
% Properties of the Star Product
%

\section{Properties of the Star Product}
\label{sec:PropertiesStarProduct}

In this section we want to examine some properties of the products
$\stpro[\Lambda]$, namely how the topology on $\SymTen^\bullet(V)$ can
be characterized by demanding that certain algebraic operations are
continuous, which products are equivalent, how to transform
$\SymTen^\bullet(V)$ to a space of complex functions, the existence
of continuous positive linear functionals and whether or
not some exponentials of elements in $\SymTen^\bullet(V)$ exist and
which elements are represented by essentially self-adjoint operators
via GNS-construction. At some points we will also work with the
completion $\SymTen^\bullet(V)^\cpl$ of $\SymTen^\bullet(V)$ and
therefore note that the previous
constructions and results extend to $\SymTen^\bullet(V)^\cpl$ by
continuity.

%
% Characterization of the Topology
%

\subsection{Characterization of the Topology}
\label{subsec:char}

We are going to show that the topology on $\SymTen^\bullet(V)$ that
was defined in the last section in a rather unmotivated way is --
under some additional assumptions -- the coarsest possible one. More
precisely, we want to express the extensions of positive Hermitian
forms with the help of suitable star products. Due to the
sesquilinearity of positive Hermitian forms, this is only possible if
we also have an antilinear structure on $\SymTen^\bullet(V)$, so we
construct a $^*$-involution.

There is clearly one and only one possibility to extend an antilinear
involution $\cc{\argument}$ on $V$ to a $^*$-involution $^*\colon
\Ten^\bullet(V)\rightarrow \Ten^\bullet(V)$ on the tensor algebra over
$V$, namely by $(x_1 \otimes \cdots \otimes x_k)^* := \cc{x}_k \otimes
\cdots \otimes \cc{x}_1$ for all $k\in \NN$ and $x \in V^k$ and
antilinear extension. Its restriction to $\SymTen^\bullet(V)$ gives a
$^*$-involution on $\big(\SymTen^\bullet(V),\vee\big)$.
% With this definition, the extension
% $\smash{\skal{\argument}{\argument}_\alpha^{\bullet}}$ of a positive
% Hermitian form
% $\smash{\skal{\argument}{\argument}_\alpha \in \mathcal{I}_V^*}$ is
% compatible with $\,\cdot\,^*$ and therefore $\,\cdot\,^*$ is
% continuous under the topology on $\Ten^\bullet(V)$ that is defined
% by the extensions of all Hilbert $^*$-seminorms of $V$.
\begin{proposition}
    Let $\cc{\argument}$ be a continuous antilinear involution on $V$,
    then the induced $^*$-involution on $\Ten^\bullet(V)$ is also
    continuous.
\end{proposition}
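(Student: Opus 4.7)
The plan is to introduce, for each $\seminorm{\gamma}{\argument} \in \mathcal{P}_V$, a companion continuous Hilbert seminorm $\seminorm{\gamma^*}{\argument}$ on $V$ and to establish the identity
\begin{equation*}
    \seminorm{\gamma}{X^*}^\bullet = \seminorm{\gamma^*}{X}^\bullet
    \qquad \text{for all } X \in \Ten^\bullet(V),
\end{equation*}
from which continuity of ${}^*\colon \Ten^\bullet(V) \to \Ten^\bullet(V)$ is immediate: the topology on $\Ten^\bullet(V)$ is generated by the extended Hilbert seminorms $\seminorm{\gamma}{\argument}^\bullet$, so the displayed identity yields the required seminorm estimate with $\seminorm{\gamma^*}{\argument}^\bullet$ again in the defining family.

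For the construction I would set $\skal{v}{w}_{\gamma^*} := \skal{\cc{w}}{\cc{v}}_\gamma$. Antilinearity in the first and linearity in the second argument follow by combining the antilinearity of $\cc{\argument}$ with the conventions on $\skal{\argument}{\argument}_\gamma$; Hermiticity and positivity are immediate; continuity is inherited from the assumed continuity of the involution and of $\skal{\argument}{\argument}_\gamma$. Hence $\seminorm{\gamma^*}{\argument} \in \mathcal{P}_V$, and by construction $\seminorm{\gamma^*}{v} = \seminorm{\gamma}{\cc{v}}$ for every $v \in V$.

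The heart of the argument is the sesquilinear identity
\begin{equation*}
    \skal{X^*}{Y^*}_\gamma^\bullet = \skal{Y}{X}_{\gamma^*}^\bullet
    \qquad \text{for all } X, Y \in \Ten^\bullet(V).
\end{equation*}
A quick bookkeeping check shows that both sides are linear in $X$ and antilinear in $Y$ (on the left, the antilinearity of ${}^*$ compensates the antilinearity of the first slot of $\skal{\argument}{\argument}_\gamma^\bullet$), so it suffices to verify the identity on pairs of simple tensors. Tensors of mismatched homogeneous degrees contribute zero on both sides by Definition~\ref{defi:extension}. For $X = x_1 \otimes \cdots \otimes x_k$ and $Y = y_1 \otimes \cdots \otimes y_k$, the left-hand side unfolds via $(x_1 \otimes \cdots \otimes x_k)^* = \cc{x}_k \otimes \cdots \otimes \cc{x}_1$ to $k! \prod_{m=1}^{k} \skal{\cc{x}_{k-m+1}}{\cc{y}_{k-m+1}}_\gamma$, which after reindexing $n = k-m+1$ matches the right-hand side $k! \prod_{n=1}^{k} \skal{y_n}{x_n}_{\gamma^*} = k! \prod_{n=1}^{k} \skal{\cc{x}_n}{\cc{y}_n}_\gamma$. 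Setting $Y = X$ then yields the announced seminorm identity.

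I do not anticipate any substantive obstacle; the only real care is in keeping track of the simultaneous order reversal built into $X^*$ and the swap of arguments built into the definition of $\gamma^*$, which are arranged precisely so that they cancel.
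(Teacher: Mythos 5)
Your proof is correct and follows essentially the same route as the paper: both define the companion form $\skal{v}{w}_{\gamma^*}$ (your $\skal{\cc{w}}{\cc{v}}_\gamma$ equals the paper's $\cc{\skal{\cc{v}}{\cc{w}}_\gamma}$ by Hermiticity), verify the key sesquilinear identity on simple tensors, and extend by the matching (anti-)linearity of both sides to conclude $\seminorm{\gamma}{X^*}^\bullet = \seminorm{\gamma^*}{X}^\bullet$. No gaps.
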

\begin{proof}
    For $\skal{\argument}{\argument}_\alpha \in \mathcal{I}_V$ define
    the continuous positive Hermitian form
    $V^2\ni (v, w) \mapsto \skal{v}{w}_{\alpha^*} :=
    \cc{\skal{\cc{v}}{\cc{w}}_\alpha}$.
    Then
    $\skal{X^*}{Y^*}_\alpha^\bullet =
    \cc{\skal{X}{Y}_{\alpha^*}^\bullet}$
    and in particular
    $\seminorm{\alpha}{X^*}^\bullet = \seminorm{\alpha^*}{X}^\bullet$
    for all $X, Y \in \Ten^\bullet(V)$ because this is clearly true
    for simple tensors and because both sides are (anti-)linear in $X$
    and $Y$.
\end{proof}

For certain bilinear forms $\Lambda$ on $V$ we can also show that
${ }^*$ is a $^*$-involution of $\stpro[\Lambda]$, which is of
course not a new result:
\begin{definition}
    Let $\cc{\argument}\colon V \rightarrow V$ be a continuous
    antilinear involution on $V$. For every continuous bilinear form
    $\Lambda\colon V \times V\rightarrow \CC$ we define its conjugate
    $\Lambda^*$ by $\Lambda^*(v, w) := \cc{\Lambda(\cc{w}, \cc{v})}$,
    which is again a continuous bilinear form on $V$. We say that
    $\Lambda$ is Hermitian if $\Lambda=\Lambda^*$ holds.
\end{definition}
Note that the bilinear form $(v, w) \mapsto \Lambda(v,w)$ is Hermitian
if and only if the sesquilinear form $(v, w)\mapsto \Lambda(\cc{v},w)$
is Hermitian.  The typical example of a complex vector space $V$ with
antilinear involution $\cc{\argument}$ is that $V = W \otimes \CC$ is
the complexification of a real vector space $W$ with the canonical
involution $\cc{w \otimes \lambda} := w \otimes \cc{\lambda}$. In this
case, every bilinear form $\Lambda$ on $V$ is fixed by two bilinear
forms $\Lambda_r, \Lambda_i\colon W\times W\rightarrow \RR$, the
restriction of the real- and imaginary part of $\Lambda$ to the real
subspace $W \cong W \otimes 1$ of $V$, and $\Lambda$ is Hermitian if
and only if $\Lambda_r$ is symmetric and $\Lambda_i$ antisymmetric.
Similarly to \cite[Prop.~3.25]{waldmann:2014a} we
get:
\begin{proposition}
    Let $\cc{\argument}\colon V \rightarrow V$ be a continuous
    antilinear involution and $\Lambda$ a continuous bilinear form on
    $V$. Then $(X \stpro[\Lambda] Y)^* = Y^* \stpro[\Lambda^*] X^*$
    holds for all $X, Y \in \SymTen^\bullet(V)$. Consequently, if
    $\Lambda$ is Hermitian, then
    $\big(\SymTen^\bullet(V), \stpro[\Lambda], { }^*\big)$ is a
    locally convex $^*$-algebra.
\end{proposition}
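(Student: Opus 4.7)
The plan is to reduce the identity to a pointwise computation on simple symmetric tensors and then verify it via a careful bookkeeping of contractions. By the continuity of $\stpro[\Lambda]$, $\stpro[\Lambda^*]$ (Theorem \ref{theorem:stprocont}), and the continuous antilinear involution ${}^*$, together with the bilinearity of both star products and the antilinearity of ${}^*$, it suffices to verify $(X \stpro[\Lambda] Y)^* = Y^* \stpro[\Lambda^*] X^*$ for $X = x_1 \vee \cdots \vee x_k$ and $Y = y_1 \vee \cdots \vee y_\ell$ with $x_i, y_j \in V$, since such elements algebraically span $\SymTen^\bullet(V)$.

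The central intermediate step is the explicit ``Wick-type'' formula
\[
X \stpro[\Lambda] Y = \sum_{t=0}^{\min(k,\ell)} \sum_{\substack{I \subseteq \{1,\ldots,k\},\ J \subseteq \{1,\ldots,\ell\} \\ |I|=|J|=t,\ \phi\colon I \to J \text{ bijection}}} \Big(\prod_{i\in I}\Lambda(x_i, y_{\phi(i)})\Big)\, \Big(\bigvee_{i\notin I} x_i\Big) \vee \Big(\bigvee_{j\notin J} y_j\Big),
\]
which I would derive directly from Definition~\ref{defi:stpro}: expanding $X = \SymFkt^k(x_1 \otimes \cdots \otimes x_k)$ and $Y = \SymFkt^\ell(y_1 \otimes \cdots \otimes y_\ell)$, applying $(\P_\Lambda)^t$ produces the prefactor $\frac{k!\,\ell!}{(k-t)!\,(\ell-t)!}$ times a sum over $t$-pairings between factors of $X$ and $Y$; after dividing by $t!$ and symmetrising via $\multvee$, this exactly counts pairs $(I, J, \phi)$ as above.

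Applying ${}^*$ to this formula, and using that commutativity of $\vee$ gives $(x_1 \vee \cdots \vee x_r)^* = \cc{x}_1 \vee \cdots \vee \cc{x}_r$, one obtains
\[
(X \stpro[\Lambda] Y)^* = \sum_{t,I,J,\phi} \Big(\prod_{i\in I}\cc{\Lambda(x_i, y_{\phi(i)})}\Big) \Big(\bigvee_{j\notin J} \cc{y}_j\Big) \vee \Big(\bigvee_{i\notin I} \cc{x}_i\Big).
\]
On the other hand, applying the same explicit formula to $Y^* \stpro[\Lambda^*] X^*$ yields a sum over triples $(J', I', \psi\colon J' \to I')$. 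The identity $\Lambda^*(\cc{y_j}, \cc{x_i}) = \cc{\Lambda(x_i, y_j)}$ and the substitution $\phi := \psi^{-1}$ (which is a bijection between the two indexing sets) identify the two expressions term by term.

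The main obstacle is bookkeeping in the derivation of the closed formula, i.e.\ checking that the combinatorial factors from symmetrisation on both sides of $\P_\Lambda$ collapse to a sum over $(I, J, \phi)$ with weight $1$; once this is in place, everything matches by the elementary substitution above. The final assertion is then immediate: if $\Lambda = \Lambda^*$, the identity reads $(X \stpro[\Lambda] Y)^* = Y^* \stpro[\Lambda] X^*$, which, combined with continuity of $\stpro[\Lambda]$ and of ${}^*$, shows that $(\SymTen^\bullet(V), \stpro[\Lambda], {}^*)$ is a locally convex ${}^*$-algebra.
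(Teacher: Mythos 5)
Your proof is correct, but it takes a genuinely different route from the paper. The paper never writes down a closed formula for the product: it instead establishes the intertwining identities ${}^*\circ\SymFkt^\bullet = \SymFkt^\bullet\circ{}^*$, ${}^*\circ\multotimes = \multotimes\circ\tau\circ({}^*\otimes_\pi{}^*)$ (with $\tau$ the flip), and $\tau\circ({}^*\otimes_\pi{}^*)\circ\P_{\Lambda} = \P_{\Lambda^*}\circ\tau\circ({}^*\otimes_\pi{}^*)$ on symmetric tensors, each checked on simple tensors, and then pushes these through the defining series term by term. That argument is shorter and sidesteps all combinatorics. Your approach instead derives the explicit Wick-type expansion of $X\stpro[\Lambda]Y$ on simple symmetric tensors and matches terms under $\phi\mapsto\phi^{-1}$; I checked the weight-$1$ normalization (the factor $\frac{1}{t!}\cdot\frac{k!\,\ell!}{(k-t)!\,(\ell-t)!}=\binom{k}{t}\binom{\ell}{t}t!$ is exactly the number of triples $(I,J,\phi)$) and the low-degree cases, and the substitution argument using $\Lambda^*(\cc{y},\cc{x})=\cc{\Lambda(x,y)}$ is sound, so this works. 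What your route buys is an explicit formula for the product that is independently useful; what it costs is the combinatorial bookkeeping you rightly flag as the main burden. One small remark: invoking continuity to reduce to simple symmetric tensors is unnecessary here, since elements of $\SymTen^\bullet(V)$ (not its completion) are already finite linear combinations of such tensors, so bilinearity of the products and antilinearity of ${}^*$ suffice.
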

\begin{proof}
    The identities
    ${ }^*\circ \SymFkt^\bullet = \SymFkt^\bullet\circ { }^*$ and
    ${ }^* \circ \multotimes = \multotimes \circ \tau \circ \big({ }^*
    \otimes_\pi { }^*\big)$,
    with
    $\tau\colon \Ten^\bullet(V)\otimes_\pi\Ten^\bullet(V)\rightarrow
    \Ten^\bullet(V)\otimes_\pi\Ten^\bullet(V)$
    defined as $\tau(X \otimes_\pi Y) := Y \otimes_\pi X$, can easily
    be checked on simple tensors, so
    ${ }^* \circ \multvee = \multvee\circ \tau \circ \big({ }^*
    \otimes_\pi { }^*\big)$.
    Combining this with
    $\tau \circ ({ }^* \otimes_\pi { }^* ) \circ \P_{\Lambda} =
    \P_{\Lambda^*} \circ \tau \circ ({ }^* \otimes_\pi { }^*)$
    on symmetric tensors, which again can easily be checked on simple
    symmetric tensors, yields the desired result.
%  \begin{align*}
%   (X\stpro[\Lambda] Y)^*\,&=\,\bigg(\,\cdot\,^*\circ \sum_{t=0}^\infty \,\frac{1}{\,t!\,}\,\multvee\circ\big(\P_{\!\Lambda}\big)^t \,\bigg)(X\otimes_\pi Y)\,\\
%   &=\,\bigg(\,\sum_{t=0}^\infty \,\frac{1}{\,t!\,}\,\multvee \circ \tau \circ \big(\,\cdot\,^*\otimes_\pi \,\cdot\,^*\big)\circ\big(\P_{\!\Lambda}\big)^t \,\bigg)(X\otimes_\pi Y)\,\\
%   &=\,\bigg(\,\sum_{t=0}^\infty \,\frac{1}{\,t!\,}\,\multvee\circ\big(\P_{\!\Lambda^*}\big)^t\circ \tau \circ \big(\,\cdot\,^*\otimes_\pi \,\cdot\,^*\big) \,\bigg)(X\otimes_\pi Y) \,\\
%   &=\,Y^*\stpro[\Lambda^*] X^*
%  \end{align*}
%  for all $X,Y\in \SymTen^\bullet(V)$.
\end{proof}
\begin{lemma}
    \label{lemma:char}%
    Let $\cc{\argument}\colon V\rightarrow V$ be a continuous
    antilinear involution. For every
    $\skal{\argument}{\argument}_\alpha \in \mathcal{I}_V$ we define a
    continuous bilinear form $\Lambda_\alpha$ on $V$ by
    $\Lambda_\alpha(v, w) := \skal{\cc{v}}{w}_\alpha$ for all
    $v, w \in V$, then $\Lambda_\alpha$ is Hermitian and the
    identities
    \begin{gather}
        \label{eq:starskal}
        \sum_{t=0}^\infty
        \frac{1}{t!}
        \multotimes\Big(
        \big(\P_{\Lambda_\alpha}\big)^t
        \big(
        \langle X^* \rangle_t \otimes_\pi \langle Y\rangle_t
        \big)
        \Big)
        =
        \skal{X}{Y}_\alpha^{\bullet} \\
        \shortintertext{and}
        \label{eq:starskalNull}
        \big\langle
        \multstpro[\Lambda_\alpha](X^*\otimes_\pi Y)
        \big\rangle_0
        =
        \skal{X}{Y}_\alpha^{\bullet}
    \end{gather}
    hold for all $X, Y\in \Ten^\bullet(V)$.
\end{lemma}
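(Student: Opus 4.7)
The plan is to verify the three claims in sequence, using the standard trick of checking identities on simple homogeneous tensors and extending by sesquilinearity.

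First I would check that $\Lambda_\alpha$ is Hermitian by the direct calculation
\[
\Lambda_\alpha^*(v,w) = \cc{\Lambda_\alpha(\cc{w},\cc{v})} = \cc{\skal{w}{\cc{v}}_\alpha} = \skal{\cc{v}}{w}_\alpha = \Lambda_\alpha(v,w),
\]
using that $\cc{\cc{v}}=v$ and the hermiticity of $\skal{\argument}{\argument}_\alpha$.

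Next I would observe that (\ref{eq:starskalNull}) follows from (\ref{eq:starskal}) by a degree count. Expanding $\multstpro[\Lambda_\alpha](X^* \otimes_\pi Y) = \sum_{t=0}^\infty \frac{1}{t!} \multvee\!\big(\P_{\Lambda_\alpha}^{\,t}(X^*\otimes_\pi Y)\big)$ and projecting onto tensor degree $0$, only the pairs of homogeneous components that are annihilated to the empty tensor survive. Since $\P_{\Lambda_\alpha}$ lowers both tensor degrees by $1$, the only contribution from the $t$-th summand comes from $\langle X^*\rangle_t \otimes_\pi \langle Y\rangle_t$, and the resulting tensor has pure scalar degree $(0,0)$, where $\multvee$ and $\multotimes$ agree. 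This reduces (\ref{eq:starskalNull}) to (\ref{eq:starskal}).

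For (\ref{eq:starskal}) itself, I would note that both sides are antilinear in $X$ and linear in $Y$ (the right-hand side by definition of $\skal{\argument}{\argument}_\alpha^\bullet$, the left-hand side because $^*$ is antilinear while $\P_{\Lambda_\alpha}$, $\multotimes$, and the projections $\langle\argument\rangle_t$ are all linear). It therefore suffices to check the identity on simple homogeneous tensors $X=x_1\otimes\cdots\otimes x_k$ and $Y=y_1\otimes\cdots\otimes y_\ell$. If $k\neq\ell$, then $\langle X^*\rangle_t$ and $\langle Y\rangle_t$ cannot be simultaneously nonzero, so the left side vanishes; the right side also vanishes since homogeneous components of different degree are orthogonal under $\skal{\argument}{\argument}_\alpha^\bullet$. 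If $k=\ell$, the only surviving term is $t=k$, and I would compute $\P_{\Lambda_\alpha}^{\,k}(X^*\otimes_\pi Y)$ iteratively on $X^*=\cc{x}_k\otimes\cdots\otimes\cc{x}_1$: each application contracts the rightmost entry $\cc{x}_i$ of the left factor against the leftmost entry $y_i$ of the right factor, producing the factor $\Lambda_\alpha(\cc{x}_i,y_i)=\skal{x_i}{y_i}_\alpha$ together with a combinatorial coefficient equal to the product of the current degrees. Iterating $k$ times yields the overall coefficient $(k!)^2\prod_{i=1}^k\skal{x_i}{y_i}_\alpha$, and division by $t!=k!$ gives exactly $k!\prod_{i=1}^k\skal{x_i}{y_i}_\alpha = \skal{X}{Y}_\alpha^\bullet$ by (\ref{eq:semiinnerproductextension1}).

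No real obstacle arises; the only point requiring care is keeping track of the combinatorial coefficients $k(k-1)\cdots 1$ on each side in the iterated contraction, and matching the reversed order of factors in $X^*$ with the forward order in $Y$ so that $\cc{x}_i$ is paired with $y_i$.
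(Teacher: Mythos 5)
Your proposal is correct and follows essentially the same route as the paper's proof: Hermiticity by direct computation, \eqref{eq:starskalNull} from \eqref{eq:starskal} via the grading of $\multvee$ and $\P_{\Lambda_\alpha}$, and \eqref{eq:starskal} checked on factorizing tensors of equal degree by iterating the contraction to obtain the coefficient $(k!)^2$ and the pairing $\Lambda_\alpha(\cc{x}_m,y_m)=\skal{x_m}{y_m}_\alpha$. The bookkeeping of the reversed order in $X^*$ and of the combinatorial factors matches the paper's computation exactly.
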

\begin{proof}
    Clearly, $\Lambda_\alpha$ is Hermitian because
    $\skal{\argument}{\argument}_\alpha$ is Hermitian. Then
    \eqref{eq:starskalNull} follows directly from \eqref{eq:starskal}
    because of the grading of $\multvee$ and $\P_{\Lambda_\alpha}$.
    For proving \eqref{eq:starskal} it is sufficient to check
    it for factorizing tensors of the same degree, because both sides
    are (anti-)linear in $X$ and $Y$ and vanish if $X$ and $Y$ are
    homogeneous of different degree. If $X$ and $Y$ are of degree $0$
    then \eqref{eq:starskal} is clearly fulfilled.
    Otherwise we get
    \begin{align*}
        &\frac{1}{k!}
        \multotimes
        \Big(
        \big(\P_{\Lambda_\alpha}\big)^k
        \big(
        (x_1 \otimes \cdots \otimes x_k)^*
        \otimes_\pi
        (y_1\otimes \cdots \otimes y_k)
        \big)
        \Big) \\
        &\quad=
        \frac{1}{k!}
        \multotimes
        \Big(
        \big(\P_{\Lambda_\alpha}\big)^k
        \big(
        (\cc{x}_k \otimes \cdots \otimes \cc{x}_1)
        \otimes_\pi
        (y_1 \otimes \cdots \otimes y_k)
        \big)
        \Big) \\
        &\quad=
        \frac{1}{k!}
        \multotimes
        \bigg(
        \big(1 \otimes_\pi 1 \big)
        (k!)^2
        \prod_{m=1}^k \Lambda_\alpha(\cc{x}_m, y_m)
        \bigg)\\
        &\quad=
        k!
        \prod_{m=1}^k \Lambda_\alpha(\cc{x}_m, y_m)\\
        &\quad=
        k!
        \prod_{m=1}^k \skal{x_m}{y_m}_\alpha \\
        &\quad=
        \skal{x_1 \otimes \cdots \otimes x_k}
        {y_1 \otimes\cdots \otimes y_k}^\bullet_\alpha.
    \end{align*}
\end{proof}

\begin{theorem}
    \label{theo:char}%
    The topology on $\SymTen^\bullet(V)$ is the coarsest locally
    convex one that makes all star products $\stpro[\Lambda]$ for all
    continuous and Hermitian bilinear forms $\Lambda$ on $V$ as well
    as the $^*$-involution and the projection
    $\langle\argument\rangle_0$ onto the scalars continuous. In
    addition we have for all $X, Y \in \SymTen^\bullet(V)$ and all
    $\skal{\argument}{\argument}_\alpha \in \mathcal{I}_V$
    \begin{equation}
        \label{eq:SkalarproduktSternprodukt}
        \langle X^* \stpro[\Lambda_\alpha] Y\rangle_0
        =
        \skal{X}{Y}_\alpha^\bullet,
    \end{equation}
    with $\Lambda_\alpha$ as in Lemma~\ref{lemma:char}.
\end{theorem}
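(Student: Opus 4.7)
The plan is to establish the two topological inclusions separately and to deduce the supplementary formula \eqref{eq:SkalarproduktSternprodukt} directly from Lemma~\ref{lemma:char}. For the latter note that, since for $X, Y \in \SymTen^\bullet(V)$ Definition~\ref{defi:stpro} gives $X \stpro[\Lambda_\alpha] Y = \multstpro[\Lambda_\alpha](X \otimes_\pi Y)$, the identity \eqref{eq:SkalarproduktSternprodukt} is precisely \eqref{eq:starskalNull} restricted to symmetric tensors.

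For the easy inclusion I would verify that the topology already defined on $\SymTen^\bullet(V)$ renders all three families of operations continuous: each star product $\stpro[\Lambda]$ by Theorem~\ref{theorem:stprocont}, the involution by the continuity result for $^*$ established above, and the scalar projection $\langle \argument \rangle_0$ because \eqref{eq:skalBulletDef} immediately yields $\seminorm{\alpha}{\langle X \rangle_0}^\bullet \leq \seminorm{\alpha}{X}^\bullet$ for every $X$ and every $\skal{\argument}{\argument}_\alpha \in \mathcal{I}_V$.

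The main step is the reverse inclusion, and this is where Lemma~\ref{lemma:char} does all of the work. Let $\topology{T}$ be any locally convex topology on $\SymTen^\bullet(V)$ making the listed operations continuous. For each $\skal{\argument}{\argument}_\alpha \in \mathcal{I}_V$, the bilinear form $\Lambda_\alpha$ from Lemma~\ref{lemma:char} is continuous and Hermitian, so $\stpro[\Lambda_\alpha]$, $^*$ and $\langle \argument \rangle_0$ are all $\topology{T}$-continuous. By \eqref{eq:SkalarproduktSternprodukt} the sesquilinear form $(X, Y) \mapsto \skal{X}{Y}_\alpha^\bullet$ is therefore $\topology{T}$-continuous as a composition of $\topology{T}$-continuous maps. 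The standard fact that any continuous sesquilinear form on a locally convex space can be dominated by a product $s(\argument)\, s(\argument)$ for a single continuous seminorm $s$ then produces a $\topology{T}$-continuous seminorm $s$ with $|\skal{X}{Y}_\alpha^\bullet| \leq s(X)\, s(Y)$; specialising to $X = Y$ gives $\seminorm{\alpha}{X}^{\bullet} \leq s(X)$, so $\seminorm{\alpha}{\argument}^\bullet$ is $\topology{T}$-continuous. Since the topology on $\SymTen^\bullet(V)$ is generated by exactly these seminorms, it is coarser than $\topology{T}$.

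The only conceptually non-trivial ingredient is Lemma~\ref{lemma:char}, which is tailored so as to reconstruct the extended Hermitian forms from the star product, involution and scalar projection; with it in hand the remaining argument is essentially bookkeeping together with a routine use of the continuity of sesquilinear forms.
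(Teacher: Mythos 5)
Your proposal is correct and follows essentially the same route as the paper: both directions are handled exactly as in the paper's proof, with the reverse inclusion obtained by composing the three $\topology{T}$-continuous operations via Lemma~\ref{lemma:char} to recover the extended Hermitian forms $\skal{\argument}{\argument}_\alpha^\bullet$, and the formula \eqref{eq:SkalarproduktSternprodukt} read off from \eqref{eq:starskalNull}. The only difference is that you spell out the (correct) standard step from joint continuity of the sesquilinear form to $\topology{T}$-continuity of the seminorm $\seminorm{\alpha}{\argument}^\bullet$, which the paper leaves implicit.
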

\begin{proof}
    We have already shown the continuity of the star product and of
    the $^*$-involution, the continuity of $\langle\argument\rangle_0$
    is clear. Conversely, if these three functions are continuous,
    their compositions yield the extensions of all
    $\skal{\argument}{\argument}_\alpha\in \mathcal{I}_V$ which then
    have to be continuous. Then \eqref{eq:starskalNull} gives
    \eqref{eq:SkalarproduktSternprodukt} for symmetric tensors $X$ and
    $Y$.
\end{proof}

%
% Equivalence of Star Products
%

\subsection{Equivalence of Star Products}

Next we want to examine the usual equivalence transformations between
star products, given by exponentials of a Laplace operator (see
\cite{waldmann:2014a} for the algebraic background).
\begin{definition}
    Let $b\colon V\times V\rightarrow \CC$ be a symmetric bilinear
    form on $V$, i.e. $b(v, w) = b(w,v )$ for all $v, w \in V$. Then
    we define the Laplace operator $\Delta_{b}\colon \Ten^\bullet(V)
    \rightarrow \Ten^{\bullet-2}(V)$ as the linear map given on simple
    tensors of degree $k \in \NN \backslash\{1\}$ by
    \begin{equation}
        \label{eq:DeltaDef}
        \Delta_{b}
        (x_1 \otimes \cdots \otimes x_k)
        :=
        \frac{k(k-1)}{2} b(x_1, x_2) x_3 \otimes \cdots \otimes x_{k}.
    \end{equation}
\end{definition}
Note that $\Delta_{b}$ can be restricted to symmetric tensors on which
it coincides with the Laplace operator from
\cite[Eq.~(2.31)]{waldmann:2014a}.  However, there is no need for
$\Delta_b$ to be continuous even if $b$ is continuous, because the
Hilbert tensor product in general does not allow the extension of all
continuous multilinear forms. Note that this is very different from
the approach taken in \cite{waldmann:2014a} where the projective
tensor product was used: this guaranteed the continuity of the Laplace
operator directly for all continuous bilinear forms.

For the restriction of $\Delta_b$ to $\SymTen^2(V)$, continuity is
equivalent to the existence of a
$\seminorm{\alpha}{\argument} \in \mathcal{P}_V$ that fulfills
$\abs{\Delta_b X} \le \seminorm{\alpha}{X}^\bullet$ for all
$X\in \SymTen^2(V)$. This motivates the following:
\begin{definition}
    \label{defi:hilbertschmidt}%
    A bilinear form of Hilbert-Schmidt type on $V$ is a bilinear form
    $b\colon V\times V\rightarrow \CC$ for which there is a seminorm
    $\seminorm{\alpha}{\argument} \in \mathcal{P}_V$ such that the
    following two conditions are fulfilled:
    \begin{definitionlist}
    \item If $\seminorm{\alpha}{v} = 0$ or $\seminorm{\alpha}{w} = 0$
        for vectors $v, w\in V$, then $b(v, w) = 0$.
    \item For every tuple of
        $\skal{\argument}{\argument}_\alpha$-orthonormal vectors
        $e\in V^d$, $d\in \NN$, the estimate
        \begin{equation}
            \label{eq:HScondition}
            \sum_{i, j = 1}^d \abs{b(e_i, e_j)}^2 \le 1
        \end{equation}
        holds.
    \end{definitionlist}
    For such a bilinear form of Hilbert-Schmidt type $b$ we define
    $\mathcal{P}_{V, b, HS}$ as the set of all
    $\seminorm{\alpha}{\argument} \in \mathcal{P}_V$ that fulfill
    these two conditions.
\end{definition}
We can characterize the bilinear forms of Hilbert-Schmidt type in the
following way:
\begin{proposition}
    \label{prop:hschar}%
    Let $b$ be a symmetric bilinear form on $V$ and
    $\seminorm{\alpha}{\argument}\in \mathcal{P}_V$, then the
    following two statements are equivalent:
    \begin{propositionlist}
    \item The bilinear form $b$ is of Hilbert-Schmidt type and
        $\seminorm{\alpha}{\argument} \in \mathcal{P}_{V, b, HS}$.
    \item The estimate
        $\abs{\Delta_b X} \le 2^{-1/2} \seminorm{\alpha}{X}^\bullet$
        holds for all $X\in \SymTen^2(V)$.
    \end{propositionlist}
    Moreover, if this holds then
    $\seminorm{\alpha}{\argument} \in \mathcal{P}_{V, b}$ and $b$ is
    continuous.
\end{proposition}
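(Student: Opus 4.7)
The plan is to reduce everything to computations on an $\skal{\argument}{\argument}_\alpha$-orthonormal tuple, where both $\Delta_b$ and the Hilbert-tensor seminorm take simple quadratic forms. The key calculation to pin down first is: given an $\alpha$-orthonormal tuple $e_1,\ldots,e_d \in V$ and a symmetric tensor $X = \sum_{i,j=1}^d c_{ij}\, e_i \otimes e_j \in \SymTen^2(V)$, Definition~\ref{defi:extension} yields $\seminorm{\alpha}{X}^{\bullet,2} = 2\sum_{i,j} \abs{c_{ij}}^2$, while \eqref{eq:DeltaDef} gives $\Delta_b X = \sum_{i,j} c_{ij}\, b(e_i,e_j)$.

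For the implication (i) $\Rightarrow$ (ii), given $X \in \SymTen^2(V)$ I would apply Lemma~\ref{lemma:helpfull} to split $X = X_0 + \tilde X$ with $\seminorm{\alpha}{X_0}^\bullet = 0$ and $\tilde X$ of the orthonormal form above. The first condition in Definition~\ref{defi:hilbertschmidt} forces $\Delta_b X_0 = 0$, so it suffices to bound $\abs{\Delta_b \tilde X}$. A single Cauchy-Schwarz estimate together with the Hilbert-Schmidt bound \eqref{eq:HScondition} then gives $\abs{\Delta_b X}^2 \le \bigl(\sum_{i,j}\abs{c_{ij}}^2\bigr)\bigl(\sum_{i,j}\abs{b(e_i,e_j)}^2\bigr) \le \tfrac{1}{2}\seminorm{\alpha}{X}^{\bullet,2}$.

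For (ii) $\Rightarrow$ (i) I would verify the two Hilbert-Schmidt conditions by testing (ii) on carefully chosen symmetric tensors. The first condition is obtained by testing on $v \vee w := \SymFkt^2(v \otimes w)$ when $\seminorm{\alpha}{v} = 0$: Cauchy-Schwarz for $\skal{\argument}{\argument}_\alpha$ gives $\seminorm{\alpha}{v \vee w}^\bullet = 0$, while symmetry of $b$ gives $\Delta_b(v\vee w) = b(v,w)$, so (ii) forces $b(v,w) = 0$. The second condition is obtained by testing on $X := \sum_{i,j=1}^d \overline{b(e_i,e_j)}\,e_i\otimes e_j$, which is symmetric precisely because $b$ is; the key formulas yield $\Delta_b X = \sum_{i,j}\abs{b(e_i,e_j)}^2$ and $\seminorm{\alpha}{X}^{\bullet,2} = 2\sum_{i,j}\abs{b(e_i,e_j)}^2$, so (ii) rearranges into $\sum_{i,j}\abs{b(e_i,e_j)}^2 \le 1$.

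For the \emph{moreover} statement I would deduce $\abs{b(v,w)} \le \seminorm{\alpha}{v}\seminorm{\alpha}{w}$, which simultaneously yields $\seminorm{\alpha}{\argument} \in \mathcal{P}_{V,b}$ and continuity of $b$. When a seminorm vanishes the first Hilbert-Schmidt condition gives $b(v,w)=0$ directly; otherwise, normalize and run a short Gram-Schmidt on $v,w$ to produce an $\alpha$-orthonormal tuple of length at most two covering both vectors, then combine Cauchy-Schwarz with \eqref{eq:HScondition} applied in the $d \le 2$ case. The main subtlety throughout is making sure that the test tensors used in (ii) $\Rightarrow$ (i) genuinely lie in $\SymTen^2(V)$ rather than only in $\Ten^2(V)$; this is precisely where the symmetry of $b$ has to enter.
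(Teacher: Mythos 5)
Your proposal is correct and follows essentially the same route as the paper: the same Lemma~\ref{lemma:helpfull} decomposition plus a single Cauchy--Schwarz step for \refitem{item:1}~$\Rightarrow$~\refitem{item:2}, and the same test tensors $v\vee w$ and $\sum_{i,j}\cc{b(e_i,e_j)}\,e_i\otimes e_j$ for the converse. The only (harmless) deviation is in the \emph{moreover} clause, where the paper gets $\abs{b(v,w)}=\abs{\Delta_b(v\vee w)}\le 2^{-1/2}\seminorm{\alpha}{v\vee w}^\bullet\le\seminorm{\alpha}{v}\seminorm{\alpha}{w}$ in one line directly from \refitem{item:2}, whereas your Gram--Schmidt detour through the Hilbert--Schmidt conditions is more laborious but still valid.
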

\begin{proof}
    If the first point holds, let $X \in \Ten^2(V)$ be
    given. Construct $X_0 = \sum_{a \in A} x_{a, 1} \otimes x_{a, 2}$
    and
    $\tilde{X} = \sum_{a'_1, a'_2 = 1}^d X^{a'_1, a'_2} e_{a'_1}
    \otimes e_{a'_2} \in \Ten^2(V)$
    like in Lemma~\ref{lemma:helpfull}. Then $b(x_{a,1}, x_{a,2}) = 0$
    for all $a \in A$ because $\seminorm{\alpha}{x_{a, 1}} = 0$ or
    $\seminorm{\alpha}{x_{a,2}} = 0$. Moreover,
    \begin{align*}
        \abs{\Delta_b X}
        &\le
        \abs[\bigg]{
          \sum\nolimits_{a'_1, a'_2=1}^d
          X^{a'_1, a'_2}
          b(e_{a'_1}, e_{a'_2})
        } \\
        &\stackrel{\cs}{\le}
        \bigg(
        \sum\nolimits_{a'_1, a'_2=1}^d
        \abs[\big]{X^{a'_1,a'_2}}^2
        \bigg)^{\frac{1}{2}}
        \bigg(
        \sum\nolimits_{a'_1, a'_2=1}^d
        \abs[\big]{b(e_{a'_1}, e_{a'_2})}^2
        \bigg)^{\frac{1}{2}}
        \\
        &\le
        \frac{1}{\sqrt{2}} \seminorm{\alpha}{X}^\bullet
    \end{align*}
    shows that the second point holds.  Conversely, from the second
    point we get
    $\abs{b(v,w)} = \abs{\Delta_b (v\vee w)} \le 2^{-1/2}
    \seminorm{\alpha}{v \vee w}^\bullet \le \seminorm{\alpha}{v}
    \seminorm{\alpha}{w}$
    for all $v, w \in V$. Hence
    $\seminorm{\alpha}{\argument} \in \mathcal{P}_{V,b}$, the bilinear
    form $b$ is
    continuous, and $b(v,w) = 0$ if one of $v$ or $w$ is in the kernel
    of $\seminorm{\alpha}{\argument}$. Moreover, given an
    $\skal{\argument}{\argument}_\alpha$-orthonormal set of vectors
    $e \in V^d$, $d\in \NN$, we define
    $X := \sum_{i,j=1}^d \cc{b(e_i,e_j)} e_i \otimes e_j \in
    \SymTen^2(V)$ and get
    \begin{equation*}
        0
        \le
        \sum_{i,j=1}^d
        \abs{b(e_i,e_j)}^2
        =
        \abs{\Delta_bX}
        \le
        \frac{1}{\sqrt{2}}
        \seminorm{\alpha}{X}^\bullet
        =
        \bigg(\sum\nolimits_{i,j=1}^d \abs{b(e_i,e_j)}^2\bigg)^{\frac{1}{2}},
    \end{equation*}
    which implies $\sum_{i,j=1}^d \abs{b(e_i,e_j)}^2 \le 1$.
\end{proof}

Note that this also implies that for a bilinear form of
Hilbert-Schmidt type $b$, the set $\mathcal{P}_{V, b, HS}$ is cofinal
in $\mathcal{P}_V$, because if
$\seminorm{\alpha}{\argument} \in \mathcal{P}_{V, b, HS}$ and
$\seminorm{\beta}{\argument} \ge \seminorm{\alpha}{\argument}$, then
$\abs{\Delta_b X} \le 2^{-1/2} \seminorm{\alpha}{X}^\bullet \le
2^{-1/2}\seminorm{\beta}{X}^\bullet$
and so $\seminorm{\beta}{\argument} \in \mathcal{P}_{V, b, HS}$.

As a consequence of the above characterization we see that a
symmetric bilinear form $b$ on $V$ has to be of Hilbert-Schmidt type
if we want $\Delta_b$ to be continuous. We are going to show now that
this is also sufficient:
\begin{proposition}
    \label{proposition:Laplaceestimate}%
    Let $b$ be a symmetric bilinear form of Hilbert-Schmidt type on
    $V$, then the Laplace operator $\Delta_b$ is continuous and
    fulfills the estimate
    \begin{equation}
        \label{eq:Laplacepowers}
        \seminorm[\big]{\alpha}{(\Delta_b)^t X}^\bullet
        \le
        \frac{\sqrt{(2t)!}}{(2r)^t} \seminorm{2r\alpha}{X}^\bullet
    \end{equation}
    for all $X \in \Ten^\bullet(V)$, $t\in \NN_0$, $r\ge 1$, and all
    $\seminorm{\alpha}{\argument} \in \mathcal{P}_{V, b, HS}$.
\end{proposition}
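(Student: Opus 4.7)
The plan is to prove \eqref{eq:Laplacepowers} in two stages: first I establish a single-step bound for $\Delta_b$ on each homogeneous component $\Ten^k(V)$, then I iterate $t$ times and sum over degrees, exchanging the pointwise factors $\sqrt{k!/(k-2t)!}$ for the exponential weight $(2r)^k$ furnished by $\seminorm{2r\alpha}{\argument}^\bullet$ on the right-hand side. Continuity of $\Delta_b$ will follow from the special case $t=r=1$ of the final estimate.

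For the single step, fix $\seminorm{\alpha}{\argument}\in\mathcal{P}_{V,b,HS}$ and $X\in\Ten^k(V)$, and decompose $X = X_0 + \tilde X$ using Lemma~\ref{lemma:helpfull}. The crucial observation is that $\seminorm{\alpha}{\Delta_b X_0}^\bullet = 0$: in every simple tensor summand $x_{a,1}\otimes\cdots\otimes x_{a,k}$ of $X_0$ some factor $x_{a,n}$ lies in the kernel of $\seminorm{\alpha}{\argument}$, and one splits by the position $n$. If $n\in\{1,2\}$, the first clause of Definition~\ref{defi:hilbertschmidt} forces $b(x_{a,1},x_{a,2})=0$, killing the summand of $\Delta_b X_0$; if $n\ge 3$, the null factor survives in $x_{a,3}\otimes\cdots\otimes x_{a,k}$, making the seminorm of that summand vanish. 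Hence $\seminorm{\alpha}{\Delta_b X}^\bullet = \seminorm{\alpha}{\Delta_b\tilde X}^\bullet$. Writing $\tilde X = \sum_{a'\in\{1,\ldots,d\}^k} X^{a'}\, e_{a'_1}\otimes\cdots\otimes e_{a'_k}$ and computing the coefficients of $\Delta_b\tilde X$ in the orthonormal family, the Cauchy--Schwarz inequality on the inner sum over $(a'_1,a'_2)$ together with the Hilbert--Schmidt bound \eqref{eq:HScondition} yields
\begin{equation*}
\seminorm{\alpha}{\Delta_b X}^{\bullet,2} \le \frac{k(k-1)}{4}\seminorm{\alpha}{X}^{\bullet,2}.
\end{equation*}

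Iterating $t$ times, since each application of $\Delta_b$ drops the degree by two, gives $\seminorm{\alpha}{(\Delta_b)^t X}^\bullet \le 2^{-t}\sqrt{k!/(k-2t)!}\,\seminorm{\alpha}{X}^\bullet$ for $X\in\Ten^k(V)$ with $k\ge 2t$, and zero otherwise. Because distinct homogeneous degrees are orthogonal under $\skal{\argument}{\argument}_\alpha^\bullet$, summing the squared contributions produces
\begin{equation*}
\seminorm{\alpha}{(\Delta_b)^t X}^{\bullet,2} \le \frac{(2t)!}{4^t}\sum_{k\ge 2t}\binom{k}{2t}\seminorm{\alpha}{\langle X\rangle_k}^{\bullet,2}.
\end{equation*}
The elementary bound $\binom{k}{2t}\le 2^k \le 4^t (2r)^{k-2t}$, valid for $r\ge 1$ and $k\ge 2t$, then dominates the right-hand side by $(2t)!(2r)^{-2t}\sum_k (2r)^k \seminorm{\alpha}{\langle X\rangle_k}^{\bullet,2} = (2t)!(2r)^{-2t}\seminorm{2r\alpha}{X}^{\bullet,2}$, and \eqref{eq:Laplacepowers} follows after taking square roots.

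The main technical obstacle is the treatment of the null part $X_0$ in the single-step estimate: merely knowing $\seminorm{\alpha}{X_0}^\bullet = 0$ is not sufficient, because $\Delta_b$ privileges the first two tensor positions, so one must split by where the null factor sits and invoke \emph{both} clauses of Definition~\ref{defi:hilbertschmidt}. Once this is handled, the remainder of the proof — iterating the homogeneous bound and passing to the sum via $\binom{k}{2t}\le 2^k$ — is a routine combinatorial exercise in which the choice $c = 2r\ge 2$ is precisely large enough to absorb the binomial growth.
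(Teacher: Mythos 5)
Your proposal is correct and follows essentially the same route as the paper's proof: decompose via Lemma~\ref{lemma:helpfull}, kill the null part $X_0$ by splitting on whether the degenerate factor sits in positions $1,2$ (first clause of Definition~\ref{defi:hilbertschmidt}) or beyond, obtain the single-step bound $\seminorm{\alpha}{\Delta_b X}^{\bullet,2}\le \tfrac{k(k-1)}{4}\seminorm{\alpha}{X}^{\bullet,2}$ on $\Ten^k(V)$ by Cauchy--Schwarz against \eqref{eq:HScondition}, then iterate and absorb $\binom{k}{2t}\le 2^k$ into the rescaled seminorm $\seminorm{2r\alpha}{\argument}^\bullet$. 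Your explicit case split for $X_0$ merely spells out what the paper leaves implicit in its one-line ``$=0$'' computation, so there is nothing to add.
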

\begin{proof}
    First, let $X\in \Ten^k(V)$, $k \ge 2$, and
    $\seminorm{\alpha}{\argument} \in \mathcal{P}_{V,b,HS}$ be
    given. Construct
    $X_0 = \sum_{a\in A} x_{a,1} \otimes \cdots \otimes x_{a,k}$ and
    $\tilde{X} = \sum_{a' \in \{1, \ldots, d\}^k} X^{a'} e_{a'_1}
    \otimes \cdots \otimes e_{a'_k}$
    like in Lemma~\ref{lemma:helpfull}. Then again
    \begin{equation*}
        \seminorm{\alpha}{\Delta_{b} X_0}^\bullet
        \le
        \frac{k(k-1) \sqrt{(k-2)!}}{2}
        \sum_{a\in A} \abs{b(x_{a_1}, x_{a_2})}
        \prod_{m=3}^{k} \seminorm{\alpha}{x_{a_{m}}}
        =
        0
    \end{equation*}
    shows that
    $\seminorm{\alpha}{\Delta_{b} X}^\bullet \le
    \seminorm{\alpha}{\Delta_{b} \tilde{X}}^\bullet$.
    For $\tilde{X}$ we get:
    \begin{align*}
        \seminorm[\big]{\alpha}{\Delta_{b}\tilde{X}}^{\bullet, 2}
        &=
        \seminorm[\Bigg]{\alpha}
        {
          \frac{k(k-1)}{2}
          \sum_{a' \in \{1, \ldots, d\}^k}
          X^{a'} b\big(e_{a'_1}, e_{a'_2}\big)
          e_{a'_3} \otimes \cdots \otimes e_{a'_{k}}
        }^{\bullet, 2} \\
        &=
        \frac{k^2(k-1)^2}{4}
        \sum_{\tilde{a}' \in \{1, \ldots, d\}^{k-2}}
        \seminorm[\Bigg]{\alpha}
        {
          \sum_{g,h=1}^{d}
          X^{(g,h,\tilde{a}')} b(e_g,e_h)
          e_{\tilde{a}'_1} \otimes \cdots \otimes e_{\tilde{a}'_{k-2}}
        }^{\bullet, 2} \\
        &=
        \frac{k^2(k-1)^2}{4}
        \sum_{\tilde{a}' \in \{1, \ldots, d\}^{k-2}}
        \abs[\Bigg]{
          \sum_{g,h=1}^{d}
          X^{(g,h,\tilde{a}')}
          b(e_g,e_h)
        }^2 (k-2)! \\
        &\le
        \frac{k(k-1)k!}{4}
        \sum_{\tilde{a}' \in \{1, \ldots,d\}^{k-2}}
        \Bigg(
        \sum_{g,h=1}^{d}
        \abs[\big]{X^{(g,h,\tilde{a}')}}
        \abs{b(e_g,e_h)}
        \Bigg)^2 \\
        &\stackrel{\cs}{\le}
        \frac{k(k-1)k!}{4}
        \sum_{\tilde{a}' \in \{1, \ldots, d\}^{k-2}}
        \Bigg(
        \sum_{g,h=1}^{d}
        \abs[\big]{X^{(g,h,\tilde{a}')}}^2
        \Bigg)
        \Bigg(
        \sum_{g,h=1}^{d}
        \abs{b(e_g,e_h)}^2
        \Bigg) \\
        &\le
        \frac{k(k-1)k!}{4}
        \sum_{a' \in \{1, \ldots, d\}^k}
        \abs{X^{a'}}^2 \\
        &=
        \frac{k(k-1)}{4}
        \seminorm{\alpha}{X}^{\bullet, 2}.
    \end{align*}
    Using this we get
    \begin{align*}
        \seminorm[\big]{\alpha}{(\Delta_b)^t X}^{\bullet, 2}
        &=
        \sum_{k=2t}^\infty
        \seminorm[\big]{\alpha}
        {(\Delta_b)^t \langle X\rangle_k}^{\bullet,2}
        \\
        &\le
        \sum_{k=2t}^\infty
        \binom{k}{2t}
        \frac{(2t)!}{4^t}
        \seminorm{\alpha}{\langle X\rangle_k}^{\bullet, 2} \\
        &\le
        \frac{(2t)!}{4^t}
        \sum_{k=2t}^\infty
        \frac{1}{r^k}
        \seminorm{2r\alpha}{\langle X\rangle_k}^{\bullet, 2} \\
        &\le
        \frac{(2t)!}{(2r)^{2t}}
        \seminorm{2r\alpha}{X}^{\bullet,2}
    \end{align*}
    for arbitrary $X\in \Ten^\bullet(V)$ and $t\in \NN$.  Finally, the
    estimate \eqref{eq:Laplacepowers} also holds in the case $t=0$.
\end{proof}
\begin{theorem}
    \label{theo:equivalence}%
    Let $b$ be a symmetric bilinear form on $V$, then the linear operator
    $\E^{\Delta_b} = \sum_{t=0}^\infty \frac{1}{t!} (\Delta_b)^t$ as
    well as its restriction to $\SymTen^\bullet(V)$ are continuous if
    and only if $b$ is of Hilbert-Schmidt type. In this case
    \begin{equation}
        \label{eq:equiv}
        \E^{\Delta_b}\big(X\stpro[\Lambda] Y\big)
        =
        \big(\E^{\Delta_b} X\big) \stpro[\Lambda+b] \big(\E^{\Delta_b} Y\big)
    \end{equation}
    holds for all $X, Y \in \SymTen^\bullet(V)$ and all continuous
    bilinear forms $\Lambda$ on $V$. Hence $\E^{\Delta_b}$ describes
    an isomorphism of the locally convex algebras
    $\big(\SymTen^\bullet(V),\stpro[\Lambda]\big)$ and
    $\big(\SymTen^\bullet(V),\stpro[\Lambda+b]\big)$.
    Moreover, for fixed $X\in \SymTen^\bullet(V)^\cpl$,
    the series $\E^{z\Delta_b} X$ converges
    absolutely and locally uniformly in $z\in \CC$ and thus depends
    holomorphically on $z$.
\end{theorem}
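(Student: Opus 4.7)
My plan is to split the claim into four parts: necessity of the Hilbert--Schmidt condition, sufficiency (yielding continuity of $\E^{\Delta_b}$ on both $\Ten^\bullet(V)$ and $\SymTen^\bullet(V)$), the intertwining identity \eqref{eq:equiv}, and the holomorphy in $z$. For necessity, I would compose the (assumed continuous) $\E^{\Delta_b}$ on $\SymTen^\bullet(V)$ with the continuous scalar projection $\langle\argument\rangle_0$ and restrict to $\SymTen^2(V)$: since $(\Delta_b)^t X = 0$ for $t\ge 2$ and $X\in \SymTen^2(V)$, this composition equals $\Delta_b\colon \SymTen^2(V)\to\CC$, and Proposition~\ref{prop:hschar} then forces $b$ to be of Hilbert--Schmidt type.

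For sufficiency and the holomorphy statement I would insert the bound of Proposition~\ref{proposition:Laplaceestimate} termwise. With $\alpha \in \mathcal{P}_{V,b,HS}$, any $r>1$, and any $z\in\CC$, using the classical bound $(2t)!\le 4^t (t!)^2$,
\[
    \seminorm{\alpha}{\E^{z\Delta_b} X}^\bullet
    \le \sum_{t=0}^\infty \frac{|z|^t \sqrt{(2t)!}}{t!\,(2r)^t}\seminorm{2r\alpha}{X}^\bullet
    \le \sum_{t=0}^\infty \bigl(|z|/r\bigr)^t \seminorm{2r\alpha}{X}^\bullet.
\]
At $z=1$ and any $r>1$, this proves continuity of $\E^{\Delta_b}$ on $\Ten^\bullet(V)$ and, by restriction, on $\SymTen^\bullet(V)$. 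For $R\ge 0$ and $r>\max\{1,R\}$ the same bound is uniform in $|z|\le R$, so the series defining $\E^{z\Delta_b} X$ converges absolutely and locally uniformly in $\SymTen^\bullet(V)^\cpl$; since the partial sums are polynomials in $z$ with values in the complete space, the limit depends holomorphically on $z$. The isomorphism claim follows because $-b$ is of Hilbert--Schmidt type whenever $b$ is, so $\E^{-\Delta_b}$ is continuous, and the algebraic identity $\E^{\pm\Delta_b}\circ\E^{\mp\Delta_b}=\id$ makes it a two-sided inverse of $\E^{\Delta_b}$.

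For the intertwining identity \eqref{eq:equiv} I would rely on the fact that, as a formula on the vector space $\SymTen^\bullet(V)=\SymTen^\bullet_\alg(V)$, it involves only the undeformed symmetric algebra structure $\vee$, the operator $\Delta_b$ and the combinatorial definition of $\stpro[\Lambda]$; all of these coincide with the ones used in \cite{waldmann:2014a}, where the identity is proved by a direct algebraic calculation in the formal setting. Continuity, which has just been established, ensures that the algebraic identity transfers unchanged to our locally convex algebras and that $\E^{\Delta_b}$ thus intertwines the two star products globally. The main technical hurdle, as I see it, is the sufficiency estimate: it is precisely the square-root scaling $\sqrt{(2t)!}/(2r)^t$ --- available only because of the Hilbert--Schmidt hypothesis combined with the Hilbert tensor topology --- that compresses into a geometric series after division by $t!$; without this saving the factorial from $\E^{\Delta_b}$ would be insufficient to dominate the derivative losses in $(\Delta_b)^t$.
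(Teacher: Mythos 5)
Your proposal is correct and follows essentially the same route as the paper: necessity via the continuity of $\Delta_b$ on $\SymTen^2(V)$ combined with Proposition~\ref{prop:hschar}, sufficiency and holomorphy via the termwise bound of Proposition~\ref{proposition:Laplaceestimate} together with $\sqrt{(2t)!}\le 2^t t!$ to obtain a geometric series, the algebraic identity \eqref{eq:equiv} imported from \cite{waldmann:2014a}, and invertibility from $\E^{-\Delta_b}$. The only cosmetic difference is that you extract continuity of $\Delta_b|_{\SymTen^2(V)}$ by composing with $\langle\argument\rangle_0$, whereas the paper uses the direct inequality $\abs{\Delta_b X}\le\seminorm{\alpha}{\E^{\Delta_b}X}^\bullet$; both are equivalent.
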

\begin{proof}
    As
    $\abs{\Delta_b X} \le \seminorm{\alpha}{\E^{\Delta_b}
      X}^{\bullet}$
    holds for all $\seminorm{\alpha}{\argument} \in \mathcal{P}_V$ and
    all $X\in \SymTen^2(V)$, it follows from
    Proposition~\ref{prop:hschar} that continuity of the restriction
    of $\E^{\Delta_b}$ to $\SymTen^\bullet(V)$ implies that $b$
    is of Hilbert-Schmidt type. Conversely, for all $X \in \Ten^\bullet(V)$,
    all $\alpha \in \mathcal{P}_{V,b,HS}$, and $r>1$, the estimate
    \begin{equation*}
        \seminorm[\big]{\alpha}{\E^{z\Delta_b} X}
        \le
        \sum_{t=0}^\infty
        \frac{1}{t!}
        \seminorm[\big]{\alpha}{\big(z\Delta_b\big)^t(X)}
        \le
        \sum_{t=0}^\infty
        \frac{|z|^t}{(4r)^t}
        \binom{2t}{t}^{\frac{1}{2}}
        \seminorm{4r\alpha}{X}^\bullet
        \le
        \sum_{t=0}^\infty
        \frac{1}{2^t}
        \seminorm{4r\alpha}{X}^\bullet
        =
        2 \seminorm{4r\alpha}{X}^\bullet
    \end{equation*}
    holds for all $z\in \CC$ with $|z|\le r$
    due to the previous Proposition~\ref{proposition:Laplaceestimate}
    if $b$ is of Hilbert-Schmidt
    type, which proves the continuity of $\E^{z\Delta_b}$ for all
    $z\in \CC$ as well as the absolute and locally uniform convergence
    of the series $\E^{z\Delta_b}X$. The
    algebraic relation \eqref{eq:equiv} is well-known, see
    e.g. \cite[Prop.~2.18]{waldmann:2014a}. Finally, as
    $\E^{\Delta_b}$ is invertible with inverse $\E^{-\Delta_b}$,
    and because
    $\Delta_b$ and thus $\E^{\Delta_b}$ map symmetric tensors to
    symmetric ones, we conclude that the restriction of
    $\E^{\Delta_b}$ to $\SymTen^\bullet(V)$ is an isomorphism of the
    locally convex algebras $\big(\SymTen^\bullet(V),
    \stpro[\Lambda]\big)$ and $\big(\SymTen^\bullet(V),
    \stpro[\Lambda+b]\big)$.
\end{proof}

%
% Gel'fand Transformation
%

\subsection{Gel'fand Transformation}

We are now going to construct an isomorphism of the undeformed
$^*$-algebra $\big(\SymTen^\bullet(V), \vee, { }^*\big)$ to a
$^*$-algebra of smooth functions by a construction similar to the
Gel'fand transformation of commutative $C^*$-algebras.

Let $\cc{\argument}$ be a continuous antilinear involution on $V$. We
write $V_h$ for the real linear subspace of $V$ consisting of
Hermitian elements, i.e.
\begin{equation}
    \label{eq:HermitianElements}
    V_h
    :=
    \big\{v \in V \; \big| \; \cc{v} = v \big\}.
\end{equation}
The inner products compatible with the involution are denoted by
\begin{equation}
    \label{eq:IVhDef}
    \mathcal{I}_{V,h}
    :=
    \big\{
    \skal{\argument}{\argument}_\alpha \in \mathcal{I}_V
    \; \big| \;
    \cc{\skal{v}{w}_\alpha}
    =
    \skal{\cc{v}}{\cc{w}}_\alpha
    \textrm{ for all }
    v, w \in V
    \big\}.
\end{equation}
Moreover, we write $V'$ for the topological dual space of $V$ and
$V'_h$ again for the real linear subspace of $V'$ consisting of
Hermitian elements, i.e.
\begin{equation}
    \label{eq:RealDualDef}
    V'_h
    :=
    \big\{
    \rho\in V'
    \; \big| \;
    \cc{\rho(v)} = \rho(\cc{v})
    \textrm{ for all }
    v\in V
    \big\}.
\end{equation}
Finally, recall that a subset $B\subseteq V'_h$ is \emph{bounded}
(with respect to the equicontinuous bornology)
if there exists a $\skal{\argument}{\argument}_\alpha \in
\mathcal{I}_{V,h}$ such that $\abs{\rho(v)} \le \seminorm{\alpha}{v}$
holds for all $v \in V$ and all $\rho\in B$. This also gives a notion
of boundedness of functions from or to $V'_h$: A (multi-)linear
function is bounded if it maps bounded sets to bounded ones.

Note that one can identify $V'_h$ with the topological dual of $V_h$
and $\mathcal{I}_{V, h}$ with the set of continuous positive bilinear
forms on $V_h$. Moreover, $\mathcal{I}_{V, h}$ is cofinal in
$\mathcal{I}_{V}$: every
$\skal{\argument}{\argument}_\alpha \in \mathcal{I}_V$ is dominated by
$V^2 \ni (v, w) \mapsto \skal{v}{w}_\alpha +
\cc{\skal{\cc{v}}{\cc{w}}}_\alpha \in \CC$.
\begin{definition}
    \label{defi:gelfandstuff}%
    Let $\cc{\argument}$ be a continuous antilinear involution on $V$
    and $\rho \in V'_h$, then we define the derivative in direction of
    $\rho$ as the linear map
    $D_\rho\colon \Ten^\bullet(V) \rightarrow \Ten^{\bullet-1}(V)$ by
    \begin{equation}
        \label{eq:DrhoDef}
        x_1 \otimes \cdots \otimes x_k
        \; \mapsto \;
        D_\rho\big(x_1\otimes \cdots \otimes x_k\big)
        :=
        k \rho(x_k) x_1 \otimes \cdots \otimes x_{k-1}
    \end{equation}
    for all $k\in \NN$ and all $x \in V^k$. Next, we
    define the translation by $\rho$ as the linear map
    \begin{equation}
        \label{eq:TranslationUmRho}
        \tau^*_\rho
        :=
        \sum_{t=0}^\infty \frac{1}{t!}
        \big(D_\rho\big)^t\colon
        \Ten^\bullet(V) \rightarrow \Ten^\bullet(V),
    \end{equation}
    and the evaluation at $\rho$ by
    \begin{equation}
        \label{eq:DeltaFunctional}
        \delta_\rho
        :=
        \langle \argument \rangle_0 \circ \tau^*_\rho \colon
        \Ten^\bullet(V)\rightarrow \CC.
    \end{equation}
    Finally, for $k\in \NN$ and $\rho_1, \ldots, \rho_k \in V'_h$ we
    set
    $D^{(k)}_{\rho_1, \ldots, \rho_k} := D_{\rho_1} \cdots
    D_{\rho_k}\colon \Ten^\bullet(V)\rightarrow \Ten^{\bullet-k}(V)$.
\end{definition}
Note that $\tau^*_\rho$ is well-defined because for every
$X\in \Ten^\bullet(V)$ only finitely many terms contribute to the
infinite series
$\tau^*_\rho X = \sum_{t=0}^\infty \frac{1}{t!}
\big(D_\rho\big)^t(X)$.
Note also that $D_\rho$ and consequently also $\tau^*_\rho$ can be
restricted to endomorphisms of $\SymTen^\bullet(V)$. Moreover, this
restriction of $D_\rho$ is a $^*$-derivation of all the $^*$-algebras
$\big(\SymTen^\bullet(V), \stpro[\Lambda], { }^*\big)$ for all
continuous Hermitian bilinear forms $\Lambda$ on $V$ (see
\cite[Lem.~2.13, \emph{iii}]{waldmann:2014a}, the compatibility with
the $^*$-involution is clear), so that $\tau^*_\rho$ turns out to be a
unital $^*$-automorphism of these $^*$-algebras.

% \begin{lemma} \label{lemma:starcharacter} Let $\cc{\,\cdot\,}$ be an
%     antilinear involution on $V$ and $\rho \in V^*$ be given, then
%     the following is equivalent:
%   \begin{enumerate}
%      \item $\rho(\cc{v}) = \cc{\rho(v)}$ for all $v\in V$. \label{enumi:1}
%      \item $D_\rho$ is a $^*$-derivation. \label{enumi:2}
%      \item $\tau^*_\rho$ is a unital $^*$-homomorphism. \label{enumi:3}
%      \item $\delta_\rho$ is a unital $^*$-homomorphism. \label{enumi:4}
%   \end{enumerate}
% \end{lemma}
% \begin{proof}
%     It can easily be checked on factorizing symmetric tensors that
%     \ref{enumi:1} implies \ref{enumi:2}. The implication
%     \ref{enumi:2} $\Rightarrow$ \ref{enumi:3} is standard,
%     \ref{enumi:3} $\Rightarrow$ \ref{enumi:4} holds because
%     $\langle\,\cdot\,\rangle_0$ is a unital $^*$-homomorphism and
%     \ref{enumi:4} implies \ref{enumi:1} because
%     $\delta_\rho(v) = \rho(v)$ for all $v\in V$.
% \end{proof}

\begin{lemma}
    \label{lemma:comderiv}%
    Let $\cc{\argument}$ be a continuous antilinear involution on $V$
    and $\rho, \sigma\in V'_h$. Then
    \begin{equation}
        \label{eq:DrhoDsigmaTaurhoTausigma}
        \big(D_\rho D_\sigma - D_\sigma D_\rho\big)(X)
        =
        \big(\tau^*_\rho D_\sigma - D_\sigma \tau^*_\rho\big)(X)
        =
        \big(\tau^*_\rho \tau^*_\sigma - \tau^*_\sigma
        \tau^*_\rho\big)(X)
        =
        0
    \end{equation}
    holds for all $X\in \SymTen^\bullet(V)$.
\end{lemma}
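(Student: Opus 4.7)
The plan is to reduce everything to the statement $D_\rho D_\sigma = D_\sigma D_\rho$ on symmetric tensors; the other two identities then follow formally from the definition $\tau^*_\rho = \sum_{t\ge 0} \frac{1}{t!}(D_\rho)^t$, which is literally a polynomial expression when applied to any given $X \in \SymTen^\bullet(V)$ (only finitely many terms contribute).

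First I would note, as already pointed out in the paragraph after Definition~\ref{defi:gelfandstuff}, that $D_\rho$ restricts to an endomorphism of $\SymTen^\bullet(V)$, and hence so does $\tau^*_\rho$. So to prove the second identity it suffices to show $D_\rho^t D_\sigma X = D_\sigma D_\rho^t X$ for every $t\in\NN_0$ and $X\in\SymTen^\bullet(V)$, which follows from the first identity by induction (note that $D_\rho^t X$ is again symmetric). For the third identity the same remark gives $D_\rho^s D_\sigma^t X = D_\sigma^t D_\rho^s X$ on $\SymTen^\bullet(V)$, so summing the double series termwise yields $\tau^*_\rho \tau^*_\sigma X = \tau^*_\sigma \tau^*_\rho X$.

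For the main step, $[D_\rho, D_\sigma]X = 0$ on symmetric $X$, the cleanest argument is to evaluate on the generating set $\{v^{\vee k} \mid v\in V, k\in\NN_0\}$ which, in characteristic zero, linearly spans $\SymTen^\bullet_\alg(V)$ by polarisation. On such an element one has $v^{\vee k} = v\otimes\cdots\otimes v$ directly from the definition of $\SymFkt^k$, and hence
\begin{equation*}
    D_\rho D_\sigma\bigl(v^{\vee k}\bigr)
    = k(k-1)\,\rho(v)\sigma(v)\,v^{\vee(k-2)}
    = D_\sigma D_\rho\bigl(v^{\vee k}\bigr),
\end{equation*}
which is manifestly symmetric in $\rho$ and $\sigma$. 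Alternatively one can argue directly: writing $X\in\SymTen^k(V)$ as $X=X^\tau$ for the transposition $\tau$ that swaps positions $k-1$ and $k$, a direct computation from \eqref{eq:DrhoDef} gives $D_\rho D_\sigma X - D_\sigma D_\rho X = 0$.

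I do not foresee a real obstacle here: the whole lemma is a purely algebraic consequence of the symmetry built into $\SymTen^\bullet(V)$, and no topology or continuity needs to enter. The only point that requires minor care is the interchange of summations in the $\tau^*_\rho\tau^*_\sigma$ identity, which is justified because the sums are finite on any fixed homogeneous $X$.
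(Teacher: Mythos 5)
Your proposal is correct and follows essentially the same route as the paper: reduce all three identities to $[D_\rho,D_\sigma]=0$ on symmetric tensors (the $\tau^*$ statements being finite sums on any fixed $X$), and verify that commutator on a spanning set of $\SymTen^\bullet_\alg(V)$ by a one-line computation. The paper checks it on general factorizing symmetric tensors $x_1\vee\cdots\vee x_k$ rather than on the powers $v^{\vee k}$ via polarisation, but this difference is immaterial.
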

\begin{proof}
    It is sufficient to show that
    $\big(D_\rho D_\sigma - D_\sigma D_\rho\big)(X) = 0$ for all
    $X\in \SymTen^\bullet(V)$, which clearly holds if $X$ is a
    homogeneous factorizing symmetric tensor and so holds for all
    $X \in \SymTen^\bullet(V)$ by linearity.
\end{proof}
\begin{lemma}
    \label{lemma:contcharacter}%
    Let $\cc{\argument}$ be a continuous antilinear involution on $V$
    and $\rho \in V'_h$. Then $D_\rho$, $\tau^*_\rho$ and
    $\delta_\rho$ are all continuous. Moreover, if
    $\seminorm{\alpha}{\argument} \in \mathcal{P}_V$ fulfills
    $\abs{\rho(v)} \le \seminorm{\alpha}{v}$, then the estimates
    \begin{gather}
        \label{eq:derivation}
        \seminorm[\big]{\alpha}{(D_\rho)^t X}^\bullet
        \le
        \sqrt{t!} \seminorm{2\alpha}{X}^\bullet
        \\
        \shortintertext{and}
        \seminorm{\alpha}{\tau^*_\rho(X)}^\bullet
        \le
        \sum_{t'=0}^\infty \frac{1}{t'!}
        \seminorm[\big]{\alpha}{(D_\rho)^{t'} X}^\bullet
        \le
        \frac{2}{\sqrt{2}-1}
        \seminorm{2\alpha}{X}^\bullet
        \label{eq:translation}
    \end{gather}
    hold for all $X\in\Ten^\bullet(V)$ and all $t\in \NN_0$.
\end{lemma}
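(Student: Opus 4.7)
The plan is to derive everything from the power estimate for $D_\rho$: continuity of $\tau^*_\rho$ then follows by summing a convergent majorant, and continuity of $\delta_\rho$ is immediate as it is the composition of the continuous projection $\langle\argument\rangle_0$ with $\tau^*_\rho$.

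To prove $\seminorm{\alpha}{(D_\rho)^t X}^\bullet \le \sqrt{t!}\,\seminorm{2\alpha}{X}^\bullet$, I would fix a homogeneous $X \in \Ten^k(V)$ with $t \le k$ (the case $t > k$ being trivial) and decompose $X = X_0 + \tilde X$ via Lemma~\ref{lemma:helpfull}. The hypothesis $\abs{\rho(v)} \le \seminorm{\alpha}{v}$ forces $\rho$ to vanish on $\ker\seminorm{\alpha}{\argument}$, so each simple tensor summand of $X_0$ either has its null factor in the last $t$ positions (then $(D_\rho)^t$ annihilates it) or in the first $k-t$ positions (then the image is a simple tensor with a null factor, so has vanishing $\seminorm{\alpha}{\argument}^\bullet$); by the triangle inequality $\seminorm{\alpha}{(D_\rho)^t X_0}^\bullet = 0$. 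For the regular part $\tilde X = \sum_{a'} X^{a'} e_{a'_1}\otimes\cdots\otimes e_{a'_k}$ in an $\skal{\argument}{\argument}_\alpha$-orthonormal tuple $e \in V^d$, writing $r_i := \rho(e_i)$ I would expand
\begin{equation*}
    (D_\rho)^t \tilde X
    =
    \frac{k!}{(k-t)!}
    \sum_{b' \in \{1,\ldots,d\}^{k-t}}
    \bigg(\sum_{c' \in \{1,\ldots,d\}^t} X^{(b',c')} r_{c'_1}\cdots r_{c'_t}\bigg)
    e_{b'_1}\otimes\cdots\otimes e_{b'_{k-t}}.
\end{equation*}
Since $\rho$ restricts to a bounded linear functional of norm at most $1$ on $\spann\{e_1,\ldots,e_d\}$, its Riesz representative has $\seminorm{\alpha}{\argument}$-norm at most $1$, giving $\sum_{i=1}^d \abs{r_i}^2 \le 1$. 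Combining Cauchy-Schwarz on the inner $c'$-sum with the homogeneous identity $\seminorm{\alpha}{Y}^{\bullet,2} = (k-t)! \sum_{b'} \abs{Y^{b'}}^2$ yields $\seminorm{\alpha}{(D_\rho)^t \tilde X}^{\bullet,2} \le \binom{k}{t} t!\,\seminorm{\alpha}{X}^{\bullet,2}$, and summing over $k$ using $\binom{k}{t} 2^{-k} \le 1$ together with the homogeneity $\seminorm{2\alpha}{\langle X\rangle_k}^{\bullet,2} = 2^k \seminorm{\alpha}{\langle X\rangle_k}^{\bullet,2}$ gives the claimed bound.

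For the second estimate the first inequality is just the triangle inequality, and inserting the bound just proved produces $\big(\sum_{t=0}^\infty 1/\sqrt{t!}\big)\,\seminorm{2\alpha}{X}^\bullet$. The elementary inequality $t! \ge 2^{t-1}$ (trivial for $t = 0$, inductive for $t \ge 1$) reduces this to the geometric sum $\sqrt 2 \sum_{t \ge 0} (1/\sqrt 2)^t = 2/(\sqrt 2 - 1)$, completing the proof. The main technical obstacle is the $X_0$-argument, which requires simultaneously exploiting that $\rho$ kills $\ker\seminorm{\alpha}{\argument}$ and that a simple tensor with a null factor has vanishing extended seminorm, by splitting each summand according to the position of its null factor.
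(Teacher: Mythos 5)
Your proposal is correct and follows essentially the same route as the paper: decomposition via Lemma~\ref{lemma:helpfull}, the observation that $\sum_i\abs{\rho(e_i)}^2\le 1$ for an $\skal{\argument}{\argument}_\alpha$-orthonormal tuple, a Cauchy--Schwarz estimate on the orthonormal expansion yielding $\seminorm{\alpha}{(D_\rho)^t\langle X\rangle_k}^{\bullet,2}\le\binom{k}{t}t!\,\seminorm{\alpha}{\langle X\rangle_k}^{\bullet,2}$, and the same summation over degrees and $t!\ge 2^{t-1}$ trick. The only (immaterial) difference is that you estimate $(D_\rho)^t$ in one shot on the orthonormal expansion, whereas the paper first proves the single-step bound $\seminorm{\alpha}{D_\rho X}^{\bullet,2}\le k\,\seminorm{\alpha}{X}^{\bullet,2}$ on $\Ten^k(V)$ and iterates it, which lets it avoid your case analysis on the position of the null factor in $X_0$.
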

\begin{proof}
    Let $\seminorm{\alpha}{\argument} \in \mathcal{P}_V$ be given such
    that $\abs{\rho(v)} \le \seminorm{\alpha}{v}$ holds for all
    $v \in V$. For all $d \in \NN_0$ and all
    $\skal{\argument}{\argument}_\alpha$-orthonormal $e \in V^d$ we
    then get
    \begin{equation*}
        \sum_{i=1}^d \abs{\rho(e_i)}^2
        =
        \rho\bigg(
        \sum_{i=1}^d e_i
        \cc{\rho(e_i)}
        \bigg)
        \le
        \seminorm[\bigg]{\alpha}
        {\sum_{i=1}^d e_i \cc{\rho(e_i)}}
        =
        \bigg(\sum_{i=1}^d \abs{\rho(e_i)}^2\bigg)^\frac{1}{2},
    \end{equation*}
    hence $\sum_{i=1}^d \abs{\rho(e_i)}^2 \le 1$.  Given $k\in \NN$
    and a tensor $X\in\Ten^k(V)$, then we construct
    $X_0 = \sum_{a\in A} x_{a,1} \otimes \cdots \otimes x_{a,k}$ and
    $\tilde{X} = \sum_{a'\in\{1, \ldots, d\}^k} X^{a'} e_{a'_1}
    \otimes \cdots \otimes e_{a'_k}$
    like in Lemma~\ref{lemma:helpfull}. Then we have
    $\seminorm{\alpha}{D_\rho X_0}^\bullet = 0$ because
    \begin{equation*}
        \seminorm{\alpha}
        {D_\rho (x_{a,1} \otimes \cdots \otimes x_{a,k})}^\bullet
        =
        k \abs{\rho(x_{a,k})}
        \seminorm{\alpha}
        {x_{a,1} \otimes \cdots \otimes x_{a,k-1}}^\bullet
        \le
        k \sqrt{(k-1)!}
        \prod_{m=1}^k\seminorm{\alpha}{x_{a,m}}
        =
        0
    \end{equation*}
    holds for all $a\in A$. Consequently
    $\seminorm{\alpha}{D_\rho X}^{\bullet} \le
    \seminorm{\alpha}{D_\rho \tilde{X}}^{\bullet}$ and we get
    \begin{align*}
        \seminorm{\alpha}{D_\rho X}^{\bullet,2}
        \le
        \seminorm{\alpha}{D_\rho \tilde{X}}^{\bullet, 2}
        &=
        \seminorm[\bigg]{\alpha}
        {
          \sum_{a'\in\{1, \ldots, d\}^k}
          X^{a'} D_\rho(e_{a'_1} \otimes \cdots \otimes e_{a'_k})
        }^{\bullet, 2} \\
        &=
        k^2 \sum_{\tilde{a}'\in\{1, \ldots, d\}^{k-1}}
        \seminorm[\bigg]{\alpha}
        {
          \sum_{g=1}^d
          X^{(\tilde{a}',g)}
          \rho(e_g)
          e_{\tilde{a}'_1}
          \otimes \cdots \otimes
          e_{\tilde{a}'_{k-1}}
        }^{\bullet, 2} \\
        &\le
        k^2 (k-1)!
        \sum_{\tilde{a}' \in \{1, \ldots, d\}^{k-1}}
        \bigg(
        \sum_{g=1}^d
        \abs[\big]{X^{(\tilde{a}',g)}}
        \abs{\rho(e_g)}
        \bigg)^2 \\
        &\stackrel{\cs}{\le}
        k^2 (k-1)!
        \sum_{\tilde{a}' \in \{1, \ldots, d\}^{k-1}}
        \bigg(
        \sum_{g=1}^d
        \abs[\big]{X^{(\tilde{a}',g)}}^2
        \bigg)
        \bigg(
        \sum_{g=1}^d
        \abs{\rho(e_g)}^2
        \bigg) \\
        &\le
        k^2 (k-1)!
        \sum_{a'\in\{1, \ldots, d\}^{k}}
        \abs[\big]{X^{a'}}^2 \\
        &=
        k \seminorm{\alpha}{X}^{\bullet, 2}.
    \end{align*}
    Using this we can derive the estimate \eqref{eq:derivation}, which
    also proves the continuity of $D_\rho$: If $t=0$, then this is
    clearly fulfilled. Otherwise, let $X\in \Ten^\bullet(V)$ be given,
    then
    \begin{equation*}
        \seminorm[\big]{\alpha}{(D_\rho)^t X}^{\bullet, 2}
        =
        \sum_{k=t}^\infty
        \seminorm[\big]{\alpha}
        {(D_\rho)^t\langle X \rangle_k}^{\bullet, 2}
        \le
        t!
        \sum_{k=t}^\infty
        \binom{k}{t}
        \seminorm{\alpha}{\langle X \rangle_k}^{\bullet, 2}
        \le
        t!
        \sum_{k=t}^\infty
        \seminorm{2\alpha}{\langle X \rangle_k}^{\bullet, 2}
        \le
        t!
        \seminorm{2\alpha}{X}^{\bullet, 2}.
    \end{equation*}
    From this we can now also deduce the estimate
    \eqref{eq:translation}, which then shows continuity of
    $\tau^*_\rho$ and of
    $\delta_\rho = \langle\argument\rangle_0 \circ \tau^*_\rho$: The
    first inequality is just the triangle inequality and for the
    second we use that $t! \ge 2^{t-1}$ for all $t \in \NN_0$, so
    \begin{equation*}
        \sum_{t=0}^\infty
        \frac{1}{t!}
        \seminorm[\big]{\alpha}{(D_\rho)^t X}^\bullet
        \le
        \sum_{t=0}^\infty
        \frac{1}{\sqrt{t!}}
        \seminorm{2\alpha}{X}^\bullet
        \le
        \sqrt{2}
        \sum_{t=0}^\infty
        \frac{1}{\sqrt{2}^{t}}
        \seminorm{2\alpha}{X}^\bullet
        \le
        \frac{2}{\sqrt{2}-1}
        \seminorm{2\alpha}{X}^\bullet.
    \end{equation*}
\end{proof}
\begin{proposition}
    \label{proposition:contcharacterchar}
    Let $\cc{\argument}$ be a continuous antilinear
    involution on $V$, then the set of all continuous unital
    $^*$-homomorphisms from
    $\big(\SymTen^\bullet(V)^\cpl, \vee, { }^*\big)$ to $\CC$ is
    $\big\{\delta_\rho \; \big| \; \rho \in V'_h \big\}$ (strictly
    speaking, the continuous extensions to $\SymTen^\bullet(V)^\cpl$
    of the restrictions of $\delta_\rho$ to $\SymTen^\bullet(V)$).
\end{proposition}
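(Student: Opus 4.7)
The plan is to prove the two inclusions separately. For the easy direction, fix $\rho \in V'_h$. Continuity of $\delta_\rho$ on $\Ten^\bullet(V)$ (hence on the subspace $\SymTen^\bullet(V)$) is precisely Lemma~\ref{lemma:contcharacter}. To see that $\delta_\rho$ is a unital $^*$-homomorphism, I write $\delta_\rho = \langle\argument\rangle_0 \circ \tau^*_\rho$: the remark following Definition~\ref{defi:gelfandstuff} says that $\tau^*_\rho$ is a unital $^*$-automorphism of $\big(\SymTen^\bullet(V), \vee, {}^*\big)$ (apply it with $\Lambda=0$), and the projection $\langle\argument\rangle_0$ onto scalars is clearly a unital $^*$-homomorphism into $\CC$ (the restriction of $\vee$ to $\SymTen^0(V)$ is ordinary multiplication and the induced $^*$-involution there is complex conjugation). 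By continuity of $\vee$ (Corollary~\ref{cor:contvee}), the homomorphism property then extends from the dense subspace $\SymTen^\bullet(V)$ to the completion $\SymTen^\bullet(V)^\cpl$.

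For the reverse inclusion, let $\varphi \colon \SymTen^\bullet(V)^\cpl \to \CC$ be any continuous unital $^*$-homomorphism. I define
\begin{equation*}
    \rho \colon V \to \CC,
    \quad
    \rho(v) := \varphi(v),
\end{equation*}
viewing $V = \SymTen^1(V)$ as a closed subspace of $\SymTen^\bullet(V)^\cpl$ via the canonical embedding (which is continuous since $\seminorm{\alpha}{\argument}^\bullet$ restricted to $V$ agrees with $\seminorm{\alpha}{\argument}$). Then $\rho$ is continuous, so $\rho \in V'$. Moreover, $v^* = \cc{v}$ for $v\in V$, so $\rho(\cc{v}) = \varphi(v^*) = \cc{\varphi(v)} = \cc{\rho(v)}$ shows $\rho \in V'_h$.

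It remains to verify $\varphi = \delta_\rho$. A direct computation from Definition~\ref{defi:gelfandstuff} gives $\delta_\rho(v) = \rho(v)$ for $v \in V$, since $(D_\rho)^t v$ vanishes for $t \ge 2$ and $D_\rho v = \rho(v)\Unit$. Because both $\varphi$ and $\delta_\rho$ are unital homomorphisms of $\big(\SymTen^\bullet(V), \vee\big)$, they must therefore coincide on every simple symmetric tensor $v_1 \vee \cdots \vee v_k$, and these span $\SymTen^\bullet(V)$. By linearity $\varphi$ and $\delta_\rho$ agree on $\SymTen^\bullet(V)$, and by continuity together with the density of $\SymTen^\bullet(V)$ in its completion they agree on all of $\SymTen^\bullet(V)^\cpl$. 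The only subtle point—really bookkeeping rather than a genuine obstacle—is confirming that $\delta_\rho$, a priori defined on $\SymTen^\bullet(V)$, extends uniquely and continuously to $\SymTen^\bullet(V)^\cpl$ and that its algebraic properties transfer; this is automatic from Lemma~\ref{lemma:contcharacter} and the continuity of $\vee$ and ${}^*$ on the completion.
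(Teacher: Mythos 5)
Your proposal is correct and follows essentially the same route as the paper: the forward inclusion via the factorization $\delta_\rho = \langle\argument\rangle_0 \circ \tau^*_\rho$ into continuous unital $^*$-homomorphisms, and the reverse inclusion by restricting $\varphi$ to $V$, noting that $V$ generates $\big(\SymTen^\bullet(V), \vee, {}^*\big)$ as a unital algebra, and invoking density of $\SymTen^\bullet(V)$ in its completion. You merely spell out the multiplicativity-on-generators and extension-by-continuity steps that the paper leaves implicit.
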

\begin{proof}
    On the one hand, every such $\delta_\rho$ is a continuous unital
    $^*$-homomorphism, because $\langle\argument\rangle_0$ and
    $\tau^*_\rho$ are. On the other hand, if
    $\phi\colon \big(\SymTen^\bullet(V)^\cpl, \vee, { }^*\big)
    \rightarrow \CC$
    is a continuous unital $^*$-homomorphism, then
    $V\ni v \mapsto \rho(v) := \phi(v) \in \CC$ is an element of
    $V'_h$ and fulfills $\delta_\rho = \phi$ because the unital
    $^*$-algebra $\big(\SymTen^\bullet(V), \vee, { }^*\big)$ is
    generated by $V$ and because $\SymTen^\bullet(V)$ is dense in its
    completion.
\end{proof}

Let $\Phi := \big\{\delta_\rho \; \big| \; \rho\in V'_h\big\}$ be the
set of all continuous unital $^*$-homomorphisms from
$\big(\SymTen^\bullet(V)^\cpl, \vee, { }^*\big)$ to $\CC$ and
$\CC^\Phi$ the unital $^*$-algebra of all functions from $\Phi$ to
$\CC$ with the pointwise operations, then the Gel'fand-transformation
is usually defined as the unital $^*$-homomorphism
$\widetilde{\argument}\colon \big(\SymTen^\bullet(V)^\cpl, \vee, {
}^*\big) \rightarrow \CC^\Phi$,
$X \mapsto \widetilde{X}$ with $\widetilde{X}(\phi) := \phi(X)$ for all
$\phi \in \Phi$. This is a natural way to transform an abstract commutative
unital locally convex $^*$-algebras to a $^*$-algebra of complex-valued
functions. For our purposes, however, it will be more convenient to identify
$\Phi$ with $V'_h$ like in the previous
Proposition~\ref{proposition:contcharacterchar}:
\begin{definition}
    Let $\cc{\argument}$ be a continuous antilinear
    involution on $V$ and $X \in \SymTen^\bullet(V)^\cpl$, then we
    define the function $\widehat{X}\colon V'_h \rightarrow \CC$
    by
    \begin{equation}
        \label{eq:Gelfand}
        \rho
        \; \mapsto \;
        \widehat{X}(\rho) := \delta_\rho(X).
    \end{equation}
\end{definition}

In the following we will show that this construction yields an
isomorphism between $\big(\SymTen^\bullet(V)^\cpl, \vee, { }^*\big)$
and a unital $^*$-algebra of certain functions on $V'_h$:
\begin{definition}
    Let $f\colon V'_h\rightarrow \CC$ be a function. For
    $\rho, \sigma \in V'_h$ we denote by
    \begin{equation}
        \label{eq:hatDrhoDef}
        \big(\widehat{D}_\rho f\big)(\sigma)
        :=
        \frac{\D}{\D t}\At{t=0} f(\sigma + t\rho)
    \end{equation}
    (if it exists) the directional derivative of $f$ at $\sigma$ in
    direction $\rho$. If the directional derivative of $f$ in
    direction $\rho$ exists at all $\sigma \in V'_h$, then we denote
    by $\widehat{D}_\rho f\colon V'_h\rightarrow \CC$ the function
    $\sigma \mapsto \big(\widehat{D}_\rho f\big)(\sigma)$. In this
    case we can also examine directional derivatives of
    $\widehat{D}_\rho f$ and define the iterated directional
    derivative
    \begin{equation}
        \label{eq:IteratedDerivative}
        \widehat{D}_{\rho}^{(k)}f
        :=
        \widehat{D}_{\rho_1} \cdots \widehat{D}_{\rho_k}f
    \end{equation}
    (if it exists) for $k\in \NN$ and $\rho\in (V'_h)^k$. For $k=0$ we
    define $\widehat{D}^{(0)} f := f$. Moreover, we say that $f$ is
    smooth if all iterated directional derivatives
    $\widehat{D}_{\rho}^{(k)}f$ exist for all $k\in \NN_0$ and all
    $\rho \in (V'_h)^k$ and describe a bounded symmetric multilinear
    form
    $(V'_h)^k \ni \rho \mapsto
    \big(\widehat{D}_{\rho}^{(k)}f\big)(\sigma) \in \CC$
    for all $\sigma \in V'_h$. Finally, we write $\smooth(V'_h)$ for
    the unital $^*$-algebra of all smooth functions on $V'_h$.
\end{definition}
Note that this notion of smoothness is rather weak, we do not even
demand that a smooth function is continuous (we did not even endow
$V'_h$ with a topology). For example, every bounded linear functional
on $V'_h$ is smooth.

\begin{proposition}
    \label{prop:deriv}%
    Let $\cc{\argument}$ be a continuous antilinear
    involution on $V$ and $X\in \SymTen^\bullet(V)^\cpl$. Then
    $\widehat{X}\colon V'_h\rightarrow \CC$ is smooth and
    \begin{equation}
        \label{eq:deriv}
        \widehat{D}^{(k)}_{\rho}\widehat{X}
        =
        \widehat{D^{(k)}_{\rho} X}
    \end{equation}
    holds for all $k\in \NN_0$ and all $\rho \in (V'_h)^k$.
\end{proposition}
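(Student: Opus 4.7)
The plan is a two-stage reduction. First, once the $k=1$ identity $\widehat{D}_\rho \widehat{X} = \widehat{D_\rho X}$ is established for all $X \in \SymTen^\bullet(V)^\cpl$ and all $\rho \in V'_h$, the general formula $\widehat{D}^{(k)}_\rho \widehat{X} = \widehat{D^{(k)}_\rho X}$ follows by a direct induction on $k$, applying the $k=1$ case iteratively (first to $X$, then to $D_{\rho_k} X$, then to $D_{\rho_{k-1}} D_{\rho_k} X$, and so on). So the core analytic task is the $k=1$ case together with the bounded multilinearity underlying smoothness.

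For the $k=1$ case, I fix $\sigma,\rho\in V'_h$ and study $F(t):=\widehat{X}(\sigma+t\rho)=\langle\tau^*_{\sigma+t\rho}X\rangle_0$ for $t\in\RR$. The plan is to expand $F$ as a convergent power series and read off $F'(0)$. Lemma~\ref{lemma:comderiv} gives $D_\sigma D_\rho=D_\rho D_\sigma$ on $\SymTen^\bullet(V)$, and I extend this commutativity to $\SymTen^\bullet(V)^\cpl$ by density, using the continuity of $D_\rho$ and $D_\sigma$ from Lemma~\ref{lemma:contcharacter}. Combined with the linear identity $D_{\sigma+t\rho}=D_\sigma+tD_\rho$, a binomial rearrangement (trivially valid on algebraic symmetric tensors where only finitely many terms contribute, then extended to the completion by density and continuity) yields the operator identity
\begin{equation*}
\tau^*_{\sigma+t\rho}=\tau^*_\sigma\circ\tau^*_{t\rho}\qquad\text{on }\SymTen^\bullet(V)^\cpl.
\end{equation*}
Substituting this into $F(t)$ and interchanging the continuous operations $\langle\cdot\rangle_0$ and $\tau^*_\sigma$ with the series defining $\tau^*_{t\rho}$ gives
\begin{equation*}
F(t)=\sum_{j=0}^\infty\frac{t^j}{j!}\,\langle\tau^*_\sigma(D_\rho)^j X\rangle_0 .
\end{equation*}
The estimate~\eqref{eq:derivation} forces $\seminorm{\alpha}{(D_\rho)^jX}^\bullet\le\sqrt{j!}\,\seminorm{2\alpha}{X}^\bullet$ whenever $\abs{\rho(v)}\le\seminorm{\alpha}{v}$, so by continuity of $\langle\cdot\rangle_0\circ\tau^*_\sigma$ the $j$-th term is dominated by a constant times $\abs{t}^j/\sqrt{j!}$, whence the series is entire in $t$. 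Its linear coefficient is $\langle\tau^*_\sigma D_\rho X\rangle_0=\delta_\sigma(D_\rho X)=\widehat{D_\rho X}(\sigma)$, which is exactly $F'(0)$.

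For smoothness it remains to check that $(\rho_1,\dots,\rho_k)\mapsto\widehat{D^{(k)}_{(\rho_1,\dots,\rho_k)}X}(\sigma)=\delta_\sigma(D_{\rho_1}\cdots D_{\rho_k}X)$ is a bounded symmetric multilinear form on $(V'_h)^k$ for every $\sigma$. Multilinearity is immediate from the linearity of $\rho\mapsto D_\rho$; symmetry follows from the extended commutativity of the $D_{\rho_i}$ on $\SymTen^\bullet(V)^\cpl$; and boundedness comes from iterating the $t=1$ case of~\eqref{eq:derivation}: if all $\rho_i$ lie in an equicontinuous bounded set dominated by $\seminorm{\alpha}{\argument}$, then $\seminorm{\alpha}{D_{\rho_1}\cdots D_{\rho_k}X}^\bullet\le\seminorm{2^k\alpha}{X}^\bullet$ uniformly in the $\rho_i$, and continuity of $\delta_\sigma$ finishes the estimate.

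The main obstacle is the operator identity $\tau^*_{\sigma+t\rho}=\tau^*_\sigma\tau^*_{t\rho}$ on the completion: on algebraic symmetric tensors both sides are finite sums and the binomial manipulation is immediate, but on completed elements each factor is a genuine infinite series and one must carefully combine the density of $\SymTen^\bullet(V)$ with the quantitative continuity bounds of Lemma~\ref{lemma:contcharacter} to carry the identity across. Once this is done, the rest of the proof is essentially the elementary fact that absolutely convergent power series are termwise differentiable, together with a mechanical induction on $k$.
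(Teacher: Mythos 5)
Your proposal is correct and follows essentially the same route as the paper: the paper also reduces to the $k=1$ case by differentiating the absolutely convergent series $\tau^*_{t\rho}(X)$ at $t=0$ (using, implicitly, the factorization $\delta_{\sigma+t\rho}=\langle\argument\rangle_0\circ\tau^*_\sigma\circ\tau^*_{t\rho}$ that you make explicit), then handles general $k$ by induction, gets symmetry from the commutativity in Lemma~\ref{lemma:comderiv}, and gets boundedness from the iterated estimate $\seminorm{\alpha}{D_{\rho_1}\cdots D_{\rho_k}X}^\bullet\le\seminorm{2^k\alpha}{X}^\bullet$ of Lemma~\ref{lemma:contcharacter}. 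Your treatment is merely more explicit about the operator identity $\tau^*_{\sigma+t\rho}=\tau^*_\sigma\circ\tau^*_{t\rho}$ and the entire power-series expansion of $t\mapsto\widehat{X}(\sigma+t\rho)$, which the paper leaves implicit.
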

\begin{proof}
    Let $X\in\SymTen^\bullet(V)^\cpl$ be given. As the exponential
    series $\tau^*_{t\rho}(X)$ is absolutely convergent by
    Lemma~\ref{lemma:contcharacter}, it follows that
    $\frac{\D}{\D t}\at{t=0}\tau^*_{t\rho}(X) = D_\rho(X)$ for all
    $\rho\in V'_h$ and so we conclude that
    \begin{equation*}
        \big(\widehat{D}_\rho \widehat{X}\big)(\sigma)
        =
        \frac{\D}{\D t}\At{t=0} \delta_{\sigma+t\rho}(X)
        =
        \bigg\langle
        \tau^*_\sigma\bigg(
        \frac{\D}{\D t}\At{t=0}\tau^*_{t\rho}(X)
        \bigg)
        \bigg\rangle_{0}
        =
        \big\langle\tau^*_\sigma\big(D_\rho(X)\big)\big\rangle_0
        =
        \widehat{D_\rho(X)}(\sigma)
    \end{equation*}
    holds for all $\rho, \sigma \in V'_h$, which proves
    \eqref{eq:deriv} in the case $k = 1$.  We see that
    $\widehat{D}_\rho$ for all $\rho\in V'_h$ is an endomorphism of
    the vector space
    $\big\{\widehat{X} \;\big| \; X \in \SymTen^\bullet(V)^\cpl
    \big\}$,
    so all iterated directional derivatives of such an $\widehat{X}$
    exist. By induction it is now easy to see that \eqref{eq:deriv}
    holds for arbitrary $k\in \NN_0$. Moreover,
    $D_\rho D_{\rho'} X = D_{\rho'} D_\rho X$ holds for all
    $\rho, \rho' \in V'_h$ and all $X\in \SymTen^\bullet(V)^\cpl$ by
    Lemmas~\ref{lemma:comderiv} and
    \ref{lemma:contcharacter}. Together with \eqref{eq:deriv} this
    shows that directional derivatives on $\widehat{X}$ commute.
    Finally, the multilinear form
    $(V'_h)^k \ni \rho \mapsto
    \big(\widehat{D}_{\rho}^{(k)}\widehat{X}\big)(\sigma) \in \CC$
    is bounded for all $\sigma\in V'_h$: It is sufficient to show this
    for $\sigma = 0$, because $\tau^*_\sigma$ is a continuous
    automorphism of $\SymTen^\bullet(V)$ and commutes with
    $D_{\rho}^{(k)}$. If $\rho \in (V'_h)^k$ fulfills
    $\abs{\rho_i(v)} \le \seminorm{\alpha}{v}$ for all
    $i \in \{1, \ldots, k\}$, all $v\in V$ and one
    $\seminorm{\alpha}{\argument} \in \mathcal{P}_V$, then we have
    $\seminorm{\alpha}{D_{\rho_1} \cdots D_{\rho_k} X}^\bullet \le
    \seminorm{2^k\alpha}{X}$
    due to Lemma~\ref{lemma:contcharacter}, which is an upper bound of
    $\big(\widehat{D}^{(k)}_{\rho} \widehat{X}\big)(0)$.
\end{proof}

Let $\cc{\argument}$ be a continuous antilinear involution on $V$ and
let $\skal{\argument}{\argument}_\alpha \in \mathcal{I}_{V, h}$ be
given, then the degeneracy space of the inner product
$\skal{\argument}{\argument}_\alpha$ is
\begin{equation}
    \label{eq:Kernelh}
    \Kern_h \seminorm{\alpha}{\argument}
    :=
    \big\{v \in V_h \;\big| \; \seminorm{\alpha}{v} = 0\big\}.
\end{equation}
Thus we get a well-defined non-degenerate positive bilinear form on the
real vector space $V_h \big/ \Kern_h \seminorm{\alpha}{\argument}$. We
write $V^\cpl_{h,\alpha}$ for the completion of this space to a real
Hilbert space with inner product $\skal{\argument}{\argument}_\alpha$
and define the linear map $\argument^{\flat_\alpha}$ from
$V^\cpl_{h, \alpha}$ to $V'_h$ as
\begin{equation}
    \label{eq:FlatMap}
    v^{\flat_\alpha}(w)
    :=
    \skal{v}{w}_\alpha
\end{equation}
for all $v \in V^\cpl_{h,\alpha}$ and all $w\in V$. Note that
$\argument^{\flat_\alpha}\colon V^\cpl_{h, \alpha} \rightarrow V'_h$
is a bounded linear map due to the Cauchy-Schwarz inequality.
Analogously, we define
\begin{equation}
    \label{eq:KernelSeminorm}
    \Kern \seminorm{\alpha}{\argument}^\bullet
    :=
    \big\{
    X \in \Ten^\bullet(V)
    \; \big| \;
    \seminorm{\alpha}{X}^\bullet = 0
    \big\},
\end{equation}
and denote by $\Ten^\bullet(V)^\cpl_\alpha$ the completion of the
complex vector space
$\Ten^\bullet_\alg(V) \big/ \Kern
\seminorm{\alpha}{\argument}^\bullet$
to a complex Hilbert space with inner product
$\skal{\argument}{\argument}_\alpha^\bullet$. Then
$\SymTen^\bullet(V)^\cpl_\alpha$ becomes the linear subspace of
(equivalence classes of) symmetric tensors, which is closed because
$\SymFkt^\bullet$ extends to a continuous endomorphism of
$\Ten^\bullet(V)^\cpl_\alpha$ by Proposition~\ref{prop:symcont}.

Moreover, for all $\skal{\argument}{\argument}_\alpha,
\skal{\argument}{\argument}_\beta \in \mathcal{I}_{V,h}$ with
$\skal{\argument}{\argument}_\beta \le \skal{\argument}{\argument}_\alpha$,
the linear map
$\id_{\Ten^\bullet(V)}\colon \Ten^\bullet(V)\rightarrow \Ten^\bullet(V)$
extends to continuous linear maps
$\iota_{\infty\alpha}\colon \Ten^\bullet(V)^\cpl
\rightarrow \Ten^\bullet(V)^\cpl_\alpha$
and
$\iota_{\alpha\beta}\colon \Ten^\bullet(V)^\cpl_\alpha
\rightarrow \Ten^\bullet(V)^\cpl_\beta$, such that
$\iota_{\alpha\beta} \circ \iota_{\infty\alpha} =
\iota_{\infty\beta}$ and
$\iota_{\beta\gamma} \circ \iota_{\alpha\beta} =
\iota_{\alpha\gamma}$
hold for all $\skal{\argument}{\argument}_\alpha,
\skal{\argument}{\argument}_\beta,
\skal{\argument}{\argument}_\gamma \in \mathcal{I}_{V,h}$ with
$\skal{\argument}{\argument}_\gamma \le
\skal{\argument}{\argument}_\beta \le
\skal{\argument}{\argument}_\alpha$.
This way, $\Ten^\bullet(V)^\cpl$ is realized as the
projective limit of the Hilbert spaces
$\Ten^\bullet(V)^\cpl_\alpha$ and similarly,
$\SymTen^\bullet(V)^\cpl$ as the
projective limit of the closed linear subspaces
$\SymTen^\bullet(V)^\cpl_\alpha$.

\begin{lemma}
  \label{lemma:coordchange}
  Let $\cc{\argument}$ be a continuous antilinear involution on
  $V$ and $f\in \smooth(V'_h)$. Given $\rho \in V'_h$ and
  $\skal{\argument}{\argument}_\alpha \in \mathcal{I}_{V,h}$ such
  that $|\rho(v)|\le \seminorm{\alpha}{v}$ holds for all $v\in V$,
  then
  \begin{equation}
    \widehat{D}_\rho f = \sum_{i\in I} \rho(e_i) \widehat{D}_{e_i^{\flat_\alpha}} f
  \end{equation}
  holds for every Hilbert basis $e\in(V^\cpl_{h,\alpha})^I$ of
  $V^\cpl_{h,\alpha}$ indexed by a set $I$.
\end{lemma}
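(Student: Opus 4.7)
The strategy is to view both sides as pointwise functions of $\sigma\in V'_h$ and, for fixed $\sigma$, reduce the claim to expanding a Riesz representative in the Hilbert space $V^\cpl_{h,\alpha}$ and pulling the expansion through the bounded linear functional coming from smoothness of $f$.

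First I fix $\sigma\in V'_h$ and set $L(\tilde\rho):=\bigl(\widehat D_{\tilde\rho}f\bigr)(\sigma)$. By smoothness (applied at $k=1$), $L$ is a bounded linear functional on $V'_h$ equipped with the equicontinuous bornology; in particular it is uniformly bounded on the equicontinuous polar set $B_\alpha := \{\tilde\rho\in V'_h : |\tilde\rho(v)|\le \seminorm{\alpha}{v} \text{ for all } v\in V\}$. Under the natural identification of $B_\alpha$ with the closed unit ball of $(V^\cpl_{h,\alpha})^*$, together with homogeneity, $L$ restricts to a continuous linear functional on the Hilbert space $(V^\cpl_{h,\alpha})^*$.

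Next, the hypothesis $|\rho(v)|\le \seminorm{\alpha}{v}$ says that $\rho$ extends to a continuous linear functional on $V^\cpl_{h,\alpha}$; by Riesz representation there is a unique $r\in V^\cpl_{h,\alpha}$ with $\rho = r^{\flat_\alpha}$, i.e.\ $\rho(v)=\skal{r}{v}_\alpha$ for all $v\in V$. Expansion in the Hilbert basis then gives
\begin{equation*}
    r \;=\; \sum_{i\in I}\skal{e_i}{r}_\alpha\, e_i \;=\; \sum_{i\in I}\rho(e_i)\,e_i
\end{equation*}
unconditionally in $V^\cpl_{h,\alpha}$. For any finite $F\subseteq I$ I set $\rho_F := \sum_{i\in F}\rho(e_i)\,e_i^{\flat_\alpha} \in V'_h$ (each summand lies in $V'_h$ because $\skal{\argument}{\argument}_\alpha\in\mathcal{I}_{V,h}$). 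Since $\argument^{\flat_\alpha}$ is an isometry from $V^\cpl_{h,\alpha}$ onto $(V^\cpl_{h,\alpha})^*$, one has $\|\rho - \rho_F\|_{(V^\cpl_{h,\alpha})^*} = \|r - r_F\|_\alpha \to 0$ as $F$ exhausts $I$.

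Combining the two ingredients, the continuity of $L$ on $(V^\cpl_{h,\alpha})^*$ yields $L(\rho_F)\to L(\rho)$, while $L(\rho_F) = \sum_{i\in F}\rho(e_i)\,L(e_i^{\flat_\alpha})$ by linearity. Therefore
\begin{equation*}
    \bigl(\widehat D_\rho f\bigr)(\sigma) \;=\; L(\rho) \;=\; \sum_{i\in I}\rho(e_i)\,\bigl(\widehat D_{e_i^{\flat_\alpha}}f\bigr)(\sigma),
\end{equation*}
which is the claimed equality pointwise in $\sigma$, hence as functions on $V'_h$. The one point requiring care — and the main obstacle — is Step 1: recognising that the bornological boundedness built into the smoothness assumption actually gives continuity of $L$ in the Hilbert dual norm $\|\argument\|_{(V^\cpl_{h,\alpha})^*}$, so that $r_F\to r$ transfers to $L(\rho_F)\to L(\rho)$. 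Once this is in place, the rest is just Riesz representation and basis expansion in the Hilbert space $V^\cpl_{h,\alpha}$.
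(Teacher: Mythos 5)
Your proposal is correct and takes essentially the same route as the paper's own proof: both use the boundedness of $\tilde\rho \mapsto (\widehat{D}_{\tilde\rho}f)(\sigma)$ guaranteed by smoothness to obtain continuity in the Hilbert dual norm of $(V^\cpl_{h,\alpha})^*$, then expand $\rho = \sum_{i\in I}\rho(e_i)\,e_i^{\flat_\alpha}$ in that topology and pass the sum through. Your version merely spells out the Riesz-representation and finite-partial-sum details that the paper leaves implicit.
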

\begin{proof}
  As $f$ is smooth, the function
  $V'_h \ni \sigma \mapsto \widehat{D}_\sigma f \in \CC$ is bounded,
  which implies that its restriction to the dual space of
  $V^\cpl_{h,\alpha}$ is continuous with respect to the Hilbert space
  topology on (the dual of) $V^\cpl_{h,\alpha}$. As
  $\rho = \sum_{i\in I} e_i^{\flat_\alpha} \rho(e_i)$ with respect to
  this topology, it follows that
  $\widehat{D}_\rho f = \sum_{i\in I} \rho(e_i)
  \widehat{D}_{e_i^{\flat_\alpha}} f$.
\end{proof}

\begin{definition}[Hilbert-Schmidt type functions]
    \label{definition:HSFunctions}%
    Let $\cc{\argument}$ be a continuous antilinear involution on
    $V$. We say that a function $f\colon V'_h\rightarrow \CC$ is
    analytic of Hilbert-Schmidt type, if it is smooth and additionally
    fulfills the condition that for all $\sigma,\sigma'\in V'_h$ and
    all $\skal{\argument}{\argument}_\alpha \in \mathcal{I}_{V,h}$
    there exists a $C_{\sigma, \sigma', \alpha} \in \RR$ such that
    \begin{equation}
        \label{eq:hsanalytic}
        \sum_{k=0}^\infty
        \frac{1}{k!}
        \sum_{i \in I^k}
        \abs[\Big]{
          \big(
          \widehat{D}^{(k)}_{
            (e_{i_1}^{\flat_\alpha},
            \ldots,
            e_{i_k}^{\flat_\alpha})
          }
          f\big)
          (\xi)
        }^2
        \le
        C_{\sigma, \sigma', \alpha}
    \end{equation}
    holds for one Hilbert base $e \in (V^\cpl_{h,\alpha})^I$ of
    $V^\cpl_{h,\alpha}$ indexed by a set $I$ and every $\xi$ from the
    line-segment between $\sigma$ and $\sigma'$, i.e. every
    $\xi = \lambda \sigma + (1-\lambda)\sigma'$ with
    $\lambda \in [0,1]$. We write $\analytic(V'_h)$ for the set of all
    complex functions on $V'_h$ that are analytic of Hilbert-Schmidt
    type.
\end{definition}
Here and elsewhere a sum over an uncountable Hilbert basis is
understood in the usual sense: only countably many terms in the sum
are non-zero.

This definition is independent of the choice of the Hilbert basis due
to Lemma~\ref{lemma:coordchange} and $\analytic(V'_h)$ is a complex
vector space. It is not too hard to check that $\analytic(V'_h)$ is
even a unital $^*$-subalgebra of $\smooth(V'_h)$. However, we will
indirectly prove this later on. Calling the functions in
$\analytic(V'_h)$ \emph{analytic} is justified thanks to the following
statement:
\begin{proposition}
    \label{prop:analytic}%
    Let $\cc{\argument}$ be a continuous antilinear involution on $V$
    and $f\colon V'_h \rightarrow \CC$ analytic of Hilbert-Schmidt
    type with $\big(\widehat{D}^{(k)}_\rho f\big)(0) = 0$ for all
    $k\in \NN_0$ and all $\rho \in (V'_h)^k$. Then $f = 0$.
\end{proposition}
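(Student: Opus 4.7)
My proposed proof proceeds by a Taylor expansion of $f$ along the ray from $0$ to an arbitrary fixed $\sigma\in V'_h$, combined with the Hilbert-Schmidt condition to control the remainder term.

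To prepare, I would pick $\skal{\argument}{\argument}_\alpha\in\mathcal{I}_{V,h}$ such that $\abs{\sigma(v)}\le\seminorm{\alpha}{v}/2$ holds for all $v\in V$; such $\alpha$ exists by rescaling any element of $\mathcal{I}_{V,h}$ that dominates $\sigma$. Let $e\in(V^\cpl_{h,\alpha})^I$ be the Hilbert basis from Definition~\ref{definition:HSFunctions}. Since $\sigma$ extends continuously to the Hilbert space $V^\cpl_{h,\alpha}$ with norm at most $1/2$, Parseval (or a direct pairing with $\sum_{i\in J}e_i\cc{\sigma(e_i)}$ for finite $J\subseteq I$) yields $\sum_{i\in I}\abs{\sigma(e_i)}^2\le 1/4$. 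Define $g\colon[0,1]\to\CC$ by $g(t):=f(t\sigma)$; smoothness of $f$ implies that $g$ is smooth with $g^{(k)}(t)=\big(\widehat{D}^{(k)}_{(\sigma,\ldots,\sigma)}f\big)(t\sigma)$, and by hypothesis $g^{(k)}(0)=0$ for all $k\in\NN_0$.

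The central step is to iterate Lemma~\ref{lemma:coordchange}: every iterated directional derivative of $f$ is again smooth (all its directional derivatives exist and, by symmetry and boundedness of higher-order derivatives of $f$, form bounded multilinear forms in each remaining slot), so applying the lemma once to each argument gives
\begin{equation*}
    \widehat{D}^{(k)}_{(\sigma,\ldots,\sigma)}f(\xi)
    =
    \sum_{i\in I^k}
    \sigma(e_{i_1})\cdots\sigma(e_{i_k})
    \big(\widehat{D}^{(k)}_{(e^{\flat_\alpha}_{i_1},\ldots,e^{\flat_\alpha}_{i_k})}f\big)(\xi).
\end{equation*}
Applying Cauchy-Schwarz to the sum over $I^k$ and the Hilbert-Schmidt bound from Definition~\ref{definition:HSFunctions} along the segment from $0$ to $\sigma$ (which contains $\xi=t\sigma$ for every $t\in[0,1]$) produces
\begin{equation*}
    \abs[\big]{g^{(k)}(t)}^2
    \le
    \Big(\sum\nolimits_{i\in I}\abs{\sigma(e_i)}^2\Big)^{k}\,
    k!\, C_{0,\sigma,\alpha}
    \le
    4^{-k}\,k!\, C_{0,\sigma,\alpha}.
\end{equation*}

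To conclude, I would apply Taylor's formula with integral remainder at $0$: for every $N\in\NN$,
\begin{equation*}
    f(\sigma)
    =
    g(1)
    =
    \frac{1}{(N-1)!}\int_0^1(1-t)^{N-1}g^{(N)}(t)\,\D t,
\end{equation*}
since all derivatives of $g$ at $0$ vanish. Using the bound above, this gives $\abs{f(\sigma)}\le 2^{-N}\sqrt{C_{0,\sigma,\alpha}/N!}$, which tends to $0$ as $N\to\infty$, so $f(\sigma)=0$ for every $\sigma\in V'_h$. The main obstacle is the rigorous iteration of Lemma~\ref{lemma:coordchange}, which needs that every iterated directional derivative of $f$ is itself smooth in the sense used by the lemma; once that is in place, the rest is Cauchy-Schwarz bookkeeping and a routine Taylor estimate.
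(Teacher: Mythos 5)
Your proof is correct and follows the paper's overall strategy --- restrict $f$ to the ray $t \mapsto t\sigma$ and kill the Taylor remainder of $g(t) := f(t\sigma)$ using the Hilbert--Schmidt bound --- but the execution of the key derivative estimate is genuinely different. You expand every slot of the $k$-th derivative in a Hilbert basis of $V^\cpl_{h,\alpha}$ by iterating Lemma~\ref{lemma:coordchange} and then apply the Cauchy--Schwarz inequality over $I^k$. This does work: fixing $k-1$ arguments of the bounded symmetric multilinear form $\rho \mapsto \big(\widehat{D}^{(k)}_{\rho}f\big)(\xi)$ leaves a bounded linear functional on (the image of) the Hilbert space $V^\cpl_{h,\alpha}$, and the resulting multiple series is absolutely convergent by the very square-summability you invoke, so the iterated sums may be rearranged into a single sum over $I^k$. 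But this is precisely the technical obstacle you flag, and the paper sidesteps it entirely: since $\abs{\sigma(v)} \le \seminorm{\alpha}{v}$, the Fréchet--Riesz theorem represents $\sigma$ as $\nu e^{\flat_\alpha}$ for a \emph{single} normalized $e \in V^\cpl_{h,\alpha}$ and $\nu \in [0,1]$, whence $g^{(k)}(t) = \nu^k \big(\widehat{D}^{(k)}_{(e^{\flat_\alpha}, \ldots, e^{\flat_\alpha})}f\big)(t\sigma)$ and the bound $\abs{g^{(k)}(t)}^2 \le k!\, C$ is read off from the single diagonal term of the sum \eqref{eq:hsanalytic} for a Hilbert basis containing $e$ --- no iteration of Lemma~\ref{lemma:coordchange} and no Cauchy--Schwarz over $I^k$ is needed. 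The conclusions also differ slightly: the paper deduces that $g$ is analytic on $]-2,2[$ (using the bound on the segment from $-2\sigma$ to $2\sigma$) and propagates the vanishing at $0$ by analytic continuation, whereas your integral-remainder estimate $\abs{f(\sigma)} \le 2^{-N}\sqrt{C/N!} \to 0$ is a perfectly good and arguably more direct finish; note that the extra factor $2^{-N}$ bought by your rescaling of $\alpha$ is not even needed, since $\sqrt{C/N!} \to 0$ on its own.
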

\begin{proof}
    Given $\sigma \in V'_h$, then define the smooth function
    $g\colon \RR \rightarrow \CC$ by $t\mapsto g(t) := f(t\sigma)$. We
    write $g^{(k)}(t)$ for the $k$-th derivative of $g$ at $t$. Then
    there exists a
    $\skal{\argument}{\argument}_\alpha \in \mathcal{I}_{V,h}$ that
    fulfills $\abs{\sigma(v)} \le \seminorm{\alpha}{v}$ for all
    $v\in V$, and consequently $\sigma = \nu e^{\flat_\alpha}$ with a
    normalized $e\in V^\cpl_{h, \alpha}$ and $\nu \in [0,1]$ by the
    Fréchet-Riesz theorem. Therefore,
    \begin{equation*}
        \bigg(
        \sum_{k=0}^\infty
        \frac{1}{k!}
        \abs[\big]{g^{(k)}(t)}
        \bigg)^2
        \stackrel{\cs}{\le}
        \sum_{k=0}^\infty
        \frac{1}{k!}
        \sum_{\ell=0}^\infty
        \frac{1}{\ell!}
        \abs[\big]{g^{(\ell)}(t)}^2
        \le
        \E \sum_{\ell=0}^\infty
        \frac{\nu^{2\ell}}{\ell!}
        \abs[\Big]{
          \big(\widehat{D}^{(\ell)}_{
            (e^{\flat_\alpha},
            \ldots,
            e^{\flat_\alpha})
          }
          f\big)
          (t\sigma)
        }^2
        \le
        \E C_{-2\sigma, 2\sigma, \alpha}
    \end{equation*}
    holds for all $t\in [-2,2]$ with a constant
    $C_{-2\sigma, 2\sigma, \alpha}\in \RR$, which shows that $g$ is an
    analytic function on $]-2,2[$. As $g^{(k)}(0) = 0$ for all
    $k\in \NN_0$ this implies $f(\sigma) = g(1) = 0$.
\end{proof}

Note that one can derive even better estimates for the derivatives of
$g$. This shows that condition \eqref{eq:hsanalytic} is even stronger
than just analyticity.

\begin{definition}
Let $\cc{\argument}$ be a continuous antilinear involution on $V$ and
let $f, g\colon V'_h\rightarrow \CC$ be analytic of Hilbert-Schmidt
type as well as
$\skal{\argument}{\argument}_\alpha\in\mathcal{I}_{V,h}$. Because of
the estimate \eqref{eq:hsanalytic} we can define a function
$\ptwskal{f}{g}_\alpha^\bullet\colon V'_h \rightarrow \CC$
by
\begin{equation}
    \label{eq:FunnyptwskalfgFunction}
    \rho
    \; \mapsto \;
    \ptwskal{f}{g}_\alpha^\bullet(\rho)
    :=
    \sum_{k=0}^\infty
    \frac{1}{k!}\sum_{i \in I^k}
    \cc{\Big(\widehat{D}^{(k)}_{e_{i}^{\flat_\alpha}}f\Big)(\rho)}
    \Big(\widehat{D}^{(k)}_{e_{i}^{\flat_\alpha}} g\Big)(\rho),
\end{equation}
where $e \in (V^\cpl_{h,\alpha})^I$ is an arbitrary Hilbert base of
$V^\cpl_{h,\alpha}$ indexed by a set $I$.
\end{definition}

Note that
$\ptwskal{f}{g}_\alpha^\bullet$ does not depend on the choice of this
Hilbert base due to Lemma~\ref{lemma:coordchange}.
Essentially, $\ptwskal{f}{g}_\alpha^\bullet(\rho)$ is a weighted
$\ell^2$-inner product (yet not necessarily positive-definite) of all
partial derivatives of $f$ and $g$ at $\rho$ in directions described
by (the dual of) a $\skal{\argument}{\argument}_\alpha$-Hilbert
base. Note that the analyticity condition \eqref{eq:hsanalytic} for a
function $f$ is equivalent to demanding that
$\ptwskal{f}{f}_\alpha^\bullet(\xi)$ exists for all $\xi \in V'_h$ and
all $\skal{\argument}{\argument}_\alpha \in \mathcal{I}_{V,h}$ and is
uniformly bounded on line segments in $V'_h$.

\begin{lemma}
    \label{lemma:partial}%
    Let $\cc{\argument}$ be a continuous antilinear
    involution on $V$. Let $k \in \NN$ and $x \in (V_h)^k$ as well as
    $\skal{\argument}{\argument}_\alpha \in \mathcal{I}_{V,h}$ be
    given. Then
    \begin{equation}
        \label{eq:partial}
        \big(
        \widehat{D}_{x^{\flat_\alpha}}^{(k)}\widehat{Y}
        \big)(0)
        =
        \big\langle D_{x^{\flat_\alpha}}^{(k)} Y \big\rangle_0
        =
        \skal{x_1 \otimes \cdots \otimes x_k}{Y}_\alpha^\bullet
    \end{equation}
    holds for all $Y \in \SymTen^\bullet(V)^\cpl$.
\end{lemma}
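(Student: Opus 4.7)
The plan is to handle the two equalities separately. For the first identity $\big(\widehat{D}_{x^{\flat_\alpha}}^{(k)}\widehat{Y}\big)(0) = \big\langle D_{x^{\flat_\alpha}}^{(k)} Y \big\rangle_0$, I would invoke Proposition~\ref{prop:deriv}, which already gives $\widehat{D}_\rho^{(k)}\widehat{Y} = \widehat{D_\rho^{(k)}Y}$ with $\rho = (x_1^{\flat_\alpha},\dots,x_k^{\flat_\alpha})$. Evaluating at $0 \in V_h'$ then produces $\delta_0\big(D_{x^{\flat_\alpha}}^{(k)}Y\big)$, and since $D_0 = 0$ by Definition~\ref{defi:gelfandstuff} one has $\tau^*_0 = \id$, so $\delta_0 = \langle\argument\rangle_0$. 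I also need to remark that each $x_i^{\flat_\alpha}$ really belongs to $V_h'$: continuity follows from Cauchy--Schwarz, while the reality condition follows from $\cc{x_i}=x_i$ together with $\skal{\argument}{\argument}_\alpha \in \mathcal{I}_{V,h}$.

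For the second identity $\big\langle D_{x^{\flat_\alpha}}^{(k)} Y\big\rangle_0 = \skal{x_1 \otimes \cdots \otimes x_k}{Y}_\alpha^\bullet$, the key observation is that both sides are continuous linear functionals of $Y \in \SymTen^\bullet(V)^\cpl$: the right hand side by Cauchy--Schwarz with respect to $\seminorm{\alpha}{\argument}^\bullet$, and the left hand side by iterating Lemma~\ref{lemma:contcharacter} (which gives continuity of each $D_{x_i^{\flat_\alpha}}$) and composing with the continuous projection $\langle\argument\rangle_0$. Since $\SymTen^\bullet(V)$ is dense in its completion, it suffices to verify the identity on $\SymTen^\bullet(V)$.

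Next I would reduce further to simple tensors. Both $\langle D_{x^{\flat_\alpha}}^{(k)}\argument\rangle_0$ and $\skal{x_1\otimes\cdots\otimes x_k}{\argument}_\alpha^\bullet$ are homogeneous of bidegree zero in the sense that they kill every homogeneous component $\langle Y\rangle_\ell$ with $\ell \neq k$: for the first, $D_{x^{\flat_\alpha}}^{(k)}$ lowers the tensor degree by exactly $k$, and for the second, this is immediate from \eqref{eq:skalBulletDef}. Hence it is enough to check the identity on $\SymTen^k(V) \subseteq \Ten^k(V)$, and by linearity it suffices to verify it on factorizing tensors $Y = y_1 \otimes \cdots \otimes y_k$.

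The remaining calculation is routine: applying the defining formula \eqref{eq:DrhoDef} for $D_\rho$ to $y_1\otimes\cdots\otimes y_k$ iteratively strips off one factor at a time and pulls out the scalar $\skal{x_m}{y_m}_\alpha = x_m^{\flat_\alpha}(y_m)$, producing a prefactor of $k(k-1)\cdots 1 = k!$ together with $\prod_{m=1}^k \skal{x_m}{y_m}_\alpha$, which by \eqref{eq:semiinnerproductextension1} coincides with $\skal{x_1\otimes\cdots\otimes x_k}{y_1\otimes\cdots\otimes y_k}_\alpha^\bullet$. The only conceptual hurdle in the argument is ensuring that passage to the completion is justified; this is the reason I singled out continuity of both functionals at the start. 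The combinatorial bookkeeping of the $k!$ (namely that the factors $k,k-1,\dots,1$ accumulated by the successive $D_{x_i^{\flat_\alpha}}$ match exactly the factor in \eqref{eq:semiinnerproductextension1}) is what ensures that no symmetrisation correction appears, which might otherwise be a source of trouble.
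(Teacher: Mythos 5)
Your proposal is correct and follows essentially the same route as the paper's proof: the first identity is Proposition~\ref{prop:deriv} evaluated at $0$, and the second is reduced, via linearity and continuity of both sides (Lemma~\ref{lemma:contcharacter}) together with the grading, to the direct computation $\big\langle D^{(k)}_{(x_1^{\flat_\alpha},\ldots,x_k^{\flat_\alpha})}\, y_1\otimes\cdots\otimes y_k\big\rangle_0 = k!\prod_{m=1}^k\skal{x_m}{y_m}_\alpha$ on factorizing tensors. The extra checks you include (that $x_i^{\flat_\alpha}\in V'_h$ and that $\delta_0=\langle\argument\rangle_0$) are harmless elaborations of points the paper leaves implicit.
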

\begin{proof}
    The first identity is just Proposition~\ref{prop:deriv}, and for the
    second one it is sufficient to show that
    $\big\langle D_{x^{\flat_\alpha}}^{(k)} Y \big\rangle_0 =
    \skal{x_1\otimes \cdots \otimes x_k}{Y}_\alpha^\bullet$
    holds for all factorizing tensors $Y$ of degree $k$, because both sides of this
    equation vanish on homogeneous tensors of different degree and are
    linear and continuous in $Y$ by
    Lemma~\ref{lemma:contcharacter}. However, it is an immediate
    consequence of the definitions of $D$,
    $\argument^{\flat_\alpha}$, and
    $\skal{\argument}{\argument}_\alpha^\bullet$ that
    \begin{equation*}
        \Big\langle
        D_{
          (x^{\flat_\alpha}_1,
          \ldots,
          x^{\flat_\alpha}_k)
        }^{(k)}
        y_1 \otimes \cdots \otimes y_k
        \Big\rangle_0
        =
        k!\prod_{m=1}^k \skal{x_m}{y_m}_\alpha
        =
        \skal{x_1 \otimes \cdots \otimes x_k}
        {y_1 \otimes \cdots \otimes y_k}_\alpha^\bullet
    \end{equation*}
    holds for all $y_1, \ldots, y_k \in V$.
\end{proof}

\begin{proposition}
    \label{prop:ptwskalinnerprod}%
    Let $\cc{\argument}$ be a continuous antilinear
    involution on $V$, then
    \begin{align}
        \ptwskal[\big]{\widehat{X}}{\widehat{Y}}_\alpha^\bullet(\rho)
        =
        \skal{\tau^*_\rho X}{\tau^*_\rho Y}_\alpha^\bullet
        =
        \widehat{X^* \stpro[\Lambda_\alpha] Y}(\rho)
    \end{align}
    holds for all $X, Y \in \SymTen^\bullet(V)^\cpl$, all
    $\rho \in V'_h$, and all
    $\skal{\argument}{\argument}_\alpha \in \mathcal{I}_{V,h}$, where
    $\Lambda_\alpha\colon V \times V \rightarrow \CC$ is the
    continuous bilinear form defined by
    $\Lambda_\alpha(v, w) := \skal{\cc{v}}{w}_\alpha$.
\end{proposition}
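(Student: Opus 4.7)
The plan is to establish the two claimed equalities separately, using the identification of the Gel'fand-type derivative with the algebraic operator $D$ on $X$ together with an abstract Parseval argument for the first, and the $^*$-automorphism property of $\tau^*_\rho$ together with \eqref{eq:SkalarproduktSternprodukt} for the second.

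For the first equality, I would unfold the definition \eqref{eq:FunnyptwskalfgFunction} and compute each factor $\big(\widehat{D}^{(k)}_{e_i^{\flat_\alpha}}\widehat{X}\big)(\rho)$. By Proposition~\ref{prop:deriv} it equals $\widehat{D^{(k)}_{e_i^{\flat_\alpha}} X}(\rho) = \delta_\rho\big(D^{(k)}_{e_i^{\flat_\alpha}} X\big) = \big\langle \tau^*_\rho D^{(k)}_{e_i^{\flat_\alpha}} X\big\rangle_0$, and since $\tau^*_\rho$ and each $D_\sigma$ commute by Lemma~\ref{lemma:comderiv}, this becomes $\big\langle D^{(k)}_{e_i^{\flat_\alpha}} \tau^*_\rho X\big\rangle_0$. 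Now Lemma~\ref{lemma:partial}, extended by continuity from $x \in (V_h)^k$ to $x \in (V^\cpl_{h,\alpha})^k$ (both sides are continuous in each $x_m$ with respect to the Hilbert topology on $V^\cpl_{h,\alpha}$ via the Cauchy--Schwarz estimate for $\argument^{\flat_\alpha}$ and the boundedness of $\langle D_{\argument} Z\rangle_0$), identifies this scalar with $\skal{e_{i_1}\otimes\cdots\otimes e_{i_k}}{\tau^*_\rho X}_\alpha^\bullet$.

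Substituting back and splitting the inner product via \eqref{eq:skalBulletDef} as a sum over homogeneous components (which are mutually orthogonal), it then remains to recognise the $k$-th term
\begin{equation*}
    \sum_{i \in I^k}
    \frac{1}{k!}
    \cc{\skal{e_{i_1}\otimes \cdots \otimes e_{i_k}}{\tau^*_\rho X}_\alpha^\bullet}
    \skal{e_{i_1}\otimes \cdots \otimes e_{i_k}}{\tau^*_\rho Y}_\alpha^\bullet
\end{equation*}
as the Parseval expansion of $\skal{\langle\tau^*_\rho X\rangle_k}{\langle\tau^*_\rho Y\rangle_k}_\alpha^\bullet$ in the complex Hilbert space $\Ten^k(V)^\cpl_\alpha$. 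This rests on the observation that if $e$ is a real Hilbert basis of $V^\cpl_{h,\alpha}$, then $\big\{(k!)^{-1/2} e_{i_1}\otimes\cdots\otimes e_{i_k} \;\big|\; i\in I^k\big\}$ is a complex orthonormal basis of $\Ten^k(V)^\cpl_\alpha$ (since the inner product is real on $V^\cpl_{h,\alpha}$ and extends sesquilinearly to the complexification). Summing over $k$ yields $\skal{\tau^*_\rho X}{\tau^*_\rho Y}_\alpha^\bullet$, and the analyticity of Hilbert-Schmidt type of $\widehat{X}$ and $\widehat{Y}$ (together with Cauchy--Schwarz) makes the double series absolutely convergent, so no reordering issue arises.

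For the second equality I would use that $\tau^*_\rho$ is a unital $^*$-automorphism of $\big(\SymTen^\bullet(V)^\cpl, \stpro[\Lambda_\alpha], {}^*\big)$, as noted after Definition~\ref{defi:gelfandstuff}, to write
\begin{equation*}
    \widehat{X^* \stpro[\Lambda_\alpha] Y}(\rho)
    =
    \big\langle
    \tau^*_\rho\big(X^* \stpro[\Lambda_\alpha] Y\big)
    \big\rangle_0
    =
    \big\langle
    (\tau^*_\rho X)^* \stpro[\Lambda_\alpha] (\tau^*_\rho Y)
    \big\rangle_0,
\end{equation*}
and then apply identity~\eqref{eq:SkalarproduktSternprodukt} of Theorem~\ref{theo:char} to conclude this equals $\skal{\tau^*_\rho X}{\tau^*_\rho Y}_\alpha^\bullet$. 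I expect the main technical obstacle to be the careful extension of Lemma~\ref{lemma:partial} from the algebraic setting $x \in V_h$ to $x \in V^\cpl_{h,\alpha}$ and, together with it, the justification of the Parseval sum in the (possibly uncountable) basis $e$; everything else is a clean combination of results already established earlier in the paper.
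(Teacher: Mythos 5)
Your proposal is correct and follows essentially the same route as the paper's proof: unfold \eqref{eq:FunnyptwskalfgFunction}, convert $\widehat{D}^{(k)}$ into $D^{(k)}$ via Proposition~\ref{prop:deriv} and Lemma~\ref{lemma:comderiv}, identify the resulting scalars with inner products against $e_{i_1}\otimes\cdots\otimes e_{i_k}$ via Lemma~\ref{lemma:partial}, sum by Parseval using that the normalized tensors form a Hilbert base of $\Ten^\bullet(V)^\cpl_\alpha$, and obtain the second equality from the $^*$-automorphism property of $\tau^*_\rho$ together with \eqref{eq:SkalarproduktSternprodukt}. Your explicit remark that Lemma~\ref{lemma:partial} must be extended by continuity from $(V_h)^k$ to basis vectors in $V^\cpl_{h,\alpha}$ is a point the paper passes over silently, and is a welcome addition rather than a deviation.
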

\begin{proof}
    Let $X, Y \in \SymTen^\bullet(V)^\cpl$, $\rho \in V'_h$ and
    $\skal{\argument}{\argument}_\alpha \in \mathcal{I}_{V,h}$ be
    given. Let $e \in (V_{h,\alpha}^\cpl)^I$ be a Hilbert base of
    $V_{h,\alpha}^\cpl$ indexed by a set $I$. Then
    \begin{align*}
        \ptwskal[\big]{\widehat{X}}{\widehat{Y}}_\alpha^\bullet(\rho)
        &=
        \sum_{k=0}^\infty
        \frac{1}{k!}
        \sum_{i \in I^k}
        \cc{
          \Big(
          \widehat{D}^{(k)}_{e_{i}^{\flat_\alpha}}
          \widehat{X}
          \Big)(\rho)
        }
        \Big(
        \widehat{D}^{(k)}_{e_{i}^{\flat_\alpha}}
        \widehat{Y}
        \Big)(\rho) \\
        &=
        \sum_{k=0}^\infty
        \frac{1}{k!}
        \sum_{i \in I^k}
        \cc{
          \Big\langle
          D^{(k)}_{e_{i}^{\flat_\alpha}} \tau^*_\rho X
          \Big\rangle_0
        }
        \Big\langle
        D^{(k)}_{e_{i}^{\flat_\alpha}} \tau^*_\rho  Y
        \Big\rangle_0 \\
        &=
        \sum_{k=0}^\infty
        \sum_{i \in I^k}
        \frac{1}{k!}
        \skal[\big]{\tau^*_\rho X}
        {e_{i_1} \otimes \cdots \otimes e_{i_k}}_\alpha^\bullet
        \skal[\big]{e_{i_1} \otimes \cdots \otimes e_{i_k}}
        {\tau^*_\rho Y}_\alpha^\bullet \\
        &=
        \skal[\big]{\tau^*_\rho X}{\tau^*_\rho Y}^\bullet_\alpha
    \end{align*}
    holds by Proposition~\ref{prop:deriv} and
    Lemma~\ref{lemma:comderiv} as well as the previous
    Lemma~\ref{lemma:partial} and the fact that the tensors
    $(k!)^{1/2} e_{i_1} \otimes \cdots \otimes e_{i_k}$ for all
    $k\in \NN_0$ and $i \in I^k$ form a Hilbert base of
    $\Ten^\bullet(V)^\cpl_\alpha$. The second identity is a direct
    consequence of Theorem~\ref{theo:char} because $\tau^*_\rho$ is a
    unital $^*$-automorphism of $\stpro[\Lambda_\alpha]$. Indeed, we have
    \begin{equation*}
        \skal[\big]{\tau^*_\rho X}{\tau^*_\rho Y}_\alpha^\bullet
        =
        \big\langle
        (\tau^*_\rho X)^* \stpro[\Lambda_\alpha] (\tau^*_\rho Y)
        \big\rangle_0
        =
        \big\langle
        \tau^*_\rho \big(X^* \stpro[\Lambda_\alpha] Y\big)
        \big\rangle_0
        =
        \widehat{X^* \stpro[\Lambda_\alpha] Y}(\rho).
    \end{equation*}
\end{proof}
\begin{corollary}
    \label{cor:analytic}%
    Let $\cc{\argument}$ be a continuous antilinear
    involution on $V$ and $X \in \SymTen^\bullet(V)^\cpl$, then
    $\widehat{X} \in \analytic(V'_h)$.
\end{corollary}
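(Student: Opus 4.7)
The plan is to derive the corollary by combining two pieces already in the text: smoothness of $\widehat{X}$ is exactly Proposition~\ref{prop:deriv}, so the only thing left to verify is the uniform bound \eqref{eq:hsanalytic} along line segments in $V'_h$. The key observation is that Proposition~\ref{prop:ptwskalinnerprod} identifies the series appearing in the Hilbert--Schmidt analyticity condition with a Hilbert seminorm. Indeed, taking $f = g = \widehat{X}$ in that proposition gives
\begin{equation*}
    \sum_{k=0}^\infty \frac{1}{k!} \sum_{i\in I^k} \abs[\big]{\big(\widehat{D}^{(k)}_{(e_{i_1}^{\flat_\alpha}, \ldots, e_{i_k}^{\flat_\alpha})}\widehat{X}\big)(\xi)}^2
    =
    \ptwskal[\big]{\widehat{X}}{\widehat{X}}_\alpha^\bullet(\xi)
    =
    \seminorm{\alpha}{\tau^*_\xi X}^{\bullet,2},
\end{equation*}
so the entire task reduces to bounding $\seminorm{\alpha}{\tau^*_\xi X}^{\bullet,2}$ uniformly for $\xi$ running through the segment from $\sigma$ to $\sigma'$.

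Next I would reduce to a single uniform application of Lemma~\ref{lemma:contcharacter}. Given $\sigma, \sigma' \in V'_h$ and $\skal{\argument}{\argument}_\alpha \in \mathcal{I}_{V,h}$, by continuity of $\sigma$ and $\sigma'$ and cofinality of $\mathcal{I}_{V,h}$ in $\mathcal{I}_V$, I can pick $\skal{\argument}{\argument}_\beta \in \mathcal{I}_{V,h}$ such that $\seminorm{\beta}{\argument} \ge \seminorm{\alpha}{\argument}$ and $\abs{\sigma(v)}, \abs{\sigma'(v)} \le \seminorm{\beta}{v}$ for all $v \in V$. For $\xi = \lambda\sigma + (1-\lambda)\sigma'$ with $\lambda \in [0,1]$ the triangle inequality then yields $\abs{\xi(v)} \le \seminorm{\beta}{v}$, so Lemma~\ref{lemma:contcharacter} gives
\begin{equation*}
    \seminorm{\alpha}{\tau^*_\xi X}^{\bullet}
    \le
    \seminorm{\beta}{\tau^*_\xi X}^{\bullet}
    \le
    \frac{2}{\sqrt{2}-1}\,\seminorm{2\beta}{X}^{\bullet},
\end{equation*}
which is independent of $\xi$. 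Setting $C_{\sigma,\sigma',\alpha} := \big(\tfrac{2}{\sqrt{2}-1}\big)^2 \seminorm{2\beta}{X}^{\bullet,2}$ verifies \eqref{eq:hsanalytic}, and together with the smoothness from Proposition~\ref{prop:deriv} shows $\widehat{X} \in \analytic(V'_h)$.

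No serious obstacle is expected: all the analytic heavy lifting has been done in Lemma~\ref{lemma:contcharacter} and Proposition~\ref{prop:ptwskalinnerprod}. The only point requiring a little care is to ensure that the chosen dominating Hilbert seminorm $\beta$ lies in $\mathcal{I}_{V,h}$ (so that the hypotheses of the lemmas about $\tau^*_\rho$ on Hermitian functionals apply) and to observe that the Hilbert-basis sum from Definition~\ref{definition:HSFunctions} is exactly the $\ptwskal{\cdot}{\cdot}_\alpha^\bullet$-quantity computed in Proposition~\ref{prop:ptwskalinnerprod}, which in turn is independent of the choice of Hilbert basis by Lemma~\ref{lemma:coordchange}.
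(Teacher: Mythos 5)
Your proposal is correct and takes essentially the same route as the paper: smoothness comes from Proposition~\ref{prop:deriv}, and the Hilbert--Schmidt sum in \eqref{eq:hsanalytic} is identified via Proposition~\ref{prop:ptwskalinnerprod}. The only (harmless) difference is the final step: you bound $\seminorm{\alpha}{\tau^*_\xi X}^{\bullet,2}$ uniformly on the segment by an explicit estimate from Lemma~\ref{lemma:contcharacter} with a single dominating seminorm $\seminorm{\beta}{\argument}$, whereas the paper simply observes that the sum equals $\widehat{X^* \stpro[\Lambda_\alpha] X}(\xi)$, which is smooth in $\xi$ and hence bounded on the compact line segment.
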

\begin{proof}
    The function $\widehat{X}$ is smooth by
    Proposition~\ref{prop:deriv}. By the previous
    Proposition~\ref{prop:ptwskalinnerprod}, we have
    \begin{align*}
        \sum_{k=0}^\infty
        \frac{1}{k!}
        \sum_{i \in I^k}
        \abs[\Big]{
          \Big(
          \widehat{D}^{(k)}_{e_{i}^{\flat_\alpha}}
          \widehat{X}
          \Big)(\xi)
        }^2
        =
        \ptwskal[\big]{\widehat{X}}{\widehat{X}}^\bullet_\alpha(\xi)
        =
        \widehat{X^* \stpro[\Lambda_\alpha] X}(\xi)
    \end{align*}
    for all $\skal{\argument}{\argument}_\alpha\in\mathcal{I}_{V,h}$,
    which is finite and depends smoothly on $\xi\in V'_h$ by
    Proposition~\ref{prop:deriv} again. Therefore it is uniformly
    bounded on line segments.
\end{proof}

\begin{lemma}
    \label{lemma:nocheins}%
    Let $\cc{\argument}$ be a continuous antilinear
    involution on $V$ and
    $\skal{\argument}{\argument}_\alpha \in \mathcal{I}_{V,h}$. For
    every $f\in\analytic(V'_h)$ there exists an
%     $X_{f, \alpha} \in \SymTen^\bullet(V)^\cpl_\alpha$ such that
%     $\ptwskal{f}{\widehat{Y}}^\bullet_\alpha(0) =
%     \skal{X_{f,\alpha}}{Y}^\bullet_\alpha$
%     holds for all $Y \in \SymTen^\bullet(V)^\cpl$. Moreover,
%     the identity $\iota_{\alpha\beta}(X_{f,\alpha}) = X_{f,\beta}$
%     holds for all
%     $\skal{\argument}{\argument}_\alpha,
%     \skal{\argument}{\argument}_\beta \in \mathcal{I}_{V,h}$ with
%     $\skal{\argument}{\argument}_\beta \le \skal{\argument}{\argument}_\alpha$
%    and there also exists a unique
    $X_f \in \SymTen^\bullet(V)^\cpl$
    that fulfills
%     $\iota_{\infty\alpha}(X_{f}) = X_{f,\alpha}$
%     holds for all
%     $\skal{\argument}{\argument}_\alpha \in \mathcal{I}_{V,h}$, which then
%     fulfills
    $\ptwskal{f}{f}^\bullet_\alpha(0) =
    \ptwskal{\widehat{X}_f}{\widehat{X}_f}^\bullet_\alpha(0)$ and
    $\ptwskal{f}{\widehat{Y}}^\bullet_\alpha(0) =
    \ptwskal{\widehat{X}_f}{\widehat{Y}}^\bullet_\alpha(0)$
    for all $Y \in \SymTen^\bullet(V)^\cpl$ and all
    $\skal{\argument}{\argument}_\alpha \in \mathcal{I}_{V,h}$.
\end{lemma}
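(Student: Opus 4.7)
The plan is to build $X_f$ as the Riesz representative of the bounded linear functional $Y \mapsto \ptwskal{f}{\widehat{Y}}_\alpha^\bullet(0)$ inside the Hilbert-space completion $\SymTen^\bullet(V)^\cpl_\alpha$, made explicit by reading off the Taylor-like coefficients of $f$ at the origin.

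First, fix a Hilbert basis $e \in (V^\cpl_{h,\alpha})^I$ of $V^\cpl_{h,\alpha}$ and set $g_{k,i} := \big(\widehat{D}^{(k)}_{(e_{i_1}^{\flat_\alpha}, \ldots, e_{i_k}^{\flat_\alpha})} f\big)(0)$ for $k \in \NN_0$ and $i \in I^k$. Two facts about these coefficients are crucial and both come directly from Definition~\ref{definition:HSFunctions}: the analyticity estimate \eqref{eq:hsanalytic} specialised to $\sigma = \sigma' = 0$ gives $\sum_{k,i} (k!)^{-1} |g_{k,i}|^2 = \ptwskal{f}{f}_\alpha^\bullet(0) < \infty$, while the symmetry of the iterated directional derivatives built into the definition of smoothness ensures that $g_{k,\cdot}$ is a symmetric function on $I^k$.

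Since the tensors $u_{k,i} := (k!)^{-1/2}\, e_{i_1} \otimes \cdots \otimes e_{i_k}$ form an orthonormal system in $\Ten^\bullet(V)^\cpl_\alpha$, the series
\begin{equation*}
    X_f := \sum_{k=0}^\infty \frac{1}{k!} \sum_{i \in I^k} g_{k,i} \, e_{i_1} \otimes \cdots \otimes e_{i_k}
\end{equation*}
converges in this Hilbert space with $\seminorm{\alpha}{X_f}^{\bullet,2} = \ptwskal{f}{f}_\alpha^\bullet(0)$, and the symmetry of the $g_{k,i}$ places its limit in the closed subspace $\SymTen^\bullet(V)^\cpl_\alpha$. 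For any $Y \in \SymTen^\bullet(V)^\cpl$, Lemma~\ref{lemma:partial} gives $\big(\widehat{D}^{(k)}_{e_i^{\flat_\alpha}} \widehat{Y}\big)(0) = (k!)^{1/2} \skal{u_{k,i}}{Y}_\alpha^\bullet$, and substituting this into the definition of $\ptwskal{f}{\widehat{Y}}_\alpha^\bullet(0)$ and collecting terms yields $\ptwskal{f}{\widehat{Y}}_\alpha^\bullet(0) = \skal{X_f}{Y}_\alpha^\bullet$. Proposition~\ref{prop:ptwskalinnerprod} at $\rho = 0$ identifies the right-hand side with $\ptwskal{\widehat{X}_f}{\widehat{Y}}_\alpha^\bullet(0)$, giving the second identity; specialising to $Y = X_f$ and comparing norms gives the first.

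The main obstacle I anticipate is the tension between the natural habitat of $X_f$ — the single Hilbert-space completion $\SymTen^\bullet(V)^\cpl_\alpha$, since the partial sums use basis vectors $e_i$ of $V^\cpl_{h,\alpha}$ rather than of $V$ itself — and the statement's requirement that $X_f \in \SymTen^\bullet(V)^\cpl$. Because both claimed identities only test $X_f$ through its $\alpha$-inner-products with tensors coming from $\SymTen^\bullet(V)^\cpl$, the passage from the Hilbert-space construction to an element of the locally convex completion has to be handled carefully, using the realisation of $\SymTen^\bullet(V)^\cpl$ as the projective limit of the Hilbert spaces $\SymTen^\bullet(V)^\cpl_\beta$ and identifying our construction with the $\alpha$-component of such a projective family.
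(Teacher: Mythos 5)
Your construction of $X_{f,\alpha}$, the norm identity $\seminorm{\alpha}{X_{f,\alpha}}^{\bullet,2} = \ptwskal{f}{f}^\bullet_\alpha(0)$, and the derivation of $\ptwskal{f}{\widehat{Y}}^\bullet_\alpha(0) = \skal{X_{f,\alpha}}{Y}^\bullet_\alpha$ via Lemma~\ref{lemma:partial} and Proposition~\ref{prop:ptwskalinnerprod} are exactly what the paper does. But the step you flag at the end as "the main obstacle" and leave to be "handled carefully" is not a formality -- it is the substantive part of the proof, and without it you have not produced an element of $\SymTen^\bullet(V)^\cpl$ at all. Your series converges only in the single Hilbert space $\SymTen^\bullet(V)^\cpl_\alpha$, and the partial sums involve vectors $e_i \in V^\cpl_{h,\alpha}$ that need not lie in $V$, so there is no reason a priori that the limit comes from the locally convex completion. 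Since the statement demands one $X_f$ satisfying the identities for \emph{all} $\skal{\argument}{\argument}_\alpha \in \mathcal{I}_{V,h}$ simultaneously, you must show that the family $(X_{f,\alpha})_\alpha$ is coherent, i.e. $\iota_{\alpha\beta}(X_{f,\alpha}) = X_{f,\beta}$ whenever $\skal{\argument}{\argument}_\beta \le \skal{\argument}{\argument}_\alpha$, before the projective limit realisation of $\SymTen^\bullet(V)^\cpl$ hands you a preimage.

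This coherence does not follow abstractly from the defining properties $\skal{X_{f,\alpha}}{Y}^\bullet_\alpha = \ptwskal{f}{\widehat{Y}}^\bullet_\alpha(0)$: the maps $\iota_{\alpha\beta}$ are not isometries, so knowing the $\alpha$-inner products of $X_{f,\alpha}$ against the (dense) image of $\SymTen^\bullet(V)^\cpl$ tells you nothing directly about the $\beta$-inner products of $\iota_{\alpha\beta}(X_{f,\alpha})$. The paper proves it by an explicit change-of-basis computation: using Lemma~\ref{lemma:coordchange} one shows that the derivative coefficients transform according to
\begin{equation*}
  \Big(\widehat{D}^{(k)}_{d_{j}^{\flat_\beta}} f\Big)(0)
  =
  \frac{1}{k!}
  \sum_{i\in I^k}
  \Big(\widehat{D}^{(k)}_{e_{i}^{\flat_\alpha}} f\Big)(0)
  \skal[\big]{d_{j_1}\otimes \cdots \otimes d_{j_k}}{\iota_{\alpha\beta}(e_{i_1}\otimes \cdots \otimes e_{i_k})}_\beta^\bullet
\end{equation*}
for a Hilbert basis $d$ of $V^\cpl_{h,\beta}$, and then expands $\iota_{\alpha\beta}(X_{f,\alpha})$ in the basis $d$ to identify it with $X_{f,\beta}$. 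You should supply this computation (or an equivalent argument); as written, your proof establishes the two identities only for the one fixed $\alpha$ used in the construction and for an object living in $\SymTen^\bullet(V)^\cpl_\alpha$ rather than in $\SymTen^\bullet(V)^\cpl$.
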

\begin{proof}
%     As
%     $\ptwskal{f}{\widehat{Y}}^\bullet_\alpha(0) \stackrel{\cs}{\le} \big(
%     \ptwskal{f}{f}^\bullet_\alpha(0)
%     \ptwskal{\widehat{Y}}{\widehat{Y}}^\bullet_\alpha(0) \big)^{1/2} =
%     \ptwskal{f}{f}^\bullet_\alpha(0)^{1/2} \seminorm{\alpha}{Y}^\bullet$
%     holds for all $Y\in \SymTen^\bullet(V)$ due to
%     Proposition~\ref{prop:ptwskalinnerprod}, existence and uniqueness
%     of $X_{f, \alpha}$ is guaranteed by the Fréchet-Riesz
%     theorem. However, we can also explicitly
    For every $\alpha\in\mathcal{I}_{V,h}$
    construct $X_{f, \alpha} \in \SymTen^\bullet(V)^\cpl_\alpha$ as
    \begin{equation*}
        X_{f,\alpha}
        :=
        \sum_{k=0}^\infty
        \frac{1}{k!}
        \sum_{i\in I^k}
        e_{i_1} \otimes \cdots \otimes e_{i_k}
        \Big(\widehat{D}^{(k)}_{e_{i}^{\flat_\alpha}} f \Big)(0)
        \in
        \SymTen^\bullet(V)^\cpl_\alpha,
    \end{equation*}
    where $e \in (V^\cpl_{h,\alpha})^I$ is a Hilbert base of
    $V^\cpl_{h,\alpha}$ indexed by a set $I$. This infinite sum
    $X_{f,\alpha}$ indeed lies in $\SymTen^\bullet(V)^\cpl_\alpha$
    and fulfills
    $\skal{X_{f,\alpha}}{X_{f,\alpha}}^\bullet_\alpha =
    \ptwskal{f}{f}^\bullet_\alpha(0)$,
    because $\big(\widehat{D}^{(k)}_{e_{i}^{\flat_\alpha}} f \big)(0)$
    is invariant under permutations of the $e_{i_1}, \ldots, e_{i_k}$
    due to the smoothness of $f$ and because
    \begin{align*}
        &\sum_{k,\ell=0}^\infty
        \sum_{i \in I^k, i' \in I^\ell}
        \frac{1}{k! \ell!}
        \skal[\Big]{
          e_{i_1} \otimes \cdots \otimes e_{i_k}
          \Big(\widehat{D}^{(k)}_{e_{i}^{\flat_\alpha}} f \Big)(0)
        }
        {
          e_{i'_1} \otimes \cdots \otimes e_{i'_\ell}
          \Big(\widehat{D}^{(\ell)}_{e_{i}^{\flat_\alpha}} f \Big)(0)
        }_\alpha^\bullet
        \\
        &\quad=
        \sum_{k=0}^\infty
        \sum_{i \in I^k}
        \frac{1}{k!}
        \abs[\Big]{
          \Big(\widehat{D}_{e_{i}^{\flat_\alpha}}^{(k)} f \Big)(0)
        }^2 \\
        &\quad=
        \ptwskal{f}{f}^\bullet_\alpha(0).
    \end{align*}
    Moreover, for all $Y\in \SymTen^\bullet(V)^\cpl$ the identity
    \begin{align*}
        \ptwskal[\big]{f}{\widehat{Y}}^\bullet_\alpha(0)
        &=
        \sum_{k=0}^\infty
        \frac{1}{k!}
        \sum_{i \in I^k}
        \cc{\Big(\widehat{D}^{(k)}_{e_{i}^{\flat_\alpha}} f\Big)(0)}
        \Big(
	  \widehat{D}^{(k)}_{e_{i}^{\flat_\alpha}}\widehat{Y}
        \Big)(0) \\
        &=
        \sum_{k=0}^\infty
        \frac{1}{k!}
        \sum_{i \in I^k}
        \skal[\big]{
	  X_{f, \alpha}
	}{
	  e_{i_1} \otimes \cdots \otimes e_{i_k}
	}^\bullet_\alpha
        \skal[\big]{
	  e_{i_1} \otimes \cdots \otimes e_{i_k}
	}{
	  Y
	}^\bullet_\alpha \\
        &=
        \skal[\big]{X_{f, \alpha}}{Y}^\bullet_\alpha
    \end{align*}
    holds due to the construction of $X_{f,\alpha}$ and
    Lemma~\ref{lemma:partial} and because the tensors
    $(k!)^{1/2} e_{i_1}\otimes \cdots \otimes e_{i_k}$ for all
    $k\in \NN_0$ and all $i\in I^k$ are a Hilbert base of
    $\Ten^\bullet(V)^\cpl_\alpha$.

    Next, let $\skal{\argument}{\argument}_\beta \in \mathcal{I}_{V,h}$
    with $\skal{\argument}{\argument}_\beta \le
    \skal{\argument}{\argument}_\alpha$ and a Hilbert basis
    $d \in (V^\cpl_{h,\beta})^J$ of $V^\cpl_{h,\beta}$
    indexed by a set $J$ be given. Using the
    explicit formulas and the identity
    \begin{equation*}
      \Big(\widehat{D}^{(k)}_{d_{j}^{\flat_\beta}} f\Big)(0)
      =
      \frac{1}{k!}
      \sum_{i\in I^k}
      \Big(\widehat{D}^{(k)}_{e_{i}^{\flat_\alpha}} f\Big)(0)
      \skal{
	d_{j_1}\otimes \dots \otimes d_{j_k}
      }{
	\iota_{\alpha\beta}(
	  e_{i_1}\otimes \dots \otimes e_{i_k}
	)
      }_\beta^\bullet
    \end{equation*}
    from Lemma~\ref{lemma:coordchange}
    one can now calculate that
    \begin{align*}
      \iota_{\alpha\beta}(X_{f,\alpha})
      &=
      \sum_{k=0}^\infty
      \frac{1}{k!}
      \sum_{i\in I^k}
      \iota_{\alpha\beta}(
	e_{i_1}\otimes \dots \otimes e_{i_k}
      )
      \Big(
	\widehat{D}^{(k)}_{e_{i}^{\flat_\alpha}} f
      \Big)(0) \\
      &=
      \sum_{k=0}^\infty
      \frac{1}{(k!)^2}
      \sum_{i\in I^k}
      \sum_{j\in J^k}
      d_{j_1}\otimes \dots \otimes d_{j_k}
      \skal{
	d_{j_1}\otimes \dots \otimes d_{j_k}
      }{
	\iota_{\alpha\beta}(
	  e_{i_1}\otimes \dots \otimes e_{i_k}
	 )
      }^\bullet_\beta
      \Big(
	\widehat{D}^{(k)}_{e_{i}^{\flat_\alpha}} f
      \Big)(0) \\
      &=
      \sum_{k=0}^\infty
      \frac{1}{k!}
      \sum_{j\in J^k}
      d_{j_1}\otimes \dots \otimes d_{j_k}
      \Big(
	\widehat{D}^{(k)}_{d_{j}^{\flat_\beta}} f
      \Big)(0) \\
      &=
      X_{f,\beta}\,.
    \end{align*}
    As $\SymTen^\bullet(V)^\cpl$ is the projective limit
    of the Hilbert spaces $\SymTen^\bullet(V)^\cpl_\alpha$,
    this implies that there exists a unique
    $X_f \in \SymTen^\bullet(V)^\cpl$
    that fulfills
    $\iota_{\infty\alpha}(X_{f}) = X_{f,\alpha}$
    for all
    $\skal{\argument}{\argument}_\alpha \in \mathcal{I}_{V,h}$.
    Consequently and with the help of
    Proposition~\ref{prop:ptwskalinnerprod},
    \begin{equation*}
      \ptwskal{\widehat{X}_{f}}{\widehat{Y}}_\alpha^\bullet(0)
      =
      \skal{X_{f}}{Y}_\alpha^\bullet
      =
      \skal{\iota_{\infty\alpha}(X_{f})}{Y}_\alpha^\bullet
      =
      \skal{X_{f,\alpha}}{Y}_\alpha^\bullet
      =
      \ptwskal{f}{\widehat{Y}}_\alpha^\bullet(0)
    \end{equation*}
    holds for all $Y\in\SymTen^\bullet(V)^\cpl$ and all
    $\skal{\argument}{\argument}_\alpha \in \mathcal{I}_{V,h}$, and
    similarly,
    \begin{equation*}
      \ptwskal{\widehat{X}_{f}}{\widehat{X}_{f}}_\alpha^\bullet(0)
      =
      \skal{X_{f}}{X_f}_\alpha^\bullet
      =
      \skal{
        \iota_{\infty\alpha}(X_{f})
      }{
        \iota_{\infty\alpha}(X_{f})
      }_\alpha^\bullet
      =
      \skal{X_{f,\alpha}}{X_{f,\alpha}}_\alpha^\bullet
      =
      \ptwskal{f}{f}_\alpha^\bullet(0).
    \end{equation*}
\end{proof}
After this preparation we are now able to identify the image of the
Gel'fand transform explicitly:
\begin{theorem}
    \label{theo:gelfand}%
    Let $\cc{\argument}$ be a continuous antilinear
    involution on $V$, then the Gel'fand transformation
    $\widehat{\argument}\colon \big(\SymTen^\bullet(V)^\cpl, \vee,
    { }^*\big) \rightarrow \analytic(V'_h)$
    is an isomorphism of unital $^*$-algebras.
\end{theorem}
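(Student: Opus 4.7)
The plan is to check four properties in turn: well-definedness, being a unital $^*$-homomorphism, injectivity, and surjectivity. The first two come essentially for free from the earlier results. Corollary~\ref{cor:analytic} shows that $\widehat{X}\in\analytic(V'_h)$ for every $X\in\SymTen^\bullet(V)^\cpl$, so the map lands in the right space. The $^*$-algebra structure is preserved because, for every $\rho\in V'_h$, the evaluation $\delta_\rho$ is a continuous unital $^*$-homomorphism from $(\SymTen^\bullet(V)^\cpl,\vee,{ }^*)$ to $\CC$ (Proposition~\ref{proposition:contcharacterchar}), and the algebra structure on $\analytic(V'_h)$ is pointwise. Hence the real content lies in bijectivity.

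For injectivity I would argue as follows. Suppose $\widehat{X}=0$. Then in particular $\big(\widehat{D}^{(k)}_\rho\widehat{X}\big)(0)=0$ for all $k$ and all $\rho\in(V'_h)^k$. By Proposition~\ref{prop:deriv} this equals $\big\langle D^{(k)}_\rho X\big\rangle_0$, and by Lemma~\ref{lemma:partial} evaluated on tuples $\rho=(x_1^{\flat_\alpha},\ldots,x_k^{\flat_\alpha})$ with $x_i\in V_h$, we obtain $\skal{x_1\otimes\cdots\otimes x_k}{X}_\alpha^\bullet=0$ for all $k$, all such tuples, and all $\skal{\argument}{\argument}_\alpha\in\mathcal{I}_{V,h}$. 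Since $V=V_h+\I V_h$, antilinear extension in each slot gives $\skal{Y}{X}_\alpha^\bullet=0$ for every simple tensor $Y$, hence (by total ness of simple tensors) for every $Y\in\Ten^\bullet(V)$. Thus the image of $X$ vanishes in each Hilbert completion $\SymTen^\bullet(V)^\cpl_\alpha$, and because $\SymTen^\bullet(V)^\cpl$ is the projective limit of these spaces we conclude $X=0$.

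Surjectivity is the main step, and it is exactly what Lemma~\ref{lemma:nocheins} and Proposition~\ref{prop:analytic} were set up for. Given $f\in\analytic(V'_h)$ let $X_f\in\SymTen^\bullet(V)^\cpl$ be the element produced by Lemma~\ref{lemma:nocheins} and set $g:=f-\widehat{X}_f\in\analytic(V'_h)$. A short sesquilinear expansion shows that
\[
\ptwskal{g}{g}_\alpha^\bullet(0)
=\ptwskal{f}{f}_\alpha^\bullet(0)-\ptwskal{f}{\widehat{X}_f}_\alpha^\bullet(0)-\ptwskal{\widehat{X}_f}{f}_\alpha^\bullet(0)+\ptwskal{\widehat{X}_f}{\widehat{X}_f}_\alpha^\bullet(0),
\]
and the two identities provided by Lemma~\ref{lemma:nocheins} (with $Y=X_f$ in the second), together with the Hermitian symmetry of $\ptwskal{\argument}{\argument}_\alpha^\bullet(0)$ and the fact that $\ptwskal{f}{f}_\alpha^\bullet(0)\ge 0$ is real, force all four terms to coincide and cancel, giving $\ptwskal{g}{g}_\alpha^\bullet(0)=0$ for every $\alpha$. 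Because this quantity is a sum of non-negative terms $\tfrac{1}{k!}\lvert(\widehat{D}^{(k)}_{e_i^{\flat_\alpha}}g)(0)\rvert^2$, each summand vanishes, and iterating Lemma~\ref{lemma:coordchange} (choosing $\alpha$ large enough to dominate any finite collection of functionals) upgrades this to $\big(\widehat{D}^{(k)}_\rho g\big)(0)=0$ for all $k$ and $\rho\in(V'_h)^k$. Proposition~\ref{prop:analytic} then yields $g=0$, i.e.\ $\widehat{X}_f=f$.

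The only genuinely delicate point is the equality $\ptwskal{g}{g}_\alpha^\bullet(0)=0$: one must carefully verify that the mixed terms produced by Lemma~\ref{lemma:nocheins} really cancel, which hinges on the fact that the second identity holds only for $Y$ of the form $Y=X_f\in\SymTen^\bullet(V)^\cpl$ and not for arbitrary analytic functions. Once this step is secured, the remaining arguments are consequences of the preparatory lemmas and the projective-limit description of $\SymTen^\bullet(V)^\cpl$ that was exploited already in the injectivity argument.
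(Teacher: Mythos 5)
Your proof is correct and follows essentially the same route as the paper: well-definedness via Corollary~\ref{cor:analytic}, the homomorphism property from the pointwise structure and Proposition~\ref{proposition:contcharacterchar}, injectivity from the vanishing of $\skal{X}{X}_\alpha^\bullet$ for all $\alpha$, and surjectivity via Lemma~\ref{lemma:nocheins}, the sesquilinear expansion of $\ptwskal{f-\widehat{X}_f}{f-\widehat{X}_f}_\alpha^\bullet(0)$, and Proposition~\ref{prop:analytic}. You even make explicit a step the paper leaves implicit, namely passing from $\ptwskal{g}{g}_\alpha^\bullet(0)=0$ for all $\alpha$ to the vanishing of all directional derivatives at $0$ via Lemma~\ref{lemma:coordchange}.
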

\begin{proof}
    Let $X\in \SymTen^\bullet(V)^\cpl$ be given, then
    $\widehat{X} \in \analytic(V'_h)$ by
    Corollary~\ref{cor:analytic}. The Gel'fand transformation is a
    unital $^*$-homomorphism onto its image by construction and
    injective because $\widehat{X} = 0$ implies
    $\skal{X}{X}^\bullet_\alpha =
    \ptwskal{\widehat{X}}{\widehat{X}}_\alpha^\bullet(0) = 0$
    for all $\skal{\argument}{\argument}_\alpha \in \mathcal{I}_{V,h}$
    by Proposition~\ref{prop:ptwskalinnerprod}, hence $X = 0$.
    It only remains to show that
    $\widehat{\argument}$ is surjective, so let $f\in \analytic(V'_h)$
    be given. Construct $X_{f} \in \SymTen^\bullet(V)^\cpl$
    like in the previous Lemma~\ref{lemma:nocheins},
    then
    \begin{align*}
      \ptwskal{f-\widehat{X}_f}{f-\widehat{X}_f}_\alpha^\bullet(0)
      &=
      \ptwskal{f}{f}_\alpha^\bullet(0)
      -
      \ptwskal{f}{\widehat{X}_f}_\alpha^\bullet(0)
      -
      \ptwskal{\widehat{X}_f}{f}_\alpha^\bullet(0)
      +
      \ptwskal{\widehat{X}_f}{\widehat{X}_f}_\alpha^\bullet(0) \\
      &=
      \ptwskal{f}{f}_\alpha^\bullet(0)
      -
      \ptwskal{\widehat{X}_f}{\widehat{X}_f}_\alpha^\bullet(0)
      -
      \ptwskal{\widehat{X}_f}{\widehat{X}_f}_\alpha^\bullet(0)
      +
      \ptwskal{\widehat{X}_f}{\widehat{X}_f}_\alpha^\bullet(0) \\
      &=
      0
    \end{align*}
    holds for all
    $\skal{\argument}{\argument}_\alpha \in \mathcal{I}_{V,h}$,
    hence $f=\widehat{X}_f$ due to Proposition~\ref{prop:analytic}.
\end{proof}
\begin{remark}
    Let $\cc{\argument}$ be a continuous antilinear involution on
    $V$. For a continuous bilinear form $\Lambda$ on $V$ the identity
    \begin{equation}
        \label{eq:PLambdaXtensY}
        \P_{\Lambda}(X \otimes_\pi Y)
        =
        \sum_{i, i' \in I} \Lambda(e_i, e_{i'})
        \big(
        D_{e_i^{\flat_\alpha}} X
        \otimes_\pi
        D_{e_{i'}^{\flat_\alpha}}Y
        \big)
    \end{equation}
    holds for all $X, Y \in \SymTen^\bullet(V)$ and every
    $\skal{\argument}{\argument}_\alpha \in \mathcal{I}_{V,h}$ for
    which $\seminorm{\alpha}{\argument} \in \mathcal{P}_{V,\Lambda}$
    and for every Hilbert base $e \in (V^\cpl_{h,\alpha})^I$ indexed
    by a set $I$. Thus
    \begin{align*}
        \widehat{X}
        \widehat{\stpro[\Lambda]}
        \widehat{Y}
        :=
        \widehat{X \stpro[\Lambda] Y}
        =
        \mu\bigg(
        \sum_{t=0}^\infty
        \frac{1}{t!}
        \bigg(
        \sum_{i, i' \in I}
        \Lambda(e_i, e_{i'})
        \big(
        \widehat{D}_{e_i^{\flat_\alpha}}
        \otimes
        \widehat{D}_{e_{i'}^{\flat_\alpha}}
        \big)
        \bigg)^t
        \big(\widehat{X} \otimes \widehat{Y}\big)
        \bigg)
    \end{align*}
    with
    $\mu\colon \smooth(V'_h)\otimes \smooth(V'_h)\rightarrow
    \smooth(V'_h)$
    the pointwise product is the usual exponential star product on
    $\analytic(V'_h)$. Moreover, if
    $\algebra{A} \subseteq \smooth(V'_h)$ is any unital
    $^*$-subalgebra on which all such products
    $\widehat{\stpro[\Lambda]}$ for all continuous Hermitian bilinear
    forms $\Lambda$ on $V$ converge, then
    $\algebra{A} \subseteq \analytic(V'_h)$, because analogous to
    Proposition~\ref{prop:ptwskalinnerprod}, every $f \in \algebra{A}$
    fulfills
    $\ptwskal{f}{f}^\bullet_\alpha = f^*
    \widehat{\stpro[\Lambda_\alpha]} f \in \algebra{A} \subseteq
    \smooth(V'_h)$
    for all $\skal{\argument}{\argument}_\alpha \in \mathcal{I}_{Vh}$
    with corresponding continuous Hermitian bilinear form
    $V^2 \ni (v, w) \mapsto \Lambda_\alpha(v,w) :=
    \skal{\cc{v}}{w}_\alpha \in \CC$.
    This is of course just our Theorem~\ref{theo:char} again.
\end{remark}

%
% Existence of continuous positive linear functionals
%

\subsection{Existence of continuous positive linear functionals}

Recall that a linear functional $\omega\colon\SymTen^\bullet(V)
\rightarrow \CC$ is said to be positive for $\stpro[\Lambda]$ if
$\omega(X^*\stpro[\Lambda]X) \ge 0$ holds for all
$X\in \SymTen^\bullet(V)$. Such positive linear functionals
yield important information about the representation theory
of a $^*$-algebra, e.g. there exists a faithful $^*$-representation
as adjointable operators on a pre-Hilbert space if and only
if the positive linear functionals are point-separating, see
\cite[Chap.~8.6]{schmuedgen:1990a}.
In this section we will determine the obstructions for the
existence of continuous positive linear functionals. First,
we need the following lemma which allows us to apply an
argument similar to the one used in \cite{bursztyn.waldmann:2000a}
in the formal case:

\begin{lemma}
   \label{lemma:squarecontractions}
   Let $\cc{\argument}$ be a continuous antilinear involution of $V$
   and $\Lambda$ a continuous Hermitian bilinear form on $V$ such that
   $\Lambda(\cc{v},v)\ge 0$ holds for all $v\in V$. Then for all
   $X\in\SymTen^\bullet(V)$ and all $t\in\NN_0$ there exist
   $n\in \NN$ and $X_1,\dots,X_n\in\SymTen^\bullet(V)$ such
   that
   \begin{equation}
     \big(\P_\Lambda\big)^t(X^*\otimes_\pi X)
     =
     \sum_{i=1}^n X_i^* \otimes_\pi X_i.
   \end{equation}
\end{lemma}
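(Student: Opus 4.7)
The strategy is to reduce to a finite-dimensional $\cc{\argument}$-invariant subspace, diagonalise $\Lambda$ there in a way compatible with the involution, and exploit the fact that on symmetric tensors $\P_{\Lambda}$ factors through commuting $^*$-derivations whose squares reassemble the required positive decomposition.

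As a first step I would restrict to a finite-dimensional setup. Let $V_X \subseteq V$ be the span of the components of $X$ and set $W := V_X + \cc{V_X}$. Then $W$ is finite-dimensional and closed under $\cc{\argument}$, so $W = W_h \oplus \I W_h$ for $W_h := \{v \in W \mid \cc{v} = v\}$. Since both $X$ and $X^*$ lie in $\SymTen^\bullet(W)$, and since $\P_{\Lambda}$ on $\Ten^\bullet(W)\otimes_\pi\Ten^\bullet(W)$ depends only on $\Lambda|_{W\times W}$, all subsequent work happens inside $\SymTen^\bullet(W)$.

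The Hermiticity of $\Lambda$ together with $\Lambda(\cc{v},v) \ge 0$ for all $v\in V$ implies that $\Lambda|_{W_h\times W_h}$ is a real symmetric positive semi-definite bilinear form. I pick a basis $(e_i)_{i=1}^d$ of $W_h$ diagonalising it, so $\Lambda(e_i, e_j) = \lambda_i \delta_{ij}$ with $\lambda_i \ge 0$, and let $(\phi_i)_{i=1}^d \in W^*$ be the $\CC$-linear dual basis. Since each $e_i$ is Hermitian, the $\phi_i$ are automatically Hermitian in the sense that $\phi_i(\cc{v}) = \cc{\phi_i(v)}$ on $W$. Direct computation on factorising symmetric tensors $x^{\vee k}\otimes_\pi y^{\vee \ell}$ with $x, y \in W$, using $\Lambda(x, y) = \sum_i \lambda_i \phi_i(x)\phi_i(y)$, yields the derivational formula
\[
    \P_{\Lambda}(Y_1 \otimes_\pi Y_2)
    =
    \sum_{i=1}^d \lambda_i\; D_{\phi_i}(Y_1) \otimes_\pi D_{\phi_i}(Y_2)
    \quad\text{for all } Y_1, Y_2 \in \SymTen^\bullet(W).
\]
Since the $D_{\phi_i}$ pairwise commute on $\SymTen^\bullet$ by Lemma~\ref{lemma:comderiv}, iterating gives
\[
    (\P_{\Lambda})^t
    =
    \sum_{i_1,\dots,i_t=1}^d
    \lambda_{i_1}\cdots\lambda_{i_t}\,
    \big(D_{\phi_{i_1}}\cdots D_{\phi_{i_t}}\big)
    \otimes_\pi
    \big(D_{\phi_{i_1}}\cdots D_{\phi_{i_t}}\big).
\]

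Finally, each $D_{\phi_i}$ is a $^*$-derivation of $\big(\SymTen^\bullet(V), \vee, {}^*\big)$ because $\phi_i$ is Hermitian (this is the symmetric-tensor analogue of the $^*$-derivation/$^*$-automorphism statement recalled just after Definition~\ref{defi:gelfandstuff}); in particular $D_{\phi_i}(Y^*) = (D_{\phi_i}Y)^*$ for all $Y \in \SymTen^\bullet(V)$. Setting
\[
    X_{i_1,\dots,i_t} := \sqrt{\lambda_{i_1}\cdots\lambda_{i_t}}\; D_{\phi_{i_1}}\cdots D_{\phi_{i_t}}(X) \in \SymTen^\bullet(W) \subseteq \SymTen^\bullet(V),
\]
which is well-defined since every $\lambda_{i_j} \ge 0$, I conclude
\[
    (\P_{\Lambda})^t(X^* \otimes_\pi X)
    =
    \sum_{i_1,\dots,i_t = 1}^d
    X_{i_1,\dots,i_t}^* \otimes_\pi X_{i_1,\dots,i_t},
\]
a finite sum of the required form; relabelling the $d^t$ multi-indices completes the proof. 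The main obstacle is establishing the derivational formula for $\P_{\Lambda}$ on symmetric tensors together with the simultaneous diagonalisation of $\Lambda$ on $W_h$; once these are in place, the $t$-fold iteration and the absorption of the non-negative scalars $\lambda_{i_j}$ into the $X_{i_1,\dots,i_t}$ are routine.
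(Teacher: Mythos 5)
Your reduction to a finite-dimensional $\cc{\argument}$-invariant subspace $W$ and the idea of writing $\P_\Lambda$ through commuting derivations are both fine, but the pivotal step fails: the hypotheses do \emph{not} imply that $\Lambda|_{W_h\times W_h}$ is a real symmetric positive semi-definite form. Hermiticity of $\Lambda$ only says that the real part of $\Lambda|_{W_h\times W_h}$ is symmetric and the imaginary part is \emph{antisymmetric}; the positivity hypothesis $\Lambda(\cc v,v)\ge 0$ is positivity of the associated \emph{sesquilinear} form $(v,w)\mapsto\Lambda(\cc v,w)$, not of the bilinear restriction to $W_h$. In the cases of interest the antisymmetric imaginary part is nonzero --- it is precisely the Poisson structure (e.g.\ the Wick-type form $\Lambda(v,w)=h(\cc v,w)$ with $h$ a positive Hermitian sesquilinear form whose imaginary part on $W_h$ is the symplectic form). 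A decomposition $\Lambda(x,y)=\sum_i\lambda_i\phi_i(x)\phi_i(y)$ with $\lambda_i\ge0$ and Hermitian $\phi_i$ would force $\Lambda$ to be symmetric and the star product to be commutative, so no basis of $W_h$ diagonalising $\Lambda$ in your sense exists in general.

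The repair is to diagonalise the \emph{sesquilinear} form instead: since the Gram matrix with entries $\Lambda(\cc{x_{j',\ell'}},x_{j,\ell})$ built from an expansion of $X$ is positive semi-definite Hermitian, it has a Hermitian square root $R$, and regrouping the double sum defining $\P_\Lambda(X^*\otimes_\pi X)$ along the rows of $R$ produces terms of the form $X_i^*\otimes_\pi X_i$ --- the Hermiticity of $R$ (rather than of any linear functionals) is what makes the left factor the $^*$ of the right one. This is exactly what the paper's proof does (after first reducing to $t=1$ by induction, using that each $\big(\P_\Lambda\big)^t(X^*\otimes_\pi X)$ is again a sum of terms $X_i^*\otimes_\pi X_i$ to which $\P_\Lambda$ can be applied once more). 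Equivalently, in your language one can write $\Lambda(x,y)=\sum_i\cc{\mu_i(\cc x)}\,\mu_i(y)$ with complex-linear, generally non-Hermitian functionals $\mu_i$, so that $\P_\Lambda(Y_1\otimes_\pi Y_2)=\sum_i D_{\mu_i^*}Y_1\otimes_\pi D_{\mu_i}Y_2$ with $\mu_i^*(x):=\cc{\mu_i(\cc x)}$ and $D_{\mu_i^*}(Y^*)=(D_{\mu_i}Y)^*$; your remaining steps then go through. As written, however, the proof only covers symmetric $\Lambda$ and has a genuine gap.
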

\begin{proof}
  This is trivial for scalar $X$ as well as for $t=0$ and for the
  remaining cases it is sufficient to
  consider $t=1$, the others then follow by induction.
  So let $k\in \NN$ and $X\in\SymTen^k(V)$ be given. Expand $X$ as
  $X=\sum_{j=1}^m x_{j,1}\vee \dots \vee x_{j,k}$ with $m\in \NN$ and
  vectors $x_{1,1},\dots,x_{m,k} \in V$. Then
  \begin{equation*}
    \P_\Lambda(X^*\otimes_\pi X)
    =
    \sum_{j',j=1}^m
    \sum_{\ell',\ell=1}^k
    \Lambda(\cc{x_{j',\ell'}},x_{j,\ell})
    (x_{j',1}\vee \cdots \widehat{x_{j',\ell'}} \cdots \vee x_{j',k})^*
    \otimes_\pi
    (x_{j,1}\vee \cdots \widehat{x_{j,\ell}} \cdots \vee x_{j,k}),
  \end{equation*}
  where $\widehat{\,\cdot\,}$ denotes omission of a vector in the
  product. The complex $mk\times mk$ -matrix with entries
  $\Lambda(\cc{x_{j',\ell'}},x_{j,\ell})$ is positive due to the
  positivity condition on $\Lambda$, which implies that it has
  a Hermitian square root $R\in \CC^{mk\times mk}$ that fulfills
  $\Lambda(\cc{x_{j',\ell'}},x_{j,\ell}) =
  \sum_{p=1}^m\sum_{q=1}^k \cc{R_{(p,q),(j',\ell')}} R_{(p,q),(j,\ell)}$
  for all $j,j'\in\{1,\dots,m\}$ and $\ell,\ell'\in\{1,\dots,k\}$.
  Consequently,
  \begin{align*}
    &\P_\Lambda(X^*\otimes_\pi X)=\\
    &=
    \sum_{p,q=1}^{m,k}
    \bigg(
      \sum_{j',\ell'=1}^{m,k}
      \cc{R_{(p,q),(j',\ell')}}
      (x_{j',1}\vee \cdots \widehat{x_{j',\ell'}} \cdots \vee x_{j',k})^*
    \bigg)
    \otimes_\pi
    \bigg(
      \sum_{j,\ell=1}^{m,k}
      R_{(p,q),(j,\ell)}
      (x_{j,1}\vee \cdots \widehat{x_{j,\ell}} \cdots \vee x_{j,k})
    \bigg)
  \end{align*}
  holds which proves the lemma.
\end{proof}

% Recall that a sum of squares of a $^*$-algebra
% $\big(\SymTen^\bullet(V),\stpro[\Lambda],^*\big)$ is an element
% of the form $\sum_{i=1}^n X_i^* \stpro[\Lambda] X_i$.
%
% \begin{proposition}
%   Let $\cc{\argument}$ be a continuous antilinear involution of $V$
%   and $\Lambda$, $\Lambda'$ two continuous Hermitian bilinear forms
%   on $V$ such that $\Lambda'(\cc{v},v)\ge 0$ holds for all $v\in V$.
%   If $X\in\SymTen^\bullet(V)$ is a sum of squares for
%   $\stpro[\Lambda+\Lambda']$, then it is also a sum of squares for
%   $\stpro[\Lambda]$. Moreover, if a linear functional $\omega$ on
%   $\SymTen^\bullet(V)$ is positive for $\stpro[\Lambda]$,
%   then it is also positive for $\stpro[\Lambda+\Lambda']$.
% \end{proposition}

\begin{proposition}
    \label{prop:posex}%
    Let $\cc{\argument}$ be a continuous antilinear involution of $V$
    and $\Lambda$, $\Lambda'$ as well as $b$ three continuous Hermitian
    bilinear forms on $V$ such that $b$ is symmetric and of
    Hilbert-Schmidt type and such that
    $\Lambda'(\cc{v},v) + b(\cc{v},v)\ge 0$ holds for all
    $v\in V$. Given a continuous linear functional $\omega$ on
    $\SymTen^\bullet(V)$ that is positive for $\stpro[\Lambda]$,
    define $\omega_{zb}\colon \SymTen^\bullet(V) \rightarrow \CC$ as
    \begin{equation}
     X\;\mapsto\;\omega_{zb}(X) := \omega\big(\E^{z\Delta_b} X\big)
    \end{equation}
    for all $z\in \RR$. Then $\omega_{zb}$ is a continuous linear
    functional and positive for $\stpro[\Lambda+z\Lambda']$.
\end{proposition}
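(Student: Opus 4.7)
The plan is as follows. Continuity of $\omega_{zb}$ is immediate from Theorem~\ref{theo:equivalence}: since $b$ (and hence $zb$) is of Hilbert--Schmidt type, $\E^{z\Delta_b}$ is a continuous linear endomorphism of $\SymTen^\bullet(V)$, so $\omega_{zb} = \omega \circ \E^{z\Delta_b}$ is continuous as a composition of continuous linear maps.

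For positivity, the first step is to apply the equivalence identity~\eqref{eq:equiv} with $\Lambda$ replaced by $\Lambda+z\Lambda'$ and $b$ replaced by $zb$. Combined with the fact that $\E^{z\Delta_b}$ commutes with the $^*$-involution on symmetric tensors (for real $z$ and symmetric, Hermitian $b$, which follows by direct computation on elements of the form $v^{\vee k}$ and polarisation), this yields
\begin{equation*}
\omega_{zb}\bigl(X^* \stpro[\Lambda+z\Lambda'] X\bigr)
= \omega\bigl(Y^* \stpro[\Lambda+z(\Lambda'+b)] Y\bigr), \qquad Y := \E^{z\Delta_b} X,
\end{equation*}
reducing the task to proving that the right-hand side is non-negative.

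Next, I would exploit that the contraction operators $\P_\Lambda$ and $\P_{\Lambda'+b}$ commute on $\Ten^\bullet(V) \otimes_\pi \Ten^\bullet(V)$, since both contract the rightmost slot of the first tensor factor with the leftmost slot of the second and therefore consume boundary slots independently of the order. Using $\P_{\Lambda+z(\Lambda'+b)} = \P_\Lambda + z\P_{\Lambda'+b}$, the binomial theorem, and a rearrangement of the absolutely convergent star-product series (Lemma~\ref{lemma:stprocont}), one obtains
\begin{equation*}
Y^* \stpro[\Lambda+z(\Lambda'+b)] Y
= \sum_{s=0}^\infty \frac{z^s}{s!}\sum_{k=0}^\infty \frac{1}{k!}\,
\multvee\!\bigl(\P_\Lambda^k \P_{\Lambda'+b}^s (Y^* \otimes_\pi Y)\bigr).
\end{equation*}
Since $\Lambda'+b$ is a continuous Hermitian bilinear form with $(\Lambda'+b)(\cc{v},v) \geq 0$ by hypothesis, Lemma~\ref{lemma:squarecontractions} supplies, for each $s\in\NN_0$, a decomposition $\P_{\Lambda'+b}^s(Y^* \otimes_\pi Y) = \sum_{i=1}^{n_s} Y_{s,i}^* \otimes_\pi Y_{s,i}$ with $Y_{s,i} \in \SymTen^\bullet(V)$. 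Recognising the inner sum over $k$ as the definition of $\stpro[\Lambda]$ applied to each $Y_{s,i}^* \otimes_\pi Y_{s,i}$ then gives
\begin{equation*}
\omega\bigl(Y^* \stpro[\Lambda+z(\Lambda'+b)] Y\bigr)
= \sum_{s=0}^\infty \frac{z^s}{s!}\sum_{i=1}^{n_s} \omega\bigl(Y_{s,i}^* \stpro[\Lambda] Y_{s,i}\bigr),
\end{equation*}
which is a sum of non-negative terms (for $z\geq 0$) by positivity of $\omega$ with respect to $\stpro[\Lambda]$.

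The main obstacle will be the rigorous justification of the double-series rearrangement and the interchange of $\multvee$ with the infinite sums, which I would handle using the absolute and locally uniform convergence guaranteed by Theorem~\ref{theorem:stprocont} together with the continuity of $\omega$. A subsidiary technical point is the $^*$-equivariance of $\E^{z\Delta_b}$ on $\SymTen^\bullet(V)$, which as noted above reduces to a short direct verification using the symmetry and Hermiticity of $b$.
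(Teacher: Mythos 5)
Your proposal is correct and follows essentially the same route as the paper's proof: continuity via Theorem~\ref{theo:equivalence}, the equivalence identity~\eqref{eq:equiv} to pass to $\stpro[\Lambda+z(\Lambda'+b)]$, the factorization of the exponential series using commutativity of the contraction operators, resummation into $\mu_{\stpro[\Lambda]}$, and Lemma~\ref{lemma:squarecontractions} applied to $\Lambda'+b$. One imprecision: $\P_\Lambda$ and $\P_{\Lambda'+b}$ do \emph{not} commute on all of $\Ten^\bullet(V)\otimes_\pi\Ten^\bullet(V)$, and the reason you give is not valid --- the two operators contract \emph{adjacent} boundary slots, so reversing the order changes which bilinear form acts on which pair of slots. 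They commute only on symmetric tensors (which is all the paper claims and all you need, since they are applied to $Y^*\otimes_\pi Y$ with $Y$ symmetric), so the argument survives, but the justification should be corrected. Finally, your restriction to $z\ge 0$ in the last step is an honest observation: the paper's own proof has exactly the same limitation (it applies Lemma~\ref{lemma:squarecontractions} to $z(\Lambda'+b)$, whose positivity requires $z\ge 0$), even though the statement is phrased for all $z\in\RR$; for the subsequent application in Theorem~\ref{theorem:posex} only $z=1$ is used.
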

\begin{proof}
  It follows from
  Theorem~\ref{theo:equivalence} that $\omega_{zb}$ is
  continuous, and given $X\in\SymTen^\bullet(V)$, then
  \begin{align*}
    \omega
    \big(
      \E^{z\Delta_b}
      (
        X^*\stpro[\Lambda+z\Lambda']X
      )
    \big)
    &=
    \omega
    \big(
      (\E^{z\Delta_b}X)^*
      \stpro[\Lambda+z(\Lambda'+b)]
      (\E^{z\Delta_b}X)
    \big)\\
    &=
    \sum_{s,t=0}^\infty
    \frac{1}{s!t!}
    \omega
    \Big(
      \mu_{\vee}
      \Big(
        \big(\P_{\Lambda}\big)^s
        \big(\P_{z(\Lambda'+b)}\big)^t
        \big(
          (\E^{z\Delta_b}X)^*
          \otimes_\pi
          (\E^{z\Delta_b}X)
        \big)
      \Big)
    \Big)  \\
    &=
    \sum_{t=0}^\infty
    \frac{1}{t!}
    \omega
    \Big(
      \mu_{\stpro[\Lambda]}
      \Big(
        \big(\P_{z(\Lambda'+b)}\big)^t
        \big(
          (\E^{z\Delta_b}X)^*
          \otimes_\pi
          (\E^{z\Delta_b}X)
        \big)
      \Big)
    \Big)  \\
    &\ge
    0
  \end{align*}
  holds because $\P_\Lambda$ and $\P_{z(\Lambda'+b')}$ commute on
  symmetric tensors and because of
  Lemma~\ref{lemma:squarecontractions}.
\end{proof}

Note that Theorem~\ref{theo:equivalence} also shows that
$\omega_{zb}$ depends holomorphically on $z\in \CC$ in so far as
$\CC\ni z\mapsto \omega_{zb}(X)\in \CC$ is holomorphic for all
$X\in \SymTen^\bullet(V)$.
This is the analog of statements in
\cite{kaschek.neumaier.waldmann:2009a, lechner.waldmann:2016a} in
the Rieffel setting.

\begin{proposition}
    \label{prop:posnes}%
    Let $\cc{\argument}$ be a continuous antilinear involution of
    $V$ and $\Lambda$ a continuous Hermitian bilinear forms on $V$.
    If there exists a continuous linear functional $\omega$ on
    $\SymTen^\bullet(V)$ that is positive for $\stpro[\Lambda]$ and
    fulfills $\omega(1) = 1$, then the bilinear form
    $V^2 \ni (v,w) \mapsto b_\omega(v,w) := \omega(v\vee w) \in \CC$
    is symmetric, Hermitian, of Hilbert-Schmidt type and fulfills
    $\Lambda(\cc{v},v) + b_\omega(\cc{v},v)\ge 0$ for all $v\in V$.
\end{proposition}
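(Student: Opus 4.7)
The plan is to verify the four claims in order. Symmetry is immediate; Hermiticity and the positivity identity reduce to routine manipulations using $\omega(X^*) = \cc{\omega(X)}$ and a direct computation of $v^* \stpro[\Lambda] v$; the only technical step is the Hilbert-Schmidt property, which I would attack via Proposition~\ref{prop:hschar}.

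First, symmetry follows from commutativity of $\vee$: $b_\omega(v, w) = \omega(v \vee w) = \omega(w \vee v) = b_\omega(w, v)$. Then I would establish that $\omega$ itself is a Hermitian functional, i.e.\ $\omega(X^*) = \cc{\omega(X)}$. Applying positivity to $Y := \mu \Unit + X$ for $\mu \in \CC$ -- noting that $\Unit \stpro[\Lambda] X = X = X \stpro[\Lambda] \Unit$ because $\P_\Lambda$ vanishes as soon as one factor is scalar -- yields
\begin{equation*}
    0 \le \omega(Y^* \stpro[\Lambda] Y) = |\mu|^2 + \cc{\mu}\, \omega(X) + \mu\, \omega(X^*) + \omega(X^* \stpro[\Lambda] X).
\end{equation*}
Reality of this expression for all $\mu \in \CC$ forces $\cc{\mu}\,\omega(X) + \mu\, \omega(X^*) \in \RR$ for every $\mu$, which by testing $\mu = 1$ and $\mu = i$ gives $\omega(X^*) = \cc{\omega(X)}$. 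Hermiticity of $b_\omega$ then follows from $\cc{b_\omega(\cc{w}, \cc{v})} = \cc{\omega((v \vee w)^*)} = \omega(v \vee w) = b_\omega(v, w)$.

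Next, the positivity identity reduces to computing $v^* \stpro[\Lambda] v$ for $v \in V$: both factors have degree one, so $\P_\Lambda(\cc{v} \otimes_\pi v) = \Lambda(\cc{v}, v)\,\Unit \otimes_\pi \Unit$ and $(\P_\Lambda)^t$ vanishes for $t \ge 2$. Hence $v^* \stpro[\Lambda] v = \cc{v} \vee v + \Lambda(\cc{v}, v)\Unit$, and positivity of $\omega$ gives $0 \le \omega(v^* \stpro[\Lambda] v) = b_\omega(\cc{v}, v) + \Lambda(\cc{v}, v)$.

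For the Hilbert-Schmidt property, the key observation I would exploit is that $\Delta_{b_\omega}$ coincides with $\omega$ on $\SymTen^2(V)$. From the definition of $\Delta_b$ together with the already established symmetry of $b_\omega$, one has $\Delta_{b_\omega}(x \vee y) = b_\omega(x, y) = \omega(x \vee y)$ for all $x, y \in V$, hence $\Delta_{b_\omega}|_{\SymTen^2(V)} = \omega|_{\SymTen^2(V)}$ by linearity. Continuity of $\omega$ provides $\seminorm{\alpha_0}{\argument} \in \mathcal{P}_V$ and $C > 0$ with $|\omega(X)| \le C \seminorm{\alpha_0}{X}^\bullet$ for all $X$. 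Since $\skal{\argument}{\argument}_{\alpha}^\bullet$ scales on $\Ten^k(V)$ as $\lambda^k$ when $\skal{\argument}{\argument}_{\alpha}$ is replaced by $\lambda \skal{\argument}{\argument}_{\alpha}$, choosing $\alpha \in \mathcal{I}_V$ with $\skal{\argument}{\argument}_\alpha = \sqrt{2}C\, \skal{\argument}{\argument}_{\alpha_0}$ gives $\seminorm{\alpha}{X}^\bullet = \sqrt{2}C\, \seminorm{\alpha_0}{X}^\bullet$ on $\SymTen^2(V)$, so $|\Delta_{b_\omega} X| = |\omega(X)| \le 2^{-1/2} \seminorm{\alpha}{X}^\bullet$ for every $X \in \SymTen^2(V)$. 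Proposition~\ref{prop:hschar} then concludes that $b_\omega$ is of Hilbert-Schmidt type with $\seminorm{\alpha}{\argument} \in \mathcal{P}_{V, b_\omega, HS}$, and records the kernel condition of Definition~\ref{defi:hilbertschmidt} as a free by-product. The main obstacle is spotting this identification $\Delta_{b_\omega}|_{\SymTen^2(V)} = \omega|_{\SymTen^2(V)}$, which turns the Hilbert-Schmidt bound into a single continuity estimate on degree-$2$ tensors rather than an infinite family of orthonormal-tuple inequalities.
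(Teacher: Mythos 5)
Your proof is correct and follows essentially the same route as the paper: symmetry and Hermiticity from the definition, positivity of $\Lambda(\cc{v},v)+b_\omega(\cc{v},v)$ from expanding $v^*\stpro[\Lambda]v$, and the Hilbert-Schmidt property via the identification $\Delta_{b_\omega}|_{\SymTen^2(V)}=\omega|_{\SymTen^2(V)}$ combined with Proposition~\ref{prop:hschar}. You merely spell out two steps the paper leaves implicit, namely the polarization argument showing $\omega(X^*)=\cc{\omega(X)}$ and the rescaling of the seminorm that produces the constant $2^{-1/2}$.
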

\begin{proof}
  It follows immediately from the construction of $b_\omega$ that
  this bilinear form is symmetric and it is Hermitian because
  $\cc{b_\omega(v,w)} = \cc{\omega(v\vee w)} =
  \omega(\cc{w}\vee \cc{v}) = b_\omega(\cc{w},\cc{v})$
  holds for all $v,w\in V$. Continuity of $\omega$
  especially implies that there exists a
  $\skal{\argument}{\argument}_\alpha\in\mathcal{I}_V$
  such that $|\omega(X)|\le2^{-1/2}\seminorm{\alpha}{X}^\bullet$
  holds for all $X\in\SymTen^2(V)$, hence $b_\omega$ is of
  Hilbert-Schmidt type by Proposition~\ref{prop:hschar} and because
  $\Delta_{b_\omega} X = \omega(X)$ for $X\in\SymTen^2(V)$.
  Finally, $0\le\omega(v^*\stpro[\Lambda] v) = \Lambda(\cc{v},v)
  +b_\omega(\cc{v}, v)$ holds due to the positivity of $\omega$.
\end{proof}

\begin{theorem}
    \label{theorem:posex}%
    Let $\cc{\argument}$ be a continuous antilinear involution of
    $V$ and $\Lambda$ a continuous Hermitian bilinear forms on $V$.
    Assume $V\neq\{0\}$.
    There exists a non-zero continuous positive linear functional
    on $\big(\SymTen^\bullet(V),\stpro[\Lambda],^*\big)$ if and only
    if there exists a symmetric and hermitian bilinear form of
    Hilbert-Schmidt type $b$ on $V$ such that
    $\Lambda(\cc{v},v) + b(\cc{v},v)\ge 0$ holds for all $v\in V$.
    In this case, the continuous positive linear functionals
    on $\big(\SymTen^\bullet(V),\stpro[\Lambda],^*\big)$ are
    point-separating, i.e. their common kernel is $\{0\}$.
\end{theorem}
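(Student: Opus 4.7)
The plan is to package the two directions of the equivalence as applications of Propositions~\ref{prop:posex} and~\ref{prop:posnes}, and then to establish point-separation by constructing a rich family of continuous positive functionals via the equivalence transformation of Theorem~\ref{theo:equivalence}.

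For the direction $(\Leftarrow)$, the simplest continuous positive functional for the undeformed algebra is $\delta_0 = \langle\argument\rangle_0$, which is positive for $\stpro[0] = \vee$ because every $\delta_\rho$ is a unital $^*$-homomorphism by Proposition~\ref{proposition:contcharacterchar}. I then apply Proposition~\ref{prop:posex} with ``$\Lambda$''$\to 0$, ``$\Lambda'$''$\to \Lambda$, and ``$b$''$\to b$ at $z = 1$; the hypothesis $\Lambda(\cc{v},v) + b(\cc{v},v) \geq 0$ is exactly what is required, and the resulting continuous linear functional $X \mapsto \langle \E^{\Delta_b} X\rangle_0$ is positive for $\stpro[\Lambda]$ and non-zero since it sends $1$ to $1$.

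For the direction $(\Rightarrow)$, let $\omega$ be a non-zero continuous positive linear functional on $\big(\SymTen^\bullet(V), \stpro[\Lambda]\big)$. Proposition~\ref{prop:posnes} directly supplies the required $b_\omega$ as soon as $\omega(1) = 1$, so the only additional ingredient is a normalisation argument. Applying the Cauchy-Schwarz inequality to the positive Hermitian sesquilinear form $(X, Y) \mapsto \omega(X^* \stpro[\Lambda] Y)$ at $X = 1$ yields $|\omega(Y)|^2 \leq \omega(1)\,\omega(Y^* \stpro[\Lambda] Y)$, so $\omega(1) = 0$ would force $\omega \equiv 0$; hence $\omega(1) > 0$, and after rescaling I invoke Proposition~\ref{prop:posnes}.

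For point-separation, the idea is to twist the characters $\delta_\rho$ by the equivalence $\E^{\Delta_b}$ so as to manufacture a whole family of continuous positive functionals for $\stpro[\Lambda]$ parameterised by $V'_h$. Concretely, for each $\rho \in V'_h$ set $\omega_{b, \rho}(Y) := \delta_\rho(\E^{\Delta_b} Y) = \widehat{\E^{\Delta_b} Y}(\rho)$. Running the same application of Proposition~\ref{prop:posex} as in the $(\Leftarrow)$ direction, but with the input functional $\delta_\rho$ (positive for $\vee$ for the same reason as $\delta_0$), shows that $\omega_{b, \rho}$ is continuous and positive for $\stpro[\Lambda]$. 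Given any $X \in \SymTen^\bullet(V)$ with $X \neq 0$, the invertibility of $\E^{\Delta_b}$ with inverse $\E^{-\Delta_b}$ (Theorem~\ref{theo:equivalence}) gives $\E^{\Delta_b} X \neq 0$, and then the injectivity of the Gel'fand transformation from Theorem~\ref{theo:gelfand} produces some $\rho \in V'_h$ at which $\omega_{b,\rho}(X) \neq 0$. I expect this last step to be the main conceptual obstacle: the Gel'fand transform is a $^*$-isomorphism only for the undeformed product $\vee$, so the characters $\delta_\rho$ are generically \emph{not} positive for $\stpro[\Lambda]$, and the whole point of the hypothesis on $b$ is precisely to let us ``rotate'' these characters into honest continuous positive functionals for $\stpro[\Lambda]$ through $\E^{\Delta_b}$.
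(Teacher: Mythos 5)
Your proposal is correct and follows essentially the same route as the paper's own proof: Proposition~\ref{prop:posnes} plus the Cauchy--Schwarz normalisation for the forward direction, and Proposition~\ref{prop:posex} applied to the characters $\delta_\rho$ of the undeformed algebra, combined with the invertibility of $\E^{\Delta_b}$ and the injectivity of the Gel'fand transform from Theorem~\ref{theo:gelfand}, for the converse and for point-separation. You merely spell out more explicitly what the paper leaves implicit (the choice $\Lambda\to 0$, $\Lambda'\to\Lambda$, $z=1$ in Proposition~\ref{prop:posex} and the family $\omega_{b,\rho}=\delta_\rho\circ\E^{\Delta_b}$).
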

\begin{proof}
  If there exists a non-zero continuous positive linear functional
  $\omega$ on $\big(\SymTen^\bullet(V),\stpro[\Lambda],^*\big)$, then
  $\omega(1) \neq 0$ due to the Cauchy-Schwarz identity and we can
  rescale $\omega$ such that $\omega(1)=1$. Then the previous
  Proposition~\ref{prop:posnes} shows the existence of such a
  bilinear form $b$. Conversely, if such a bilinear form $b$
  exists, then Proposition~\ref{prop:posex} shows that all
  continuous linear functionals on $\SymTen^\bullet(V)$ that are
  positive for $\vee$ can be deformed to continuous linear functionals
  that are positive for $\stpro[\Lambda]$ by taking the pull-back
  with $\E^{\Delta_b}$. As $\E^{\Delta_b}$ is invertible, it only
  remains to show that that the continuous positive linear functionals
  on $\big(\SymTen^\bullet(V),\vee,^*\big)$ are point-separating.
  This is an immediate consequence of Theorem~\ref{theo:gelfand},
  which especially shows that the evaluation functionals
  $\delta_\rho$ with $\rho\in V'_h$ are point-separating.
\end{proof}

\subsection{Exponentials and Essential Self-Adjointness in GNS Representations}

Having a topology on the symmetric tensor algebra allows us to ask
whether or not some exponentials (with respect to the undeformed or
deformed products) exist in the completion, i.e. we want to discuss
for which tensors $X \in \SymTen^\bullet(V)^\cpl$ the series
$\exp_{\star_\Lambda}(X) := \sum_{n=0}^\infty \frac{1}{n!}
X^{\stpro[\Lambda] n}$
converges, where $X^{\stpro[\Lambda] n}$ denotes the $n$-th power of
$X$ with respect to the product $\stpro[\Lambda]$ for a continuous
bilinear form $\Lambda$ on $V$.  Note that since the algebra is
(necessarily) \emph{not} locally multiplicatively convex, this is a
non-trivial question.  This also allows to give a sufficient criterium
for a GNS representation of a Hermitian algebra element to be
essentially self-adjoint.
\begin{definition}
    For $k\in \NN_0$ we define
    \begin{equation}
        \label{eq:SymTensorsUpTok}
        \SymTen^{(k)}(V)
        :=
        \bigoplus_{\ell=0}^k \SymTen^\ell(V),
    \end{equation}
    and write $\SymTen^{(k)}(V)^\cpl$ for the closure of
    $\SymTen^{(k)}(V)$ in $\SymTen^{\bullet}(V)^\cpl$.
\end{definition}
\begin{lemma}
    \label{lemma:binomis}%
    One has
    \begin{equation}
        \label{eq:IneqForBinomis}
        \binom{m}{\ell}
        \binom{m-\ell+t}{t}
        \le
        \binom{\ell+t}{t} \binom{k(n+1)}{k}
    \end{equation}
    for all $k, n \in \NN_0$, $m \in \{0, \ldots, kn\}$,
    $t\in \{0, \ldots, k\}$, and all
    $\ell \in \big\{0, \ldots, \min\{m,k-t\}\big\}$.
\end{lemma}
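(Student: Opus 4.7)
The plan is to reduce \eqref{eq:IneqForBinomis} to two monotonicity facts via a single algebraic identity. The key observation is that both $\binom{m+t}{\ell}\binom{m-\ell+t}{t}$ and $\binom{\ell+t}{t}\binom{m+t}{\ell+t}$ equal the trinomial coefficient $\binom{m+t}{\ell,\,t,\,m-\ell}=\tfrac{(m+t)!}{\ell!\,t!\,(m-\ell)!}$, so they coincide. Combining this identity with the trivial monotonicity $\binom{m}{\ell}\le \binom{m+t}{\ell}$ in the upper entry (valid since $\ell\le m$) gives
\[
    \binom{m}{\ell}\binom{m-\ell+t}{t}
    \;\le\;
    \binom{m+t}{\ell}\binom{m-\ell+t}{t}
    \;=\;
    \binom{\ell+t}{t}\binom{m+t}{\ell+t},
\]
so it suffices to show $\binom{m+t}{\ell+t}\le \binom{k(n+1)}{k}$.

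For this remaining bound, the constraints $m\le kn$ and $t\le k$ yield $m+t\le k(n+1)$, so monotonicity in the upper entry delivers $\binom{m+t}{\ell+t}\le \binom{k(n+1)}{\ell+t}$. The assumption $\ell+t\le k$, together with $k\le k(n+1)/2$ whenever $n\ge 1$, places both $\ell+t$ and $k$ on the increasing side of the unimodal function $r\mapsto\binom{k(n+1)}{r}$, so $\binom{k(n+1)}{\ell+t}\le \binom{k(n+1)}{k}$. The degenerate cases $k=0$ or $n=0$ force $m=0$ and hence $\ell=0$, so \eqref{eq:IneqForBinomis} reduces to $1\le 1$ and is trivial.

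The whole argument is elementary; the only mild obstacle is spotting the trinomial identity, which splits the left-hand side of \eqref{eq:IneqForBinomis} cleanly into the factor $\binom{\ell+t}{t}$ appearing on the right together with a single binomial coefficient amenable to standard monotonicity and unimodality arguments. Equality is attained at $\ell=0$, $t=k$, $m=kn$, confirming that the estimate is sharp and hence is the right shape for its later use.
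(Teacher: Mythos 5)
Your proof is correct. Note that the paper states Lemma~\ref{lemma:binomis} without any proof at all, so there is nothing to compare against; your argument supplies the missing verification. The key steps all check out: the trinomial identity
\[
\binom{m+t}{\ell}\binom{m-\ell+t}{t}
=\frac{(m+t)!}{\ell!\,t!\,(m-\ell)!}
=\binom{\ell+t}{t}\binom{m+t}{\ell+t}
\]
is a direct factorial computation; the bound $\binom{m}{\ell}\le\binom{m+t}{\ell}$ uses only $\ell\le m$; the constraints $m\le kn$ and $t\le k$ give $m+t\le k(n+1)$ and hence $\binom{m+t}{\ell+t}\le\binom{k(n+1)}{\ell+t}$; and since $\ell+t\le k\le k(n+1)/2$ for $n\ge 1$, the unimodality of $r\mapsto\binom{k(n+1)}{r}$ on $\{0,\dots,\lfloor k(n+1)/2\rfloor\}$ yields $\binom{k(n+1)}{\ell+t}\le\binom{k(n+1)}{k}$. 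The degenerate cases $k=0$ and $n=0$ indeed force $m=\ell=0$ and reduce the claim to $1\le 1$, and your equality check at $\ell=0$, $t=k$, $m=kn$ confirms sharpness.
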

%\begin{proof}
%  This inequality is equivalent to
%  \begin{align*}
%    \frac{m!}{(m-\ell)!}\,\frac{(m-\ell+t)!}{(m-\ell)!}\,\frac{k!}{(\ell+t)!}\,\le\,\frac{(kn+k)!}{(kn)!}\,,
%  \end{align*}
%  which is true because
%  \begin{align*}
%   \frac{m!}{(m-\ell)!}\,\,\,=\,\,\,\prod_{i=1}^\ell (m-\ell+i)\,\,\,&\le\,\,\,\prod_{i=1}^\ell(kn+i) \\
%   \frac{(m-\ell+t)!}{(m-\ell)!}\,\,\,=\,\,\,\prod_{i=\ell+1}^{\ell+t} (m-2\ell+i)\,\,\,&\le\,\,\,\prod_{i=\ell+1}^{\ell+t}(kn+i) \\
%   \text{and}\quad\quad\quad\quad\quad\quad\frac{k!}{(\ell+t)!}\,\,\,=\,\,\,\prod_{i=\ell+t+1}^{k} i\,\,\,&\le\,\,\,\prod_{i=\ell+t+1}^{k}(kn+i)\,.
%  \end{align*}
%\end{proof}

\begin{lemma}
    \label{lemma:powersVerbessert}%
    Let $\Lambda$ be a continuous bilinear form on
    $V$.  Let $k, n \in \NN_0$ and
    $X_1, \ldots, X_n \in \SymTen^{(k)}(V)^\cpl$ be given.  Then the
    estimates
    \begin{gather}
        \label{eq:EstimateMultipleProductsComponents}
        \seminorm[\big]{\alpha}{
          \big\langle X_1
          \stpro[\Lambda] \cdots \stpro[\Lambda]
          X_n\big\rangle_m
        }^\bullet
        \le
        \bigg(\frac{(kn)!}{(k!)^n}\bigg)^{\frac{1}{2}}
        \big(2\E^2\big)^{kn}
        \seminorm{\alpha}{X_1}^{\bullet}
        \cdots
        \seminorm{\alpha}{X_n}^{\bullet} \\
        \shortintertext{and}
        \label{eq:EstimateMultipleProducts}
        \seminorm[\big]{\alpha}{
          X_1 \stpro[\Lambda] \cdots \stpro[\Lambda] X_n
        }^\bullet
        \le
        \bigg(\frac{(kn)!}{(k!)^n}\bigg)^{\frac{1}{2}}
        \big(2\E^3\big)^{kn}
        \seminorm{\alpha}{X_1}^{\bullet}
        \cdots
        \seminorm{\alpha}{X_n}^{\bullet}
    \end{gather}
    hold for all $m \in \{0, \ldots, kn\}$ and all
    $\seminorm{\alpha}{\argument} \in \mathcal{P}_{V,\Lambda}$.
\end{lemma}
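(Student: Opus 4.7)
The plan is to prove \eqref{eq:EstimateMultipleProductsComponents} by induction on $n$ and then derive \eqref{eq:EstimateMultipleProducts} by summing over degrees. As a preliminary I would establish a sharpened version of the star-product estimate for homogeneous inputs: for $X' \in \SymTen^{k'}(V)$ and $Y' \in \SymTen^{k''}(V)$, iterating Lemma~\ref{lemma:plambdaklij} via Lemma~\ref{lemma:otimespi} and combining with the precise tensor-product bound \eqref{eq:PreciseContTensorProduct} together with Proposition~\ref{prop:symcont} yields $\seminorm{\alpha}{\multvee((\P_\Lambda)^t(X'\otimes_\pi Y'))}^\bullet \le \binom{k'+k''-2t}{k'-t}^{1/2}\sqrt{\frac{k'!\,k''!}{(k'-t)!(k''-t)!}}\seminorm{\alpha}{X'}^\bullet\seminorm{\alpha}{Y'}^\bullet$.

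The base cases $n = 0, 1$ of the induction are immediate. For the inductive step, set $Z := X_1\stpro[\Lambda]\cdots\stpro[\Lambda] X_n$, which lies in $\SymTen^{(kn)}(V)^\cpl$ by continuity of $\stpro[\Lambda]$, and decompose $\langle Z\stpro[\Lambda] X_{n+1}\rangle_m$ as a sum over pairs $(t,\ell)$ with $t\in\{0,\ldots,k\}$ and $\ell\in\{0,\ldots,\min\{m,k-t\}\}$: here $t$ is the number of contractions and $\ell+t$ the degree of the surviving part of $X_{n+1}$, so the relevant component of $Z$ has degree $m+t-\ell \le kn$. Substituting the refined bound above, the $(t,\ell)$-coefficient simplifies (after cancelling factorials against $1/t!$) to $\binom{m}{\ell}^{1/2}\sqrt{\binom{m+t-\ell}{t}\binom{\ell+t}{t}}$.

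This is exactly where Lemma~\ref{lemma:binomis} is tailored to fit: squaring the coefficient and applying the inequality gives $\binom{m}{\ell}\binom{m+t-\ell}{t}\binom{\ell+t}{t} \le \binom{\ell+t}{t}^2\binom{k(n+1)}{k}$, so the coefficient is at most $\binom{\ell+t}{t}\sqrt{\binom{k(n+1)}{k}} \le 2^k\sqrt{\binom{k(n+1)}{k}}$. The factor $\sqrt{\binom{k(n+1)}{k}}$ combines with $\sqrt{(kn)!/(k!)^n}$ from the inductive hypothesis to give exactly $\sqrt{(k(n+1))!/(k!)^{n+1}}$, and the number $(k+1)(k+2)/2$ of $(t,\ell)$-terms multiplied by $2^k$ gets absorbed into $(2\E^2)^k$ via the elementary bound $(k+1)(k+2)/2 \le \E^{2k}$, which closes the induction.

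Estimate \eqref{eq:EstimateMultipleProducts} then follows from the identity $\seminorm{\alpha}{X_1\stpro[\Lambda]\cdots\stpro[\Lambda] X_n}^{\bullet,2} = \sum_{m=0}^{kn}\seminorm{\alpha}{\langle X_1\stpro[\Lambda]\cdots\stpro[\Lambda] X_n\rangle_m}^{\bullet,2}$ and \eqref{eq:EstimateMultipleProductsComponents}: the resulting factor $\sqrt{kn+1}$ is absorbed in the upgrade $(2\E^2)^{kn} \to (2\E^3)^{kn}$ via $1+kn \le \E^{kn}$. I expect the main obstacle to be purely combinatorial bookkeeping in the inductive step, where the exponents and binomial coefficients arising from the star-product expansion must align precisely with the form provided by Lemma~\ref{lemma:binomis}—but that lemma was manifestly designed for exactly this purpose.
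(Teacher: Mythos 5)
Your proposal is correct and follows essentially the same route as the paper's proof: induction on $n$ with the $(t,\ell)$-indexed decomposition of $\langle X_1\stpro[\Lambda]\cdots\stpro[\Lambda] X_{n+1}\rangle_m$, the coefficient $\binom{m}{\ell}^{1/2}\binom{m-\ell+t}{t}^{1/2}\binom{\ell+t}{t}^{1/2}$ controlled via Lemma~\ref{lemma:binomis}, and the count of $(t,\ell)$-terms absorbed into $(2\E^2)^{k}$. The only cosmetic difference is that you deduce \eqref{eq:EstimateMultipleProducts} from the orthogonality of homogeneous components (giving a factor $\sqrt{1+kn}$), whereas the paper uses the triangle inequality over the $1+kn$ components; both factors are absorbed by $\E^{kn}$.
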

\begin{proof}
    The first estimate implies the second, because
    $\seminorm{\alpha}{X_1 \stpro[\Lambda] \cdots \stpro[\Lambda]
      X_n}^\bullet$ has at most $(1+kn)$ non-vanishing homogeneous
    components, namely those of degree $m \in \{0, \ldots, kn\}$, and
    $(1+kn) \le \E^{kn}$. We will prove the first estimate by
    induction over $n$: If $n=0$ or $n=1$, then the estimate is
    clearly fulfilled for all possible $k$ and $m$, and if it holds
    for one $n\in \NN$, then
    \begin{align*}
        &\seminorm[\big]{\alpha}{
          \big\langle
          X_1\stpro[\Lambda] \cdots \stpro[\Lambda] X_{n+1}
          \big\rangle_m
        }^\bullet \\
        &\quad\le
        \sum_{t=0}^k
        \frac{1}{t!}
        \seminorm[\Big]{\alpha}{
          \Big\langle
          \mu_\vee\Big(
          \big(\P_{\Lambda}\big)^t
          \big(
          (
          X_1\stpro[\Lambda] \cdots \stpro[\Lambda] X_{n}
          )
          \otimes_\pi
          X_{n+1}
          \big)
          \Big)
          \Big\rangle_m
        }^\bullet\\
        &\quad\le
        \sum_{t=0}^k
        \sum_{\ell=0}^{\min\{m,k-t\}}
        \frac{1}{t!}
        \seminorm[\Big]{\alpha}{
          \mu_\vee\Big(
          \big(\P_{\Lambda}\big)^t
          \big(
          \langle
          X_1\stpro[\Lambda] \cdots \stpro[\Lambda] X_{n}
          \rangle_{m-\ell+t}
          \otimes_\pi
          \langle X_{n+1}\rangle_{\ell + t}
          \big)
          \Big)
        }^\bullet\\
        &\quad\le
        \sum_{t=0}^k
        \sum_{\ell=0}^{\min\{m,k-t\}}
        \frac{1}{t!}
        \binom{m}{\ell}^{\frac{1}{2}}
        \seminorm[\Big]{\alpha \otimes_\pi \alpha}{
          \big(\P_{\Lambda}\big)^t
          \big(
          \langle
          X_1\stpro[\Lambda] \cdots \stpro[\Lambda] X_{n}
          \rangle_{m-\ell+t}
          \otimes_\pi
          \langle X_{n+1}\rangle_{\ell + t}
          \big)
        }^\bullet\\
        &\quad\le
        \sum_{t=0}^k
        \sum_{\ell=0}^{\min\{m,k-t\}}
        \binom{m}{\ell}^{\frac{1}{2}}
        \binom{m-\ell+t}{t}^{\frac{1}{2}}
        \binom{\ell+t}{t}^{\frac{1}{2}}
        \seminorm{\alpha}{
          \langle
          X_1\stpro[\Lambda] \cdots \stpro[\Lambda] X_{n}
          \rangle_{m-\ell+t}
        }^\bullet
        \seminorm{\alpha}{
          \langle X_{n+1}\rangle_{\ell + t}
        }^\bullet\\
        &\quad\le
        \sum_{t=0}^k
        \sum_{\ell=0}^{\min\{m,k-t\}}
        \binom{\ell+t}{t}
        \binom{k(n+1)}{k}^{\frac{1}{2}}
        \seminorm{\alpha}{
          \langle
          X_1\stpro[\Lambda] \cdots \stpro[\Lambda] X_{n}
          \rangle_{m-\ell+t}
        }^\bullet
        \seminorm{\alpha}{
          \langle X_{n+1}\rangle_{\ell + t}
        }^\bullet\\
        &\quad\le
        \sum_{t=0}^k
        \sum_{\ell=0}^{\min\{m,k-t\}}
        \binom{\ell+t}{t}
        \binom{k(n+1)}{k}^{\frac{1}{2}}
        \bigg(\frac{(kn)!}{(k!)^n}\bigg)^{\frac{1}{2}}
        \big(2\E^2\big)^{kn}
        \seminorm{\alpha}{X_1}^{\bullet}
        \cdots
        \seminorm{\alpha}{X_{n}}^{\bullet}
        \seminorm{\alpha}{X_{n+1}}^{\bullet} \\
        &\quad=
        \sum_{t=0}^k
        \sum_{\ell=0}^{\min\{m,k-t\}}
        \binom{\ell+t}{t}
        \bigg(\frac{(k(n+1))!}{(k!)^{n+1}}\bigg)^{\frac{1}{2}}
        \big(2\E^2\big)^{kn}
        \seminorm{\alpha}{X_1}^{\bullet}
        \cdots
        \seminorm{\alpha}{X_{n+1}}^{\bullet} \\
        &\quad\le
        \bigg(\frac{(k(n+1))!}{(k!)^{n+1}}\bigg)^{\frac{1}{2}}
        \big(2\E^2\big)^{k(n+1)}
        \seminorm{\alpha}{X_1}^{\bullet}
        \cdots
        \seminorm{\alpha}{X_{n+1}}^{\bullet}
    \end{align*}
    holds due to the grading of $\mu_\vee$ and $\P_{\Lambda}$, the
    estimates from Propositions~\ref{prop:contotimes} as well as
    \ref{prop:symcont} and Lemma~\ref{lemma:plambdaklij} for
    $\mu_\vee$ and $\P_{\Lambda}$, and the previous
    Lemma~\ref{lemma:binomis}.
\end{proof}

\begin{proposition}
    \label{proposition:exp}%
    Let $\Lambda$ be a continuous bilinear form on
    $V$, then $\exp_{\stpro[\Lambda]}(v)$ is absolutely convergent and
    \begin{equation}
        \label{eq:exp}
        \exp_{\stpro[\Lambda]}(v)
        =
        \sum_{n=0}^\infty
        \frac{v^{\stpro[\Lambda] n}}{n!}
        =
        \E^{\frac{1}{2}\Lambda(v,v)} \exp_{\vee}(v)
    \end{equation}
    holds for all $v \in V$. Moreover,
    \begin{gather}
        \label{eq:expmult}
        \exp_{\vee}(v) \stpro[\Lambda] \exp_{\vee}(w)
        =
        \E^{\Lambda(v,w)} \exp_{\vee}(v+w) \\
        \shortintertext{and}
        \skal[\big]{\exp_\vee(v)}{\exp_\vee(w)}_\alpha^\bullet
        =
        \E^{\skal{v}{w}_\alpha}
    \end{gather}
    hold for all $v, w \in V$ and all
    $\skal{\argument}{\argument}_\alpha \in \mathcal{I}_V$. Finally,
    $\exp_\vee(v)^* = \exp_\vee(\cc{v})$ for all $v\in V$ if $V$ is
    equipped with a continuous antilinear involution $\cc{\argument}$.
\end{proposition}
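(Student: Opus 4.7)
The starting observation is that the symmetric power $v^{\vee n}$ coincides with the ordinary tensor power $v^{\otimes n}$, since $v \otimes \cdots \otimes v$ is already symmetric. Hence $\seminorm{\alpha}{v^{\vee n}}^\bullet = \sqrt{n!}\,\seminorm{\alpha}{v}^n$ by \eqref{eq:semiinnerproductextension1}, and $\seminorm{\alpha}{v^{\vee n}/n!}^\bullet = \seminorm{\alpha}{v}^n/\sqrt{n!}$ is summable, so $\exp_\vee(v)$ converges absolutely in $\SymTen^\bullet(V)^\cpl$. This same observation immediately yields claim (4): from $(x_1 \otimes \cdots \otimes x_n)^* = \cc{x}_n \otimes \cdots \otimes \cc{x}_1$ applied to $x_1 = \cdots = x_n = v$ we get $(v^{\vee n})^* = \cc{v}^{\vee n}$, and summing gives $\exp_\vee(v)^* = \exp_\vee(\cc{v})$. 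For (3), Definition~\ref{defi:extension} gives $\skal{v^{\vee n}}{w^{\vee m}}_\alpha^\bullet = \delta_{nm}\, n!\,\skal{v}{w}_\alpha^n$, and then
\begin{equation*}
    \skal{\exp_\vee(v)}{\exp_\vee(w)}_\alpha^\bullet
    =
    \sum_{n=0}^\infty \frac{1}{(n!)^2} n! \skal{v}{w}_\alpha^n
    =
    \E^{\skal{v}{w}_\alpha},
\end{equation*}
with absolute convergence justified by Cauchy-Schwarz applied to the two $\exp_\vee$ factors.

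For (2), the key computation is the action of $\P_\Lambda$ on the factorizing tensors $v^{\otimes n} \otimes_\pi w^{\otimes m}$: by induction on $t$ one obtains
\begin{equation*}
    \big(\P_\Lambda\big)^t\big(v^{\otimes n} \otimes_\pi w^{\otimes m}\big)
    =
    \frac{n!\,m!}{(n-t)!(m-t)!}\,\Lambda(v,w)^t\,
    v^{\otimes(n-t)} \otimes_\pi w^{\otimes(m-t)}
\end{equation*}
for $t \le \min\{n,m\}$ and zero otherwise. Applying $\mu_\vee$ and inserting into \eqref{eq:defstpro}, the triple sum over $n,m,t$ converges absolutely (this is exactly what Lemma~\ref{lemma:stprocont} gives when applied to the factorizing pieces), so Fubini permits the substitution $n' = n-t$, $m' = m-t$, which factors the series as $\E^{\Lambda(v,w)} \bigl(\exp_\vee(v) \vee \exp_\vee(w)\bigr)$. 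The commutativity and associativity of $\vee$ yield $\exp_\vee(v) \vee \exp_\vee(w) = \exp_\vee(v+w)$ via the usual binomial theorem in the completion.

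Finally, for (1) the plan is to establish by induction the closed form
\begin{equation*}
    v^{\stpro[\Lambda] n}
    =
    \sum_{k=0}^{\lfloor n/2 \rfloor}
    \binom{n}{2k}(2k-1)!!\,\Lambda(v,v)^k\,v^{\vee(n-2k)},
\end{equation*}
whose inductive step uses the elementary identity $v^{\vee m} \stpro[\Lambda] v = v^{\vee(m+1)} + m\,\Lambda(v,v)\,v^{\vee(m-1)}$ (only $t \in \{0,1\}$ contribute because $v \in \SymTen^1(V)$, so $\P_\Lambda^{\ge 2}$ kills the factor $v$). Dividing by $n!$, reindexing with $m = n-2k$, and using $(2k-1)!!/(2k)! = 1/(2^k k!)$ yields
\begin{equation*}
    \sum_{n=0}^\infty \frac{v^{\stpro[\Lambda] n}}{n!}
    =
    \sum_{k,m \ge 0} \frac{\Lambda(v,v)^k}{2^k k!}\,\frac{v^{\vee m}}{m!}
    =
    \E^{\Lambda(v,v)/2}\exp_\vee(v),
\end{equation*}
with absolute convergence of the double series following from the seminorm estimate $\seminorm{\alpha}{v^{\vee m}/m!}^\bullet \le \seminorm{\alpha}{v}^m / \sqrt{m!}$ applied termwise. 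The main technical point throughout is just bookkeeping: everything reduces to the identity $v^{\vee n} = v^{\otimes n}$ and the absolute convergence needed to reorder the resulting double and triple series, which is guaranteed uniformly in $\seminorm{\alpha}{\argument} \in \mathcal{P}_{V,\Lambda}$ by the estimates already established.
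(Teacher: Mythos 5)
Your proof is correct, and in two places it departs from the paper's argument in a way worth noting. For the absolute convergence and the closed form \eqref{eq:exp}, the paper first establishes convergence of $\sum_n \seminorm{\alpha}{v^{\stpro[\Lambda] n}}^\bullet/n!$ abstractly via Lemma~\ref{lemma:powersVerbessert} (the general estimate for $n$-fold products of elements of bounded degree, specialized to $k=1$), and then only cites \cite[Lem.~5.5]{waldmann:2014a} for the explicit formula; you instead derive the Hermite-type closed form $v^{\stpro[\Lambda] n} = \sum_k \binom{n}{2k}(2k-1)!!\,\Lambda(v,v)^k v^{\vee(n-2k)}$ by induction from $v^{\vee m}\stpro[\Lambda] v = v^{\vee(m+1)} + m\Lambda(v,v)v^{\vee(m-1)}$, which yields both the identity and the convergence in one self-contained stroke (I checked the recursion $c_{n+1,k}=c_{n,k}+(n-2k+2)c_{n,k-1}$ and the reindexing $(2k-1)!!/(2k)! = 1/(2^k k!)$; both are right). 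Your route is more elementary and avoids the external reference, at the cost of not generalizing beyond degree-one elements the way Lemma~\ref{lemma:powersVerbessert} does. For \eqref{eq:expmult} the paper observes that $\exp_\vee(v)\otimes_\pi\exp_\vee(w)$ is an eigenvector of $\P_\Lambda$ with eigenvalue $\Lambda(v,w)$ and sums the exponential series directly, whereas you compute $(\P_\Lambda)^t$ on each homogeneous component and reorder the triple series by Fubini; these are the same computation organized differently, and your appeal to the estimates behind Lemma~\ref{lemma:stprocont} for the needed absolute convergence is legitimate (one can also bound the triple sum directly using $\seminorm{\gamma}{v^{\vee a}\vee w^{\vee b}}^\bullet \le \sqrt{(a+b)!}\,\seminorm{\gamma}{v}^a\seminorm{\gamma}{w}^b$ and $(a+b)!\le 2^{a+b}a!\,b!$). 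The remaining identities are handled by both you and the paper as straightforward computations.
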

\begin{proof}
    The existence and absolute convergence of
    $\stpro[\Lambda]$-exponentials of vectors follows directly from
    the previous Lemma~\ref{lemma:powersVerbessert}
    with $k=1$ and $X_1 = \cdots = X_n = v$:
    \begin{equation*}
        \sum_{n=0}^\infty
        \frac{\seminorm{\alpha}{v^{\stpro[\Lambda] n}}^\bullet}{n!}
        \le
        \sum_{n=0}^\infty
        \frac{\left(4 \E^3\seminorm{\alpha}{v}\right)^n}
        {\sqrt{n!}}
        \frac{1}{2^n}
        \stackrel{\cs}{\le}
        \bigg(
        \sum_{n=0}^\infty
        \frac{\left(4 \E^3\seminorm{\alpha}{v}\right)^{2n}}{n!}
        \bigg)^{\frac{1}{2}}
        \bigg(
        \sum_{n=0}^\infty
        \frac{1}{4^n}
        \bigg)^{\frac{1}{2}}
        =
        \frac{2\E^{16\E^6\seminorm{\alpha}{v}^2}}{\sqrt{3}}
    \end{equation*}
    The explicit formula can then be derived like in
    \cite[Lem.~5.5]{waldmann:2014a}. For \eqref{eq:expmult} we just
    note that
    \begin{align*}
        \P_{\Lambda}
        \big(
        \exp_{\vee}(v) \otimes_\pi \exp_{\vee}(w)
        \big)
        &=
        \sum_{k, \ell=0}^\infty
        \P_{\Lambda}
        \bigg(
        \frac{v^{\vee k}}{k!} \otimes_\pi \frac{w^{\vee \ell}}{\ell!}
        \bigg) \\
        &=
        \Lambda(v, w)
        \sum_{k, \ell=1}^\infty
        \frac{k v^{\vee (k-1)}}{k!}
        \otimes_\pi
        \frac{\ell w^{\vee (\ell-1)}}{\ell!} \\
        &=
        \Lambda(v, w)
        \exp_{\vee}(v) \otimes_\pi \exp_{\vee}(w),
    \end{align*}
    and so
    \begin{equation*}
        \exp_{\vee}(v) \stpro[\Lambda] \exp_{\vee}(w)
        =
        \sum_{t=0}^\infty \frac{1}{t!}
        \mu_\vee\Big(
        (\P_{\Lambda})^t
        \big(\exp_{\vee}(v) \otimes_\pi \exp_{\vee}(w)\big)
        \Big)
        =
        \E^{\Lambda(v, w)} \exp_{\vee}(v) \vee \exp_{\vee}(w).
    \end{equation*}
    The remaining two identities are the results of straightforward
    calculations.
\end{proof}

As an application we show that there exists a dense $^*$-subalgebra
consisting of uniformly bounded elements:
\begin{definition}
    Let $\cc{\argument}$ be a continuous antilinear involution on
    $V$. We define the linear subspace
    \begin{equation}
        \label{eq:SymTenPerSpace}
        \SymTen^\bullet_\per(V)
        :=
        \spann\big\{
        \exp_{\vee}(\I v) \in \SymTen^\bullet(V)^\cpl
        \; \big| \;
        v \in V
        \textrm{ and }
        \cc{v} = v
        \big\}
    \end{equation}
    of $\SymTen^\bullet(V)^\cpl$.
\end{definition}
\begin{proposition}
    \label{prop:periodic}%
    Let $\cc{\argument}$ be a continuous antilinear involution on
    $V$. Then $\SymTen^\bullet_\per(V)$ is a dense $^*$-subalgebra of
    $\big(\SymTen^\bullet(V)^\cpl, \stpro[\Lambda], { }^*\big)$
    with respect to all products $\stpro[\Lambda]$ for all continuous
    bilinear Hermitian forms $\Lambda$ on $V$ and
    \begin{equation}
      \seminorm{\infty,\Lambda}{X}
      :=
      \sup \sqrt{\omega(X^*\stpro[\Lambda]X)} < \infty
    \end{equation}
    holds for all $X\in \SymTen^\bullet_\per(V)$, where the supremum
    runs over all continuous positive linear functionals $\omega$
    on $\big(\SymTen^\bullet(V),\stpro[\Lambda],^*\big)$ that are
    normalized to $\omega(1)=1$.
\end{proposition}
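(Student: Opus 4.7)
The easy parts follow directly from Proposition~\ref{proposition:exp}. For $v,w\in V_h$, one computes $\exp_\vee(\I v)^* = \exp_\vee(\cc{\I v}) = \exp_\vee(-\I v)$ and $\exp_\vee(\I v)\stpro[\Lambda]\exp_\vee(\I w) = \E^{-\Lambda(v,w)}\exp_\vee(\I(v+w))$, both of which lie in $\SymTen^\bullet_\per(V)$, so it is a $^*$-subalgebra under every $\stpro[\Lambda]$. For the boundedness claim, I would first note that $X\mapsto\sqrt{\omega(X^*\stpro[\Lambda]X)}$ satisfies the triangle inequality via the standard Cauchy--Schwarz argument for positive functionals, so $\seminorm{\infty,\Lambda}{\argument}$ is a seminorm on its domain of finiteness and it suffices to control generators. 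But Proposition~\ref{proposition:exp} gives $\exp_\vee(\I v)^*\stpro[\Lambda]\exp_\vee(\I v) = \E^{\Lambda(v,v)}\cdot 1$, with $\Lambda(v,v)\in\RR$ since $\Lambda$ is Hermitian and $v=\cc v$, so every normalised $\omega$ yields the $\omega$-independent value $\E^{\Lambda(v,v)}$, whence $\seminorm{\infty,\Lambda}{\exp_\vee(\I v)} = \E^{\Lambda(v,v)/2}<\infty$.

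\textbf{Reduction for density.} The substantive content is density, which I would reduce via polarization to showing $z^{\vee N}\in\overline{\SymTen^\bullet_\per(V)}$ for every $z\in V_h$ and every $N\in\NN_0$. Any generator $v_1\vee\cdots\vee v_k$ of $\SymTen^\bullet(V)$ can be expanded using the decomposition $v_i=u_i+\I u'_i$ with $u_i,u'_i\in V_h$ afforded by the involution, yielding a complex linear combination of mixed products of hermitian vectors; each such mixed product is in turn a real linear combination of $k$-th powers $z^{\vee k}$ with $z\in V_h$ by the standard polarization identity for symmetric multilinear maps on $V_h$. Hence it is enough to approximate $z^{\vee N}$ for arbitrary $z\in V_h$.

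\textbf{Core construction and main obstacle.} For this I would fix $N+1$ distinct reals $\tau_0,\ldots,\tau_N$, solve the Vandermonde system $\sum_j A_j\tau_j^n = N!\,\delta_{n,N}$ for $n=0,\ldots,N$, and for $h>0$ set
\begin{equation*}
X(h) := \sum_{j=0}^N \frac{A_j}{(\I h)^N}\exp_\vee(\I h\tau_j z) \in \SymTen^\bullet_\per(V).
\end{equation*}
Substituting the convergent Taylor series $\exp_\vee(\I h\tau_j z)=\sum_n\frac{(\I h\tau_j)^n}{n!}z^{\vee n}$ from Proposition~\ref{proposition:exp} and using the Vandermonde conditions kills the coefficients of $z^{\vee n}$ for $n<N$ and isolates that of $z^{\vee N}$, giving $X(h)=z^{\vee N}+R_N(h)$ with residual $R_N(h)=\sum_{n>N}\frac{(\I h)^{n-N}}{n!}\bigl(\sum_j A_j\tau_j^n\bigr)z^{\vee n}$. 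The technical crux is showing $R_N(h)\to 0$ in every seminorm $\seminorm{\alpha}{\argument}^\bullet$ as $h\to 0$: the coefficients $A_j/(\I h)^N$ blow up like $h^{-N}$, but using $\seminorm{\alpha}{z^{\vee n}}^\bullet=\sqrt{n!}\,\seminorm{\alpha}{z}^n$ together with the $h$-independent bound $\bigl|\sum_j A_j\tau_j^n\bigr|\le C M^n$ (with $M=\max_j|\tau_j|$, $C=\sum_j|A_j|$), the squared seminorm is dominated by a tail of the form $\sum_{n>N}\frac{h^{2(n-N)}(M\seminorm{\alpha}{z})^{2n}}{n!}=O(h^2)$, which vanishes as $h\to 0$. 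Combined with the polarization reduction this gives $\SymTen^\bullet(V)\subseteq\overline{\SymTen^\bullet_\per(V)}$, hence density in $\SymTen^\bullet(V)^\cpl$.
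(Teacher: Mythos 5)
Your proposal is correct, and the subalgebra and uniform-boundedness parts coincide with the paper's argument (both rest on $\exp_\vee(\I v)^* = \exp_\vee(-\I v)$, the product formula $\exp_\vee(\I v)\stpro[\Lambda]\exp_\vee(\I w) = \E^{-\Lambda(v,w)}\exp_\vee(\I(v+w))$, and the $\omega$-independent value $\omega\big(\exp_\vee(\I v)^*\stpro[\Lambda]\exp_\vee(\I v)\big) = \E^{\Lambda(v,v)}$). The density argument, however, is where you genuinely diverge. The paper observes that $-\I\frac{\D}{\D z}\big|_{z=0}\exp_\vee(\I z v) = v$ for $v\in V_h$, so the closure of $\SymTen^\bullet_\per(V)$ contains $V_h$, hence $V$; since that closure is a closed unital subalgebra (by continuity of the product) and $\SymTen^\bullet(V)$ is generated as a unital algebra by $V$, density follows in one line. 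You instead approximate each monomial $z^{\vee N}$, $z\in V_h$, directly by the $N$-th order finite-difference combination $\sum_j A_j(\I h)^{-N}\exp_\vee(\I h\tau_j z)$ built from a Vandermonde system, control the residual tail using the orthogonality of homogeneous components and $\seminorm{\alpha}{z^{\vee n}}^\bullet = \sqrt{n!}\,\seminorm{\alpha}{z}^n$ (your tail bound is correct up to the harmless constant $C^2=(\sum_j|A_j|)^2$ you dropped), and then reach general tensors by polarization over $V_h$. Your route is longer but more self-contained: it never invokes that the closure of a subalgebra is a subalgebra, and it exhibits explicit approximants for every element of $\SymTen^\bullet(V)$, which is a small bonus. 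The paper's route buys brevity by leaning on the continuity of $\stpro[\Lambda]$ already established in Theorem~\ref{theorem:stprocont}. Both are valid proofs of the statement.
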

\begin{proof}
    Proposition~\ref{proposition:exp} shows that
    $\SymTen^\bullet_\per(V)$ is a $^*$-subalgebra of
    $\SymTen^\bullet(V)^\cpl$ with respect to all products
    $\stpro[\Lambda]$ for all continuous bilinear Hermitian forms
    $\Lambda$ on $V$. As
    $-\I\frac{\D}{\D z}\big|_{z=0} \exp_\vee (\I zv) = v$ for all
    $v \in V$ with $v = \cc{v}$ we see that the closure of the
    subalgebra $\SymTen^\bullet_\per(V)$ contains $V$, hence
    $\SymTen^\bullet(V)$ which is (as a unital algebra) generated by
    $V$, and so the closure of $\SymTen^\bullet_\per(V)$ coincides
    with $\SymTen^\bullet(V)^\cpl$.

    As $\SymTen^\bullet_\per(V)$ is spanned by exponentials and
    $\omega\big(
     \exp_{\vee}(\I v)^*\stpro[\Lambda]\exp_{\vee}(\I v)
    \big)
    =
    \E^{\Lambda(v,v)} \omega\big(\exp_{\vee}(0)\big)
    =
    \E^{\Lambda(v,v)}$
    holds for all positive linear functionals $\omega$
    on $\big(\SymTen^\bullet(V),\stpro[\Lambda],^*\big)$ that are
    normalized to $\omega(1)=1$ by Proposition~\ref{proposition:exp},
    it follows that $\seminorm{\infty,\Lambda}{X}<\infty$ for all
    $X\in \SymTen^\bullet_\per(V)$.
\end{proof}

Note that one can show that $\seminorm{\infty,\Lambda}{\argument}$
is a $C^*$-norm on $\big(\SymTen^\bullet(V),\stpro[\Lambda],^*\big)$
if the continuous positive linear functionals are point-separating.
In contrast to the existence of exponential of vectors, we get strict
constraints on the existence of exponentials of quadratic elements:
\begin{proposition}
    Let $\cc{\argument}$ be a continuous antilinear
    involution on $V$. Then there is no locally convex topology $\tau$
    on $\SymTen^\bullet_\alg(V)$ with the property that any
    (undeformed) exponential
    $\exp_\vee(X) = \sum_{n=0}^\infty \frac{X^{\vee n}}{n!}$ of any
    $X\in \SymTen^2(V)\backslash\{0\}$ exists in the completion of
    $\SymTen^\bullet_\alg(V)$ under $\tau$ and such that all the
    products $\stpro[\Lambda]$ for all continuous Hermitian bilinear
    forms $\Lambda$ on $V$ as well as the $^*$-involution and the
    projection $\langle\argument\rangle_0$ on the scalars are
    continuous.
\end{proposition}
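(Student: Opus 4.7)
The strategy is a direct contradiction argument exploiting Theorem~\ref{theo:char}: any locally convex topology $\tau$ that makes the $^*$-involution, $\langle\argument\rangle_0$, and all star products $\stpro[\Lambda]$ (for continuous Hermitian bilinear $\Lambda$) continuous must be finer than the reference topology on $\SymTen^\bullet(V)$. In particular, every seminorm $\seminorm{\alpha}{\argument}^\bullet$ with $\skal{\argument}{\argument}_\alpha\in\mathcal{I}_V$ remains $\tau$-continuous, so any $\tau$-convergent series is Cauchy with respect to every such $\seminorm{\alpha}{\argument}^\bullet$. It therefore suffices to exhibit a single $X \in \SymTen^2(V)\setminus\{0\}$ and a single $\skal{\argument}{\argument}_\alpha \in \mathcal{I}_V$ for which the partial sums of $\exp_\vee(X)$ fail to be Cauchy in $\seminorm{\alpha}{\argument}^\bullet$.

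The natural choice is $X := v\vee v$ for some $v\in V\setminus\{0\}$. Since $v\otimes v$ is already symmetric one has $v^{\vee k} = v^{\otimes k}$, and consequently $(v\vee v)^{\vee n} = v^{\otimes 2n}$, whose $\seminorm{\alpha}{\argument}^\bullet$-norm equals $\sqrt{(2n)!}\,\seminorm{\alpha}{v}^{2n}$ by Definition~\ref{defi:extension}. Because tensors of different degrees are orthogonal with respect to $\skal{\argument}{\argument}_\alpha^\bullet$, the partial sums $S_N := \sum_{n=0}^{N} \frac{(v\vee v)^{\vee n}}{n!}$ satisfy
\begin{equation*}
\seminorm{\alpha}{S_N - S_M}^{\bullet,2} = \sum_{n=M+1}^{N} \binom{2n}{n} \seminorm{\alpha}{v}^{4n}
\end{equation*}
for $N>M$. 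Using the elementary bound $\binom{2n}{n} \geq 4^n/(2n+1)$, the $n$-th summand dominates $(4\seminorm{\alpha}{v}^4)^n/(2n+1)$.

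Since $V$ is Hausdorff and $\mathcal{P}_V$ is directed, $v\neq 0$ forces the existence of some $\skal{\argument}{\argument}_\alpha \in \mathcal{I}_V$ with $\seminorm{\alpha}{v} > 0$, and after rescaling $\alpha$ by a sufficiently large positive number we may arrange $\seminorm{\alpha}{v} > 1/\sqrt{2}$. For this $\alpha$ the individual summands of the above series tend to infinity, so $(S_N)_{N\in\NN_0}$ is not Cauchy in $\seminorm{\alpha}{\argument}^\bullet$; by the first paragraph it is not $\tau$-Cauchy either, so $\exp_\vee(v\vee v)$ cannot exist in the $\tau$-completion. This contradicts the assumed existence of exponentials for \emph{every} nonzero element of $\SymTen^2(V)$, and finishes the argument. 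No genuine obstacle is expected here; the only point requiring care is the appeal to Theorem~\ref{theo:char} to reduce $\tau$-convergence to convergence in the Hilbert seminorms, and the combinatorial identity $(v\vee v)^{\vee n} = v^{\otimes 2n}$ which is responsible for the factorial factor $\sqrt{(2n)!}$ that ultimately outpaces $n!$.
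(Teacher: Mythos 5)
Your reduction is the same as the paper's: as in the proof of Theorem~\ref{theo:char}, continuity of all $\stpro[\Lambda]$, of the $^*$-involution and of $\langle\argument\rangle_0$ forces every extended form $\skal{\argument}{\argument}_\alpha^\bullet = \langle (\argument)^*\stpro[\Lambda_\alpha](\argument)\rangle_0$, hence every seminorm $\seminorm{\alpha}{\argument}^\bullet$, to be $\tau$-continuous, so a $\tau$-convergent exponential series must be Cauchy for each of them. Your computation for $X=v\vee v$ is also correct: $\seminorm{\alpha}{X^{\vee n}/n!}^{\bullet,2}=\binom{2n}{n}\seminorm{\alpha}{v}^{4n}$, and rescaling $\alpha$ so that $\seminorm{\alpha}{v}>1/\sqrt{2}$ makes the terms blow up; this is essentially the paper's first case with the rank-one form $\skal{v}{w}_\omega=\cc{\omega(v)}\,\omega(w)$.

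The gap is one of scope. The proposition, as the paper proves it and as the remark immediately following it makes explicit, asserts that under these continuity hypotheses \emph{no} $X\in\SymTen^2(V)\setminus\{0\}$ has a convergent exponential: the ``any'' sits inside the scope of ``there is no'' and means ``even one''. Your argument only treats elements of the form $v\vee v$, and not every nonzero quadratic element is of this rank-one form; for instance $X=x_1\vee x_2$ with $x_1,x_2$ linearly independent is not, and for such $X$ every rank-one form of the type you use gives $\seminorm{\omega}{X}^\bullet=0$, so your estimate says nothing. The paper handles a general $X=\sum_{i\le j}\tilde{X}^{ij}x_i\vee x_j$ (with the $x_i$ linearly independent) by a case distinction: if some $\tilde{X}^{ii}\neq 0$, a rank-one functional $\omega$ with $\omega(x_i)=\delta_{i1}$ yields $\skal{X^{\vee n}}{X^{\vee n}}_\omega^\bullet=(2n)!$, which is your divergence; if all $\tilde{X}^{ii}=0$, one needs a rank-two form built from $\omega\colon V\to\CC^2$, which yields only $\skal{X^{\vee n}}{X^{\vee n}}_\omega^\bullet=(n!)^2$, so the terms $X^{\vee n}/n!$ have constant norm $1$ -- the series still fails to be Cauchy, but not because the individual terms blow up. To complete the proof you must either supply this second case or justify that only the weaker reading (``some exponential fails'') is intended, which the paper's own proof and the subsequent remark contradict.
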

\begin{proof}
    Analogously to the proof of Theorem~\ref{theo:char} we see that,
    if all the products $\stpro[\Lambda]$ for all continuous Hermitian
    bilinear forms $\Lambda$ on $V$ as well as the $^*$-involution and
    the projection $\langle\argument\rangle_0$ on the scalars are
    continuous, then all the extended positive Hermitian forms
    $\skal{\argument}{\argument}_\alpha^\bullet$ for all
    $\skal{\argument}{\argument}_\alpha\in\mathcal{I}_V$ would have to
    be continuous and thus extend to the completion of
    $\SymTen^\bullet_\alg(V)$.

    Now let $X \in \SymTen^2(V)\setminus\{0\}$ be given. There exist
    $k \in \NN$ and $x \in V^k$ such that $x_1, \ldots, x_k$ are
    linearly independent and
    $X = \sum_{i=1}^k\sum_{j=i}^k \tilde{X}^{ij} x_i \vee x_j$ with
    complex coefficients $\tilde{X}^{ij}$. If there exists an
    $i \in \{1, \ldots, k\}$ such that $\tilde{X}^{ii} \neq 0$, then
    we can assume without loss of generality that $i=1$ and
    $\tilde{X}^{11} = 1$ and define a continuous positive Hermitian
    form on $V$ by $\skal{v}{w}_\omega := \cc{\omega(v)} \omega(w)$,
    where $\omega\colon V \rightarrow \CC$ is a continuous linear form
    on $V$ that satisfies $\omega(x_1) = 1$ and $\omega(x_i) = 0$ for
    $i \in \{2, \ldots, k\}$. Otherwise we can assume without loss of
    generality that $\tilde{X}^{11} = \tilde{X}^{22} = 0$ and
    $\tilde{X}^{12} = 1$ and define a continuous positive Hermitian
    form on $V$ by $\skal{v}{w}_\omega := \cc{\omega(v)}^T \omega(w)$,
    where $\omega\colon V \rightarrow \CC^2$ is a continuous linear
    map that satisfies $\omega(x_1) = \binom{1}{0}$,
    $\omega(x_2) = \binom{0}{1}$ and $\omega(x_i) = 0$ for
    $i \in \{3, \ldots, k\}$.

    In the first case, this results in
    $\skal{X^{\vee n}}{X^{\vee n}}^\bullet_\omega = (2n)!$ and in the
    second, $\skal{X^{\vee n}}{X^{\vee n}}^\bullet_\omega = (n!)^2$. So
    $\sum_{n=0}^\infty \frac{X^{\vee n}}{n!}$ cannot converge in the
    completion of $\SymTen^\bullet_\alg(V)$ because
    \begin{equation*}
        \skal[\bigg]{\sum_{n=0}^N \frac{X^{\vee n}}{n!}}
        {\sum_{n=0}^N \frac{X^{\vee n}}{n!}}^\bullet_\omega
        \ge
        \sum_{n=0}^N 1
        \xrightarrow{N\rightarrow \infty}
        \infty.
    \end{equation*}
\end{proof}

A similar result has already been obtained by Omori, Maeda, Miyazaki
and Yoshioka in the $2$-dimensional case in
\cite{omori.maeda.miyazaki.yoshioka:2000a}, where they show that
associativity of the Moyal-product breaks down on exponentials of
quadratic functions. Note that the above proposition does not exclude
the possibility that exponentials of \emph{some} quadratic functions
exist if one only demands that \emph{some} special deformations are
continuous.

Even though exponentials of non-trivial
tensors of degree $2$ are not
contained in $\SymTen^\bullet(V)^\cpl$, the continuous positive linear
functionals are in some sense ``analytic'' for such tensors:
\begin{proposition}
    \label{proposition:analyticfunctionals}%
    Let $\cc{\argument}$ be a continuous antilinear
    involution on $V$ and $\Lambda$ a continuous Hermitian bilinear
    form on $V$. Let
    $\omega\colon \SymTen^\bullet(V)^\cpl \rightarrow \CC$ be a
    continuous linear functional on $\SymTen^\bullet(V)^\cpl$ that is
    positive with respect to $\stpro[\Lambda]$. Then for all
    $X \in \SymTen^{(2)}(V)^\cpl$ there exists an $\epsilon > 0$ such
    that
    \begin{equation}
        \label{eq:ExpOmegaEstimate}
        \sum_{n=0}^\infty
        \frac{
          \epsilon^n
          \omega\big(
          (X^{\stpro[\Lambda] n})^*
          \stpro[\Lambda]
          X^{\stpro[\Lambda] n}
          \big)^{\frac{1}{2}}
        }
        {n!}
        <
        \infty
    \end{equation}
    holds.
\end{proposition}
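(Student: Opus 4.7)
The strategy is to bound $\omega\bigl((X^{\stpro[\Lambda] n})^* \stpro[\Lambda] X^{\stpro[\Lambda] n}\bigr)$ using the multi-factor estimate from Lemma~\ref{lemma:powersVerbessert} with $k=2$, and then to check that the resulting factorial growth is dominated by $(n!)^2$ up to a constant, so that dividing by $n!$ and taking a square root still leaves a geometric series in $\epsilon$.

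First, since $\Lambda$ is Hermitian and the $^*$-involution is a continuous antiautomorphism, we have $(X^{\stpro[\Lambda] n})^* = (X^*)^{\stpro[\Lambda] n}$, so that
\begin{equation*}
(X^{\stpro[\Lambda] n})^* \stpro[\Lambda] X^{\stpro[\Lambda] n} = \underbrace{X^*\stpro[\Lambda]\cdots\stpro[\Lambda] X^*}_{n \text{ factors}} \stpro[\Lambda] \underbrace{X\stpro[\Lambda]\cdots\stpro[\Lambda] X}_{n \text{ factors}}
\end{equation*}
is a $\stpro[\Lambda]$-product of $2n$ elements lying in $\SymTen^{(2)}(V)^\cpl$; the continuity results extend from $\SymTen^\bullet(V)$ to $\SymTen^\bullet(V)^\cpl$ so Lemma~\ref{lemma:powersVerbessert} applies with $k=2$ and with $n$ replaced by $2n$. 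Picking $\seminorm{\alpha}{\argument}\in\mathcal{P}_{V,\Lambda}$ large enough that it dominates the pull-back seminorm of $^*$ (so that $\seminorm{\alpha}{X^*}^\bullet \le \seminorm{\alpha}{X}^\bullet$ after enlarging $\alpha$ if necessary), we obtain
\begin{equation*}
\seminorm[\big]{\alpha}{(X^{\stpro[\Lambda] n})^* \stpro[\Lambda] X^{\stpro[\Lambda] n}}^\bullet \le \bigg(\frac{(4n)!}{4^n}\bigg)^{1/2} (2\E^3)^{4n} \seminorm{\alpha}{X}^{\bullet,\,2n}.
\end{equation*}

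Second, by continuity of $\omega$ there exist a continuous Hilbert seminorm $\seminorm{\beta}{\argument}\in\mathcal{P}_V$ and a constant $C>0$ such that $|\omega(Z)| \le C\seminorm{\beta}{Z}^\bullet$ for all $Z\in\SymTen^\bullet(V)^\cpl$. Enlarging $\alpha$ so that $\seminorm{\beta}{\argument} \le \seminorm{\alpha}{\argument}$, we therefore get
\begin{equation*}
\omega\bigl((X^{\stpro[\Lambda] n})^* \stpro[\Lambda] X^{\stpro[\Lambda] n}\bigr)^{1/2} \le C^{1/2}\, \bigl((4n)!\bigr)^{1/4}\, 2^{-n/2}\, (2\E^3)^{2n}\, \seminorm{\alpha}{X}^{\bullet,\,n}.
\end{equation*}

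Third, I would use the multinomial estimate $(4n)!/(n!)^4 = \binom{4n}{n,n,n,n} \le 4^{4n}$, which rearranges to $\bigl((4n)!\bigr)^{1/4} \le 4^n\, n!$. Substituting this bound into the inequality above yields
\begin{equation*}
\frac{\omega\bigl((X^{\stpro[\Lambda] n})^* \stpro[\Lambda] X^{\stpro[\Lambda] n}\bigr)^{1/2}}{n!} \le C^{1/2}\, \bigl(K \seminorm{\alpha}{X}^{\bullet}\bigr)^n,
\end{equation*}
where $K := 4 \cdot 2^{-1/2} \cdot 4\E^6 = 8\sqrt{2}\,\E^6$ absorbs all $n$-independent factors. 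Choosing $\epsilon > 0$ so small that $\epsilon K \seminorm{\alpha}{X}^{\bullet} < 1$ gives a convergent geometric majorant for the series \eqref{eq:ExpOmegaEstimate}, which finishes the proof.

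The main technical point is the combinatorial estimate $\bigl((4n)!\bigr)^{1/4} \le 4^n\, n!$: without it, the $\sqrt{(4n)!}$ growth coming from Lemma~\ref{lemma:powersVerbessert} applied to a $2n$-fold product would overwhelm the $1/n!$ in \eqref{eq:ExpOmegaEstimate}. It is precisely the fact that $X$ lies in degree at most two (so $k=2$ and hence only $(4n)!^{1/2}$ appears, not something worse like $(2kn)!^{1/2}$ for larger $k$) that makes this trade-off work.
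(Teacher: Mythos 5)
Your proof is correct and follows essentially the same route as the paper: both rest on Lemma~\ref{lemma:powersVerbessert} with $k=2$ together with the observation that the resulting factorial growth is beaten by $n!$ via a multinomial bound, leaving a geometric series in $\epsilon$. The only (harmless) difference is that the paper applies the lemma to the $n$-fold product $X^{\stpro[\Lambda] n}$ and uses continuity of the seminorm $Y \mapsto \omega(Y^*\stpro[\Lambda]Y)^{1/2}$ to get $\omega\big((X^{\stpro[\Lambda] n})^*\stpro[\Lambda]X^{\stpro[\Lambda] n}\big)^{1/2} \le C\,\seminorm{\alpha}{X^{\stpro[\Lambda] n}}^\bullet$, so it only needs $\sqrt{(2n)!}\le 2^n\,n!$, whereas you bound $\omega$ of the full $2n$-fold product directly and therefore need $\big((4n)!\big)^{1/4}\le 4^n\,n!$ --- the same estimate up to a geometric factor that is absorbed into $\epsilon$.
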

\begin{proof}
    The seminorm
    $\SymTen^\bullet(V)^\cpl \ni Y \mapsto \omega(Y^*\stpro[\Lambda]
    Y)^{1/2} \in [0,\infty[$
    is continuous by construction, so there exist $C > 0$ and
    $\seminorm{\alpha}{\argument}\in \mathcal{P}_V$ such that
    $\omega(Y^* \stpro[\Lambda] Y)^{1/2} \le C
    \seminorm{\alpha}{Y}^\bullet$
    holds for all $Y\in \SymTen^\bullet(V)^\cpl$. We can even assume
    without loss of generality that
    $\seminorm{\alpha}{\argument} \in \mathcal{P}_{V,\Lambda}$. Now
    choose $\epsilon > 0$ with
    $\epsilon \big(8 \E^6 \seminorm{\alpha}{X}^{\bullet,
      2}\big) \le 1$,
    then Lemma~\ref{lemma:powersVerbessert} in the case $k=2$ and
    $X_1 = \cdots = X_n = X$ shows that
    \begin{align*}
        \sum_{n=0}^\infty
        \frac{
          \epsilon^n \omega
          \big(
          (X^{\stpro[\Lambda]n})^*
          \stpro[\Lambda]
          X^{\stpro[\Lambda] n}
          \big)^{\frac{1}{2}}
        }
        {n!}
        &\le
        C
        \sum_{n=0}^\infty
        \frac{
          \epsilon^n
          \seminorm{\alpha}{X^{\stpro[\Lambda] n}}^\bullet}
        {n!} \\
        &\le
        C
        \sum_{n=0}^\infty
        \frac{\sqrt{(2n)!}}
        {\sqrt{2}^{3n}n!} \\
        &\le
        C
        \sum_{n=0}^\infty \frac{1}{\sqrt{2}^n}
        \\
        &=
        \frac{C\sqrt{2}}{\sqrt{2} - 1}.
    \end{align*}
\end{proof}

It is an immediate consequence of this proposition that Hermitian
tensors of grade at most $2$ are represented by essentially
self-adjoint operators in every GNS-representation corresponding to a
continuous positive linear functional $\omega$. Recall that for a
$^*$-algebra $\algebra{A}$ with a positive linear functional
$\omega\colon \algebra{A} \rightarrow \CC$, the GNS representation of
$\algebra{A}$ associated to $\omega$ is the unital $^*$-homomorphism
$\pi_\omega\colon \algebra{A} \rightarrow \textup{Adj}(\algebra{A} /
\mathcal{I}_\omega)$ into the adjointable endomorphisms on the
pre-Hilbert space $\mathcal{H}_\omega = \algebra{A} /
\mathcal{I}_\omega$ with inner product
$\skal{\argument}{\argument}_\omega$, where $\mathcal{I}_\omega =
\big\{a \in \algebra{A} \; \big| \; \omega(a^*a) = 0\big\}$ and
$\skal{[a]}{[b]}_\omega = \omega(a^*b)$ for all $[a], [b] \in
\mathcal{H}_\omega$ with representatives $a, b \in\algebra{A}$.
\begin{theorem}
    \label{theo:essselfadj}%
    Let $\cc{\argument}$ be a continuous antilinear involution on $V$
    and $\Lambda$ a continuous Hermitian bilinear form on $V$. Let
    $\omega\colon \SymTen^\bullet(V)^\cpl \rightarrow \CC$ be a
    continuous linear functional on $\SymTen^\bullet(V)^\cpl$ that is
    positive with respect to $\stpro[\Lambda]$.  Then for $X^* = X \in
    \SymTen^{(2)}(V)^\cpl$ all vectors in the GNS pre-Hilbert space
    $\mathcal{H}_\omega$ are analytic for $\pi_\omega(X)$ which is
    therefore essentially self-adjoint.
\end{theorem}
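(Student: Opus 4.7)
The plan is to verify the hypotheses of Nelson's analytic vector theorem: show that $\pi_\omega(X)$ is symmetric (immediate from $X^* = X$ and $\pi_\omega$ being a $^*$-homomorphism) and exhibit a dense set of analytic vectors. In fact I will show that \emph{every} vector in $\mathcal{H}_\omega$ is analytic for $\pi_\omega(X)$, which is more than needed. Note that by Hermiticity of $\Lambda$ and $X$ one has $(X^{\stpro[\Lambda] n})^* = X^{\stpro[\Lambda] n}$ for every $n \in \NN_0$, so the computations below involving powers of $X$ behave as expected.

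The key device is to replace $\omega$ by a suitable ``vector state''. Given $Y \in \SymTen^\bullet(V)^\cpl$, I would introduce
\begin{equation*}
    \omega_Y\colon \SymTen^\bullet(V)^\cpl \rightarrow \CC,
    \qquad
    A \; \mapsto \; \omega\big(Y^* \stpro[\Lambda] A \stpro[\Lambda] Y\big).
\end{equation*}
Continuity of $\omega_Y$ is a direct consequence of the continuity of $\omega$ and of $\stpro[\Lambda]$ (Theorem~\ref{theorem:stprocont}), and positivity for $\stpro[\Lambda]$ follows from
\begin{equation*}
    \omega_Y\big(A^* \stpro[\Lambda] A\big)
    =
    \omega\big((A \stpro[\Lambda] Y)^* \stpro[\Lambda] (A \stpro[\Lambda] Y)\big)
    \ge
    0.
\end{equation*}
Hence $\omega_Y$ satisfies the hypotheses of Proposition~\ref{proposition:analyticfunctionals}.

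I would then apply Proposition~\ref{proposition:analyticfunctionals} to $\omega_Y$ and the Hermitian element $X \in \SymTen^{(2)}(V)^\cpl$ to obtain $\epsilon_Y > 0$ with
\begin{equation*}
    \sum_{n=0}^\infty
    \frac{\epsilon_Y^n}{n!}\,
    \omega_Y\big((X^{\stpro[\Lambda] n})^* \stpro[\Lambda] X^{\stpro[\Lambda] n}\big)^{1/2}
    <
    \infty.
\end{equation*}
Unwinding the definition of $\omega_Y$ and of the GNS inner product, and using associativity of $\stpro[\Lambda]$ together with $(X^{\stpro[\Lambda] n})^* = X^{\stpro[\Lambda] n}$, each summand is
\begin{equation*}
    \omega_Y\big((X^{\stpro[\Lambda] n})^* \stpro[\Lambda] X^{\stpro[\Lambda] n}\big)
    =
    \omega\big((X^{\stpro[\Lambda] n} \stpro[\Lambda] Y)^* \stpro[\Lambda] (X^{\stpro[\Lambda] n} \stpro[\Lambda] Y)\big)
    =
    \big\|\pi_\omega(X)^n [Y]\big\|_\omega^2.
\end{equation*}

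Substituting this back, the displayed convergence is exactly the analytic-vector condition for $[Y]$ with parameter $\epsilon_Y$. Since every vector of $\mathcal{H}_\omega$ arises as such a $[Y]$, all vectors of $\mathcal{H}_\omega$ are analytic for the symmetric operator $\pi_\omega(X)$, and Nelson's theorem yields essential self-adjointness. The only delicate step is the legitimacy of the auxiliary functional $\omega_Y$ as input to Proposition~\ref{proposition:analyticfunctionals}; the rest is bookkeeping. Continuity there is automatic by Theorem~\ref{theorem:stprocont}, and the positivity rewriting relies precisely on $\Lambda$ being Hermitian, which is part of the hypothesis.
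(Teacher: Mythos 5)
Your proposal is correct and follows essentially the same route as the paper: both reduce the claim to Nelson's theorem by introducing the auxiliary functional $\omega_Y(Z) = \omega(Y^* \stpro[\Lambda] Z \stpro[\Lambda] Y)$, checking it is continuous and positive, and then invoking Proposition~\ref{proposition:analyticfunctionals} to obtain the analytic-vector estimate for $[Y]$. The only difference is that you spell out the continuity and positivity of $\omega_Y$ explicitly, which the paper leaves implicit.
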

\begin{proof}
    It is clear from the construction of the GNS representation that
    $\pi_\omega(X)$ is a symmetric operator on $\mathcal{H}_\omega =
    \SymTen^\bullet(V)^\cpl / \mathcal{I}_\omega$ and by Nelson's
    theorem, see e.g. \cite[Thm.~7.16]{schmuedgen:2012a}, it is
    sufficient to show that all vectors $[Y] \in \mathcal{H}_\omega$
    are analytic for $\pi_\omega(X)$: From
    \begin{equation*}
        \skal[\big]{\pi_\omega(X)^n[Y]}{\pi_\omega(X)^n[Y]}_\omega
        =
        \omega\big(
        (X^{\stpro[\Lambda]n} \stpro[\Lambda] Y)^*
        \stpro[\Lambda]
        (X^{\stpro[\Lambda]n} \stpro[\Lambda] Y)
        \big)
        =
        \omega\big(
        Y^* \stpro[\Lambda]
        (X^{\stpro[\Lambda]n})^*
        \stpro[\Lambda]
        X^{\stpro[\Lambda]n}
        \stpro[\Lambda] Y
        \big)
    \end{equation*}
    it follows that analyticity of the vector $[Y]$ is equivalent to
    the analyticity of the continuous positive linear functional
    $\SymTen^\bullet(V)^\cpl \ni Z \mapsto \omega_{Y}(Z) := \omega(Y^*
    \stpro[\Lambda] Z \stpro[\Lambda] Y) \in \CC$ in the sense of the
    previous Proposition~\ref{proposition:analyticfunctionals}.
\end{proof}

%
% Special Cases and Examples
%

\section{Special Cases and Examples}
\label{sec:SpecialCasesExamples}

Finally we want to discuss two special cases that have appeared in the
literature before, namely that $V$ is a Hilbert space and that $V$ is
a nuclear space.

%
% Deformation Quantization of Hilbert Spaces
%

\subsection{Deformation Quantization of Hilbert Spaces}

Assume that $V$ is a (complex) Hilbert space with inner product
$\skal{\argument}{\argument}_1$. We note that in this case
$\SymTen^\bullet(V)$ is not a pre-Hilbert space but only a countable
projective limit of pre-Hilbert spaces, because the extensions
$\skal{\argument}{\argument}_\alpha^\bullet$ of the (equivalent) inner
products $\skal{\argument}{\argument}_\alpha := \alpha
\skal{\argument}{\argument}_1$ for $\alpha \in {]0, \infty[}$ are not
equivalent. If $V$ is a Hilbert space, then its topological dual and,
more generally, all spaces of bounded multilinear functionals on $V$
are Banach spaces. This allows a more detailed analysis of the
continuity of functions in $\analytic(V'_h)$ and of the dependence of
the product $\stpro[\Lambda]$ on $\Lambda \in \Bil(V)$.

\begin{theorem}
    \label{theorem:ContinuityOnLambda}%
    Let $V$ be a (complex) Hilbert space with inner product
    $\skal{\argument}{\argument}_1$ and unit ball $U \subseteq V$ and
    let $\Bil(V)$ be the Banach space of all continuous bilinear forms
    on $V$ with norm
    $\norm{\Lambda} := \sup_{v, w \in U} \abs{\Lambda(v, w)}$. Then
    the map
    $\Bil(V)\times \SymTen^\bullet(V)^\cpl \times
    \SymTen^\bullet(V)^\cpl \rightarrow \SymTen^\bullet(V)^\cpl$
    \begin{equation}
        \label{eq:TripleMapContinuous}
        (\Lambda, X, Y)
        \; \mapsto \;
        X \stpro[\Lambda] Y
    \end{equation}
    is continuous.
\end{theorem}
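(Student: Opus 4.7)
The plan is to verify joint continuity at each fixed point $(\Lambda_0, X_0, Y_0) \in \Bil(V) \times \SymTen^\bullet(V)^\cpl \times \SymTen^\bullet(V)^\cpl$. I start from the decomposition
\begin{equation*}
  X \stpro[\Lambda] Y - X_0 \stpro[\Lambda_0] Y_0
  =
  (X - X_0) \stpro[\Lambda] Y
  +
  X_0 \stpro[\Lambda] (Y - Y_0)
  +
  \big(X_0 \stpro[\Lambda] Y_0 - X_0 \stpro[\Lambda_0] Y_0\big),
\end{equation*}
which splits the task into two subproblems: a bound on $\stpro[\Lambda]$ uniform for $\Lambda$ on bounded subsets of $\Bil(V)$, and the continuity of the map $\Lambda \mapsto X_0 \stpro[\Lambda] Y_0$.

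For the first subproblem I exploit that, since $V$ is a Hilbert space, every $\seminorm{\alpha}{\argument} \in \mathcal{P}_V$ is a positive multiple of $\norm{\argument}$, so a single $\seminorm{\tilde\gamma}{\argument}$ with $\tilde\gamma \ge \max(1, \norm{\Lambda_0}+1)$ simultaneously belongs to $\mathcal{P}_{V,\Lambda}$ for every $\Lambda$ in the unit ball around $\Lambda_0$. Lemma~\ref{lemma:stprocont} then yields a bound $\seminorm{\tilde\gamma}{X \stpro[\Lambda] Y}^\bullet \le C_1 \seminorm{c_1\tilde\gamma}{X}^\bullet \seminorm{c_1\tilde\gamma}{Y}^\bullet$ with constants $C_1, c_1 > 0$ independent of $\Lambda$ in this ball, which together with the triangle inequality $\seminorm{c_1\tilde\gamma}{Y}^\bullet \le \seminorm{c_1\tilde\gamma}{Y_0}^\bullet + \seminorm{c_1\tilde\gamma}{Y - Y_0}^\bullet$ handles the first two summands.

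For the third summand -- the only nontrivial part -- I exploit the fact that $\P_\Lambda$ is linear in $\Lambda$ and apply the noncommutative telescoping identity
\begin{equation*}
  (\P_\Lambda)^t - (\P_{\Lambda_0})^t
  =
  \sum_{s=0}^{t-1}
  (\P_\Lambda)^s\, \P_{\Lambda - \Lambda_0}\, (\P_{\Lambda_0})^{t-1-s},
\end{equation*}
rewriting $\P_{\Lambda - \Lambda_0} = \norm{\Lambda - \Lambda_0}\, \P_{\tilde\Lambda}$ with $\tilde\Lambda$ of unit norm in order to extract the desired factor $\norm{\Lambda - \Lambda_0}$. Three nested applications of Proposition~\ref{prop:contplambda} (with $c=2$ at each step) to the three factors $(\P_\Lambda)^s$, $\P_{\tilde\Lambda}$, and $(\P_{\Lambda_0})^{t-1-s}$ give a bound proportional to $\norm{\Lambda - \Lambda_0}\, \frac{s!(t-1-s)!}{2^t}\, \seminorm{c_2\tilde\gamma}{X_0}^\bullet \seminorm{c_2\tilde\gamma}{Y_0}^\bullet$. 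The elementary estimate $\sum_{s=0}^{t-1} s!(t-1-s)! \le t!$ absorbs the $s$-sum together with the prefactor $\frac{1}{t!}$, leaving a geometric series $\sum_{t\ge 1} 2^{-t}$; after incorporating $\multvee$ via Corollary~\ref{cor:contvee} this yields an estimate of the shape
\begin{equation*}
  \seminorm{\tilde\gamma}{X_0 \stpro[\Lambda] Y_0 - X_0 \stpro[\Lambda_0] Y_0}^\bullet
  \le
  C_3\, \norm{\Lambda - \Lambda_0}\,
  \seminorm{c_3\tilde\gamma}{X_0}^\bullet \seminorm{c_3\tilde\gamma}{Y_0}^\bullet,
\end{equation*}
which tends to zero as $\Lambda \to \Lambda_0$.

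Combining the three estimates then delivers joint continuity at $(\Lambda_0, X_0, Y_0)$. The main obstacle I anticipate is the careful bookkeeping of seminorm rescalings in the three nested applications of Proposition~\ref{prop:contplambda}; the Hilbert-space hypothesis enters precisely because it allows one single $\seminorm{\tilde\gamma}{\argument}$ to absorb all the conditions $\seminorm{\cdot}{\argument} \in \mathcal{P}_{V,\Lambda}$ uniformly as $\Lambda$ ranges in a neighbourhood of $\Lambda_0$, a step that would fail for a general projective limit of Hilbert spaces.
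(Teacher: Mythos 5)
Your proposal is correct, and its overall architecture coincides with the paper's: the same splitting into a bound for $\stpro[\Lambda]$ that is uniform for $\Lambda$ in a ball around $\Lambda_0$ (handling the $(X,Y)$-dependence via Lemma~\ref{lemma:stprocont} and the fact that one scalar multiple $\seminorm{\tilde\gamma}{\argument}$ of the Hilbert norm lies in $\mathcal{P}_{V,\Lambda}$ for all such $\Lambda$), plus a separate estimate for $\Lambda \mapsto X_0 \stpro[\Lambda] Y_0$. Where you genuinely diverge is in that last estimate. The paper uses the fact that $\P_{\Lambda_0}$ and $\P_{\Lambda - \Lambda_0}$ commute on symmetric tensors to resum the exponential series as $X_0 \stpro[\Lambda] Y_0 = \sum_{s=0}^\infty \frac{1}{s!}\, \multstpro[\Lambda_0]\big((\P_{\Lambda-\Lambda_0})^s(X_0\otimes_\pi Y_0)\big)$, so that the difference is exactly the $s\ge 1$ tail, controlled by rescaling $\P_{\Lambda-\Lambda_0} = \rho^{-1}\P_{\rho(\Lambda-\Lambda_0)}$ and letting $\rho\to\infty$. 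You instead use the operator telescoping identity, which needs no commutativity and only the linearity of $\Lambda \mapsto \P_\Lambda$; the price is the extra combinatorial bound $\sum_{s=0}^{t-1} s!\,(t-1-s)! \le t!$ (true, since each summand is $(t-1)!/\binom{t-1}{s} \le (t-1)!$ and there are $t$ of them) and the bookkeeping of three nested applications of Proposition~\ref{prop:contplambda}, which I have checked goes through with $c=2$ and a final rescaling factor of $128$. Your route buys two things: it sidesteps the commutation of $\P_{\Lambda_0}$ and $\P_{\Lambda-\Lambda_0}$ on symmetric tensors, which the paper only asserts ``can be checked''; and it produces an explicit local Lipschitz estimate $\seminorm{\gamma}{X_0\stpro[\Lambda]Y_0 - X_0\stpro[\Lambda_0]Y_0}^\bullet \le C\,\norm{\Lambda-\Lambda_0}\,\seminorm{c\gamma}{X_0}^\bullet\seminorm{c\gamma}{Y_0}^\bullet$, slightly sharper in form than the paper's $\rho$-dependent bound. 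One small correction: it is not true that \emph{every} $\seminorm{\alpha}{\argument}\in\mathcal{P}_V$ is a positive multiple of the Hilbert norm (e.g.\ $v\mapsto\abs{\skal{e}{v}_1}$ is a continuous Hilbert seminorm); what is true, and what your argument actually needs, is that these multiples are cofinal in $\mathcal{P}_V$, so that it suffices to verify the estimates against them.
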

\begin{proof}
    Note that for a Hilbert space $V$, the continuous inner products
    $\skal{\argument}{\argument}_\lambda$ with $\lambda > 0$ are
    cofinal in $\mathcal{I}_V$. Now let $\Lambda \in \Bil(V)$,
    $X, Y \in \SymTen^\bullet(V)^\cpl$ and $\epsilon > 0$ be given,
    then
    \begin{equation*}
        \seminorm{\lambda}
        {X' \stpro[\Lambda'] Y' - X \stpro[\Lambda] Y}
        \le
        \seminorm{\lambda}
        {X' \stpro[\Lambda'] Y' - X \stpro[\Lambda'] Y}
        +
        \seminorm{\lambda}{X \stpro[\Lambda'] Y - X \stpro[\Lambda] Y}
    \end{equation*}
    holds for all $\lambda > 0$ and all $\Lambda' \in \Bil(V)$ as well
    as all $X', Y' \in \SymTen^\bullet(V)^\cpl$. Moreover,
    \begin{align*}
        \seminorm{\lambda}
        {X' \stpro[\Lambda'] Y' - X \stpro[\Lambda'] Y}
        &\le
        \seminorm{\lambda}{(X' - X) \stpro[\Lambda'] Y'}
        +
        \seminorm{\lambda}{X \stpro[\Lambda'] (Y' - Y)} \\
        &\le
        4 \seminorm{8\lambda}{X' - X}^\bullet
        \seminorm{8\lambda}{Y'}^\bullet
        +
        4 \seminorm{8\lambda}{X}^\bullet
        \seminorm{8\lambda}{Y' - Y}^\bullet
    \end{align*}
    holds for all $X', Y' \in \SymTen^\bullet(V)^\cpl$ as well as all
    $\lambda > 0$ and all $\Lambda' \in \Bil(V)$ such that
    $\seminorm{\lambda}{\argument} \in \mathcal{P}_{V,\Lambda'}$ by
    Lemma~\ref{lemma:stprocont}. One can check on factorizing
    symmetric tensors that $\P_\Lambda$ and $\P_{\Lambda'-\Lambda}$
    commute and by using that
    \begin{align*}
        X\stpro[\Lambda']Y
        &=
        \sum_{t'=0}^\infty \frac{1}{t'!}
        \mu_\vee \Big(
        \big(\P_{\Lambda + (\Lambda' - \Lambda)}\big)^{t'}
        (X \otimes_\pi Y)
        \Big) \\
        &=
        \sum_{t, s=0}^\infty \frac{1}{t!s!}
        \mu_\vee\Big(
        \big(\P_{\Lambda}\big)^t \big(\P_{\Lambda' - \Lambda}\big)^s
        (X\otimes_\pi Y)
        \Big) \\
        &=
        \sum_{s=0}^\infty \frac{1}{s!}
        \mu_{\stpro[\Lambda]}
        \big((\P_{\Lambda' - \Lambda})^s (X \otimes_\pi Y)\big),
    \end{align*}
    it follows that
    \begin{align*}
        \seminorm{\lambda}{X \stpro[\Lambda'] Y - X \stpro[\Lambda] Y}
        &\le
        \sum_{s=1}^\infty
        \frac{1}{\rho^s s!}
        \seminorm[\Big]{\lambda}{\multstpro[\Lambda]
          \Big(
          \big(\P_{\rho(\Lambda' - \Lambda)}\big)^s
          (X \otimes_\pi Y)
          \Big)
        }^\bullet \\
        &\le
        4
        \sum_{s=1}^\infty
        \frac{1}{\rho^s s!}
        \seminorm[\Big]{8\lambda \otimes_\pi 8\lambda}
        {
          \big(\P_{\rho(\Lambda' - \Lambda)}\big)^s
          (X \otimes_\pi Y)
        }^\bullet
        \\
        &\le
        8
        \sum_{s=1}^\infty
        \frac{1}{(2\rho)^s}
        \seminorm{32\lambda}{X}^\bullet
        \seminorm{32\lambda}{Y}^\bullet \\
        &=
        \frac{8}{2\rho - 1}
        \seminorm{32\lambda}{X}^\bullet
        \seminorm{32\lambda}{Y}^\bullet
    \end{align*}
    holds for all $\rho> \frac{1}{2}$, $\lambda>0$, and all
    $\Lambda' \in \Bil(V)$ if
    $\seminorm{\lambda}{\argument} \in \mathcal{P}_{V,\Lambda} \cap
    \mathcal{P}_{V, \rho(\Lambda' - \Lambda)}$
    by Lemma~\ref{lemma:stprocont} and
    Proposition~\ref{prop:contplambda} with $c=2$.

    Assume that $\lambda \ge 1 + \norm{\Lambda}$ and choose
    $\rho > \frac{1}{2}$ such that
    $\frac{8}{2\rho - 1} \seminorm{32\lambda}{X}^\bullet
    \seminorm{32\lambda}{Y}^\bullet \le \frac{\epsilon}{3}$.
    Then
    $\seminorm{\lambda}{\argument} \in \mathcal{P}_{V,\Lambda} \cap
    \mathcal{P}_{V, \rho(\Lambda' - \Lambda)}$
    for all $\Lambda'\in \Bil(V)$ with
    $\seminorm{}{\Lambda' - \Lambda} \le \frac{1}{\rho}$ and
    $\seminorm{\lambda}{X' \stpro[\Lambda'] Y' - X \stpro[\Lambda] Y}
    \le \epsilon$
    holds for all these $\Lambda'$ and all
    $X', Y' \in \SymTen^\bullet(V)^\cpl$ with
    $\seminorm{8\lambda}{X' - X}^\bullet \le \epsilon / \big(12 +
    12\seminorm{8\lambda}{Y}^\bullet\big)$
    and
    $\seminorm{8\lambda}{Y' - Y} \le \min\big\{1, \epsilon / \big(12 +
    12\seminorm{8\lambda}{X}^\bullet\big)\big\}$.
    This proves continuity of $\star$ at $(\Lambda, X, Y)$.
\end{proof}
\begin{theorem}
    Let $V$ be a (complex) Hilbert space with inner product
    $\skal{\argument}{\argument}_1$ and a continuous antilinear
    involution $\cc{\argument}$ that fulfills
    $\cc{\skal{v}{w}}_1 = \skal{\cc{v}}{\cc{w}}_1$ for all
    $v, w \in V$, then $\widehat{X}\colon V'_h \rightarrow \CC$ is
    smooth in the Fréchet sense for all
    $X \in \SymTen^\bullet(V)^\cpl$.
\end{theorem}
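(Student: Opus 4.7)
The plan is to establish a convergent Taylor expansion of $\widehat{X}$ around every point $\sigma\in V'_h$ with remainder and derivative estimates that are uniform on norm-bounded sets; Fréchet $C^\infty$ smoothness then follows by standard Banach-space calculus together with the identification of Proposition~\ref{prop:deriv}. The essential simplification in the Hilbert case is that $V'_h$ is a real Banach space under the operator norm $\norm{\rho}:=\sup\{|\rho(v)|:\seminorm{1}{v}\le1\}$, and every continuous Hilbert seminorm on $V$ is of the form $\seminorm{\lambda}{\argument}=\sqrt{\lambda}\,\seminorm{1}{\argument}$ with $\lambda>0$; so the whole directed family $\mathcal{P}_V$ collapses to a single scale, which makes the seminorm bookkeeping in the iterated estimates tractable.

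Fix $X\in\SymTen^\bullet(V)^\cpl$ and $\sigma\in V'_h$, and put $M:=\max\{1,\norm{\sigma}\}$, so that $|\sigma(v)|\le\seminorm{M^2}{v}$. Since $D_\rho$ depends linearly on $\rho$ and the $D_\rho$'s mutually commute on symmetric tensors by Lemma~\ref{lemma:comderiv}, the semigroup identity $\tau^*_{\sigma+\rho}=\tau^*_\sigma\tau^*_\rho$ holds on each finite-degree tensor by direct expansion and extends to $\SymTen^\bullet(V)^\cpl$ by continuity. Combined with the absolute convergence from Lemma~\ref{lemma:contcharacter}, this gives
\begin{equation*}
    \widehat{X}(\sigma+\rho)
    \;=\;
    \sum_{k=0}^\infty \tfrac{1}{k!}\,\bigl\langle\tau^*_\sigma D^k_\rho X\bigr\rangle_0
    \;=\;
    \sum_{k=0}^\infty \tfrac{1}{k!}\,\widehat{D^{(k)}_{(\rho,\ldots,\rho)}X}(\sigma).
\end{equation*}
Chaining the two estimates of Lemma~\ref{lemma:contcharacter}, first for $\tau^*_\sigma$ with the seminorm $\seminorm{M^2}{\argument}$, then for $D^k_\rho$ using $D_\rho=\norm{\rho}\,D_{\rho/\norm{\rho}}$ and the seminorm $\seminorm{2M^2}{\argument}$, produces the bound
\begin{equation*}
    \bigl|\bigl\langle\tau^*_\sigma D^k_\rho X\bigr\rangle_0\bigr|
    \;\le\;
    \frac{2}{\sqrt{2}-1}\Bigl(\frac{\norm{\rho}}{M\sqrt{2}}\Bigr)^{\!k}\sqrt{k!}\;\seminorm{4M^2}{X}^\bullet,
\end{equation*}
so the series converges absolutely for every $\rho\in V'_h$ and its tail beyond order $N$ is $O(\norm{\rho}^{N+1})$ uniformly in $\sigma$ on norm-bounded sets.

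Iterating the estimate \eqref{eq:derivation} of Lemma~\ref{lemma:contcharacter} shows that the symmetric $n$-linear form $(\rho_1,\ldots,\rho_n)\mapsto\widehat{D^{(n)}_{(\rho_1,\ldots,\rho_n)}X}(\sigma)$ is bounded by a constant times $\prod_i\norm{\rho_i}$; by uniqueness of Taylor expansions and polarization, this form is precisely the $n$-th Fréchet derivative of $\widehat{X}$ at $\sigma$. Continuity of the map $\sigma\mapsto d^n\widehat{X}(\sigma)$ into the Banach space of bounded symmetric $n$-linear forms on $V'_h$ is obtained by applying the same Taylor bound with $Y:=D^{(n)}_{(\rho_1,\ldots,\rho_n)}X$ in place of $X$, using that $\seminorm{\lambda}{Y}^\bullet$ is controlled uniformly for $\norm{\rho_i}\le1$ by the iterated derivative estimate. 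The only genuine technical burden is the careful bookkeeping of seminorms through the iterated Lemma~\ref{lemma:contcharacter} estimates, which the Hilbert-space assumption makes tractable.
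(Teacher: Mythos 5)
Your proof is correct and follows essentially the same route as the paper: both identify the candidate $n$-th Fréchet derivative with the bounded symmetric form $\rho\mapsto\widehat{D^{(n)}_{\rho}X}(\sigma)$ from Proposition~\ref{prop:deriv} and control the Taylor remainder through the $\sqrt{k!}$ estimate \eqref{eq:derivation} of Lemma~\ref{lemma:contcharacter}, which makes the tail $O(\norm{\rho}^{N+1})$. The only cosmetic difference is that the paper reduces to the base point $0$ by using that $\tau^*_\sigma$ is an automorphism of $\SymTen^\bullet(V)^\cpl$, whereas you chain the $\tau^*_\sigma$ and $D^k_\rho$ estimates directly at a general $\sigma$ and additionally spell out the continuity of $\sigma\mapsto d^n\widehat{X}(\sigma)$, which the paper leaves implicit.
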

\begin{proof}
    By the Fréchet-Riesz theorem we can identify $V'_h$ with $V_h$ by
    means of the antilinear map
    $\argument^\flat\colon V_h \rightarrow V'_h$.  As the
    translations $\tau^*$ are automorphisms of
    $\SymTen^\bullet(V)^\cpl$, it is sufficient to show that
    $\widehat{X}$ is smooth at $0 \in V'_h$.  So let $K \in \NN_0$ and
    $r \in V_h$ be given with $r \neq 0$ and $\seminorm{1}{r} \le 1$.
    We have already seen in Proposition~\ref{prop:deriv} that all
    directional derivatives of $\widehat{X}$ exist and form bounded
    symmetric multilinear maps
    $(V'_h)^k \ni \rho \mapsto \big(\widehat{D}^{(k)}_{\rho}
    \widehat{X}\big)(0) \in \CC$.
    These maps are indeed the derivatives of $\widehat{X}$ in the
    Fréchet sense due to the analyticity of $\widehat{X}$: Define
    $\hat{r} := r / \seminorm{1}{r}$, then due to
    Proposition~\ref{prop:deriv} and Lemma~\ref{lemma:contcharacter}
    the estimate
    \begin{align*}
        \frac{1}{\norm{r}^{K+1}}
        \abs[\bigg]{
          \widehat{X}(r^\flat)
          -
          \sum_{k=0}^K
          \frac{1}{k!}
          \big(
          \widehat{D}^{(k)}_{(r^\flat, \ldots, r^\flat)} \widehat{X}
          \big)(0)
        }
        &=
        \frac{1}{\norm{r}^{K+1}}
        \abs[\bigg]{
          \bigg\langle
          \tau^*_{r^\flat}(X)
          -
          \sum_{k=0}^K \
          \frac{1}{k!}
          \big(D_{r^\flat}\big)^k X
          \bigg\rangle_0
        } \\
        &=
        \frac{1}{\norm{r}^{K+1}}
        \abs[\bigg]{
          \bigg\langle
          \sum_{k=K+1}^\infty
          \frac{1}{k!}
          \big(D_{r^\flat}\big)^k X
          \bigg\rangle_0
        } \\
        &\le
        \abs[\bigg]{
          \bigg\langle
          \sum_{k=K+1}^\infty
          \frac{1}{k!}
          \big(D_{\hat{r}^\flat}\big)^k X
          \bigg\rangle_0
        } \\
        &\le
        \sum_{k=K+1}^\infty
        \frac{1}{k!}
        \seminorm[\big]{1}{\big(D_{\hat{r}^\flat}\big)^k X}^\bullet \\
        &\le
        \sum_{k=K+1}^\infty
        \frac{1}{\sqrt{k!}}
        \seminorm{2}{X}^\bullet \\
        &\le
        C \seminorm{2}{X}^\bullet
    \end{align*}
    with $C = \sum_{k=K+1} \frac{1}{\sqrt{k!}} < \infty$ holds
    uniformly for all $r \neq 0$ with $\seminorm{1}{r} \le 1$.
\end{proof}

The formal deformation quantization of a Hilbert space in a very
similar setting has already been examined in \cite{dito:2005a} by
Dito. There the formal deformations of exponential type of a certain
algebra $\mathcal{F}_{HS}$ of smooth functions on a Hilbert space
$\hilb$ was constructed. More precisely, $\mathcal{F}_{HS}$ consists
of all smooth (in the Fréchet sense) functions $f$ whose derivatives
fulfill the additional condition that for all $\sigma \in \hilb$
\begin{align}
    k! \ptwskal[\big]{f}{f}^k(\sigma)
    :=
    \sum_{i \in I^k}
    \abs[\big]{
      \big(\widehat{D}^{(k)}_{(e_{i_1}, \ldots, e_{i_k})} f\big)
      (\sigma)
    }^2
    <
    \infty
\end{align}
holds and depends continuously on $\sigma$ for one (hence all) Hilbert
base $e \in \hilb^I$ of $\hilb$ indexed by a set $I$. In this case
$\ptwskal{f}{f}^k \in \mathcal{F}_{HS}$ holds.

The convergent deformations discussed in this article and the formal
deformations discussed by Dito in \cite{dito:2005a} are very much
analogous: In both cases it is necessary to restrict the construction
to a subalgebra of all smooth functions, $\mathcal{F}_{HS}$ or
$\analytic(V'_h)$, where the additional requirement is that all the
derivatives of fixed order (in the formal case) or of all orders (in
the convergent case) at every point $\sigma$ obey a Hilbert-Schmidt
condition and that the square of the corresponding Hilbert-Schmidt
norms, $\ptwskal{f}{f}^k(\sigma)$ or $\ptwskal{f}{f}^\bullet(\sigma)$,
respectively, depend in a sufficiently nice way on $\sigma$ such that
one can prove that $\ptwskal{f}{f}^k$ and $\ptwskal{f}{f}^\bullet$ are
again elements of $\mathcal{F}_{HS}$ or $\analytic(V'_h)$ (see the
proof of Proposition~3.4 in \cite{dito:2005a} and our
Proposition~\ref{prop:ptwskalinnerprod}). Moreover, the results
concerning equivalence of the deformations are similar: In
\cite[Thm.~2]{dito:2005a} it is shown that two (formal) deformations
are equivalent if and only if they differ by bilinear forms of
Hilbert-Schmidt type, while our Theorem~\ref{theo:equivalence} shows
that the corresponding equivalence transformations are continuous if
and only if they are generated by bilinear forms of Hilbert-Schmidt
type.

%
% Deformation Quantization of Nuclear Spaces
%

\subsection{Deformation Quantization of Nuclear Spaces}

We conclude this article with a short discussion of the case that $V$
is nuclear. It is well known that the topology of a nuclear space can
be described by continuous Hilbert seminorms. Moreover, the topology
of the Hilbert tensor product on $\SymTen^k(V)$ coincides with the
topology of the projective tensor product which was examined in
\cite{waldmann:2014a}. However, for the comparison of the topologies
on $\SymTen^\bullet(V)$ we have to be more careful: Let
$\seminorm{\alpha}{\argument}\in \mathcal{P}_V$ be given. Define the
seminorm $\seminorm{\alpha,\textup{pr}}{\argument}^\bullet$ as
\begin{equation}
    \label{eq:SeminormAlphaPr}
    \seminorm{\alpha,\textup{pr}}{X}^\bullet
    :=
    \abs[\big]{\langle X \rangle_0}
    +
    \sum_{k=1}^\infty \sqrt{k!}
    \inf \sum_{i \in I} \prod_{m=1}^k
    \seminorm{\alpha}{x_{i,m}}
\end{equation}
for all $X\in\Ten^\bullet_\alg(V)$, where the infimum runs over all
possibilities to express $\langle X\rangle_k$ as a finite sum of
factorizing tensors, i.e. as
$\langle X\rangle_k = \sum_{i\in I} x_{i,1} \otimes \cdots \otimes
x_{i,k}$ with $x_i \in V^k$.
\begin{lemma}
    One has the estimate
    \begin{equation}
        \label{eq:EstimatePr}
        \seminorm{\alpha}{X}^\bullet
        \le
        \seminorm{\alpha,\textup{pr}}{X}^\bullet
    \end{equation}
    for all $X\in \Ten^\bullet_\alg(V)$. Moreover, if there is a
    $\seminorm{\beta}{\argument} \in \mathcal{P}_V$,
    $\seminorm{\beta}{\argument} \ge \seminorm{\alpha}{\argument}$,
    such that for every
    $\skal{\argument}{\argument}_\beta$-orthonormal $e \in V^d$ and
    all $d \in \NN$ the estimate
    $\sum_{i=1}^d \seminorm{\alpha}{e_i}^2 \le 1$ holds, then
    \begin{equation}
        \label{eq:NochEineAbschaetzung}
        \seminorm{\alpha,\textup{pr}}{X}^\bullet
        \le
        \seminorm{\beta}{X}^\bullet
    \end{equation}
    for all $X \in \Ten^\bullet_\alg(V)$.
\end{lemma}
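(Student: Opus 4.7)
For the first estimate, I would work degree by degree. Fix $k\ge 1$ and any representation $\langle X\rangle_k = \sum_{i\in I} x_{i,1}\otimes\cdots\otimes x_{i,k}$. By the triangle inequality on $\seminorm{\alpha}{\argument}^\bullet$ and the defining formula $\seminorm{\alpha}{x_1\otimes\cdots\otimes x_k}^\bullet=\sqrt{k!}\prod_m\seminorm{\alpha}{x_m}$ from Definition~\ref{defi:extension}, we get $\seminorm{\alpha}{\langle X\rangle_k}^\bullet\le\sqrt{k!}\sum_i\prod_m\seminorm{\alpha}{x_{i,m}}$; taking the infimum over all such representations yields $\seminorm{\alpha}{\langle X\rangle_k}^\bullet\le\sqrt{k!}\inf\sum_i\prod_m\seminorm{\alpha}{x_{i,m}}$. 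The $k=0$ term is handled trivially by $\seminorm{\alpha}{\langle X\rangle_0}^\bullet=\abs{\langle X\rangle_0}$. Combining the pieces over all degrees and using that the $\ell^2$-type combination defining $\seminorm{\alpha}{X}^\bullet$ is dominated by the corresponding $\ell^1$-type combination finishes the first inequality.

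For the second estimate, the idea is to pick an especially efficient representation of each $\langle X\rangle_k$ using the Hilbert structure associated to $\beta$ and then to exploit the Hilbert--Schmidt hypothesis. Concretely, I would apply Lemma~\ref{lemma:helpfull} to $\langle X\rangle_k$ with respect to $\skal{\argument}{\argument}_\beta$ to write $\langle X\rangle_k=X_{0,k}+\tilde X_k$, with $\tilde X_k=\sum_{a'\in\{1,\ldots,d\}^k} X_k^{a'} e^{(k)}_{a'_1}\otimes\cdots\otimes e^{(k)}_{a'_k}$ in a $\beta$-orthonormal tuple $e^{(k)}$, and with $X_{0,k}$ a sum of simple tensors each of which has some factor of $\beta$-seminorm zero. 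Since $\seminorm{\alpha}{\argument}\le\seminorm{\beta}{\argument}$, the latter factors also have $\alpha$-seminorm zero, so $X_{0,k}$ contributes zero to the projective infimum. Using the explicit representation of $\tilde X_k$ as a candidate in that infimum, the projective bound reads $\sqrt{k!}\sum_{a'}\abs{X_k^{a'}}\prod_m\seminorm{\alpha}{e^{(k)}_{a'_m}}$.

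Applying Cauchy--Schwarz in the index $a'$ splits this into $\sqrt{k!\sum_{a'}\abs{X_k^{a'}}^2}\cdot\bigl(\sum_{a'}\prod_m\seminorm{\alpha}{e^{(k)}_{a'_m}}^2\bigr)^{1/2}$. By Lemma~\ref{lemma:helpfull}, the first factor equals $\seminorm{\beta}{\tilde X_k}^\bullet=\seminorm{\beta}{\langle X\rangle_k}^\bullet$. The second factor factors as $\bigl(\sum_{j=1}^d\seminorm{\alpha}{e^{(k)}_j}^2\bigr)^{k/2}$, and this is $\le 1$ precisely because of the Hilbert--Schmidt hypothesis on $\beta$. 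Thus, for each $k\ge 1$, $\sqrt{k!}\inf\sum_i\prod_m\seminorm{\alpha}{x_{i,m}}\le\seminorm{\beta}{\langle X\rangle_k}^\bullet$, while for $k=0$ equality holds. Assembling all degrees yields $\seminorm{\alpha,\textup{pr}}{X}^\bullet\le\seminorm{\beta}{X}^\bullet$.

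The main obstacle is precisely this last assembly step: the per-degree inequality $\sqrt{k!}\pi_\alpha(\langle X\rangle_k)\le\seminorm{\beta}{\langle X\rangle_k}^\bullet$ is sharp, and the transition from the $\ell^1$-style weighting in $\seminorm{\alpha,\textup{pr}}{\argument}^\bullet$ to the $\ell^2$-style weighting in $\seminorm{\beta}{\argument}^\bullet$ is what makes the Hilbert--Schmidt hypothesis on $\beta$ indispensable, since only then can the factor $\bigl(\sum_j\seminorm{\alpha}{e^{(k)}_j}^2\bigr)^{k/2}$ be dropped uniformly in $k$. Aside from that, the rest of the argument consists only of Lemma~\ref{lemma:helpfull}, Cauchy--Schwarz, and the triangle inequality.
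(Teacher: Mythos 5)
Your argument coincides with the paper's proof in both halves: for \eqref{eq:EstimatePr} you reduce to a fixed degree, apply the triangle inequality to an arbitrary representation of $\langle X\rangle_k$ by simple tensors, take the infimum, and then use that the $\ell^2$-combination over degrees is dominated by the $\ell^1$-combination; for \eqref{eq:NochEineAbschaetzung} you apply Lemma~\ref{lemma:helpfull} with respect to $\skal{\argument}{\argument}_\beta$, discard $X_0$ (correctly using $\seminorm{\alpha}{\argument}\le\seminorm{\beta}{\argument}$ to see that its simple summands contribute nothing to the projective infimum), feed the orthonormal expansion of $\tilde X$ into the infimum as a competitor, and finish with Cauchy--Schwarz and the hypothesis $\sum_{i}\seminorm{\alpha}{e_i}^2\le 1$. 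This is exactly the route taken in the paper.

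The one step you assert that the argument does not actually deliver is the final ``assembling all degrees yields $\seminorm{\alpha,\textup{pr}}{X}^\bullet\le\seminorm{\beta}{X}^\bullet$.'' The per-degree bound $\seminorm{\alpha,\textup{pr}}{\langle X\rangle_k}^\bullet\le\seminorm{\beta}{\langle X\rangle_k}^\bullet$ is what you prove, but $\seminorm{\alpha,\textup{pr}}{\argument}^\bullet$ combines the degrees in $\ell^1$ while $\seminorm{\beta}{\argument}^\bullet$ combines them in $\ell^2$, and $\sum_k b_k\le\big(\sum_k b_k^2\big)^{1/2}$ is false for inhomogeneous tensors: with $\seminorm{\beta}{\argument}=\seminorm{\alpha}{\argument}$ on a one-dimensional space (the hypothesis then holds with equality) and $X=1+v$, $\seminorm{\alpha}{v}=1$, one gets $\seminorm{\alpha,\textup{pr}}{X}^\bullet=2$ but $\seminorm{\beta}{X}^\bullet=\sqrt{2}$. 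The Hilbert--Schmidt hypothesis only removes the factor $\big(\sum_j\seminorm{\alpha}{e_j}^2\big)^{k/2}$ \emph{within} each degree; it does nothing for the cross-degree $\ell^1$-to-$\ell^2$ passage, so your ``main obstacle'' paragraph misidentifies what the hypothesis buys. To be fair, the paper's own proof of the second estimate silently restricts to homogeneous $X\in\Ten^k_\alg(V)$ and never performs the assembly, so the defect sits in the lemma as stated rather than in your method. For the intended application (comparing the topologies) the repair is routine: replace $\beta$ by $2\beta$, so that the per-degree constant becomes $2^{-k/2}$, and a Cauchy--Schwarz over $k$ gives $\seminorm{\alpha,\textup{pr}}{X}^\bullet\le\sqrt{2}\,\seminorm{2\beta}{X}^\bullet$ for all $X\in\Ten^\bullet_\alg(V)$.
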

\begin{proof}
    Let $X\in \Ten^\bullet_\alg(V)$ be given, then
    $\seminorm{\alpha}{X}^\bullet \le \sum_{k=0}^\infty
    \seminorm{\alpha}{\langle X \rangle_k}^\bullet$ and
    $\seminorm{\alpha,\textup{pr}}{X}^\bullet = \sum_{k=0}^\infty
    \seminorm{\alpha,\textup{pr}}{\langle X \rangle_k}^\bullet$.
    Thus it is sufficient for the first estimate to show that
    $\seminorm{\alpha}{\langle X\rangle_k}^\bullet \le
    \seminorm{\alpha,\textup{pr}}{\langle X\rangle_k}^\bullet$
    for all $k\in \NN_0$. Fix $k\in \NN_0$ and assume that
    $\langle X \rangle_k = \sum_{i\in I} x_{i,1} \otimes \cdots
    \otimes x_{i,k}$ with $x_i \in V^k$. Then
    \begin{equation*}
        \seminorm{\alpha}{\langle X\rangle_k}^\bullet
        \le
        \sum\nolimits_{i\in I}
        \seminorm{\alpha}
        {x_{i,1} \otimes \cdots \otimes x_{i,k}}^\bullet
        =
        \sqrt{k!}
        \sum\nolimits_{i\in I}
        \prod\nolimits_{m=1}^k
        \seminorm{\alpha}{x_{i,m}}
    \end{equation*}
    shows that
    $\seminorm{\alpha}{\langle X\rangle_k}^\bullet \le
    \seminorm{\alpha,\textup{pr}}{\langle X\rangle_k}^\bullet$,
    hence
    $\seminorm{\alpha}{X}^\bullet \le
    \seminorm{\alpha,\textup{pr}}{X}^\bullet$.
    For the second estimate, let $\seminorm{\beta}{\argument}$ with
    the stated properties and $X\in \Ten^k_\alg(V)$ be given. Use
    Lemma~\ref{lemma:helpfull} to construct
    $X_0 = \sum_{a\in A} x_{a,1} \otimes \cdots \otimes x_{a,k}$ and
    $\tilde{X} = \sum_{a'\in \{1, \ldots, d\}^k} X^{a'} e_{a'_1}
    \otimes \cdots \otimes e_{a'_k}$
    with $e\in V^k$ orthonormal with respect to
    $\skal{\argument}{\argument}_\beta$. Clearly
    $\seminorm{\alpha,\pr}{X_0}^\bullet = 0$ and so
    \begin{align*}
        \seminorm{\alpha,\textup{pr}}{X}^\bullet
        &\le
        \seminorm{\alpha,\textup{pr}}{\tilde{X}}^\bullet \\
        &\le
        \sqrt{k!}
        \sum_{a'\in \{1, \ldots, d\}^k}
        \abs[\big]{X^{a'}}
        \prod_{m=1}^k \seminorm{\alpha}{e_{a'_m}} \\
        &\stackrel{\textup{cs}}{\le}
        \Bigg(
        k! \bigg(
        \sum_{a'\in \{1, \ldots, d\}^k}
        \abs[\big]{X^{a'}}^2
        \bigg)
        \bigg(
        \sum_{a' \in \{1, \ldots, d\}^k}
        \prod_{m=1}^k \seminorm{\alpha}{e_{a'_m}}^{2}
        \bigg)
        \Bigg)^{\frac{1}{2}} \\
        &\le
        \Bigg(
        k!
        \bigg(
        \sum_{a'\in \{1, \ldots, d\}^k}
        \abs[\big]{X^{a'}}^2
        \bigg)
        \bigg(
        \sum_{i=1}^d
        \seminorm{\alpha}{e_i}^{2}
        \bigg)^{k}
        \Bigg)^{\frac{1}{2}} \\
        &\le
        \seminorm{\beta}{X}^\bullet.
    \end{align*}
\end{proof}
\begin{proposition}
    Let $V$ be a nuclear space, then the topology on
    $\SymTen^\bullet(V)$ coincides with the one constructed in
    \cite{waldmann:2014a} for $R = \frac{1}{2}$.
\end{proposition}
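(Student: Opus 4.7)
The plan is to apply the two inequalities of the previous lemma in both directions. The lemma already supplies $\seminorm{\alpha}{\argument}^\bullet \le \seminorm{\alpha,\textup{pr}}{\argument}^\bullet$ for every continuous Hilbert seminorm $\seminorm{\alpha}{\argument}$ on $V$, so the Hilbert topology used throughout this paper is always coarser than (or equal to) the projective topology used in \cite{waldmann:2014a} with $R=1/2$. Since for a nuclear space the continuous Hilbert seminorms are cofinal among all continuous seminorms, working with Hilbert seminorms in the defining formula \eqref{eq:SeminormAlphaPr} yields the same projective topology as the one generated in \cite{waldmann:2014a} by arbitrary continuous seminorms.

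For the reverse direction I would invoke the standard characterisation of nuclearity for hilbertisable locally convex spaces: for every continuous Hilbert seminorm $\seminorm{\alpha}{\argument}$ on $V$ there exists a continuous Hilbert seminorm $\seminorm{\beta}{\argument} \ge \seminorm{\alpha}{\argument}$ such that the canonical map from the Hilbert completion $\hat{V}_\beta$ to $\hat{V}_\alpha$ is Hilbert-Schmidt (see e.g. \cite[Thm.~21.5.3]{jarchow:1981a}). This Hilbert-Schmidt property is equivalent to the existence of a constant $C\ge 0$ such that $\sum_{i=1}^d \seminorm{\alpha}{e_i}^2 \le C^2$ holds for every $\skal{\argument}{\argument}_\beta$-orthonormal tuple $e \in V^d$, $d\in\NN$.

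Rescaling by $\seminorm{\gamma}{\argument} := \max\{1,C\}\,\seminorm{\beta}{\argument}$ gives another continuous Hilbert seminorm with $\seminorm{\gamma}{\argument} \ge \seminorm{\alpha}{\argument}$, and every $\skal{\argument}{\argument}_\gamma$-orthonormal tuple $e\in V^d$ now satisfies $\sum_{i=1}^d \seminorm{\alpha}{e_i}^2 \le 1$. This is precisely the hypothesis of the second part of the preceding lemma for the pair $(\alpha,\gamma)$, which therefore yields $\seminorm{\alpha,\textup{pr}}{X}^\bullet \le \seminorm{\gamma}{X}^\bullet$ for all $X\in \Ten^\bullet_\alg(V)$. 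Together with the first direction this shows that the two systems of seminorms are mutually subordinate, hence equivalent, and induce the same topology on $\Ten^\bullet_\alg(V)$; restriction to the subspace then gives the same topology on $\SymTen^\bullet(V)$.

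The main obstacle is conceptual rather than computational: one has to recognise that the auxiliary condition $\sum_i \seminorm{\alpha}{e_i}^2 \le 1$ appearing in the lemma is exactly (a rescaled form of) the Hilbert-Schmidt property of the canonical embedding $\hat{V}_\gamma \hookrightarrow \hat{V}_\alpha$, and that nuclearity of $V$ is precisely what guarantees the existence of such a dominating Hilbert seminorm $\gamma$. Everything else is essentially bookkeeping, combining the lemma with the chosen characterisation of nuclear hilbertisable spaces from \cite{jarchow:1981a}.
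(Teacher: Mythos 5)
Your argument is correct and is essentially the paper's own proof: both directions come from the two estimates of the preceding lemma, with nuclearity supplying, for each $\seminorm{\alpha}{\argument}\in\mathcal{P}_V$, a dominating Hilbert seminorm satisfying the orthonormal-tuple condition. The only difference is presentational — the paper simply cites \cite[Satz~28.4]{meise.vogt:1992a} (or \cite[Chap.~21.2, Thm.~1]{jarchow:1981a}) for the existence of such a seminorm, whereas you unpack this via the Hilbert--Schmidt characterisation of nuclearity together with an explicit rescaling, which is a perfectly valid way to obtain the same input.
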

\begin{proof}
    This is a direct consequence of the preceeding lemma because the
    locally convex topology constructed in \cite{waldmann:2014a} for
    $R = \frac{1}{2}$ is the one defined by the seminorms
    $\seminorm{\alpha,\textup{pr}}{\argument}^\bullet$ for all
    $\seminorm{\alpha}{\argument}\in\mathcal{P}_V$ and because in a
    nuclear space, such seminorms $\seminorm{\beta}{\argument}$ as
    required in the lemma exist for all
    $\seminorm{\alpha}{\argument}\in\mathcal{P}_V$, see e.g.
    \cite[Satz~28.4]{meise.vogt:1992a} or also
    \cite[Chap.~21.2, Thm.~1]{jarchow:1981a}.
\end{proof}
From \cite[Thm.~4.10]{waldmann:2014a} we get:
\begin{corollary}
    Let $V$ be a nuclear space, then $\SymTen^\bullet(V)$ is nuclear.
\end{corollary}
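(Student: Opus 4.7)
The plan is to reduce the statement immediately to results already established. By the preceding proposition, when $V$ is nuclear the locally convex topology on $\SymTen^\bullet(V)$ defined here via the Hilbert-seminorm extensions $\seminorm{\alpha}{\argument}^\bullet$ agrees, as a locally convex topology, with the one constructed in \cite{waldmann:2014a} at the parameter value $R=1/2$. Since nuclearity is an intrinsic property of a locally convex space, any such property transfers verbatim once the two topologies have been shown to coincide.

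It then suffices to cite \cite[Thm.~4.10]{waldmann:2014a}, which asserts exactly that for nuclear $V$ the space constructed there (at $R=1/2$) is nuclear. Combining the topological identification with this citation gives the corollary without any additional computation. The only point worth double-checking, and which is in fact already delivered by the two-sided estimate $\seminorm{\alpha}{\argument}^\bullet \le \seminorm{\alpha,\textup{pr}}{\argument}^\bullet \le \seminorm{\beta}{\argument}^\bullet$ in the preceding lemma, is that the identification of topologies is genuinely an equality and not merely a one-sided comparison; the existence of a suitable majorising Hilbert seminorm $\seminorm{\beta}{\argument}$ for every $\seminorm{\alpha}{\argument} \in \mathcal{P}_V$ is precisely the standard characterization of nuclearity via Hilbert seminorms (as invoked in the proof of the preceding proposition). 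There is no further obstacle; the proof is essentially a one-line reference.
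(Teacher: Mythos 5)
Your proposal is correct and matches the paper exactly: the paper derives this corollary by combining the preceding proposition (identifying the topology with the $R=\frac{1}{2}$ topology of \cite{waldmann:2014a}) with \cite[Thm.~4.10]{waldmann:2014a}, which is precisely your one-line reference. Your added remark that the two-sided seminorm estimate guarantees a genuine equality of topologies is a sensible sanity check but adds nothing beyond what the paper already establishes.
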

And conversely, our Theorem~\ref{theo:char} implies:
\begin{corollary}
    Let $V$ be a nuclear space, then the $R = \frac{1}{2}$ topology
    constructed in \cite{waldmann:2014a} is the coarsest one possible
    under the conditions of Theorem~\ref{theo:char} in the truely (not
    graded) symmetric case.
\end{corollary}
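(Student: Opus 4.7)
The plan is a direct two-line argument chaining the two immediately preceding results, so the substance lies entirely in recognizing that no further work is required. First I would invoke Theorem~\ref{theo:char}, which asserts that the Hilbert-seminorm topology on $\SymTen^\bullet(V)$ constructed in Section~\ref{sec:ConstructionAlgebra} is already the coarsest locally convex topology making every star product $\stpro[\Lambda]$ (for continuous Hermitian $\Lambda$), the $^*$-involution, and the projection $\langle\argument\rangle_0$ simultaneously continuous; this holds for any hilbertisable $V$, in particular for a nuclear $V$.

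Next I would invoke the preceding Proposition, which states that when $V$ is nuclear, the Hilbert-seminorm topology on $\SymTen^\bullet(V)$ from the present paper coincides exactly with the topology induced by the seminorms $\seminorm{\alpha,\textup{pr}}{\argument}^\bullet$, i.e.\ with the $R=\tfrac12$ topology of \cite{waldmann:2014a}. Transporting the characterization of Theorem~\ref{theo:char} across this identification yields the assertion that the $R=\tfrac12$ topology is the coarsest locally convex topology on $\SymTen^\bullet(V)$ satisfying the three continuity requirements of Theorem~\ref{theo:char}, in the purely (non-graded) symmetric case considered in \cite{waldmann:2014a}.

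There is no real obstacle here: the work has already been done in Theorem~\ref{theo:char} (abstract characterization via star products, involution, and scalar projection) and in the preceding Proposition (identification of the two topologies in the nuclear setting). The only point worth flagging is that Theorem~\ref{theo:char} uses the full collection of continuous Hermitian bilinear forms on $V$, and the coincidence of topologies from the preceding Proposition is required on the whole $\SymTen^\bullet(V)$, not merely on the individual homogeneous components $\SymTen^k(V)$ where it is automatic; but this is precisely what the preceding Proposition supplies in the nuclear case, since there the two families of seminorms $\seminorm{\alpha}{\argument}^\bullet$ and $\seminorm{\alpha,\textup{pr}}{\argument}^\bullet$ are mutually dominated by the lemma. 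Consequently the proof reduces to one sentence combining the two results.
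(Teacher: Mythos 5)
Your proposal is correct and matches the paper's own (essentially implicit) argument exactly: the paper introduces the corollary with ``And conversely, our Theorem~\ref{theo:char} implies:'' and relies precisely on combining the characterization of the topology from Theorem~\ref{theo:char} with the preceding proposition identifying it with the $R=\tfrac12$ topology of \cite{waldmann:2014a} in the nuclear case. Your additional remark that the coincidence of topologies is needed on all of $\SymTen^\bullet(V)$, and is supplied by the mutual domination of the seminorms $\seminorm{\alpha}{\argument}^\bullet$ and $\seminorm{\alpha,\textup{pr}}{\argument}^\bullet$ established in the lemma, is exactly the right point to flag.
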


As all continuous bilinear forms on a nuclear space $V$ are
automatically of Hilbert-Schmidt type (see
\cite[Chap.~21.3, Thm.~5]{jarchow:1981a} or use
\cite[Satz~28.4]{meise.vogt:1992a}), we also see that the
equivalence transformations $\E^{\Delta_b}$ are continuous for all
continuous symmetric bilinear forms $b$ on $V$, which corresponds to
\cite[Prop.~5.9]{waldmann:2014a}. Our discussion of translations and
evaluation functionals then shows the existence of point-separating many
positive linear functionals on the deformed algebras:

\begin{theorem}
  \label{theo:posexnuclear}
  Let $V$ be a Hausdorff nuclear space and $\cc{\argument}$ a continuous
  antilinear involution of $V$ as well as $\Lambda$ a continuous Hermitian
  bilinear form on $V$, then there exist point-separating many
  continuous positive linear functionals of
  $\big(\SymTen^\bullet(V),\stpro[\Lambda],^*\big)$.
\end{theorem}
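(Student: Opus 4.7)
The plan is to reduce the claim directly to Theorem~\ref{theorem:posex}: it suffices to exhibit a continuous symmetric Hermitian bilinear form $b$ of Hilbert-Schmidt type on $V$ satisfying $\Lambda(\cc{v},v)+b(\cc{v},v)\ge 0$ for all $v\in V$; the theorem then delivers the desired point-separating family of continuous positive linear functionals. Since $V$ is nuclear, every continuous bilinear form on $V$ is automatically of Hilbert-Schmidt type (as recalled in the paragraph preceding the statement, using \cite[Chap.~21.3, Thm.~5]{jarchow:1981a}), so the Hilbert-Schmidt requirement is free and only the positivity and symmetry properties need to be arranged.

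By continuity of $\Lambda$ together with the cofinality of $\mathcal{I}_{V,h}$ in $\mathcal{I}_V$ established earlier, I can choose an involution-compatible inner product $\skal{\argument}{\argument}_\alpha\in\mathcal{I}_{V,h}$ large enough that $|\Lambda(v,w)|\le \seminorm{\alpha}{v}\seminorm{\alpha}{w}$ holds for all $v,w\in V$. I then define $b:=\Lambda_\alpha$ in the sense of Lemma~\ref{lemma:char}, that is $b(v,w):=\skal{\cc{v}}{w}_\alpha$. By construction $b$ is continuous and bilinear, and by Lemma~\ref{lemma:char} it is Hermitian. Symmetry follows by combining the compatibility $\cc{\skal{a}{b}_\alpha}=\skal{\cc{a}}{\cc{b}}_\alpha$ with the Hermitian property $\skal{a}{b}_\alpha = \cc{\skal{b}{a}_\alpha}$ of the inner product:
\[
b(w,v)=\skal{\cc{w}}{v}_\alpha=\cc{\skal{w}{\cc{v}}_\alpha}=\skal{\cc{v}}{w}_\alpha=b(v,w).
\]

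Finally, evaluating on the diagonal gives $b(\cc{v},v)=\skal{v}{v}_\alpha=\seminorm{\alpha}{v}^2$, while compatibility of $\alpha$ with the involution gives $|\Lambda(\cc{v},v)|\le \seminorm{\alpha}{\cc{v}}\seminorm{\alpha}{v}=\seminorm{\alpha}{v}^2$. Hence $\Lambda(\cc{v},v)+b(\cc{v},v)\ge 0$ for all $v\in V$, so Theorem~\ref{theorem:posex} applies and yields the conclusion (the case $V=\{0\}$ being trivial). There is no substantial obstacle in this argument; the one point requiring a bit of care is arranging the dominating inner product to be compatible with the involution, which is exactly what cofinality of $\mathcal{I}_{V,h}$ inside $\mathcal{I}_V$ provides.
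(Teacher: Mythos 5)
Your proof is correct and follows essentially the same route as the paper: take $b(v,w)=\skal{\cc{v}}{w}_\alpha$ for a suitable $\skal{\argument}{\argument}_\alpha\in\mathcal{I}_{V,h}$, note that nuclearity makes $b$ automatically of Hilbert-Schmidt type, verify symmetry and the positivity bound, and invoke Theorem~\ref{theorem:posex}. You are in fact slightly more careful than the paper in two places — explicitly choosing $\alpha$ to dominate $\Lambda$ via cofinality of $\mathcal{I}_{V,h}$, and using $\abs{\Lambda(\cc{v},v)}\le\seminorm{\alpha}{v}^2$ rather than a one-sided bound — both of which the paper's proof needs implicitly.
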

\begin{proof}
  Choose some $\skal{\argument}{\argument}_\alpha\in\mathcal{I}_{V,h}$ and
  define a bilinear form $b$ on $V$ by $b(v,w):=\skal{\cc{v}}{w}_\alpha$ for all
  $v,w\in V$. Then $b$ is continuous and Hermitian by construction and
  symmetric due to the compatibility of $\skal{\argument}{\argument}_\alpha$
  with $\cc{\argument}$. Moreover, $\Lambda(\cc{v},v) \le
  \seminorm{\alpha}{\cc{v}}\seminorm{\alpha}{v} = \seminorm{\alpha}{v}^2
  = \skal{v}{v}_\alpha = b(\cc{v},v)$ holds for all $v\in V$ and $b$ is
  of Hilbert-Schmidt type because every continuous bilinear form on
  a nuclear space is of Hilbert-Schmidt type (again, see
  \cite[Chap.~21.3, Thm.~5]{jarchow:1981a} or use
  \cite[Satz~28.4]{meise.vogt:1992a}).
  Because of this, Theorem~\ref{theorem:posex} applies.
\end{proof}

\begin{remark}
    \label{remark:TheLastRemark}%
    As Theorem~\ref{theo:posexnuclear} shows the existence of many
    continuous positive linear functionals in the nuclear case, this
    might be the best candidate for applications, because it allows to
    combine most of our results: The space $\SymTen^\bullet(V)^\cpl$
    has a clear interpretation as a space of certain analytic
    functions (Theorem~\ref{theo:gelfand}) and its topology is
    essentially the coarsest possible one
    (Theorem~\ref{theo:char}). The usual equivalences of star products
    that are generated by continuous bilinear forms that differ only
    in the symmetric part still holds due to
    Theorem~\ref{theo:equivalence} and because all symmetric bilinear
    forms on a nuclear space are of Hilbert-Schmidt type. Finally, the
    existence of many continuous positive linear functionals assures
    that there exist non-trivial representations of the deformed
    algebras, in which all elements of up to degree $2$ -- which
    include the most important elements from the point of view of
    physics, e.g. the Hamiltonian of the harmonic oscillator -- are
    represented by essentially self-adjoint operators
    (Theorem~\ref{theo:essselfadj}).  Note that these results are very
    similar to the well-known properties that make $C^*$-algebras
    interesting for applications in physics, even though the topology
    on the algebra that we have considered here is far from $C^*$,
    indeed not even submultiplicative.
\end{remark}

{
  \footnotesize
  \renewcommand{\arraystretch}{0.5}
%  \bibliographystyle{chairx}
%  \bibliography{dqarticle,dqbook,dqprocentry,dqproceeding,dqthesis,misc,script,preprints,notes}

}

%
% at the very end: the list of fixme's, to be deleted in the final version
%

%\ifdraft{\clearpage}
%\ifdraft{\phantomsection}
%\ifdraft{\addcontentsline{toc}{section}{List of Corrections}}
%\ifdraft{\listoffixmes}

%
% end of hstar
%

\end{document}

%%% Local Variables:
%%% mode: latex
%%% TeX-master: t
%%% End: